\documentclass[12pt]{amsart}

\usepackage{a4wide,amssymb,amsthm,amscd,amsmath,mathtools,stmaryrd,url,fullpage,enumerate,color,mathrsfs}
\usepackage{array}
\usepackage{ulem}
\usepackage{float}
\usepackage{soul}
\usepackage{etoolbox,expl3,pgfkeys,xparse,xstring,tikz}
\usepackage{dynkin-diagrams}
\usepackage{comment}

\usepackage{dsfont} 

\usepackage{listings}
\usepackage{xcolor} 

\newtheorem{theorem}{Theorem}[section]
\newtheorem{lemma}[theorem]{Lemma}
\newtheorem{prop}[theorem]{Proposition}
\newtheorem{proposition}[theorem]{Proposition}

\newtheorem{cor}[theorem]{Corollary}
\newtheorem{corollary}[theorem]{Corollary}
\newtheorem{definition}[theorem]{Definition}

\theoremstyle{remark}
\newtheorem{remark}[theorem]{Remark}

 \usepackage{xcolor}
 \usepackage[pdftex]{hyperref}
 \definecolor{couleur_cite}{rgb}{0.05,.4,0.05}
 \definecolor{couleur_link}{rgb}{0.05,0.05,0.4}
 \hypersetup{
 bookmarksopen,bookmarksnumbered,
 colorlinks,
 linkcolor=couleur_link,citecolor=couleur_cite,
 }

\usepackage[
backend=bibtex,
style=alphabetic,
maxbibnames=99,
doi=false,
isbn=false,
url=false,
minalphanames=3,
maxalphanames=3
]{biblatex}

\newcommand{\R}{\mathbb R}
\newcommand{\C}{\mathbb C}

\newcommand{\Z}{\mathbb Z}
\newcommand{\Q}{\mathbb Q}

\newcommand{\p}{\mathfrak p}
\newcommand{\g}{\mathfrak g}

\newcommand{\ga}{\mathfrak a}

\newcommand{\SL}{\text{SL} }

\newcommand{\Hom}{\text{Hom}}

\newcommand{\be}{\begin{equation}}
\newcommand{\ee}{\end{equation}}
\newcommand{\bes}{\begin{equation*}}
\newcommand{\ees}{\end{equation*}}

\newcommand{\ba}{\begin{eqnarray}}
\newcommand{\ea}{\end{eqnarray}}
\newcommand{\bas}{\begin{eqnarray*}}
\newcommand{\eas}{\end{eqnarray*}}

\def\XXint#1#2#3{{\setbox0=\hbox{$#1{#2#3}{\int}$}
     \vcenter{\hbox{$#2#3$}}\kern-.5\wd0}}

  \title{Quantum ergodicity in the Benjamini--Schramm limit for locally symmetric spaces}
\date{\today}
  
\author{Farrell Brumley}
\address{Sorbonne Universit\'e}
\email{brumley@imj-prg.fr}

\author{Simon Marshall}
\address{University of Melbourne}
\email{simon.marshall@unimelb.edu.au}

\author{Jasmin Matz}
\address{University of Copenhagen}
\email{matz@math.ku.dk}

\author{Carsten Peterson}
\address{Sorbonne Universit\'e}
\email{peterson@imj-prg.fr}

\thanks{F.B. is supported by the Institut Universitaire de France and ANR-FNS Grant  ANR-24-CE93-0016. S.M. was supported by National Science Foundation Grant DMS-1902173. J.M. is supported by the Carlsberg Foundation grant no. CF21-0374. C.P. was supported by the Deutsche Forschungsgemeinschaft (DFG, German Research Foundation) Grant SFB-TRR 358/1 2023 - 491392403 and received funding from the European Union’s Horizon 2020 research and innovation programme under the Marie Sk\l{}odowska-Curie grant agreement No 101034255 and from the National Science Foundation Grant DMS-2503324.}

\begin{document}
\begin{abstract}
We prove that for almost all symmetric spaces $X$ and for any sequence of compact locally symmetric spaces $Y_n$ which is uniformly discrete, has a uniform spectral gap, and converges in the sense of Benjamini--Schramm to $X$, the joint eigenfunctions of all invariant differential operators on $Y_n$ delocalize on average when their spectral parameters are taken to lie in a fixed spectral window.
\end{abstract}

\maketitle

\setcounter{tocdepth}{1}
\tableofcontents

\section{Introduction}

The celebrated Quantum Ergodicity theorem of Snirelman \cite{Snirelman}, Zelditch \cite{Zelditch_87}, and Colin de Verdi\`ere \cite{Colin_de_Verdiere} states that, on a closed Riemannian manifold $Y$ whose geodesic flow is ergodic, the $L^2$-mass of almost every Laplacian eigenfunction equidistributes in the limit of high frequency. This is an early example of the transference principle in semiclassical analysis, whereby high-energy Laplacian eigenfunctions inherit the dynamical properties of the underlying Hamiltonian system, in this case given by the geodesic flow.

More precisely, let $\{\psi_j\}$ be an orthonormal basis of $L^2(Y)$ consisting of eigenfunctions of the Laplacian. We write $\Delta \psi_j = \mu_j^2 \psi_j$, where $\mu_j \ge 0$ is the frequency, and $N(M) = \#\{j : \mu_j \leq M\}$. Quantum Ergodicity affirms that, for any $a \in C(Y)$,
\[
\lim_{M\rightarrow\infty}    \frac{1}{N(M)} \sum_{j: \mu_j \leq M} \Big| \int_{Y} a |\psi_j|^2 d \textnormal{vol} - \frac{1}{\textnormal{vol}(Y)} \int_{Y} a d \textnormal{vol} \Big|^2 = 0.
\]
In fact, the aforementioned authors proved a stronger version of the above statement in which the multiplication operator $a$ is replaced with more general pseudodifferential operators.

An analog of Snirelman's theorem can be posed in the setting of locally symmetric spaces of non-compact type, which form a subclass of Riemannian manifolds of central interest in automorphic forms and harmonic analysis. To do so effectively, one must take into account the extra symmetries enjoyed by these spaces. Indeed, Snirelman's theorem does not always directly apply, since the geodesic flow on the cosphere bundle of a locally symmetric space $Y$ is ergodic only in rank 1. Nevertheless, the \textit{Weyl chamber flow} can serve as a suitable substitute: if $r$ is the rank of $Y$, the Weyl chamber flow is an $\R^r$-action on the Weyl chamber bundle, whose orbits descend to immersed maximal flat subspaces of $Y$, the higher rank analog of geodesics. This action is ergodic with respect to the uniform measure, and its quantization yields a rank $r$ algebra of commuting differential operators containing the Laplacian. One can then ask for the quantum ergodic properties of the joint eigenfunctions for this algebra as the multispectra goes to infinity, a point of view first advanced in \cite{Silberman_Venkatesh}.

In this paper we shall investigate quantum ergodicity for locally symmetric spaces $Y$ under a different limiting procedure to that described above. Following the breakthrough results of \cite{Anantharaman_Le_Masson, Le_Masson_Sahlsten_17}, rather than fixing a single space $Y$ and varying the multispectra, we shall prove a version of quantum ergodicity for a fixed spectral window and a sequence of locally symmetric spaces $Y_n=\Gamma_n\backslash X$ which converge, in the sense of Benjamini--Schramm, to their common universal cover $X$. In so doing we will correct a significant error in the earlier work of the first and third authors \cite{Brumley_Matz_23}, in which $X$ was taken to have isometry group $\SL_n(\R)$, and greatly expand the scope of that paper. The precise result is stated in Theorem \ref{main-theorem} below. 

The proof of our main theorem requires the introduction of several new techniques and auxiliary estimates, both in harmonic analysis (bounds on spherical functions) and the geometry of higher rank symmetric spaces (bounds on intersection translates), which we believe to be of general interest. These results, and their role in the proof of Theorem \ref{main-theorem}, are discussed in detail in Section \ref{sec:technical-thms}.

\subsection{Review of literature: the rank one case}
The framework for studying quantum ergodicity in the Benjamini--Schramm limit originates in the work of Anantharaman--Le Masson \cite{Anantharaman_Le_Masson} in the context of regular graphs. They considered sequences of $(q+1)$-regular graphs for which the adjacency operator (the discrete analogue of the Laplacian) has a uniform spectral gap and for which the number of short loops is small compared to the number of vertices; the latter condition is equivalent to the Benjamini--Schramm convergence of the sequence of graphs to the $(q+1)$-regular tree. They proved a form of quantum ergodicity for eigenfunctions of the adjacency operator on such graphs, with eigenvalue lying in some fixed interval $I$. More precisely, $I$ was taken to be a subinterval of $[-2 \sqrt{q}, 2 \sqrt{q}]$, which is the spectrum of the adjacency operator acting on the $(q+1)$-regular tree. Later, Anantharaman--Sabri \cite{Anantharaman_Sabri} extended this result to a wide class of large finite graphs, not necessarily regular, in the presence of a potential.

Analogous results were proven for hyperbolic surfaces by Le Masson--Sahlsten \cite{Le_Masson_Sahlsten_17}, building on work of Brooks--Le Masson--Lindenstrauss \cite{Brooks_Le_Masson_Lindenstrauss}, the latter of which gave a new proof of the result of Anantharaman--Le Masson \cite{Anantharaman_Le_Masson}. Such results were extended to all rank one locally symmetric spaces by Abert--Bergeron--Le Masson \cite{Abert_Bergeron_Le_Masson_22}.

\subsection{Our main result}\label{sec:main-result}
Let us recall some notation related to locally symmetric spaces, which will be necessary to state our main result.

Let $G$ be a connected non-compact semisimple real Lie group with finite center and $K<G$ a maximal compact subgroup. Let $\mathfrak{g}$ and $\mathfrak{k}$ be their respective Lie algebras. Then $K$ induces a Cartan involution with corresponding eigenspace decomposition $\mathfrak{g} = \mathfrak{p} \oplus \mathfrak{k}$. We endow $X=G/K$ with the $G$-invariant metric coming from the Killing form, making $X$ a Riemannian symmetric space. All groups isogenous to $G$ yield the same symmetric space. Thus we may assume without loss of generality that $G$ is a product of non-compact centerless connected simple real Lie groups.

Let $\Gamma < G$ be an irreducible lattice, which we shall assume to be uniform throughout this paper. Then $Y = \Gamma \backslash X$ is a compact locally symmetric space. We endow $Y$ with the measure $dx$ induced from the Riemannian volume form on $X$. Let $D_{G}(X)$ be the ring of $G$-invariant differential operators on $X$. The action of $D_{G}(X)$ descends to functions on $Y$, and the joint eigenfunctions of $D_G(X)$ in $L^2(Y)$ are called \textit{Maass forms}. A Maass form determines an eigencharacter of $D_G(X)$, which can then be identified with a spectral parameter $\lambda\in\ga_\C^*/W$ by the Harish-Chandra isomorphism. Here $\mathfrak{a}$ is a maximal abelian subspace of $\mathfrak{p}$, $\ga_\C$ is its complexification, and $W$ is the Weyl group of $G$. A Maass form is said to be \textit{tempered} when its spectral parameter lies in $\ga^*/W$, the latter being identified with the $D_G(X)$-spectrum on $L^2(X)$. For example, when $X=\mathbb{H}^n$ is hyperbolic $n$-space and $Y=\Gamma\backslash\mathbb{H}^n$ is a compact hyperbolic manifold, a Maass form $\psi$ is just a Laplacian eigenfunction $\Delta\psi=\mu^2\psi$, and if we write the eigenvalue as $\mu^2=\rho^2+\lambda^2$, with $\rho=(n-1)/2$, then the tempered case $\lambda\in \R$ corresponds to $\mu^2\geq \rho^2$.

We shall be interested in the $L^2$-mass distribution of Maass forms, when their spectral parameters are confined to a compact subset of $\ga^*/W$ and the lattice $\Gamma<G$ is allowed to vary. A sequence of locally symmetric spaces $Y_n=\Gamma_n\backslash X$ as above is said to \textit{Benjamini--Schramm} converge to $X$ if asymptotically almost all points in $Y_n$ have arbitrarily large injectivity radius. Moreover, if the $\Gamma_n$ are torsion free, we say that the sequence $\Gamma_n < G$ is \textit{uniformly discrete} if there is a universal lower bound on the global injectivity radii of $Y_n$.

With the above notation, our main theorem is the following.

\begin{theorem}\label{main-theorem}
Let $G$ be a product of non-compact connected centerless simple real Lie groups, $K$ be a maximal compact subgroup, and $X = G/K$ be the associated symmetric space. Let $\Gamma_n<G$ be a sequence of torsion free, cocompact, uniformly discrete, irreducible lattices. Suppose  $Y_n = \Gamma_n \backslash X$ Benjamini--Schramm converges to $X$ as $n \to \infty$. Fix a simple factor $G_1$ of $G$ and suppose that 
\begin{enumerate}
\item\label{thm:root-system} its reduced root subsystem is of type $A_n, B_n, C_n, D_n$ or $E_7$,
\item\label{thm:spec-gap} the action of $G_1$ on $L^2_0(\Gamma_n\backslash G)$, the orthocomplement to the constant functions, has a uniform spectral gap. 
\end{enumerate}

There exists a finite $W$-stable set of hyperplanes $\{ P_i\}$ in $\ga^*$ such that, for any compact $W$-invariant subset $\Omega \subset \mathfrak{a}^*\smallsetminus\cup_i P_i$ with non-empty interior, the following holds. Let $\{\psi_j^{(n)}\}$ be an orthonormal basis of $L^2(Y_n)$ consisting of Maass forms with associated spectral parameters $\lambda_j^{(n)}$, and write $N(\Omega, \Gamma_n) = \#\{j : \lambda_j^{(n)} \in \Omega \}$. Then for any uniformly bounded sequence $a_n\in L^\infty(Y_n)$ we have
    \begin{gather*}
       \lim_{n\rightarrow\infty} \frac{1}{N(\Omega, \Gamma_n)} \sum_{j: \lambda_j^{(n)} \in \Omega} \Big| \int_{Y_n} a_n(x) |\psi_j^{(n)}(x)|^2 dx - \frac{1}{{\rm vol}(Y_n)}\int_{Y_n} a_n(x) dx \Big|^2 = 0.
    \end{gather*}
\end{theorem}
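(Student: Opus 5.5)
We follow the Brooks--Le Masson--Lindenstrauss / Le Masson--Sahlsten strategy as adapted to higher rank. First reduce to $a_n$ with $\int_{Y_n} a_n = 0$ and $\|a_n\|_\infty\le 1$; by Weyl's law in the Benjamini--Schramm limit (uniform discreteness gives control of the trace formula), $N(\Omega,\Gamma_n)\asymp \mathrm{vol}(Y_n)$, since $\Omega$ has non-empty interior and Plancherel density is positive there. It then suffices to show that the variance $V_n(a_n)=\sum_{\lambda_j\in\Omega}|\langle a_n\psi_j,\psi_j\rangle|^2$ is $o(\mathrm{vol}(Y_n))$. For this we use a family of $K$-bi-invariant test functions $k_t$ on $G$ (normalized averaging operators, e.g.\ $k_t=\frac1t\sum_{s\le t}$ of spherical transforms of $K a_s K$ along a regular direction inside $G_1$, or a smooth approximation thereof), with spherical transform $h_t(\lambda)$. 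Since $\psi_j$ is a joint eigenfunction, $k_t$ acts on it by the scalar $h_t(\lambda_j)$. Hence $\langle a\psi_j,\psi_j\rangle = h_t(\lambda_j)^{-1}\langle a\,k_t*\psi_j,\psi_j\rangle$, and we want $|h_t(\lambda)|$ bounded below on $\Omega$. This is where the excluded hyperplanes $P_i$ enter: choose $k_t$ so that its spherical transform $h_t$ is the average of $\varphi_\lambda(a_s)$ over $s$, and use the asymptotics (Harish-Chandra $c$-function expansion) of $\varphi_\lambda$ to show that $\frac1T\int_0^T|\varphi_\lambda(\exp sH)|^2 e^{2\rho(sH)}ds$ stays bounded below away from the walls where distinct Weyl-translates of $\lambda$ coincide on $H$, i.e.\ away from finitely many $W$-stable hyperplanes.

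Next, bound the variance by a Hilbert--Schmidt norm. Writing $A_t$ for the operator $\psi\mapsto \frac1{T}\int_0^T e^{\rho(sH)}\,(a\cdot(\text{sphere-average at } sH)\psi)\,ds$, we get
\[
V_n(a)\ \ll_\Omega\ \sum_j |\langle A_t\psi_j,\psi_j\rangle|^2\le \|A_t\|_{HS}^2 ,
\]
and $\|A_t\|_{HS}^2$ is an integral over $Y_n\times Y_n$ of $a(x)\overline{a(y)}$ times a kernel $\sum_{\gamma\in\Gamma_n}K_t(x,\gamma y)$. Split into the identity/short-loop part and the rest. On points of injectivity radius $>R(t)$ (most points by BS convergence) only $\gamma=e$ contributes; the contribution there is $\int\!\!\int a(x)\overline{a(y)}K_t(x,y)$, which, after unfolding, is $\langle \mathcal{M}_t a, a\rangle$ with $\mathcal M_t$ a convolution by a $K$-bi-invariant probability-type kernel built from pairs of $G_1$-Weyl-chamber translates. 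Using the uniform spectral gap of $G_1$ on $L^2_0(\Gamma_n\backslash G)$ together with exponential decay of matrix coefficients (Howe--Moore/Cowling type bounds for spectral gap), $\langle\mathcal M_t a,a\rangle \ll \beta(t)\,\mathrm{vol}(Y_n)$ with $\beta(t)\to0$ as $t\to\infty$. The points of small injectivity radius contribute $\ll C(t)\,\mathrm{vol}(Y_n^{<R(t)})$ using uniform discreteness to bound the number of lattice points in balls, which is $o(\mathrm{vol}(Y_n))$ for fixed $t$. Letting $n\to\infty$ then $t\to\infty$ gives the theorem.

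The main obstacle is the estimate for the identity term: $K_t$ involves products of two spherical-function-weighted sphere averages, and one must show that $\|K_t\|$ in the relevant norm decays, which requires (i) sharp uniform bounds on $\varphi_\lambda$ for $\lambda\in\Omega$ off the walls, and (ii) geometric bounds on the measure of intersections $K a_s K\cap g K a_{s'}K$ of translated double cosets in higher rank, to convert the spectral gap decay into a gain over the normalization $h_t(\lambda)^2$. I expect the root-system hypothesis (1) to be exactly what makes these intersection-translate bounds provable (via explicit control of $\rho$ versus the relevant weights in types $A$--$D$, $E_7$), and I would first attempt (ii) by reducing to estimates on the Cartan projection $\mu(a_s k a_{s'})$ using Kostant convexity.
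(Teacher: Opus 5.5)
Your outline has the paper's architecture: reduction to mean-zero observables, a time-averaged propagator whose eigenvalue is bounded below off the finitely many hyperplanes $\lambda(wH_0-w'H_0)=0$ via the Gangolli--Varadarajan asymptotic, a Hilbert--Schmidt bound with a thick--thin splitting, and a spectral-gap ergodic theorem applied to intersection sets. One slip: the operator must be the two-sided $\frac1\tau\int_\tau^{2\tau}U_taU_t^*\,dt$, whose eigenvalue is $\frac1\tau\int e^{-2t\rho(H_0)}|\widehat{\mathds{1}_{S_t}}(\lambda)|^2dt$. Your one-sided $\frac1T\int aU_s\,ds$ has eigenvalue $\frac1T\int h_s(\lambda)\,ds$, which oscillates to zero.

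The genuine gap is that the step you defer as ``the main obstacle'' is the whole difficulty, and the choices you make would make it fail. You propagate along a \emph{regular} direction. The argument only closes if
\[
\mathrm{vol}(e^HS_t\cap S_t)\ll(\log t)^k e^{\rho(2tH_0-H)} .
\]
With this bound, Nevo's estimate $\mathrm{vol}(E)^{-\theta}$ (with $\theta<1/2$), the time average, and the $H$-integral over $P_{2\tau}$ give $(\log\tau)^k/\tau$; the $H$-integral is a degenerate Brion computation that costs exactly one power of $\tau$. A loss of $t^m$ with $m\ge 1$ would instead give $\tau^{-1+2m(1-\theta)}$, which does not decay. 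For non-extremal directions such a loss is expected:
\begin{itemize}
\item the paper recalls that for $\mathrm{PGL}_3(\mathbb{Q}_p)$ with a non-extremal (there necessarily regular) direction one has $\mathrm{vol}(p^HS_t\cap S_t)\gg t\,p^{\rho(2tH_0-H)}$ for suitable $H$;
\item in the paper's spectral majorant, the region $\|\lambda\|\lesssim t^{-1}$ has size $t^{|\Phi^+|-r}$ when $\Phi_M=\emptyset$.
\end{itemize}
The missing idea is to take $H_0$ \emph{maximally singular}, namely a multiple of an extremal fundamental coweight. Then the reduced root system $\Phi_M$ of its centralizer is \emph{semi-dense}: $|\Psi\cap\Phi_M|+\mathrm{rank}\,\Psi\ge\frac12|\Psi|$ for every semistandard $\Psi$. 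This combinatorial property is what singles out types $A$--$D$ and $E_7$; no subsystem of $E_6,E_8,F_4,G_2$ has it. It is not a matter of comparing $\rho$ with weights.

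Your proposed route to (ii) also lacks the required precision. Kostant convexity controls only the \emph{support} of $e^HS_t\cap S_t$, and the paper uses it for exactly that, to get $\|H\|_P\le t+1$. It says nothing sharp about the volume. Even the harmonic-analytic bound from $|\varphi_\lambda|\le\varphi_0$ plus Anker's estimate for $\varphi_0$ loses $\|H\|^{|\Phi^+\smallsetminus\Phi_L^+|}$, which is fatal for the reason above.

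The paper instead dominates $\mathds{1}_{S_t}$ by a smooth bi-$K$-invariant $k_t$ and writes
\[
\mathrm{vol}(e^HS_t\cap S_t)\le\int_{\mathfrak{a}^*_+}|\widehat{k}_t(\lambda)|^2\varphi_\lambda(e^H)|c(\lambda)|^{-2}\,d\lambda .
\]
This integral runs over \emph{all} tempered $\lambda$, and the paper's semi-dense bound is needed precisely where $\|\lambda\|\lesssim t^{-1}$, near the walls. So your item (i), bounds for $\lambda\in\Omega$ off the walls, is the wrong input; that regime matters only for the lower bound on the propagator, where the Gangolli--Varadarajan asymptotic suffices.

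What is needed is a bound uniform up to and across the walls. The paper proves
\[
\varphi_\lambda(e^H)\ll(1+\|\lambda\|)^a\,\Theta(H,\lambda)\,\max_{w} e^{-(\rho+w\,\mathrm{Im}\,\lambda)(H)},
\]
where $\Theta$ interpolates root by root between $|\alpha(H)|+1$ and $|\langle\lambda,\alpha\rangle|^{-1}+1$. The proof is by induction over Levi subgroups, using the Gangolli--Varadarajan expansion along maximal Levis and a maximum-modulus argument to cross the walls. The paper then:
\begin{itemize}
\item transfers this bound to $\widehat{k}_t$ by integration by parts;
\item integrates over a barycentric subdivision of $\mathfrak{a}^*_+$ refined at scale $t^{-1}$;
\item applies the semi-dense inequality with $\Psi=w\Phi_{\sigma,i}$, which forces every exponent in the resulting iterated integrals to be at least $-1$, so only powers of $\log t$ are lost.
\end{itemize}
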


In particular, the theorem holds for a non-compact simple real Lie group $G$ of rank at least 2 satisfying \eqref{thm:root-system} and any uniformly discrete sequence of cocompact torsion free lattices $\Gamma_n<G$  such that ${\rm vol}(Y_n)\rightarrow\infty$. Indeed, in such a setting the uniform spectral gap assumption \eqref{thm:spec-gap} is automatic by Property (T), and the Benjamini--Schramm convergence of $Y_n=\Gamma_n\backslash X$ to $X$ is automatic by \cite[Theorem 1.5]{7_samurai}.

The exceptional hyperplanes $\cup_i P_i$ in the statement of Theorem \ref{main-theorem} can be taken to depend only on the reduced root system of $G_1$. In particular, when the root subsystem of reduced roots for $G_1$ is of type $A_n, B_n$, or $C_n$, Theorem \ref{main-theorem} holds with $\Omega$ a compact subset of the regular parameters $\ga^*_{\rm reg}$. See Section \ref{sec:spectral-estimate}, and, in particular, Propositions \ref{prop:local-lower-bd} and \ref{prop:Weyl-diff}, for a more complete description of the $\cup_i P_i$.

The reader will note that the types $E_6, E_8, F_4$ and $G_2$ are not considered in Theorem \ref{main-theorem}. Indeed, we eventually reduce the proof to a combinatorial property of the root system of the simple factor $G_1$ which fails to hold in these types (see Section \ref{sec:big-subsystems}). There are precisely eight simple groups/irreducible symmetric spaces excluded from Theorem \ref{main-theorem}. In the notation of  \cite[p. 532]{Helgason_01} these are the groups
\[
E \ I,\; E \ II,\; E\, III,\; E \ VI,\; E \ VIII,\; E \ IX,\; F \ I,\textrm{ and } G.
\]
Despite the failure of our techniques to treat such spaces, we believe Theorem \ref{main-theorem} should remain valid for them.

\subsection{Relation with the preceding work of Brumley--Matz}\label{hi-rank-intro}

The first investigation of quantum ergodicity in the Benjamini--Schramm limit in higher rank can be found in the work of the first and third authors \cite{Brumley_Matz_23} who focused on locally symmetric spaces associated to $\textnormal{SL}_n(\mathbb{R})$. At a certain step in the proof, which we refer to as the \textit{geometric bound}, they must bound the volume of a certain set in the symmetric space. A mistake in the geometric bound was found by the fourth author as part of his thesis work \cite{Peterson_thesis}. This mistake resulted in a gap in the proof of the main theorem of \cite{Brumley_Matz_23}. 

By modifying the techniques of \cite{Brumley_Matz_23}, particularly those related to the geometric bound, the fourth author proved quantum ergodicity in the Benjamini--Schramm limit for the group $\textnormal{PGL}_3$ over a non-archimedean local field, in which case the role of the symmetric space is replaced by that of the Bruhat-Tits building, and the invariant differential operators are replaced by the spherical Hecke algbera \cite{Peterson_23}. In Section \ref{sub_sec_int_vol} we shall say more about the mistake in the geometric bound of \cite{Brumley_Matz_23}, the method of ``fixing'' it for non-archimedean $\textnormal{PGL}_3$ in \cite{Peterson_23}, and how these ideas play a role in the present paper.

\subsection{Acknowledgements}

We would like to thank Jean-Philippe Anker, particularly for his contributions to Section \ref{sub_sec_anker} and \ref{subsec_eks}. F. Brumley and J. Matz would like to thank their co-authors for their generosity in sharing their ideas to repair and extend their previous work on this topic. 

\section{Sketch of proof and main technical theorems}\label{sec:technical-thms}

The overall strategy of the proof of Theorem \ref{main-theorem} follows the argument of Le Masson--Sahlsten \cite{Le_Masson_Sahlsten_17}, which itself derives from \cite{Brooks_Le_Masson_Lindenstrauss}. The subsequent works of \cite{Abert_Bergeron_Le_Masson_22} and \cite{Brumley_Matz_23, Peterson_thesis} all followed the same basic structure, with additional difficulties depending on the given context : higher dimensional hyperbolic manifolds, and higher rank real and $p$-adic symmetric spaces, respectively. The main purpose of the present discussion is to recall the various steps of the argument which are common to the above works, point to the specific sections in this paper where they are executed, and describe in detail the geometric and analytic challenges special to our higher rank setting. 

Our two main auxiliary results which surmount these difficulties are stated in Sections \ref{sub_sec_int_vol} and \ref{intro-sph-bds} below. Of these, the primary result is stated in Theorem \ref{intersection-vol}; it provides sharp (up to logarithmic powers) upper bounds on intersection volumes of spherical shells in the globally symmetric space $X$, and addresses the issue raised in Section \ref{hi-rank-intro}. We prove this geometric bound using harmonic analysis. For this, we prove new uniform bounds on spherical functions in  Theorem \ref{sph-fn-bd}, which provides the central ingredient to our analytic approach.

The reader will notice that Theorem \ref{main-theorem} is stated for $G$ semisimple, while Theorem \ref{intersection-vol} takes $G$ to be simple. As we describe below, the deduction of the former from the latter result takes place in Section \ref{sec:geom-reduction}, specifically in Proposition \ref{prop:main-term-bd}. To simplify the exposition, we shall assume throughout Section \ref{sec:technical-thms} that $G$ is simple.

\subsection{Spectral and geometric reduction steps}
The basic strategy to the proof of Theorem \ref{main-theorem} is to introduce an averaging operator over expanding bi-$K$-invariant sets $S_{tH_0}\subset G$ to convert the distributional properties of the $L^2$-mass of Maass forms into the mean ergodic properties of the $S_{tH_0}$ (or their intersection translates).

\subsection*{Reduction to mean-zero observables}
We begin by observing that it is enough to prove Theorem \ref{main-theorem} for the class of observables which are orthogonal to constants. Indeed, if $a_n$ is as in the theorem, then so is $a_n-\frac{1}{{\rm vol}(Y_n)}\int_{Y_n} a_n$, so we may assume, without loss of generality, that $\int_{Y_n}a_n=0$. In this case the proof of Theorem \ref{main-theorem} reduces to showing
\begin{equation}\label{eq:main-estimate}
\frac{1}{N(\Omega, \Gamma_n)}\sum_{\lambda_j^{(n)}\in \Omega}\big|\langle a_n\psi_j^{(n)},\psi_j^{(n)}\rangle\big|^2\longrightarrow 0
\end{equation}
as $n\rightarrow\infty$.

\subsection*{Spectral properties of the propagator (Section \ref{sec:spectral-estimate})}

In Section \ref{sec:spectral-estimate}, we introduce the aforementioned expanding bi-$K$-invariant sets $S_{tH_0}$ in $G$. More precisely, given a non-zero $H_0 \in \overline{\mathfrak{a}}_+$ and real parameters $t,\epsilon_0>0$, define the \textit{spherical shell directed by $H_0$} by
\begin{gather*}
S_{tH_0} := K \textnormal{exp}(B_{\mathfrak{a}}(tH_0,\epsilon_0))K,
\end{gather*}
where $B_{\mathfrak{a}}(H, r)$ denotes the Euclidean ball in $\ga$ of radius $r > 0$ centered at $H \in \mathfrak{a}$. We may associate with $S_{tH_0}$ its averaging operator $U_t$ on $L^2(\Gamma\backslash G)$, and we denote the corresponding self-adjoint time average by $\mathbf{A}(\tau)$. The latter is an integral operator with kernel given by $\sum_{\gamma\in\Gamma} A(\tau)(g,\gamma h)$, where
\begin{equation}\label{intro-kernel}
A(\tau)(g,h)=\frac1{\tau}\int_{\tau}^{2\tau}e^{-2t\rho(H_0)}\int_{g S_{tH_0}\cap h S_{tH_0}}a(x) dx dt,
\end{equation}
where $\rho$ is the half-sum of the positive roots. The primary goal of Section 4 is then to prove that one can essentially replace the matrix coefficient $\langle a\psi_j,\psi_j\rangle$ in the spectral average \eqref{eq:main-estimate} by the matrix coefficient $\langle \mathbf{A}(\tau)\psi_j,\psi_j\rangle$.

This indeed is the content of Theorem \ref{main-spec-thm}, which is valid in a wide degree of generality. Since Theorem \ref{main-spec-thm} is essentially local in nature, no assumptions on $\Gamma$ are necessary. Moreover, the non-zero directing element $H_0$ can be taken to be arbitrary, provided one restricts to spectral parameters avoiding the exceptional hyperplanes $\{P_i\}$ figuring in the statement of Theorem \ref{main-theorem}. To be precise, these exceptional hyperplanes arise in the proof of Proposition \ref{prop:int-W-sum}, and depend on $H_0$ through the set $\mathfrak{a}_{\rm bad}^*$, defined in \eqref{eq:Omega-condition}. (Eventually $H_0$ itself will be chosen to depend on the reduced root system of $G_1$, implying the claimed dependency of the exceptional hyperplanes $\{P_i\}$ after Theorem \ref{main-theorem}.)

\subsection*{Reduction to the intersection volume bound (Section \ref{sec:geom-reduction})}
The outcome of Section \ref{sec:spectral-estimate} is to reduce Theorem \ref{main-theorem} to a similar estimate on the Hilbert--Schmidt norm $\|\mathbf{A}(\tau)\|_{{\rm HS}}$. We study the latter by realizing it as the $L^2$-norm of its kernel. Theorem \ref{main-geom-thm} establishes a bound on $\|\mathbf{A}(\tau)\|_{{\rm HS}}$ in terms of various geometric and spectral quantities controlled by the hypotheses in Theorem \ref{main-theorem}. We then show, using known Limit Multiplicity theorems recalled in Section \ref{sec:limit-mult}, how Theorem \ref{main-geom-thm} suffices to deduce Theorem \ref{main-theorem}.

As is evident from \eqref{intro-kernel}, the Hilbert--Schmidt norm of the operator $\mathbf{A}(\tau)$ encodes the dynamical properties of the intersection of the spherical shells $S_{tH_0}$ with their group translates. The most difficult step in the proof of Theorem \ref{main-geom-thm} is to bound the volume of these intersections, as stated in Theorem \ref{intersection-vol} below. The bulk of Section \ref{sec:geom-reduction} is then dedicated to the reduction of Theorem \ref{main-geom-thm} to Theorem \ref{intersection-vol}.

This reduction proceeds by bounding $\|\mathbf{A}(\tau)\|_{{\rm HS}}$ by means of a thick-thin decomposition of the locally symmetric space $Y=\Gamma\backslash G/K$. This resulting bound produces a main term, denoted as $M(\tau)$ and defined in \eqref{eq:main-term}, and an error term, occuring in Lemma \ref{lemma:general-HS}. The error term involves the volume of the thin part and the global injectivity radius of $Y$, controlled by the hypotheses of Theorem \ref{main-theorem}, as well as the support of the kernel function \eqref{intro-kernel}, determined in Corollary \ref{cor:intersection-range}.

The analysis of the main term $M(\tau)$ is much more delicate. In Proposition \ref{prop:main-term-bd} we use the Minkowski integral inequality and the Nevo ergodic theorem to set up an application of Theorem \ref{intersection-vol}. Once this is inserted, we execute in Section \ref{sec:Brion} one last integral using a degenerate form of Brion's formula to conclude the proof of Theorem \ref{main-geom-thm}.

\subsection{Bounds on intersection volumes (Sections \ref{sec_int_vol}-\ref{sec:sph-fn})} \label{sub_sec_int_vol}

It then remains to bound the volumes of intersection translates of expanding spherical shells. For this, we wish to find an $H_0$ such that for all $H \in \mathfrak{a}$ and $t\gg 1$, the volume of the intersection $e^H S_{tH_0} \cap S_{tH_0}$ is ``small'', in a suitable sense. We are able to do so for all non-compact simple groups satisfying condition \eqref{thm:root-system} of Theorem \ref{main-theorem}, as we now describe.

In Section \ref{sec_int_vol}, we outline an approach to bounding intersection volumes using \textit{analysis}, rather than geometry. Indeed, we pass to the spectral side using Plancherel inversion and present a strategy which reduces the desired bound to uniform bounds on spherical functions, which we then prove. This reduction strategy only succeeds when the directing element $H_0$ is highly symmetric.

Let $M$ be the centralizer of $H_0$ inside $G$. In Section \ref{sec:big-subsystems}, we make this symmetry condition on $H_0$ precise by examining the way in which the reduced root subsystem of $M$ sits inside that of $G$. We first do this abstractly (making no mention of $H_0$), by introducing a combinatorial property of reduced root subsystems which encodes its relative fullness. Namely, if $\Phi$ is a reduced root system, and $\Phi_0\subset\Phi$ is a semistandard root subsystem, we say that $\Phi_0$ is \textit{semi-dense} in $\Phi$ if, roughly speaking, for any semistandard root subsystem $\Psi \subset \Phi$, $\Phi_0$ contains at least half the roots of $\Psi$. The required inequality is stated exactly in Definition \ref{def:big}.

\begin{theorem}\label{thm:semi-dense} An irreducible reduced root system $\Phi$ contains a semi-dense root subsystem if, and only if, $\Phi$ is of type $A_n, B_n, C_n, D_n$ or $E_7$.
\end{theorem}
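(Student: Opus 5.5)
The proof splits into a sufficiency part (construct a semi-dense subsystem in types $A_n,B_n,C_n,D_n,E_7$) and a necessity part (show none exists in types $E_6,E_8,F_4,G_2$). Both rest on the same preliminary reduction, which I carry out first. By definition, a semistandard root subsystem is $W$-conjugate to a standard Levi subsystem $\Phi_I$ for some subset $I$ of the simple roots; equivalently, it has the form $\Phi\cap V$ for a subspace $V$ spanned by roots. The inequality of Definition \ref{def:big} is invariant under simultaneously conjugating $\Phi_0$ and $\Psi$ by $W$. So I may fix $\Phi_0$ standard and let $\Psi=w\Phi_J$ range over all pairs $(w,J)$. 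Moreover, $\Phi_0\cap\Psi$ is again of the form $\Phi\cap(V_0\cap V_\Psi)$, so the inequality becomes a comparison of two root counts for a pair of root-spanned subspaces. I will also record that the inequality is monotone in $\Phi_0$: if it holds for $\Phi_0$, it holds for any larger semistandard subsystem. Hence for necessity it suffices to rule out the maximal proper Levi subsystems, one for each node of the Dynkin diagram.

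For sufficiency I work in the standard coordinate models and give one explicit candidate per type. For type $A_n$ the roots are $e_i-e_j$; a root-spanned subspace corresponds to a set partition of $\{1,\dots,n+1\}$, and the root count is $\sum_b |b|(|b|-1)$ over the blocks $b$. Taking $\Phi_0$ to be the Levi subsystem of a partition into two blocks of sizes as equal as possible, the inequality reduces to an elementary convexity inequality for a quadratic form in the sizes $|b\cap B_1|$ and $|b\cap B_2|$, which I expect to verify block by block. Types $B_n$ and $C_n$ are handled the same way using signed partitions: blocks paired with their negatives, plus a zero block that carries a $B_k$ or $C_k$ factor. Here the natural candidate is $\Phi_0$ of type $A_{n-1}$ (the Siegel-type Levi), or whichever Levi the counting favours. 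Type $D_n$ is analogous, using $D_k$ zero blocks. For $E_7$ I take $\Phi_0=E_6$, the Levi attached to the minuscule node. The orbit structure of $W(E_7)$ on Levi subsystems is small and classified, so I will check the inequality orbit by orbit, by hand or by machine.

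For necessity, the types $E_6,E_8,F_4,G_2$ are finite cases. For each maximal Levi $\Phi_0$ I exhibit an explicit witness $\Psi$, typically a rank-two $A_2$ or $B_2$ subsystem in general position, or a large Levi positioned transversally to $\Phi_0$, with $|\Phi_0\cap\Psi|<\tfrac12|\Psi|$. For $G_2$ this is immediate: $\Phi_0$ has type $A_1$, and any $A_1$ subsystem not equal to it serves as $\Psi$. For the others I will enumerate the pairs of Levi subsystems up to $W$, which is a standard finite computation, and record the witnesses in a table.

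I expect the main obstacle to be the classical families. The inequality must hold uniformly over infinitely many ranks and all relative positions of $\Psi$, so the crux is choosing the right $\Phi_0$ (and proportions of its blocks) so that the resulting quadratic inequality in block sizes holds for every partition. I would first try the balanced two-block choice in type $A$, prove the block-wise inequality there, and then transport the argument to $B$, $C$ and $D$ by treating the zero block separately.
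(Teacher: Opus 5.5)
Your necessity half uses the wrong failure criterion, and this is a genuine gap. Definition \ref{def:big} has $\textnormal{rank}(\Psi)$ on the left-hand side. So a witness against $\Phi_0$ must satisfy $|\Psi\cap\Phi_0|+\textnormal{rank}(\Psi)<\frac12|\Psi|$, not $|\Psi\cap\Phi_0|<\frac12|\Psi|$.

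This breaks your $G_2$ argument. For any $A_1$ subsystem $\Psi\neq\Phi_0$ one has $0+1\ge 1$, so no $A_1$ can ever be a witness. Under the criterion you wrote, $\Psi=\Phi\cap\R\alpha=\{\pm\alpha\}$ with $\alpha\notin\Phi_0$ would show that no proper subsystem of any root system is semi-dense, contradicting your own sufficiency half. The type $A$ block inequality you plan to prove would also be false at $p=q=1$ without the rank term.

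With the correct criterion, a rank-two witness must meet a conjugate of $\Phi_0$ trivially. An $A_2$ meeting $w\Phi_0$ in an $A_1$ gives $2+2\ge 3$ and proves nothing. What works is the following:
\begin{enumerate}
\item For $G_2$, $F_4$, and every maximal Levi of $E_6$ and $E_8$ other than $D_5, A_5\subset E_6$ and $E_7\subset E_8$, take $\Psi=\Phi$. For example, $2+2<6$ for $G_2$, and $18+4<24$ for $B_3,C_3\subset F_4$.
\item In the three remaining cases, $\Psi=\Phi$ satisfies the inequality ($46\ge 36$, $36\ge 36$, $134\ge 120$). The missing step is to exhibit $w\in W$ and a standard $A_2$ with $w\Phi_0\cap\Psi=\emptyset$. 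The paper does this explicitly in coordinate models.
\end{enumerate}
Your planned machine enumeration would find these witnesses, but only if it tests the correct inequality.

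For sufficiency, your route genuinely differs from the paper's and works once the rank term is included. Take a block $b$ of the partition defining $\Psi$, with $p=|b\cap B_1|$ and $q=|b\cap B_2|$. Its contribution is $p(p-1)+q(q-1)+(p+q-1)-\frac12(p+q)(p+q-1)=\frac12(p-q)^2+\frac12(p+q)-1\ge 0$. So every maximal Levi of $A_n$ is semi-dense, balancing the blocks is unnecessary, and the ``crux'' you anticipated disappears.

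The same signed-block computation extends to the other classical types. A $B_{k_0}$ or $C_{k_0}$ zero block gives $k_0(k_0-1)+k_0\ge k_0^2$, and a $D_{k_0}$ zero block gives $k_0(k_0-1)+k_0\ge k_0(k_0-1)$. Hence the Siegel Levi $A_{n-1}$ is semi-dense in $B_n$, $C_n$ and $D_n$.

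The paper instead fixes the extremal subsystem ($A_{n-1}$, $B_{n-1}$, $C_{n-1}$, $D_{n-1}$) and reduces to irreducible standard $\Psi$. It then shows $\Psi\cap\Phi_0$ is again extremal in $\Psi$ and checks only $\Psi=\Phi$ numerically. That route also yields Theorem \ref{thm:semi-dense2} for the specific extremal subsystems used later, which your choice of $A_{n-1}$ in types $B$, $C$, $D$ does not.
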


We now return to the setting of Theorem \ref{main-theorem} and denote by $\Phi$ the set of restricted roots for $A$ in $G$ and by $\Phi_{\rm red}\subset\Phi$ the subsystem of reduced (or indivisible) roots. As before, let $M$ be the centralizer of $H_0$ in $G$; then $M$ is a standard Levi subgroup of $G$. Let $\Phi_M\subset\Phi$ be the roots of $A$ in $M$, and $\Phi_{M, {\rm red}}\subset\Phi_{\textnormal{red}}$ the corresponding subsystem of reduced roots. In the case where $\Phi_{\rm red}$ is of type $A_n, B_n, C_n, D_n$ or $E_7$, we define in Section \ref{sec:str-sing} a class of elements $H_0\in \overline{\ga}_+$, which we call \textit{extremal}. Extremal $H_0$ will turn out to be our primary source of extremal root subsystems, thanks to the following result.

\begin{theorem}\label{thm:semi-dense2}
Let $G$ be such that $\Phi_{\textnormal{red}}$ is of type $A_n, B_n, C_n, D_n$ or $E_7$ and let $H_0$ be extremal. Then $\Phi_{M,\textnormal{red}}$ is a semi-dense root subsystem of $\Phi_{\rm red}$.
\end{theorem}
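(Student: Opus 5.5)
We have not yet seen the precise definitions of semi-dense (Definition \ref{def:big}) or of extremal $H_0$ (Section \ref{sec:str-sing}). The plan therefore assumes the natural reading. Semi-density of $\Phi_0$ in $\Phi$ should mean that for every semistandard subsystem $\Psi\subset\Phi$ (a $W$-conjugate of a standard Levi subsystem, or of a subsystem cut out by a subspace of $\mathfrak a$) one has $|\Psi\cap\Phi_0|\ge \tfrac12|\Psi|$. Extremal $H_0$ should be a specific case-by-case choice of face of $\overline{\mathfrak a}_+$, so that $\Phi_{M,\mathrm{red}}=\{\alpha\in\Phi_{\mathrm{red}}:\alpha(H_0)=0\}$ is a specific maximal Levi subsystem.

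The first step is to reduce the problem to the root system. The centralizer $M$ of $H_0$ depends only on which simple roots vanish on $H_0$, so $\Phi_{M,\mathrm{red}}$ is the standard Levi subsystem of $\Phi_{\mathrm{red}}$ generated by the simple roots killed by $H_0$. The statement is then purely combinatorial: for each of the types $A_n,B_n,C_n,D_n,E_7$, show that this explicit Levi subsystem $\Phi_0$ is semi-dense. I expect the extremal choices to be the following:
\begin{itemize}
\item in $A_{n}$, the Levi subsystem $A_{\lfloor (n-1)/2\rfloor}\times A_{\lceil (n-1)/2\rceil}$, i.e.\ $H_0$ splitting the coordinates in half;
\item in $B_n,C_n$, the Levi subsystem $A_{n-1}$ (splitting signs);
\item in $D_n$, the Levi subsystem $A_{n-1}$;
\item in $E_7$, the Levi subsystem $E_6$.
\end{itemize}
The $E_7$ choice is the one where $H_0$ is the minuscule coweight, which gives a Hermitian-type grading $\mathfrak g=\mathfrak g_{-1}\oplus\mathfrak g_0\oplus\mathfrak g_1$.

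The second step uses a grading to reformulate the condition. Write $\Phi=\Phi_0\sqcup\Phi^{+}_{H_0}\sqcup\Phi^-_{H_0}$ according to the sign of $\alpha(H_0)$. For $\Psi$ semistandard, $\Psi$ is stable under negation, so $|\Psi\cap\Phi^+_{H_0}|=|\Psi\cap\Phi^-_{H_0}|$. The inequality $|\Psi\cap\Phi_0|\ge\frac12|\Psi|$ is therefore equivalent to $|\Psi\cap\Phi_0|\ge 2|\Psi\cap \Phi^+_{H_0}|$. Since semistandard subsystems are $W$-conjugates of standard ones, replace $\Psi$ by a standard one and $H_0$ by $wH_0$ ranging over the $W$-orbit. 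The condition becomes a statement about how the grading by $wH_0$ restricts to each irreducible factor of a standard Levi subsystem. One then verifies it factor by factor, since both sides are additive over irreducible components of $\Psi$. For classical types this is coordinate counting. Take $A_{n}$ with $H_0=(1,\dots,1,-1,\dots,-1)$ and a factor of type $A_{k-1}$ on $k$ coordinates containing $p$ plus signs and $q$ minus signs. Then the factor has $p(p-1)+q(q-1)$ roots in $\Phi_0$ and $2pq$ roots outside it, and we need $(p-q)^2\ge p+q$. This fails for balanced small blocks, so the literal half-inequality cannot be the definition. Most likely Definition \ref{def:big} compares dimensions or counts roots with multiplicity in a form that holds for the balanced split, and the case check should be adapted to it.

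Granting the correct form of the inequality, the remaining work is as follows. For $B_n/C_n/D_n$, do the analogous count with factors of type $A_k$ (coordinates $e_i-e_j$ and the twisted versions $e_i+e_j$ under sign changes) and of type $B_k/C_k/D_k$. For $E_7$, enumerate standard Levi subsystems up to $W$ and check the grading of each irreducible factor by the minuscule coweight. This is a finite computation, using the fact that the restriction of a minuscule grading to a sub-root-system yields gradings of depth at most $1$, which are again minuscule-type.

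The main obstacle is the exceptional case $E_7$ together with the tightest factors in the classical types. For $E_7$ I would first use the depth-one property: any irreducible $\Psi$ sees $H_0$ as a cominuscule or zero coweight. Then the needed inequality reduces to a known table of $|\Phi^+(\Psi)\setminus\Phi^+(L)|$ for cominuscule Levi subsystems $L$, namely the dimensions of the Hermitian symmetric spaces. I would compare these with $|\Phi(L)|$, falling back to computer verification if necessary. The same depth-one reasoning also explains why $E_6,E_8,F_4,G_2$ fail (consistent with Theorem \ref{thm:semi-dense}): either there is no minuscule coweight, or the $E_6$ cominuscule grading is too unbalanced.
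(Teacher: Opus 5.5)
Your proposal does not prove the statement, because it misidentifies both the definition and the subsystem, and the actual verification is explicitly deferred.

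\textbf{The definition.} The paper's Definition~\ref{def:big} is $|\Psi\cap\Phi_0|+\textnormal{rank}(\Psi)\ge\frac12|\Psi|$. The rank term is essential. Under your guessed inequality $|\Psi\cap\Phi_0|\ge\frac12|\Psi|$, no proper subsystem could ever be semi-dense: for any root $\alpha\notin\Phi_0$, the choice $\Psi=\{\pm\alpha\}$ gives $0\ge 1$. So the statement as you formulated it is false. Writing ``granting the correct form of the inequality'' defers exactly the content of the theorem, and your $(p-q)^2\ge p+q$ failure is a symptom of this.

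\textbf{The subsystem.} The extremal $H_0$ of Table~\ref{table1} is an end-node coweight. Its Levi lies in the same family with rank one less: $A_{n-1}\subset A_n$, $B_{n-1}\subset B_n$, $C_{n-1}\subset C_n$, $D_{n-1}\subset D_n$, and $E_6\subset E_7$. In the coordinate models this is $\Phi_0=\Phi\cap e_j^{\perp}$. Your middle-node split of $A_n$ and your $A_{n-1}$ inside $B_n,C_n,D_n$ are different subsystems. So even corrected counts in your plan would address the wrong $\Phi_0$; only your $E_7$ choice is right.

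\textbf{Repair and comparison.} With the correct data your framework does go through, and more directly than the paper's.
\begin{itemize}
\item The paper's route (Proposition~\ref{Sibd}) reduces to standard irreducible $\Psi\not\subset\Phi_0$ and checks $\Psi=\Phi$ from a table. It then shows that for maximal standard $\Psi$ the intersection $\Psi\cap\Phi_0$ is again extremal in $\Psi$, and inducts. $E_7$ is handled by a Sage enumeration (Proposition~\ref{lemma_e_7}).
\item In your route, additivity over components and $W$-conjugation reduce to standard $\Psi$ and $\Phi_0=\Phi\cap e_j^\perp$ with $j$ arbitrary. Only the block of $\Psi$ containing $j$ matters, since the other blocks lie in $\Phi_0$. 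A type-$A$ block of size $k$ gives $(k-1)^2\ge\frac12k(k-1)$. A $B_m$ or $C_m$ block gives $2(m-1)^2+m\ge m^2$, i.e.\ $(m-1)(m-2)\ge0$. A $D_m$ block gives $2(m-1)(m-2)+m\ge m(m-1)$, i.e.\ $(m-2)^2\ge 0$.
\item For $E_7$ your depth-one idea is a genuine improvement. Since $\varpi_7^\vee$ is minuscule, the projection of $wH_0$ to the span of each irreducible component of a standard $\Psi$ is conjugate to $0$ or to a minuscule fundamental coweight. The resulting Levis are $A_{p-1}\times A_{q-1}\subset A_{p+q-1}$, where the inequality becomes $(p-q)^2+p+q\ge 2$, together with $D_{k-1},A_{k-1}\subset D_k$, $D_5\subset E_6$ and $E_6\subset E_7$. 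All satisfy the inequality, which replaces the computer check.
\end{itemize}

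\textbf{Aside on $E_6$.} Your closing claim about $E_6$ is backwards. The same argument shows that the minuscule $D_5\subset E_6$ satisfies the inequality for every $\Psi$. For example, any $A_2$ meets $w\Phi_0$, because $a,b,a+b\in\{\pm1\}$ is impossible. Consistently with this, in the paper's $E_6$ counterexample in Proposition~\ref{lemma:exceptional-small}, the root $-\frac12\sum_i e_i\in\Psi$ is in fact orthogonal to $e_1+e_2-3e_6+e_8$. So that counterexample does not go through as written.
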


Theorems \ref{thm:semi-dense} and \ref{thm:semi-dense2} are proved in Section \ref{sec:big-subsystems}, in the form of Propositions \ref{Sibd}-\ref{lemma:exceptional-small}.

We ultimately provide strong bounds on ${\rm vol}(e^H S_{tH_0} \cap S_{tH_0})$ for extremal $H_0$. To better understand their quality, first remark that the volume of $S_{tH_0}$ is asymptotically of size $e^{2t\rho(H_0)}$. Moreover, as we shall see in Section \ref{sec:ker-supp}, the spherical shell $S_{tH_0}$ and its translate $e^HS_{tH_0}$, where $H\in\overline{\ga}_+$, do not intersect each other as soon as $\rho( H) > 2t\rho(H_0)$. The following bound interpolates between these two extremities (full and empty intersection), losing only a logarithmic factor of $t$.

\begin{theorem} \label{intersection-vol}
Let $G$ be a non-compact simple real Lie group. If $H_0 \in\overline{\mathfrak{a}}_+$ is such that $\Phi_{M,\textnormal{red}}$ is semi-dense in $\Phi_{\textnormal{red}}$, then there exists a non-negative integer $k$ such that for all $H \in \overline{\mathfrak{a}}_+$ and $t \gg 1$: 
    \begin{gather} \label{eqn_vol_bound}
        \textnormal{vol}(e^HS_{t H_0} \cap S_{t H_0}) \ll (\log t)^k e^{\rho(2 t H_0 - H)}.
    \end{gather}
\end{theorem}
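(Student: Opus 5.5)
We plan to follow the analytic route announced in Section \ref{sub_sec_int_vol}: we pass to the spectral side via Plancherel. Write $\chi_t = \mathds{1}_{S_{tH_0}}$, a bi-$K$-invariant function on $G$. Since $S_{tH_0}$ is symmetric under inversion, we have
\[
\textnormal{vol}(e^H S_{tH_0}\cap S_{tH_0}) = \int_G \chi_t(e^{-H}g)\chi_t(g)\,dg = (\chi_t*\chi_t)(e^{H}),
\]
up to normalisation. The convolution $\chi_t*\chi_t$ is bi-$K$-invariant, so spherical inversion gives
\[
(\chi_t*\chi_t)(e^H)=\int_{\ga^*/W}\widehat{\chi_t}(\lambda)^2\,\varphi_\lambda(e^H)\,|c(\lambda)|^{-2}\,d\lambda ,
\]
with $\widehat{\chi_t}(\lambda)=\int_{S_{tH_0}}\varphi_{-\lambda}$. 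The task therefore splits into two estimates: bounding the spherical transform of the shell, and bounding $\varphi_\lambda(e^H)$ uniformly in $\lambda$.

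For the first estimate, we write $\widehat{\chi_t}(\lambda)=\int_{B_\ga(tH_0,\epsilon_0)}\varphi_{-\lambda}(e^{X})\delta(X)\,dX$, where $\delta(X)\asymp e^{2\rho(X)}$ on the cone. We then use a uniform spherical-function bound of the type in Theorem \ref{sph-fn-bd}. Roughly, near $tH_0$ we have $|\varphi_\lambda(e^{X})|\ll e^{-\rho(X)}\prod_{\alpha}(1+|\langle\lambda,\alpha\rangle|\,\alpha(X))^{-m_\alpha/2}$, where the product runs only over roots with $\alpha(H_0)>0$, that is, roots outside $\Phi_M$. This gives
\[
|\widehat{\chi_t}(\lambda)|\ll e^{t\rho(H_0)}\prod_{\alpha\notin\Phi_M}(1+t|\langle\lambda,\alpha\rangle|)^{-m_\alpha/2}.
\]
Because the shell is smoothed by the $\epsilon_0$-ball, we also get rapid decay in $|\lambda|$, which truncates the integral to $|\lambda|\ll 1$. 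For the other factor we use $|\varphi_\lambda(e^H)|\le\varphi_0(e^H)\ll (1+|H|)^{d}e^{-\rho(H)}$. A sharper version is available: the decay bound $\prod_{\alpha(H)>0}(1+|\langle\lambda,\alpha\rangle|\alpha(H))^{-m_\alpha/2}$, taken without loss in $H$.

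Inserting these bounds and the density $|c(\lambda)|^{-2}\asymp\prod_{\alpha>0}|\langle\lambda,\alpha\rangle|^{m_\alpha}$ for small $\lambda$ reduces the volume to
\[
e^{2t\rho(H_0)-\rho(H)}\int_{|\lambda|\ll1}\prod_{\alpha\notin\Phi_M}(1+t|\langle\lambda,\alpha\rangle|)^{-m_\alpha}\prod_{\alpha>0}|\langle\lambda,\alpha\rangle|^{m_\alpha}\,d\lambda ,
\]
possibly multiplied by a polynomial loss in $|H|$. We must show that this integral is $O((\log t)^k)$ after the polynomial factor in $H$ is absorbed. We intend to handle the $H$-dependence by also using decay of $\varphi_\lambda(e^H)$ in the directions $\alpha(H)$. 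When $H$ is large, the true volume is only compared against $e^{\rho(2tH_0-H)}$, so a polynomial in $|H|\le Ct$ must be beaten by gains coming from roots that are positive on $H$.

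The main obstacle is this final integral estimate, uniformly in $H$. We decompose $\lambda$-space dyadically according to the sizes of $|\langle\lambda,\alpha\rangle|$, and on each piece we count homogeneity. Roots $\alpha$ with $t|\langle\lambda,\alpha\rangle|\gg1$ and $\alpha\notin\Phi_M$ contribute decay. Roots in $\Phi_M$ contribute nothing from the shell, but their vanishing Plancherel density helps. On the piece where $\lambda$ is concentrated near a subspace cut out by a semistandard subsystem $\Psi$, the balance between the dimension of the region and the powers gained is exactly an inequality of the form $|\Psi\cap\Phi_M|\ge\frac12|\Psi|$, counted with multiplicities. This is precisely what semi-density (Definition \ref{def:big}) should supply, with equality cases producing the logarithms. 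We expect the delicate points to be (a) making the spherical function bounds uniform across all degenerations of $\lambda$ near walls, and (b) combining them with the $H$-decay so that no power of $t$ is lost. We would first attempt to prove the combined bound for $\varphi_\lambda(e^{X})$, uniform in $(\lambda,X)$ and with product structure over roots, and then perform the dyadic count.
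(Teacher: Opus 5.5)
Your architecture matches the paper's: spherical inversion of the convolution, then a uniform bound for $\varphi_\lambda$, then a count over $\lambda$-regions near the walls at scale $t^{-1}$. The problem is that the analytic inputs you propose are false, and false in a way that removes the real difficulty.

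\textbf{The spherical-function bound fails at $\lambda=0$.} Your bound $|\varphi_\lambda(e^X)|\ll e^{-\rho(X)}\prod_{\alpha(H_0)>0}(1+|\langle\lambda,\alpha\rangle|\alpha(X))^{-m_\alpha/2}$ cannot hold. By Anker, $\varphi_0(e^X)\asymp e^{-\rho(X)}\prod_{\alpha\in\Phi^+_{\textnormal{red}}}(1+\alpha(X))$. For $X$ near $tH_0$ this is $\asymp t^{n}e^{-\rho(X)}$, with $n=|\Phi^+_{\textnormal{red}}\smallsetminus\Phi^+_{\textnormal{red},M}|\ge 1$. In effect you have put $e^{-\rho}$ where $Q^{-1/2}\asymp e^{-\rho}\prod_\alpha(1+\alpha)$ belongs.
\begin{itemize}
\item Consequently $\widehat{\chi_t}(0)=\int_{S_{tH_0}}\varphi_0\asymp t^{n}e^{t\rho(H_0)}$, which contradicts your transform bound.
\item Your \emph{sharper} bound for $\varphi_\lambda(e^H)$ fails for the same reason.
\end{itemize}
The correct statements are \emph{growth} bounds. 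Theorem \ref{sph-fn-bd} gives $\varphi_\lambda(e^X)\ll(1+\|\lambda\|)^a\Theta(X,\lambda)e^{-\rho(X)}$, where $\Theta\ge1$ is built from $\min(|\alpha(X)|+1,|\langle w\lambda,\alpha\rangle|^{-1}+1)$. From this one gets $\widehat{k}_t(\lambda)\ll_N(1+\|\lambda\|)^{-N}e^{t\rho(H_0)}\Theta(tH_0,\lambda)$. So the shell transform produces losses of up to $t^{2n}$ near the walls, not gains.

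\textbf{The Plancherel density is also misstated.} Near $0$ it vanishes to order $2$ along each reduced root hyperplane: $|c(\lambda)|^{-2}\ll\prod_{\alpha\in\Phi^+_{\textnormal{red}}}|\langle\lambda,\alpha\rangle|^2$. Your $\asymp\prod|\langle\lambda,\alpha\rangle|^{m_\alpha}$ is wrong whenever $m_\alpha\neq2$.

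\textbf{A sanity check.} With your inputs, including the sharper bound for $\varphi_\lambda(e^H)$, the final integral is trivially $O(1)$, since every factor is $\le1$ on $\|\lambda\|\ll1$. You would then get the estimate with $k=0$ for every $H_0$, never using semi-density. The ${\rm PGL}_3$ example in Section \ref{sub_sec_int_vol} suggests this should not happen.

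\textbf{The correct count, and the inequality it needs.} With the correct inputs the count is tight, and it yields a different inequality from the one you anticipate. Consider the region where $|\langle\lambda,\beta\rangle|\lesssim t^{-1}$ for $\beta$ in a semistandard $\Psi$, and all other roots are $\asymp1$. On it:
\begin{itemize}
\item $\Theta(tH_0,\lambda)^2$ contributes up to $t^{|\Psi|-|w\Psi\cap\Phi_M|}$, for the dominant $w\in W$;
\item $\Theta(H,\lambda)$ contributes up to $t^{|\Psi|/2}$, because $\|H\|\ll t$;
\item the density contributes $t^{-|\Psi|}$;
\item the volume of the region contributes $t^{-\textnormal{rank}(\Psi)}$.
\end{itemize}
The net exponent is $\tfrac12|\Psi|-|w\Psi\cap\Phi_M|-\textnormal{rank}(\Psi)$. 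So you need
\[
|w\Psi\cap\Phi_M|+\textnormal{rank}(\Psi)\ge\tfrac12|\Psi|,
\]
counting reduced roots without multiplicity. This is exactly Definition \ref{def:big}, and the equality cases produce the logarithms. The paper organises this count via a barycentric subdivision refined at scale $t^{-1}$ (Lemma \ref{lemma_power_k}).

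Your inequality $|\Psi\cap\Phi_M|\ge\frac12|\Psi|$ drops the rank term, which comes from the volume of the $t^{-1}$-neighbourhood. Without it the inequality fails for every proper $\Phi_M$: take $\Psi=\{\pm\alpha\}$ with $\alpha\notin\Phi_M$. Multiplicities should not enter either.

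\textbf{Two smaller repairs.}
\begin{itemize}
\item $\mathds{1}_{S_{tH_0}}$ is not smooth, so its transform does not decay rapidly in $\lambda$. The paper majorises it by a smooth bi-$K$-invariant $k_t\ge\mathds{1}_{S_t}$ and uses $\textnormal{vol}(e^HS_t\cap S_t)\le(k_t*k_t^\vee)(e^H)$. It then gains $(1+\|\lambda\|)^{-N}$ by integrating by parts against $\Delta$, uniformly in $t$ (Proposition \ref{khatdecay}).
\item $S_{tH_0}^{-1}$ is the shell about $-w_0tH_0$, which is not $S_{tH_0}$ in general (for example, extremal $H_0$ in type $A_n$, $n\ge2$). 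You should therefore work with $\chi_t*\chi_t^\vee$ and $|\widehat{\chi_t}|^2$.
\end{itemize}
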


J.-P. Anker has explained to us a straightforward argument which bounds the intersection volume $\textnormal{vol}(e^HS_{t H_0} \cap S_{t H_0})$, for all choices of $H_0$, by a similar expression to that in \eqref{eqn_vol_bound} but with the $(\log t)^k$ factor replaced by $t^n$ for some positive integer $n\geq 1$. We review this argument in Section \ref{sub_sec_anker}. The fact that Theorem \ref{intersection-vol} yields poly-logarithmic factors rather than positive integral powers in $t$ is critical to the overall argument. Indeed, any positive integral power of $t$ would kill the decay in the main term of Theorem \ref{main-geom-thm}.

In \cite{Brumley_Matz_23}, the first and third authors considered a similar quantity to the left-hand side of \eqref{eqn_vol_bound}, but for a choice of $H_0$ which was not extremal. In \cite[Lemma 5.8]{Brumley_Matz_23}, they claimed a bound like the right-hand side but with the $(\log t)^k$ factor replaced by a constant. In his Ph.D. thesis \cite{Peterson_thesis}, the fourth author found a mistake in their argument and showed that the analogous bound for ${\rm PGL}_3(\Q_p)$ does not hold. Namely, for the choice of $H_0$ from \cite{Brumley_Matz_23} and specific choices of $H$, we have
\[
\textnormal{vol}(p^H S_{t H_0} \cap S_{t H_0}) \gg t p^{\rho (2 t H_0 - H)},
\]
where $S_{t H_0} = \textnormal{PGL}_3(\Z_p) p^{t H_0} \textnormal{PGL}_3(\Z_p)$. On the other hand, if one takes an extremal $H_0$, then $\textnormal{vol}(p^H S_{t H_0} \cap S_{t H_0}) \ll p^{\rho(2 t H_0 - H)}$. This suggests that in the archimedean setting we should not expect \eqref{eqn_vol_bound} to hold for generic choices of $H_0$.

\subsection{Bounds for spherical functions (Section \ref{sec:sph-fn})}\label{intro-sph-bds}

The proof of Theorem \ref{intersection-vol} relies crucially on a new bound for the spherical function.  As this bound holds on a general semisimple group, and may be of independent interest, we shall state and prove it independently of Theorem \ref{main-theorem} and Theorem \ref{intersection-vol}.

Let $G$ be any non-compact semisimple real Lie group with finite center. We let $\varphi_\lambda$ denote the Harish-Chandra spherical function on $G$ with spectral parameter $\lambda \in \ga^*_\C$. See Section \ref{subsec:sph-fn} for definitions. To state our bound, we first define
\begin{equation}\label{defn-falpha}
    f_{\alpha}(H, \lambda) := \min(|\alpha(H)|+1, |\langle \lambda, \alpha \rangle|^{-1} + 1),
\end{equation}
where $\alpha \in \Phi^+_{\textnormal{red}}$, $H \in \mathfrak{a}$, and $\lambda \in \mathfrak{a}^*_{\mathbb{C}}$. We subsequently define
\begin{gather}
\label{Thetadef}
    \Theta(H, \lambda) := \sum_{w \in W} \prod_{\alpha \in \Phi^+_{\textnormal{red}}} f_{\alpha}(H, w\lambda).
\end{gather}
Note that $\Theta$ is $W$-invariant in both $\lambda$ and $H$, and also satisfies $\Theta(H, \lambda) \geq 1$. Let $\mathfrak{a}^*(\kappa)$ denote those elements in $\mathfrak{a}^*_{\mathbb{C}}$ whose imaginary part is bounded in size by $\kappa$.

\begin{theorem}
\label{sph-fn-bd}
There are $a, \kappa > 0$ such that, for all $H \in \overline{\ga}_+$ and all $\lambda \in \ga^*(\kappa)$, we have
\[
\varphi_\lambda(e^H) \ll (1 + \| \lambda \|)^a \Theta(H, \lambda) \underset{w \in W}{\max} \, e^{-(\rho + w \Im \lambda)(H) }.
\]
\end{theorem}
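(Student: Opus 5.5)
We would prove Theorem \ref{sph-fn-bd} by splitting $\overline{\ga}_+$ into two regimes according to the distance of $H$ from the walls relative to $\|\lambda\|$, and using a different representation of $\varphi_\lambda$ in each. In the regime where $H$ is far from all walls, say $\alpha(H)\geq c$ for all simple $\alpha$, we use the Harish-Chandra expansion
\[
\varphi_\lambda(e^H)=\sum_{w\in W} c(w\lambda)\,\Phi_{w\lambda}(e^H),\qquad \Phi_\mu(e^H)=e^{(i\mu-\rho)(H)}\sum_{q\in \Lambda}\Gamma_q(\mu)e^{-q(H)}.
\]
For $\lambda$ in a small tube $\ga^*(\kappa)$, uniform polynomial bounds on the coefficients $\Gamma_q(\mu)$ hold away from the singular hyperplanes $\langle\mu,\alpha\rangle=0$. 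Together with the Gindikin--Karpelevich product formula for $c(\lambda)$, this gives a bound by $(1+\|\lambda\|)^a\prod_\alpha|\langle w\lambda,\alpha\rangle|^{-1}$ times $e^{-(\rho+w\Im\lambda)(H)}$.

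The singularities in the individual terms are fake and cancel in the sum over $W$. To exploit this, we group the $W$-sum along each reduced root $\alpha$ and apply a divided-difference (mean value) argument in the variable $\langle\lambda,\alpha\rangle$. A pole $1/\langle w\lambda,\alpha\rangle$, paired with the difference $e^{i w\lambda(H)}-e^{i s_\alpha w\lambda(H)}$, is bounded by $\min(|\alpha(H)|+1,|\langle\lambda,\alpha\rangle|^{-1}+1)$, which is exactly $f_\alpha$. Doing this simultaneously for all positive reduced roots should produce the product $\prod_\alpha f_\alpha(H,w\lambda)$, summed over $w$, that is, $\Theta(H,\lambda)$.

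The main obstacle is making this simultaneous cancellation rigorous, since several roots may be nearly orthogonal to $\lambda$ at once. We would handle it by induction on root subsystems. For a given $\lambda$, let $\Psi=\{\alpha:|\langle\lambda,\alpha\rangle|\leq 1\}$, and approximate $\lambda$ by $\lambda_0$ with $\langle\lambda_0,\alpha\rangle=0$ for $\alpha\in\Psi$. Writing $\varphi_\lambda$ as a contour integral, or more precisely applying Cauchy's formula in the variables $\langle\lambda,\alpha\rangle$, $\alpha\in\Psi$, over circles of radius comparable to $\min(1,1/|\alpha(H)|)$, we bound the holomorphic function $\lambda\mapsto\varphi_\lambda(e^H)$ by its maximum on a polydisc where the $\Psi$-coordinates are of size about $\max(|\langle\lambda,\alpha\rangle|,1/(1+|\alpha(H)|))$. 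There the Harish-Chandra expansion bound gives poles of size at most $f_\alpha$. The radius $\min(1,1/(1+\alpha(H)))$ keeps the exponential $e^{w\Im\lambda(H)}$ changed by only a bounded factor, and keeps $\Im\lambda$ inside a slightly larger tube $\ga^*(2\kappa)$. This is consistent with the maximum principle, because $\varphi_\lambda$ is entire and $W$-invariant in $\lambda$.

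In the remaining regime, where $H$ is near some walls, we would avoid the series, whose convergence degenerates there. Instead we would use the integral formula $\varphi_\lambda(e^H)=\int_K e^{(i\lambda-\rho)(A(ke^H))}dk$ together with the trivial bound $|\varphi_\lambda(e^H)|\leq\varphi_{i\Im\lambda}(e^H)$. Near the walls we would combine this with the known estimate $\varphi_{i\eta}(e^H)\asymp$ (polynomial in $H$) $\cdot\, e^{(\max_w w\eta-\rho)(H)}$ of Anker and Harish-Chandra type. Roots $\alpha$ with $\alpha(H)$ bounded contribute only $f_\alpha\asymp 1$. Alternatively, we would reduce to the Levi subgroup centralizing the walls near $H$ via Trombi--Varadarajan-type expansions relative to a parabolic, and induct on rank. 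The polynomial factor $(1+\|\lambda\|)^a$ absorbs all losses from the coefficient bounds and $c$-function growth.
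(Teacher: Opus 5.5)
There is a genuine gap, and it already appears in your ``far from the walls'' regime. For bounded $\lambda'$ in the tube, $|c(w\lambda')|\asymp\prod_{\beta\in\Phi^+_{\rm red}}|\langle\lambda',\beta\rangle|^{-1}$ up to constants independent of $w$. Moreover $|\Phi_{w\lambda'}(e^H)|\asymp e^{-(\rho+w\Im\lambda')(H)}$ once $\min_\alpha\alpha(H)$ is large. A Cauchy or maximum-modulus argument uses a single contour (a circle, or the torus of a polydisc) for the whole $W$-sum, and on that contour $\max_w e^{-(w\Im\lambda')(H)}$ must stay within a bounded factor of its value at $\lambda$. Since $\max_w (w\eta)(H)\gg\|\eta\|\|H\|$ for real $\eta\neq 0$ (for $G$ simple), this forces all radii to be $O(\|H\|^{-1})$. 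Your claim that radii $\min(1,1/(1+\alpha(H)))$ keep the exponential bounded is therefore false when the $\alpha(H)$ live on different scales. Consequently, for bounded $\lambda$ the best the full expansion can give term by term is $\prod_{\beta}\min(\|H\|+1,|\langle\lambda,\beta\rangle|^{-1}+1)$. This is $\Theta$ with every $\alpha(H)$ replaced by $\|H\|$.

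Here is a concrete case. Take $\mathrm{SL}_3(\R)$ with $\alpha_1(H)=t^{1/2}$, $\alpha_2(H)=t$ (so $H$ is in your first regime) and $\lambda=t^{-3/4}\rho$. Then $\Theta(H,\lambda)\asymp t^{1/2}\cdot t^{3/4}\cdot t^{3/4}=t^2$. By contrast, each of the six terms is $\asymp t^{9/4}e^{-\rho(H)}$ on any admissible contour, and the $\varphi_0$-bound gives $t^{5/2}e^{-\rho(H)}$. There is a second, separate problem: the $\langle\lambda,\alpha\rangle$, $\alpha\in\Psi$, are not coordinates when $|\Psi|>\mathrm{rank}\,\Psi$. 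Also, the torus of a coordinate polydisc can meet non-coordinate root hyperplanes, for example $z_1+z_2=0$ on $|z_1|=|z_2|$ in $A_2$. This is what the pigeonhole choice in Lemma~\ref{lemma:diskchoice} is designed to avoid.

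The near-wall regime does not repair this, and your option (i) fails there. For $H=t\varpi_1^\vee$ on a wall and $\lambda$ at distance $\asymp 1$ from all root hyperplanes, $\Theta(H,\lambda)\asymp 1$. But $e^{\rho(H)}\varphi_0(e^H)\asymp\prod_{\alpha}(1+\alpha(H))$ grows like a power of $t$. This is exactly the loss that Section~\ref{sub_sec_anker} shows is fatal, and singular $H$ near $tH_0$ is precisely where Proposition~\ref{khatdecay} invokes the theorem.

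Option (ii) is the right idea, but as stated it does not work. It must be used for all $H$ outside a ball, and the Levi must be chosen by a cone condition rather than by the walls with $\alpha(H)<c$. Otherwise the Gangolli--Varadarajan error $(1+\|H\|)^s e^{-2\beta_L(H)}$, and its growth in the tube, is not absorbed. In addition, the poles of $c^L$ would again sit at several scales.

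The missing idea is to peel off one scale per induction step:
\begin{enumerate}
\item Cover $\overline{\ga}_+\smallsetminus B_R(0)$ by cones $\mathcal{C}_L$, $L$ maximal, on which $\alpha(H)\asymp\|H\|$ for all $\alpha\in\Phi^+\smallsetminus\Phi_L^+$.
\item Expand $\varphi_\lambda(e^H)\approx e^{-\rho^L(H)}\sum_{W_L\backslash W}c^L(w\lambda)\varphi^L_{w\lambda}(e^H)$, as in Proposition~\ref{lem:complexGV}.
\item Bound $\varphi^L$ by the induction hypothesis. This already supplies the correct factors $f_\alpha$ for the roots of $L$, at whatever scale they live.
\item Only then remove the poles of $c^L$, which now all sit at the single scale $\|H\|^{-1}$. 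Do this with a one-variable maximum principle on $\lambda+z\rho$, $|z|=Cc^{-1}\|H\|^{-1}$, with $C\in[1,\sigma]$ chosen by pigeonhole.
\end{enumerate}
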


We shall prove Theorem \ref{sph-fn-bd} in Section \ref{sec:sph-fn}.  To aid the reader in understanding the statement of the theorem, in Section \ref{sec:sph-fn-conseq} we shall illustrate it in the case of ${\rm SL}_2(\C)$ and $\textnormal{SL}_2( \mathbb{R})$, as well as deriving some simpler consequences of it in general.  Section \ref{sec:sph-fn-prev} also contains a discussion of the relation between Theorem \ref{sph-fn-bd} and previous bounds for the spherical function.

When $G$ is a complex group, Theorem \ref{sph-fn-bd} has an interesting `self-improving' property which allows us to significantly strengthen it, subject to the condition that $\lambda$ lie in $\ga^*$.  In particular, we expect this bound to be sharp in all aspects subject to this tempered condition on $\lambda$.  Moreover, this bound may be transferred to the Cartan motion group associated with $G$, whose definition we recall below. This correspondence uses the well-known link between the Cartan motion group and the semisimple group in the complex case. For this reason, we shall state the bound for both groups simultaneously.

The Cartan motion group associated to $G$ is the semidirect product $\p \ltimes K$, which acts on the Euclidean symmetric space $\p$.  For $\lambda \in \ga^*_\C$, we have the Euclidean spherical function $\varphi_\lambda^E \in C^\infty(\p)$ (also known as the generalized Bessel function) defined by
\be
\label{euclideandef}
\varphi_\lambda^E(Z) = \int_K e^{ i\lambda( kZ) } dk \quad (Z \in \p),
\ee
where we have extended $\lambda$ to an element of $\p^*$ by orthogonality.  To recall the relationship between $\varphi_\lambda^E$ and $\varphi_\lambda$, we let
\[
Q(H) = \prod_{\alpha \in \Phi^+} \sinh(\alpha(H))/\alpha(H)\quad (H\in\ga).
\]
As $Q$ is Weyl invariant, it extends to a function on $\p$, and is equal to the Jacobian of the exponential map $\exp: \p \to G/K$.  We then have 
\[
\varphi_\lambda(e^Z) = C Q(Z)^{-1/2} \varphi_\lambda^E(Z)
\]
for some constant $C > 0$, see for instance Ch. IV, Theorem 4.7 of \cite{Helgason_00}.  Our bounds for $\varphi_\lambda$ and $\varphi_\lambda^E$ are as follows.

\begin{theorem}
\label{sph-fn-cx}

Suppose that $G$ is a complex group.  Then for $H \in \overline{\ga}_+$ and $\lambda \in \ga^*$, we have
\be
\label{cxeuclideanbd}
\varphi_\lambda^E(H) \ll \sum_{w \in W} \prod_{\alpha \in \Phi^+} (1 + |\alpha(H) \langle w\lambda, \alpha \rangle|)^{-1},
\ee
and
\be
\label{cxbound}
\varphi_\lambda(e^H) \ll e^{-\rho(H)} \prod_{\alpha \in \Phi^+} (|\alpha(H)|+1) \sum_{w \in W} \prod_{\alpha \in \Phi^+} (1 + |\alpha(H) \langle w\lambda, \alpha \rangle|)^{-1}.
\ee

\end{theorem}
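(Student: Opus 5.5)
For complex $G$, all restricted roots have multiplicity $2$, $\Phi$ is reduced, and Harish-Chandra's formula gives
\[
\varphi_\lambda(e^H) = c\,\frac{\sum_{w\in W}\varepsilon(w) e^{i w\lambda(H)}}{\prod_{\alpha\in\Phi^+}\langle \lambda,\alpha\rangle\,\prod_{\alpha\in\Phi^+}\sinh\alpha(H)} .
\]
Since $Q(H)^{1/2}=\prod_\alpha(\sinh\alpha(H)/\alpha(H))$ in this case, the relation $\varphi_\lambda = C Q^{-1/2}\varphi^E_\lambda$ gives the Euclidean counterpart
\[
\varphi^E_\lambda(H)= c'\,\frac{\sum_{w}\varepsilon(w)e^{iw\lambda(H)}}{\prod_\alpha \langle\lambda,\alpha\rangle\alpha(H)}.
\]
I would first deduce \eqref{cxbound} from \eqref{cxeuclideanbd}. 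For $H\in\overline{\ga}_+$ we have $\prod_\alpha \alpha(H)/\sinh\alpha(H)\ll e^{-\rho(H)}\prod_\alpha(1+\alpha(H))$, because $\rho=\frac12\sum_{\alpha\in\Phi^+}\alpha\cdot m_\alpha$ with $m_\alpha=2$, so $2\rho(H)=2\sum_\alpha\alpha(H)$ and $\prod\sinh\alpha(H)\gg e^{\rho(H)}\prod \min(1,\alpha(H))$. The whole task is therefore to prove \eqref{cxeuclideanbd}.

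For \eqref{cxeuclideanbd}, the trivial bound $|\varphi^E_\lambda|\le 1$ handles the case where all products $|\alpha(H)\langle w\lambda,\alpha\rangle|$ are small. In general, the alternating sum over $W$ divided by the product of linear forms must be controlled by exploiting cancellation only in the "small" directions. My plan is the following self-improvement. Write $x=H$, $y=\lambda$ (identified via the Killing form), and consider the entire function
\[
F(x,y)=\sum_w \varepsilon(w)e^{i\langle wy,x\rangle}\Big/\prod_\alpha\langle y,\alpha\rangle\langle x,\alpha\rangle,
\]
which is $W$-invariant in each variable separately. Fix $x,y$ and, after replacing $y$ by a suitable $wy$, partition $\Phi^+$ into the set $S$ of roots where $|\alpha(x)\langle y,\alpha\rangle|\le 1$ and its complement. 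The target is the bound $\prod_{\alpha\notin S}|\alpha(x)\langle y,\alpha\rangle|^{-1}$, up to summing over $w$.

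I would obtain this through an integral representation. Harish-Chandra's formula is equivalently the Duistermaat--Heckman/Fourier transform of the orbit measure: $\varphi^E_\lambda(H)$ equals the Fourier transform at $x$ of the pushforward measure of $K\cdot y$ projected to $\ga$. That pushforward is a piecewise polynomial density on the permutohedron-like polytope $\mathrm{conv}(Wy)$ (Kostant convexity plus Duistermaat--Heckman). Its Fourier transform can be computed via Brion's formula face by face. Integrating by parts along the directions $\alpha^\vee$ with $\alpha\notin S$ gains a factor $|\alpha(x)|^{-1}$ each time, while each facet contributes a factor $|\langle y,\alpha\rangle|^{-1}$ from the polynomial density's normalization. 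In the directions in $S$, one instead uses the trivial bound on the corresponding lower-dimensional piece. Concretely, I would group the Weyl sum into cosets of the subgroup $W_S$ generated by reflections in $S$ (assuming $S$ can be arranged to be a closed subsystem, possibly after shrinking thresholds by constants). Each coset sum is then a product of a $\varphi^E$ for the Levi attached to $S$, bounded by $1$ via $|\varphi^E|\le 1$ for that Levi, times $e^{i\langle\cdot\rangle}$ over $\prod_{\alpha\notin S}\langle y,\alpha\rangle\alpha(x)$.

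The main obstacle is that the set $S$ need not be a root subsystem, and the sizes $|\alpha(x)|$, $|\langle y,\alpha\rangle|$ interact across different $w$. I would handle this by a dyadic decomposition: order the roots by the size of $\alpha(x)\langle y,\alpha\rangle$, and apply the Levi-coset grouping at each scale, using the genericity freedom in the sum over $w$ (that is the reason a sum over $w$ appears on the right side). If that fails, I would use the fallback of induction on rank via the Levi-restriction formula for $\varphi^E$ (the Euclidean analogue of Harish-Chandra's integral formula through a parabolic), bounding the Levi factor by the inductive hypothesis.
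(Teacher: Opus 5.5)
Your preliminary reductions are sound. The explicit formulas for complex $G$ are correct, and so is the passage from \eqref{cxeuclideanbd} to \eqref{cxbound} via $Q(H)^{-1/2}\asymp e^{-\rho(H)}\prod_{\alpha\in\Phi^+}(|\alpha(H)|+1)$, reading $Q$ with the multiplicities $m_\alpha=2$. This is exactly how the paper passes between the two bounds.

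However, \eqref{cxeuclideanbd}, which is the entire content of the theorem, is never proved. You offer a menu of strategies, and the one you describe concretely fails. Grouping the alternating sum into cosets $W_L u$ gives, up to a constant, $\varphi^E_y(x)=\sum_u \varphi^{E,L}_{uy}(x)\big(\prod_{\alpha\in\Phi^+\smallsetminus\Phi_L^+}\langle uy,\alpha\rangle\,\alpha(x)\big)^{-1}$. A set $S$ chosen for one Weyl element gives no control of these denominators for the other cosets.

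For example, in type $A_2$ take $\langle y,\alpha_1\rangle=\epsilon$, $\langle y,\alpha_2\rangle=1$, $\alpha_1(x)=1$, $\alpha_2(x)=T$. Then $S=\{\alpha_1\}$, but $\langle s_2y,\alpha_1+\alpha_2\rangle=\epsilon$. So the coset of $s_2$ contributes $\varphi^{E,L}_{s_2y}(x)$, generically of size $\asymp 1$, divided by a denominator of size $\asymp\epsilon T^2$. This term is unbounded as $\epsilon\to 0$ although $|\varphi^E_y|\le 1$, and it is cancelled only by the coset of $s_2s_1$.

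More generally, for a proper Levi the individual coset terms have poles along root hyperplanes in $y$ that cancel only in the full sum. No coset-by-coset or scale-by-scale (dyadic) estimate can succeed without a device that exploits the analyticity of the whole sum in $y$.

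Your fallback does not help. Bounding the Levi factor in modulus inside the parabolic integral formula discards the oscillation of the outer $K$-integral, which is the only source of decay in the roots outside $\Phi_L$; in rank one it just returns $|\varphi^E_\lambda|\le 1$. The Duistermaat--Heckman/Brion claim that facets contribute factors $|\langle y,\alpha\rangle|^{-1}$ is an unproved heuristic.

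The paper does not attack \eqref{cxeuclideanbd} head on; it uses two devices you are missing.

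\emph{First, maximum modulus in $\lambda$.} The paper deduces the result from Theorem \ref{sph-fn-bd}, whose proof handles exactly this near-wall cancellation. Along a maximal Levi, the main term of the Gangolli--Varadarajan expansion is bounded directly when $\lambda$ is at distance $\gg\|H\|^{-1}$ from all walls. The general case then follows from the maximum modulus principle on circles $\lambda+z\rho$ with $|z|\asymp\|H\|^{-1}$ (Lemma \ref{lemma:diskchoice} and Proposition \ref{prop:Lmainbd}). For real $\|\lambda\|<1$ the loss $(1+\|\lambda\|)^a$ is harmless, and $f_\alpha(H,w\lambda)\asymp(|\alpha(H)|+1)/(1+|\alpha(H)\langle w\lambda,\alpha\rangle|)$ (Lemma \ref{sph-bd-equiv}). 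So Theorem \ref{sph-fn-bd} yields \eqref{cxbound}, hence \eqref{cxeuclideanbd}, for $\|\lambda\|<1$.

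\emph{Second, dilation.} The exact homogeneity $\varphi^E_{r\lambda}(H/r)=\varphi^E_\lambda(H)$, under which the right-hand side of \eqref{cxeuclideanbd} is also invariant, removes the restriction on $\lambda$. This dilation is the ``self-improvement'' you name but never use.

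If you prefer to keep your exact coset formula, the same two devices repair it. Dilate to $\|x\|=1$. Cover $\overline{\ga}_+$ by cones on which $\alpha(x)\gg\|x\|$ for $\alpha\in\Phi^+\smallsetminus\Phi_L^+$, with $L$ maximal. Induct on rank, allowing complex $y$ with $\|\Im y\|\ll\|x\|^{-1}$. Then use the maximum modulus principle in $y$ to reduce to $y$ at distance $\gg\|x\|^{-1}$ from every wall, where each factor $|\langle uy,\alpha\rangle\alpha(x)|$ in the coset denominators is $\gg 1$.
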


We note that
\[
Q(H)^{-1/2} \asymp e^{-\rho(H)} \prod_{\alpha \in \Phi^+} (|\alpha(H)|+1)
\]
for $H \in \overline{\ga}_+$, so that the bounds \eqref{cxeuclideanbd} and \eqref{cxbound} are equivalent.  In Section \ref{sec:sph-fn-conseq} we explain why we expect Theorem \ref{sph-fn-cx} to be sharp based on an analysis of the oscillatory integral \eqref{euclideandef}, and in Section \ref{sec:sph-fn-prev} we discuss the relation between Theorem \ref{sph-fn-cx} and previous bounds for the spherical function on complex groups.  We deduce Theorem \ref{sph-fn-cx} from Theorem \ref{sph-fn-bd} in Section \ref{sec:cx-pf}.

\section{Preliminaries}\label{sec:preliminaries}
In this section we introduce most of the notation that will be in force throughout the rest of the paper, and recall some of the definitions and foundational results that enter into the statement and proof of Theorem \ref{main-theorem}. More exactly, after setting up some standard notational conventions in Sections \ref{sec:basic}-\ref{subsec:sph-fn}, we give standard bounds on the Harish-Chandra $c$-function and recall an asymptotic formula for the spherical function due to Gangolli--Varadararajan in Section \ref{sec:c-bds}, define \textit{maximally singular} and \textit{extremal} elements in Section \ref{sec:str-sing}, and state the pertinent formulation of Benjamini--Schramm convergence in Section \ref{sec:BS}. Finally, in Sections \ref{sec:limit-mult}-\ref{sec:SG} we recall the relevant limit multiplicity theorems   and the notion of uniform spectral gap.

Throughout this section we shall let $G$ denote a connected semisimple real Lie group with finite center and no compact factors. Additional hypotheses on $G$ (such as those appearing in Theorem \ref{main-theorem}) will be assumed later in the paper as needed.

\subsection{Basic notation}\label{sec:basic}
Fix a maximal compact subgroup $K$ of $G$. Denote by $\mathfrak{k}$ and $\g$ their respective Lie algebras. Then $K$ induces a Cartan involution $\Theta$ on $G$. The differential at the identity of $\Theta$ defines an involution on $\g$, whose $-1$ and $+1$ eigenspaces determine the Cartan decomposition $\g=\mathfrak{p}\oplus\mathfrak{k}$.

Let $\mathfrak{a}\subset\mathfrak{p}$ denote a maximal abelian subspace. Let $A = \exp(\mathfrak{a})$ be the corresponding analytic subgroup of $G$. Let $\ga^*$ denote the dual space of $\ga$. Let $\Phi \subset\ga^*$ be the set of restricted roots. Let $\Phi^+\subset\Phi$ be a choice of positive roots. We let $\Delta\subset\Phi^+$ denote the set of simple roots; they form a basis for $\ga^*$. Then the fundamental coweights $\widehat{\Delta}^\vee=\{\varpi_\alpha^\vee : \alpha\in\Delta\}\subset\ga$ form the dual basis to the simple roots and give rise to decompositions
$\lambda=\sum_{\alpha\in\Delta} \lambda(\varpi_\alpha^\vee)\alpha$,  $H=\sum_{\alpha\in\Delta}\alpha(H)\varpi_\alpha^\vee$ of elements $\lambda\in\ga^*, H\in\ga$. Let
\[
\ga_+=\{H\in\ga: \alpha(H)>0\;\forall\alpha\in\Delta\}=\sum_{\alpha\in\Delta} \R_{> 0}\varpi_\alpha^\vee\\
\]
be the fundamental Weyl chamber in $\ga$ and denote by $\overline{\ga}_+=\sum_{\alpha\in\Delta} \R_{\ge 0}\varpi_\alpha^\vee$ its closure.

We have a restricted root space decomposition
\[
\g=\mathfrak{m}\oplus\ga\oplus\sum_{\alpha\in\Phi}\g_\alpha,
\]
where $\mathfrak{m}=Z_{\mathfrak{k}}(\ga)$ is the centralizer of $\ga$ in $\mathfrak{k}$ and $\g_\alpha$ is the root space for $\alpha$. As usual, we let $\rho=\frac{1}{2}\sum_{\alpha\in\Phi^+}m_\alpha \alpha$ be the half-sum of the positive roots, where we have put $m_\alpha=\dim\g_\alpha$. Let $r=\dim\ga$ be the rank of $G$ and $W=N_K(\ga)/Z_K(\ga)$ the Weyl group of $G$. Let $\Phi_\text{red}$ denote the set of reduced roots of $\Phi$, consisting of those $\alpha\in\Phi$ such that $\alpha/2\notin\Phi$. Then $\Phi_\textnormal{red}$ is a reduced root system, and we refer to it as the reduced root system of $G$. Let $\Phi_\text{red}^+ := \Phi_\text{red} \cap \Phi^+$. Note that $\Delta$ is also a base for $\Phi_{\textnormal{red}}$. 

We let $\langle \cdot ,\cdot\rangle$ denote the restriction of the Killing form to $\ga$. We equip $\ga^*$ with the inner product $\langle \alpha ,\beta \rangle =\langle H_\alpha, H_\beta\rangle$, where $H_\alpha\in\ga$ is uniquely determined by $\alpha(Z)=\langle H_\alpha, Z\rangle$. For any non-zero $\alpha\in\ga^*$, we may then define the orthogonal reflection $\lambda\mapsto \lambda-2\frac{\langle\lambda,\alpha\rangle}{\langle\alpha,\alpha\rangle}\alpha$ across the root hyperplane $\alpha^\perp$. Note that $\lambda\mapsto 2\frac{\langle\lambda,\alpha\rangle}{\langle \alpha,\alpha \rangle}$ is an element in $\ga^{**}$, and as such determines an element $\alpha^\vee\in\ga$, the \textit{coroot} associated with $\alpha$, such that $2\frac{\langle \lambda,\alpha \rangle}{\langle \alpha,\alpha \rangle}=\lambda(\alpha^\vee)$.  Then the orthogonal reflection can be written $\lambda\mapsto\lambda-\lambda(\alpha^\vee)\alpha$ and hence $\alpha^\perp=
\{\lambda\in\ga^*: \lambda(\alpha^\vee)=0\}$.

Denote the set of (restricted) coroots and simple coroots by $\Phi^\vee=\{\alpha^\vee: \alpha\in\Phi\}\subset\ga$ and $\Delta^\vee=\{\alpha^\vee: \alpha\in\Delta\}\subset\ga$, respectively. By definition, the fundamental weights $\widehat{\Delta}=\{\varpi_\alpha: \alpha\in \Delta\}\subset\ga^*$ form the dual basis to the simple coroots $\Delta^\vee$. We have corresponding decompositions $\lambda=\sum_{\alpha\in\Delta} \lambda(\alpha^\vee)\varpi_\alpha$, $H=\sum_{\alpha\in\Delta}\varpi_\alpha(H)\alpha^\vee$ of elements $\lambda\in\ga^*, H\in\ga$. Let
\[
\ga_+^*=\{\lambda\in\ga^*: \lambda(\alpha^\vee)>0\;\forall\alpha\in\Delta\}=\sum_{\alpha\in\Delta} \R_{> 0}\varpi_\alpha\\
\]
be the fundamental Weyl chamber in $\ga^*$ and denote by $\overline{\ga}_+^*=\sum_{\alpha\in\Delta} \R_{\ge 0}\varpi_\alpha$ its closure. We write
\begin{equation}\label{eq:reg-sing}
\ga^*_{\rm sing}=\bigcup_{\alpha\in\Phi^+} \{\lambda\in\ga^*: \lambda(\alpha^\vee)=0\}
\end{equation}
for the union of all root hyperplanes and put $\ga^*_{\rm reg}=\ga^*\smallsetminus \ga^*_{\rm sing}$. Then $\ga^*_{\rm reg}=W.\ga_+^*$. Finally, we write $\mathfrak{a}^*_\mathbb{C} := \mathfrak{a}^* \otimes \mathbb{C}$ for the complexification of $\mathfrak{a}^*$.

\subsection{Subsets of simple roots and associated structures}\label{sec:subsets}
Let $\mathfrak{n} = \bigoplus_{\alpha \in \Phi^+} \mathfrak{g}_\alpha$ and write $N = \exp(\mathfrak{n})$ for the corresponding analytic subgroup. Let $P = Z_K(\mathfrak{a}) A N$ be the standard minimal parabolic subgroup. The Bruhat decomposition states that
\begin{gather*}
    G = \bigsqcup_{w \in W} P w P,
\end{gather*}
for any choice $w$ of coset representatives for $W = N_{K}(\mathfrak{a})/Z_K(\mathfrak{a})$.

Any subgroup of $G$ containing $P$ is called a standard parabolic subgroup. These can be constructed in the following way. Let $I \subseteq \Delta$, and let $W_I$ be the subgroup of $W$ generated by the reflections associated to $\alpha \in I$; we define $W_{\emptyset} = 1$. Standard parabolic subgroups are in bijection with subsets $I\subseteq \Delta$, through the map sending $I$ to
\begin{gather*}
    Q_I = \bigsqcup_{w \in W_I} P w P;
\end{gather*}
see \cite[\S\S 29.2-3]{Humphreys}. This recovers the Bruhat decomposition for $G$ when $I=\Delta$.

Let $\Phi_I$ be the root subsystem consisting of $\Z$-linear combinations of roots in $I$ lying in $\Phi$.  Let $\Phi_I^+=\Phi_I\cap\Phi^+$. Note that the roots of $Q_I$ with respect to $A$ are $\Phi^+\cup(-\Phi_I^+) = (\Phi^+ \smallsetminus \Phi_I^+) \cup \Phi_I$ \cite[\S 30.1, Theorem (b)]{Humphreys}.  The following proposition relates $\Phi_I^+$ with the action of $W_I$ on $\Phi^+$.

\begin{proposition} \label{lemma:WL-stable}
Let $I\subseteq \Delta$. Then
\begin{equation}\label{eq:WI}
W_I=\{w\in W\mid w \Phi^+\subseteq (\Phi^+ \smallsetminus \Phi_I^+) \cup \Phi_I \}.
\end{equation}
In particular, $w(\Phi^+ \smallsetminus \Phi_I^+) = \Phi^+ \smallsetminus \Phi_I^+$ if and only if $w \in W_I$. Furthermore,
\begin{equation}
\begin{aligned}\label{eq:Phi-WI}
\Phi_I^+&=\{\alpha\in\Phi^+ \mid \exists w\in W_I:\, w\alpha<0\},\\
\Phi^+\smallsetminus\Phi_I^+ &=\{\alpha\in\Phi^+ \mid w\alpha>0 \, \forall w\in W_I\}.
\end{aligned} 
\end{equation}
\end{proposition}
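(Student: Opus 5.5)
The plan is to argue entirely inside the Weyl group combinatorics, using standard facts about Coxeter groups with simple system $\Delta$ acting on the root system $\Phi$ (or on $\Phi_{\rm red}$; the two have the same Weyl group and the same sign pattern, since divisible roots are positive multiples of reduced ones). The key inputs are: (a) for a simple reflection $s_\alpha$, $\alpha\in\Delta$, the map $s_\alpha$ permutes $\Phi^+\smallsetminus\{\alpha,2\alpha\}$ and sends $\alpha$ to $-\alpha$; (b) the length $\ell(w)$ equals $\#\{\beta\in\Phi^+_{\rm red}: w\beta<0\}$; (c) $W_I$ is itself the Weyl group of the root system $\Phi_I$, with simple roots $I$.

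\textbf{First inclusion, $W_I\subseteq$ right side of \eqref{eq:WI}.} I would show that each $s_\alpha$ with $\alpha\in I$ preserves $\Phi^+\smallsetminus\Phi_I^+$. Take $\beta\in\Phi^+\smallsetminus\Phi_I^+$. Then $\beta$ has a positive coefficient on some simple root $\gamma\notin I$. The reflection $s_\alpha\beta=\beta-\beta(\alpha^\vee)\alpha$ changes only the $\alpha$-coefficient, so the $\gamma$-coefficient stays positive. Hence $s_\alpha\beta$ is a positive root not in $\Phi_I$. Since $\Phi_I$ is $W_I$-stable, each $w\in W_I$ preserves both $\Phi^+\smallsetminus\Phi_I^+$ and $\Phi_I$. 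This gives $w\Phi^+\subseteq(\Phi^+\smallsetminus\Phi_I^+)\cup\Phi_I$. It also gives the equality $w(\Phi^+\smallsetminus\Phi_I^+)=\Phi^+\smallsetminus\Phi_I^+$, because $w$ is a bijection of $\Phi$ mapping this finite set into itself.

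\textbf{Reverse inclusion.} This is the step I expect to be the main obstacle. Suppose $w\Phi^+\subseteq(\Phi^+\smallsetminus\Phi_I^+)\cup\Phi_I$, so that every positive root sent to a negative root lands in $-\Phi_I^+$. I would induct on $\ell(w)$. If $w\neq1$, there is a simple root $\alpha$ with $w\alpha<0$; indeed, if $w\Delta>0$ then $w\Phi^+>0$ and $w=1$. Then $w\alpha\in-\Phi_I^+$.

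Next, apply the statement to $w^{-1}$: $w^{-1}$ sends the positive root $-w\alpha$ to $-\alpha<0$. I would instead write $w=w's_\alpha$ with $\ell(w')=\ell(w)-1$ and check that $w'$ satisfies the hypothesis. By (a), $w'\Phi^+ = w s_\alpha\Phi^+$ consists of $w(\Phi^+\smallsetminus\{\alpha,2\alpha\})$ together with $-w\alpha$ (and $-2w\alpha$). The first part lies in the allowed set by hypothesis, and $-w\alpha\in\Phi_I^+$ is also allowed. So $w'$ is in $W_I$ by induction.

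It remains to see that $s_\alpha\in W_I$, i.e.\ that $\alpha\in I$. This is the delicate point, since it is $w\alpha$, not $\alpha$, that lies in $\Phi_I$. To fix it, I would choose $\alpha$ so that the factorization is taken on the left: $w=s_\beta w''$ with $\beta\in\Delta$ and $w^{-1}\beta<0$. Then $\beta=w(-w^{-1}\beta)$ is the image of a positive root and is positive, so it is not forced into $-\Phi_I^+$; that choice does not work either.

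The cleanest fix is to use the hypothesis directly. The set $N(w)=\{\beta>0: w\beta<0\}$ satisfies $wN(w)\subseteq-\Phi_I^+$. Since $\ell(w)=\#N(w)$ and $N(w^{-1})=-wN(w)$, we get $N(w^{-1})\subseteq\Phi_I^+$. Now a simple root $\beta$ with $w^{-1}\beta<0$ lies in $N(w^{-1})\subseteq\Phi_I^+\cap\Delta=I$. Writing $w=s_\beta w''$, the element $w''$ has $N(w''^{-1})=s_\beta(N(w^{-1})\smallsetminus\{\beta\})\subseteq\Phi_I^+$, since $s_\beta$ with $\beta\in I$ preserves $\Phi_I^+\smallsetminus\{\beta\}$. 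Induction then gives $w\in W_I$. So I would phrase the whole induction in terms of $N(w^{-1})\subseteq\Phi_I^+$, which is equivalent to the hypothesis.

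\textbf{Description \eqref{eq:Phi-WI}.} If $\alpha\in\Phi^+\smallsetminus\Phi_I^+$, the first step shows $w\alpha>0$ for all $w\in W_I$. Conversely, if $\alpha\in\Phi_I^+$, then the longest element $w_I$ of $W_I$, which sends $I$, and hence all of $\Phi_I^+$, to negative roots by (c), gives $w_I\alpha<0$. The two sets partition $\Phi^+$, so both equalities follow.
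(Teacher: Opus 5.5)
Your argument is correct, but it takes a different route from the paper. The paper proves the reverse inclusion in \eqref{eq:WI} group-theoretically. The condition $w\Phi^+\subseteq\Phi^+\cup(-\Phi_I^+)$ means that $w$ maps $P$ into the standard parabolic $Q_I$. Hence $w\in Q_I$ by a lemma in Humphreys, and $Q_I\cap W=W_I$ by the Bruhat decomposition of $Q_I$. The paper reads off the ``in particular'' clause by taking complements in \eqref{eq:WI}. It gets \eqref{eq:Phi-WI} from the fact that every $W_I$-orbit in $\Phi_I$ meets $-\Phi^+$.

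You stay entirely inside the root system. The forward inclusion comes from your coefficient argument. For the reverse inclusion you rewrite the hypothesis as $N(w^{-1})\subseteq\Phi_I^+$ via $N(w^{-1})=-wN(w)$, then induct on length, peeling off a simple root $\beta\in N(w^{-1})\cap\Delta\subseteq I$ on the left. Your route is self-contained and applies verbatim to the possibly non-reduced restricted root system. It also avoids importing parabolic-subgroup facts from the algebraic-group setting into the restricted-root context. The paper's route is shorter, given its black boxes. For \eqref{eq:Phi-WI}, your longest element $w_I$ plus the partition argument is equivalent to the paper's orbit argument and somewhat cleaner.

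Two small points for the write-up. First, you never prove the converse half of the ``in particular'' clause. In your framework it is one line: if $w$ preserves $\Phi^+\smallsetminus\Phi_I^+$, then so does $w^{-1}$, so $N(w^{-1})\subseteq\Phi_I^+$ and your induction gives $w\in W_I$.

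Second, drop the false starts. The remark that $\beta=w(-w^{-1}\beta)$ is a sign slip: that vector is $-\beta$. It is the image of a positive root and is negative, hence lies in $-\Phi_I^+$, which is exactly why your final left factorization works. The right-factorization attempt could also have been completed. Once $w'\in W_I$, the relation $w'\alpha=-w\alpha\in\Phi_I$ forces $\alpha\in\Phi_I\cap\Delta=I$, because $W_I$ stabilizes $\Phi_I$.
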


\begin{proof}
The direct inclusion in \eqref{eq:WI} holds, since $W_I$ preserves the roots of $Q_I$. On the other hand, if $w\in W$ is such that $w \Phi^+\subseteq \Phi^+\cup (-\Phi_I^+)$, then  $w$ maps $P$ into $Q_I$ and therefore belongs to $Q_I$ by \cite[\S 29.3, Lemma D]{Humphreys} so that $w\in Q_I\cap W=W_I$.

To see that $W_I$ consists precisely of those $w\in W$ for which $w(\Phi^+ \smallsetminus \Phi_I^+) = \Phi^+ \smallsetminus \Phi_I^+$  we first observe that since $W_I$ preserves $\Phi_I$ and $(\Phi^+ \smallsetminus \Phi_I^+) \cup \Phi_I$, it must also preserve $\Phi^+ \smallsetminus \Phi_I^+$. Next, taking complements in \eqref{eq:WI} gives
\[
W_I  = \{w\in W \mid -( \Phi^+ \smallsetminus \Phi_I^+) \subset w (- \Phi^+) \}  = \{w\in W \mid w^{-1}( \Phi^+ \smallsetminus \Phi_I^+) \subset \Phi^+ \},
\]
so that $w(\Phi^+ \smallsetminus \Phi_I^+) = \Phi^+ \smallsetminus \Phi_I^+$ implies $w \in W_I$ as required.

The two statements in \eqref{eq:Phi-WI} are clearly equivalent so that it suffices to prove the second.  The direct inclusion follows from the fact that $W_I$ preserves $\Phi^+ \smallsetminus \Phi_I^+$. On the other hand, suppose $\alpha \in \Phi^+$ is such that $w \alpha > 0$ for all $w \in W_I$. It suffices to show that $W_I \alpha \subset \Phi^+ \smallsetminus \Phi_I^+$, since this implies, by the preceding claim, that $\alpha \in \Phi^+ \smallsetminus \Phi_I^+$. Now \eqref{eq:WI} already yields the inclusion $W_I \alpha\subset(\Phi^+ \smallsetminus \Phi_I^+) \cup \Phi_I$. But $W_I\alpha$ has no intersection with $\Phi_I$. Indeed, every element in $W_I\alpha$ is positive while the $W_I$-orbit of every element in $\Phi_I$ meets $-\Phi^+$, since $\Phi_I$ is a root system with Weyl group $W_I$.
\end{proof}

Let $Q=Q_I$ be a standard parabolic subgroup. Its unipotent radical $U$ has Lie algebra $\bigoplus_{\alpha \in \Phi^+\smallsetminus \Phi_I^+} \mathfrak{g}_{\alpha}$. For example, when $I=\emptyset$, we have $Q=P$ and $U=N$. Then $Q$ admits a Levi decomposition $Q = L \ltimes U$, where $L=L_I=Q \cap \Theta(Q)$. The reductive subgroup $L$ is called the Levi component of $Q$; its Lie algebra decomposes as $\mathfrak{a} \oplus\mathfrak{m}\oplus \bigoplus_{\alpha \in \Phi_I} \mathfrak{g}_{\alpha}$. By \textit{standard Levi subgroup} we shall mean any Levi component of a standard parabolic subgroup. The Weyl group of $L$, relative to the maximal compact subgroup $K_L = K \cap L$, and maximal split torus $A\subset L$, is equal to $W_I$ \cite[\S 27.1, Theorem]{Humphreys}.

The Levi component $L_I$ itself determines the standard parabolic subgroup $Q_I$. Moreover, we may identify $L_I$ as the centralizer of any element in the open cone $C_I=\sum_{\alpha \in \Delta \smallsetminus I} \mathbb{R}_{> 0} \varpi^\vee_\alpha$, and all standard Levi subgroups are obtained in this way \cite[\S 30.2]{Humphreys}. The nodes of the Dynkin diagram of $\Phi$ are also naturally labeled by elements in $\Delta$. We thus have one-to-one correspondences between subsets $I\subseteq\Delta$, standard parabolics $Q_I$, standard Levi subgroups $L_I$, faces $C_I$ of $\overline{\mathfrak{a}}_+$, and subdiagrams of the Dynkin diagram for $\Phi$ obtained by deleting the nodes corresponding to $\Delta \setminus I$. In this last correspondence, the type of the resulting Dynkin diagram determines the type of the root system $\Phi_I$.

\subsection{Measures and Jacobian factors}\label{sec:Jacobian}
Let $X=G/K$ be the Riemannian globally symmetric space associated to $G$. Let $B$ denote the Killing form on $\g$. Then $B$ is positive definite on $\mathfrak{p}$ and hence defines an inner product on $\mathfrak{p}$. Identifying $\mathfrak{p}$ with the tangent space of $X$ at the origin, we then transport this inner product to a $G$-invariant Riemannian metric on $X$, of non-positive curvature. The restriction of $B$ to $\mathfrak{a}$ defines a $W$-invariant inner product. Let $dH$ denote the corresponding Lebesgue measure on $\mathfrak{a}$. Let $dk$ denote the probability Haar measure on $K$. We normalize Haar measure $dg$ on $G$ in such a way that its quotient by $dk$ agrees with the measure $dx$ induced by the Riemannian volume form on $X$. If $E$ is a subset of $G$ or $X=G/K$, we write ${\rm vol}(E)$ for $\int_E dg$ or $\int_E dx$, respectively.

The Cartan decomposition states that $G = K \exp(\overline{\mathfrak{a}}_+)K$. Correspondingly, there is a constant $b_G>0$ such that, for every $f\in C_c(G)$, we have 
\begin{equation}\label{eq:Cartan-int}
\int_G f(g)dg = b_G\int_K\int_{\mathfrak{a}_+}\int_K f(k_1 \exp(H) k_2) J(H) dk_1dH dk_2,
\end{equation}
where, for $H \in \mathfrak{a}_+$, the radial volume factor $J \in C^\infty(\ga_+)$ is given by
\begin{equation}\label{eq:def:J}
 J(H) =\prod_{\alpha\in \Phi^+}\big(\sinh \alpha(H) \big)^{m_\alpha}.
\end{equation}
See \cite[\S 2.4, Prop. 2.4.11]{Gangolli_Varadarajan_88}. By extending to $\ga_{\rm reg}$ by $W$-invariance, and then by continuity from $\ga_{\rm reg}$ to $\ga$, we may also view $J(H)$ as a continuous $W$-invariant function on $\ga$.

More generally, we let $L=L_I$ be a standard Levi subgroup, where $I$ is the corresponding subset of $\Delta$, as in the bijective correspondence of Section \ref{sec:subsets}. Denote by $W_L = W_I$ its Weyl group. Let $\Phi_L = \Phi_I$, and $\Phi_L^+ = \Phi_L \cap \Phi^+$. We let
\be
\label{aLdef}
\ga_{L,+} = \{ H \in \ga : \alpha(H) > 0 \; \forall \alpha \in I \}
\ee
be the fundamental Weyl chamber for $L$. We again give $K_L$ the probability Haar measure $dk$, and we denote by $dl$ the Haar measure on $L$, normalized in the same way as for $G$. The Cartan decomposition for $L$ states that $L=K_L\exp(\overline{\ga}_{L,+})K_L$. Once again, there is a constant $b_L>0$, depending only on $L$, such that 
\begin{equation}\label{L-int-formula}
\int_L f(l)dl = b_L \int_{K_L}\int_{\ga_{L,+}}\int_{K_L} f(k_1\exp(H)k_2)J_L(H) dk_1dH dk_2,
\end{equation}
where the radial volume factor with respect to $L$ is
\begin{equation}\label{eq:def:JM}
 J_L(H)=\prod_{\alpha\in \Phi_L^+} \big(\sinh\alpha(H) \big)^{m_\alpha}.
\end{equation}
We define $J^L = J / J_L$.

\subsection{Spherical functions, spherical transform, spherical inversion}\label{subsec:sph-fn}

Let $D_{G}(X)$ denote the algebra of $G$-invariant differential operators on $X$. A spherical function on $X$ is a $K$-invariant joint eigenfunction of $D_{G}(X)$, normalized to take the value $1$ at the identity. Harish-Chandra gave an integral representation for spherical functions, which we now review.

The Iwasawa decomposition for $G$ states that the multiplication map $N \times A \times K\rightarrow G$ is a diffeomorphism. Let $\mathcal{H}: G \to \ga$ be the Iwasawa projection, given by $g = n\exp(\mathcal{H}(g))k$. For $\lambda \in \ga^*_{\mathbb{C}}$ the map $g\mapsto e^{i\lambda(\mathcal{H}(g))}$ descends to a plane wave on $X$. The average over left-$K$-orbits
\begin{gather}\label{eqn_spherical_fn}
    \varphi_{\lambda}(g) := \int_K e^{(i\lambda + \rho) (\mathcal{H}(k g))} dk
\end{gather}
defines a bi-$K$-invariant function on $G$ --- equivalently, a $K$-invariant function on $X$ --- such that
\begin{gather*}
D \varphi_{\lambda} = \gamma_{\rm HC}(D)(\lambda) \varphi_{\lambda} \qquad (\forall\; D\in D_G(X)),
\end{gather*}
and $\varphi_{\lambda}(e) =1$. Here, $\gamma_{\rm HC}$ denotes the Harish-Chandra isomorphism
\begin{gather*}
    \gamma_{\rm HC}: D_{G}(X) \xrightarrow{\;\sim\;} \mathrm{Sym}(\ga_{\mathbb{C}})^W
\end{gather*}
onto the $W$-invariant elements of the symmetric algebra of $\ga_{\mathbb{C}}$, viewed as $W$-invariant polynomials on $\ga^*_{\mathbb{C}}$. We call $\varphi_{\lambda}$ the \textit{spherical function with spectral parameter $\lambda$.}

We have $\varphi_{\lambda} = \varphi_{\lambda'}$ if and only if $\lambda$ and $\lambda'$ are in the same $W$-orbit. For this reason, we call $\ga^*_{\mathbb{C}}/W$ the space of \textit{spectral parameters.} The spectral parameters lying in $\ga^*/W$ are called \textit{tempered spectral parameters}. The tempered subspace $\ga^*/W$ arises naturally in the decomposition of $L^2(X)$, as will be seen in the following subsection.

The group $L$ has its own Iwasawa decomposition $L = N_L A K_L  $ where $N_L$ is the analytic subgroup corresponding to $\bigoplus_{\alpha \in \Phi_L^+} \mathfrak{g}_\alpha$. We can thus in turn define the spherical function $\varphi_\lambda^L$ on $L$ by the formula \eqref{eqn_spherical_fn}, replacing $\rho$ with $\rho_L = \frac{1}{2} \sum_{\alpha \in \Phi_L^+} m_\alpha \alpha$. Then $\varphi_{\lambda}^L = \varphi_{\lambda'}^L$ if and only if $\lambda$ and $\lambda'$ are in the same $W_L$-orbit.

Let $C_c^\infty(G/\!\!/K)$ denote the space of compactly supported smooth bi-$K$-invariant functions. Given $k \in C_c^\infty(G /\!\!/ K)$ let
\begin{equation}\label{eq:HC-transform}
\widehat{k}(\lambda) = \int_G k(g) \varphi_{-\lambda}(g) dg= b_G \int_{\ga^+} k(e^H) \varphi_{-\lambda}(e^H) J(H)dH
\end{equation}
be the Harish-Chandra spherical transform. This transform can be inverted using the Harish-Chandra $c$-function $c(\lambda)$, which is a meromorphic function on $\lambda\in\ga^*_\C$, given by the product
\begin{equation}\label{eq:explicit-c}
c(\lambda)=C\prod_{\alpha\in\Phi_{\textnormal{red}}^+}c_\alpha\Big( \frac{i \langle \lambda, \alpha \rangle}{\langle \alpha, \alpha \rangle}\Big),\qquad c_\alpha(s) = \frac{2^{-s} \Gamma (s)}{\Gamma(\frac{1}{2}(\frac{1}{2}m_\alpha + 1 + s)) \Gamma(\frac{1}{2}(\frac{1}{2}m_\alpha + m_{2 \alpha} + s))},
\end{equation}
for a non-zero constant $C$. Then we have the following inversion formula:
\begin{gather*}
    k(g) = \int_{\ga_+^*} \widehat{k}(\lambda) \varphi_{\lambda}(g) |c(\lambda)|^{-2} d \lambda.
\end{gather*}
More generally, for a standard Levi subgroup $L$, we let $c_L$ be its $c$-function, given by the product \eqref{eq:explicit-c} but with $\Phi_{\textnormal{red}}^+$ replaced by $\Phi^+_{\textnormal{red},L}$.

\subsection{Bounds on the $c$-function and the spherical function}\label{sec:c-bds}
In this section we recall standard bounds on the $c$-function that will be used throughout the later sections and state the generalized Harish-Chandra asymptotic of the spherical function, a result which undergirds the proof of Theorem \ref{sph-fn-bd}.

From its definition in \eqref{eq:explicit-c} it is clear that each function $c_\alpha(s)$ has a simple pole at $s = 0$. Therefore, for $\lambda$ close to zero, we have
\begin{gather} \label{eqn_c_lambda_small}
    |c(\lambda)|^{-2} \ll \prod_{\alpha \in \Phi_{\textnormal{red}}^+} |\langle \lambda, \alpha \rangle|^2.
\end{gather}
On the other hand, if $s$ is large with real part bounded by $\kappa$, we have that \cite{Duistermaat_Kolk_Varadarajan_79}
\begin{gather} \label{eqn_c_lambda_big}
    |c_{\alpha}(s)|^{-2} \ll_\kappa |s|^{m_\alpha + m_{2 \alpha}}. 
\end{gather}

Let $L$ be a standard Levi subgroup of $G$. From \eqref{eq:reg-sing} and the expression \eqref{eq:explicit-c} it follows that $c(\lambda)$ is holomorphic and non-vanishing on the regular parameters $\ga^*_{\rm reg}$. More generally, the quotient
\begin{equation}\label{def:cM}
c^L(\lambda)=\frac{c(\lambda)}{c_L(\lambda)}=C^L \prod_{\alpha\in\Phi_{\textnormal{red}}^+\smallsetminus\Phi^+_{\textnormal{red},L}}c_\alpha\Big(\frac{i \langle \lambda, \alpha \rangle}{\langle \alpha, \alpha \rangle}\Big)\qquad (C^L=C/C_L),
\end{equation}
is holomorphic and non-vanishing on
\begin{equation}\label{eq:cM-domain}
\ga^*\smallsetminus \bigcup_{\alpha\in\Phi^+\smallsetminus\Phi^+_L}\{\lambda\in\ga^*: \lambda(\alpha^\vee)=0\},
\end{equation}
an open set in $\ga^*$ containing $\ga^*_{\rm reg}$. The main term in the generalized Harish-Chandra asymptotic, stated below, will be seen to be
\begin{equation}\label{defn:thetaL}
    \theta_L(H, \lambda) := \sum_{w \in W_L \backslash W} c^L(w \lambda) \varphi^L_{w\lambda}(e^H),
\end{equation}
and the error term is governed by the function $\beta_L: \ga \to \mathbb{R}$ given by
\begin{equation}\label{defn:beta}
    \beta_L(H) := \min_{\alpha \in \Phi^+ \smallsetminus \Phi_L^+} |\alpha(H)|.
\end{equation}
Note that $\theta_L(H, \lambda)$ is defined on $\ga^*_{\rm reg}$.

\begin{proposition}[Theorem 5.9.4 of \cite{Gangolli_Varadarajan_88}] \label{prop_hc_expansion}
Let $\varepsilon > 0$. There exist constants $C > 0$ and $s \geq 0$ such that for all $\lambda \in \ga^*_{\textnormal{reg}}$ and for all $H \in \overline{\ga}_+$ such that $\beta_L(H) \geq \varepsilon$, we have
\begin{gather*}
    \Big| e^{\rho(H)} \varphi_\lambda(e^H) - e^{\rho_L(H)} \theta_L(H, \lambda) \Big| \leq C (1 + \|\lambda\|)^s (1 + \|H\|)^s e^{-2 \beta_L(H)}.
\end{gather*}
\end{proposition}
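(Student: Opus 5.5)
The plan is to compare two solutions of the same system of radial differential equations, one on $G$ and one on the Levi subgroup $L$, and to show that they differ only by terms coming from the roots in $\Phi^+\smallsetminus\Phi_L^+$. On $\ga$ the joint eigenfunction equations $D\varphi_\lambda=\gamma_{\rm HC}(D)(\lambda)\varphi_\lambda$ become, after conjugating by $e^{\rho}$, a holonomic system whose coefficients are built from $\coth\alpha(H)$, $\alpha\in\Phi^+$. For $\alpha\notin\Phi_L^+$ write $\coth\alpha(H)=1+O(e^{-2\alpha(H)})$, uniformly on $\beta_L(H)\ge\varepsilon$. Then the $G$-system for $F_G:=e^{\rho}\varphi_\lambda(e^H)$ equals the $L$-system for $F_L:=e^{\rho_L}\varphi^L_\mu(e^H)$, applied in a shifted form, up to a perturbation with coefficients of size $O(e^{-2\beta_L(H)})$. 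Here the shift $\lambda\leftrightarrow$ spectral parameter of $L$ accounts for $\rho-\rho_L$ and for the constant part of $\coth$.

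First I identify the leading term. Use the Harish-Chandra series $\varphi_\lambda=\sum_{w\in W}c(w\lambda)\Phi_{w\lambda}$ with $\Phi_\lambda(e^H)=e^{(i\lambda-\rho)(H)}\sum_{\mu\in\Z_{\ge0}\Delta}\Gamma_\mu(\lambda)e^{-\mu(H)}$. Split the $\mu$ into those in $\Z_{\ge0}I$ and the rest. The recursion determining $\Gamma_\mu$ for $\mu\in\Z_{\ge0}I$ involves only the $\Phi_L$ part of the radial operator, so it reproduces the coefficients $\Gamma^L_\mu$ of the Levi series. The remaining $\mu$ contain some simple root outside $I$; together with the $\coth$ corrections they give the $e^{-2\beta_L}$ error. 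Factor $c(w\lambda)=c^L(w\lambda)c_L(w\lambda)$ and regroup $W=\bigsqcup W_L w$. Using $\varphi^L_{w\lambda}=\sum_{v\in W_L}c_L(vw\lambda)\Phi^L_{vw\lambda}$ and the $W_L$-invariance of $c^L$ (each $v\in W_L$ permutes $\Phi^+_{\rm red}\smallsetminus\Phi^+_{{\rm red},L}$), this leading part is exactly $e^{\rho_L(H)}\theta_L(H,\lambda)$. This gives the formal identity.

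To make it quantitative and uniform in $\lambda$ and $H$, I would not sum the series termwise, since $\Gamma_\mu(\lambda)$ and the $c$-factors have poles near singular $\lambda$. Instead, set $E(H)=F_G(H)-e^{\rho_L(H)}\theta_L(H,\lambda)$. It satisfies the $L$-type first-order system, obtained by passing to the vector of derivatives modulo the ideal, with an inhomogeneous term. That term is bounded by $e^{-2\beta_L(H)}$ times polynomial bounds on $F_G$ and its derivatives; such bounds follow from the integral formula \eqref{eqn_spherical_fn}, e.g.\ $|\varphi_\lambda(e^H)|\le\varphi_0(e^H)\ll(1+\|H\|)^{|\Phi^+|}e^{-\rho(H)}$, and differentiating under the integral gives polynomial factors in $\lambda$. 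Integrating this system along rays going into the cone $C_I$, from a reference region where both sides are controlled, with a Gronwall-type variation of constants argument, yields $|E|\ll(1+\|\lambda\|)^s(1+\|H\|)^se^{-2\beta_L(H)}$.

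The main obstacle is uniformity as $\lambda$ approaches the walls $\alpha^\perp$, $\alpha\in\Phi_L$, or the points where the eigenvalues of the fundamental solution of the $L$-system become degenerate. There the individual terms blow up, and the fundamental solution of the perturbed system could grow. I would first try to prove the estimate for $\lambda$ at distance $\ge1$ from all singular hyperplanes, where the ODE argument is clean. I would then extend to all of $\ga^*_{\rm reg}$ by holomorphy in $\lambda$: $E$ extends holomorphically to a tube around $\ga^*$ minus the hyperplanes of \eqref{eq:cM-domain}, and the cancellations between $W_L$-conjugate terms remove the apparent poles. Applying the maximum principle on small polydiscs in $\lambda$ then transfers the bound, at the cost of increasing the exponent $s$.
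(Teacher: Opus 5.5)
Your architecture is the one behind the result the paper quotes: Gangolli--Varadarajan, following Harish-Chandra, compare the $G$-system with the Levi system along the split centre of $L$. However, two steps, as you describe them, would not give the estimate.

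\textbf{The decay.} Fix $Z$ central in $\mathfrak{l}$ with $\alpha(Z)>0$ for $\alpha\in\Phi^+\smallsetminus\Phi^+_L$. Let $\Gamma_\lambda=\Gamma(Z:\lambda)$ be the constant matrix of the unperturbed $L$-system in direction $Z$; for real $\lambda$ it has purely imaginary spectrum $\{i\,w\lambda(Z)\}$. Let $\mathcal{E}$ and $\mathcal{F}$ be the vectors of derivatives of $E$ and of $e^{\rho}\varphi_\lambda$, and let $\mathcal{R}$ be the perturbation term. Then
$\mathcal{E}(H+tZ)=e^{t\Gamma_\lambda}\mathcal{E}(H)+\int_0^t e^{(t-u)\Gamma_\lambda}\mathcal{R}(H+uZ)\,du$,
and the first term does not decay. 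So if you integrate into the cone from a region where $\mathcal{E}$ is merely bounded, Gronwall gives at best a polynomial bound, never $e^{-2\beta_L(H)}$. Indeed, adding any solution of the homogeneous $L$-system to $e^{\rho_L}\theta_L$ leaves your inhomogeneous equation unchanged.

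The decay is exactly the statement that $E$ has no homogeneous component, so you must integrate from $t=+\infty$:
\begin{itemize}
\item show that $\mathcal{T}(H)=\lim_{t\to\infty}e^{-t\Gamma_\lambda}\mathcal{F}(H+tZ)$ exists;
\item deduce $\mathcal{F}(H)-\mathcal{T}(H)=-\int_0^\infty e^{-u\Gamma_\lambda}\mathcal{R}(H+uZ)\,du$, which is $e^{-2\beta_L(H)}$ up to polynomial factors;
\item then prove that $\mathcal{T}$ is the vector attached to $e^{\rho_L}\theta_L$.
\end{itemize}
Your Harish-Chandra series computation is the right tool for this last identification. But it must be used as an exact identity, valid for generic $\lambda$ and $H$ deep in $\ga_+$, and then extended to the $\Phi_L$-walls by continuity in $H$ and to $\ga^*_{\rm reg}$ by continuity in $\lambda$. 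Treating it only as a formal guide is not enough.

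Relatedly, set the system up on $L$ using $U(\mathfrak{l})$-derivatives of $\varphi_\lambda|_L$, not in radial coordinates on $\ga$. The region $\beta_L(H)\ge\varepsilon$ contains the $\Phi_L$-walls, where $\coth\beta(H)$, $\beta\in\Phi_L$, is singular.

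\textbf{Uniformity in $\lambda$.} Here the proposed route fails. Suppose you prove the bound at distance $\ge 1$ from the singular hyperplanes and want to transfer it by the maximum principle to a regular $\lambda$ close to one of them. The discs then need radius $\gg 1$, so their boundaries contain $\lambda$ with $|\Im\lambda|\gg 1$. There $e^{\rho}\varphi_\lambda(e^H)$, $e^{\rho_L}\theta_L$ and $e^{t\Gamma_\lambda}$ are controlled only up to factors $e^{c\|H\|}$. That loss is exponential, not polynomial, and it swamps $e^{-2\beta_L(H)}$. Your real-$\lambda$ argument also says nothing on such complex contours. The paper's own maximum-principle argument in Section~\ref{sec:Lmainbd} uses radius $\asymp\|H\|^{-1}$ precisely to avoid this.

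Nor does staying at distance $\ge 1$ from the root hyperplanes make the diagonalisation of $\Gamma_\lambda$ uniform. Its eigenvalues collide on the hyperplanes $\lambda(w^{-1}Z-w'^{-1}Z)=0$, which need not be root hyperplanes (compare $\ga^*_{\rm bad}$ in Proposition~\ref{prop:Weyl-diff}).

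The missing idea is that neither excision nor holomorphy is needed. For real $\lambda$, $\Gamma_\lambda$ has purely imaginary spectrum and entries polynomial in $\lambda$. Schur triangularisation then gives
$\|e^{t\Gamma_\lambda}\|\le\sum_{k<n}(2|t|\,\|\Gamma_\lambda\|)^k/k!\ll(1+|t|)^{n}(1+\|\lambda\|)^{O(1)}$,
with $n$ the size of $\Gamma_\lambda$, uniformly even where eigenvalues coincide. This one estimate makes both the limit defining $\mathcal{T}$ and the bound on $\int_0^\infty e^{-u\Gamma_\lambda}\mathcal{R}$ uniform over $\ga^*_{\rm reg}$. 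It is why the statement is restricted to real $\lambda$, and it is the source of the factor $(1+\|H\|)^s$.
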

We shall discuss an extension of this result to a small tube about the tempered subspace $\ga^*$ in Proposition \ref{lem:complexGV}. In the case where $L = Z_{K}(\ga) A$, so that $L$ is the unique minimal standard Levi, we have that $\rho_L = 0$, and $\varphi_{\lambda}^L(e^H) = e^{i\lambda(H)}$. It follows that $e^{-\rho(H)} \theta_L(H, \lambda)$ is equal to the leading term in the classical Harish-Chandra expansion.

\subsection{Maximally singular and extremal elements}\label{sec:str-sing}

An element $H_0 \in \overline{\ga}_+$ is called \textit{singular} if $H_0\in \overline{\ga}_+\smallsetminus\ga_+$.
Non-zero elements lying on the one-dimensional faces of the polyhedral cone $\overline{\ga}_+$ are the most singular among all non-zero elements and shall be called \textit{maximally singular}. As the fundamental coweights form a conical basis for $\overline{\ga}_+$, the maximally singular elements are nothing other than the positive multiples of fundamental coweights. Note that the fundamental coweights correspond bijectively with the nodes of the Dynkin diagram. 

Among the maximally singular elements in $\overline{\ga}_+$ we would now like to isolate a subclass which enjoys certain extremal properties. We will only be able to find such a nice class in types $A_n$, $B_n$, $C_n$, $D_n$, and $E_7$, so for the rest of this subsection we shall assume that $\Phi_{\text{red}}$ is of one of these types. We say that $H_0\in\overline{\ga}_+$ is \textit{extremal} if $H_0$ is equal to a positive multiple of an \textit{extremal} fundamental coweight. The latter are defined via the following table, which identifies the node (or nodes) of the Dynkin diagram of $\Phi_{\text{red}}$ which corresponds to the extremal fundamental coweights:

\begin{table}[H]
 \begin{tabular}{|c|c|c|}
 \hline
 type of $\Phi_{\text{red}}$ & extremal nodes $\bullet$ & type of $\Phi_{M, \text{red}}$ \\ \hline
  $A_n \, (n\ge 1)$ & $\begin{dynkinDiagram}[mark=o]A{} 
\dynkinRootMark{*}1 \dynkinRootMark{*}4 \end{dynkinDiagram}$ &$A_{n-1}$\\ 
  $B_n \, (n\ge 2)$ & $\begin{dynkinDiagram}[mark=o]B{} 
\dynkinRootMark{*}1 \end{dynkinDiagram}$ & $B_{n-1}$\\
  $C_n \, (n\ge 3)$ & $\begin{dynkinDiagram}[mark=o]C{} 
\dynkinRootMark{*}1 \end{dynkinDiagram}$ & $C_{n-1}$\\
  $D_n \, (n\ge 4)$ & $\begin{dynkinDiagram}[mark=o]D{} 
\dynkinRootMark{*}1 \end{dynkinDiagram}$ & $D_{n-1}$\\
$E_7$ & $\begin{dynkinDiagram}[mark=o]E{7} 
\dynkinRootMark{*}7 \end{dynkinDiagram}$ & $E_6$\\
\hline
 \end{tabular}
 \bigskip
 \caption{\label{table1}Darkened nodes correspond to extremal fundamental coweights}
\end{table}

For the infinite families $A_n, B_n, C_n, D_n$, the extremal nodes are the ones that you can remove from the Dynkin diagram to obtain a Dynkin diagram in the same infinite family but of rank one less (with the conventions $B_1 = A_1, C_2 = B_2, D_3 = A_3$).


\subsection{Benjamini--Schramm convergence}\label{sec:BS}
A lattice $\Gamma < G$ is a discrete subgroup of finite covolume. The quotient $Y=\Gamma \backslash X$ is a Riemannian locally symmetric space of non-positive curvature. A lattice $\Gamma$ is said to be \textit{uniform} if $Y$ is compact and \textit{irreducible} if the projection of $\Gamma$ onto each simple factor is dense. We will generally assume that $\Gamma$ is uniform, irreducible, and torsion-free.

The injectivity radius of a point $y \in Y$, denoted $\textnormal{InjRad}_Y(y)$, is the supremum of all $r$ such that the ball of radius $r$ centered at any lift $\tilde{y} \in X$ of $y$ maps injectively to $Y$ under the canonical projection $X \to Y$. The (global) injectivity radius of $Y$, denoted $\textnormal{InjRad}(Y)$, is the infimum over all $y \in Y$ of $\textnormal{InjRad}_Y(y)$. We define
\begin{gather*}
    Y_{\leq R} := \{y \in Y: \textnormal{InjRad}_{Y}(y) \leq R\}.
\end{gather*}

Suppose $Y_n$ is a sequence of locally symmetric spaces obtained from quotienting $X$ by a sequence of torsion-free lattices. We say that the sequence $Y_n$ \textit{Benjamini--Schramm converges} to $X$ if, for every $R > 0$, we have
\begin{gather*}
    \frac{\textnormal{vol}((Y_n)_{\leq R})}{\textnormal{vol}(Y_n)} \to 0.
\end{gather*}
We say that the sequence $Y_n$ is \textit{uniformly discrete} if there is a universal non-zero lower bound on $\textnormal{InjRad}(Y_n)$.

\subsection{Plancherel convergence}\label{sec:limit-mult}
Let $\widehat{G}$ denote the unitary dual of $G$, consisting of isomorphism classes of irreducible unitary representations of $G$, and endowed with the Fell topology. Suppose $\Gamma$ is a cocompact lattice in $G$. Then
\begin{equation}\label{eq:spec-decomp}
    L^2(\Gamma \backslash G) = \bigoplus_{\pi \in \widehat{G}} m(\pi,\Gamma)\pi,
\end{equation}
where the multiplicity $m(\pi,\Gamma)=\dim\Hom_G(\pi,L^2(\Gamma\backslash G))$ is finite, and $m(\pi,\Gamma)=0$ for all but countably many $\pi$. We define the spectral measure on $\widehat{G}$ relative to $\Gamma$ as
\begin{gather*}
    \mu_\Gamma := \frac{1}{\textnormal{vol}(\Gamma \backslash G)} \sum_{\pi \in \widehat{G}} m(\pi,\Gamma) \delta_{\pi}.
\end{gather*}
An important component of the proof of Theorem \ref{main-theorem} is the limiting behavior of the spectral measure $\mu_\Gamma$ when $Y=\Gamma\backslash X$ converges Benjamini--Schramm to $X$.

Recall that we have fixed a Haar measure $dg$ on $G$. If $f \in C_c^\infty(G)$ and $\pi$ is an irreducible unitary representation of $G$, we define the trace class operator $\pi(f) := \int_{G} f(g) \pi(g) dg$. The Plancherel measure $d\mu_{\rm Pl}$ is the unique Radon measure on $\widehat{G}$ verifying the inversion formula $f(e)=\int_{\pi\in\widehat{G}}{\rm tr}\, \pi(f)d\mu_{\rm Pl}(\pi)$.

As our main interest is $L^2(\Gamma\backslash X)$ rather than $L^2(\Gamma\backslash G)$, we shall restrict our attention to irreducible unitary \textit{spherical} representations $\pi$ of $G$ occurring in \eqref{eq:spec-decomp}. An irreducible representation is said to be spherical if the space of $K$-invariant vectors is non-zero, in which case it is one-dimensional. Any such representation can be realized as the unique spherical subquotient $\pi_\lambda$ of the (unitarily normalized) principal series representation ${\rm Ind}_P^G\chi_\lambda$, where $\lambda\in\ga_\C^*$ and $\chi_\lambda$ is the character $\exp(\lambda(\mathcal{H}(p))$ of the minimal parabolic $P$. Since $\pi_\lambda\simeq \pi_{\lambda'}$ if and only if there is $w\in W$ with $\lambda=w\lambda'$, the map sending $\pi_\lambda$ to $\lambda$ descends to an injective map from the spherical unitary dual  $\widehat{G}^{\rm sph}$ to $\ga_\C^*/W$. Let the image of this injection be denoted $\ga^*_{\text{un}}/W$; it contains all of $\ga^*/W$. By comparison, for any $\lambda\in\ga^*_\C$, the matrix coefficient (relative to the inner product given by integration over $K$, which is unitary only for $\lambda\in\ga^*/W$) of any unit vector in $\pi_\lambda^K$ recovers the Harish-Chandra integral expression for the spherical function in \eqref{eqn_spherical_fn}. Without abuse, we may then speak of $\lambda$ as the \textit{spectral parameter} for $\pi_\lambda$. In this parametrization, the restriction of the Plancherel measure to the unitary spherical representations has density function equal to $|c(\lambda)|^{-2}$ on $\ga^*/W$ and is identically zero outside of this locus. Moreover, if $\lambda\in\ga^*_{\text{un}}/W$ and $f \in C_c^\infty(G /\!\!/ K)$, then $\textnormal{tr}\,\pi_\lambda(f)=\widehat{f}(\lambda)$, the Harish-Chandra spherical transform of $f$ evaluated at $\lambda$, as defined in \eqref{eq:HC-transform}.

We have the following result \cite{Deitmar_18, 7_samurai}.
\begin{theorem} \label{thm_bs_plancherel_conv}
    Suppose $\Gamma_n$ is a sequence of cocompact, uniformly discrete lattices in $G$. Then the following are equivalent:
    \begin{enumerate}
        \item The sequence of locally symmetric spaces $\Gamma_n \backslash X$ Benjamini--Schramm converges to $X$.
        \item For every $f \in C_c^\infty(G)$
        \begin{gather}
            \int_{\widehat{G}} \textnormal{tr}\,\pi(f)\, d \mu_{\Gamma_n}(\pi) \to \int_{\widehat{G}} \textnormal{tr}\,\pi(f)\, d \mu_{\rm Pl}(\pi). \label{eqn_weak_conv}
        \end{gather}
    \end{enumerate}
\end{theorem}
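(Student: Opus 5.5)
The plan is to prove both directions through the standard correspondence between spectral measures and traces of convolution operators on $\Gamma_n\backslash G$; this result is essentially \cite{Deitmar_18, 7_samurai}. For $f\in C_c^\infty(G)$ we have the Selberg pre-trace identity
\[
\int_{\widehat G}\textnormal{tr}\,\pi(f)\,d\mu_{\Gamma_n}(\pi)=\frac{1}{\textnormal{vol}(\Gamma_n\backslash G)}\int_{\Gamma_n\backslash G}\sum_{\gamma\in\Gamma_n} f(g^{-1}\gamma g)\,dg ,
\]
since the operator $R(f)$ on $L^2(\Gamma_n\backslash G)$ is trace class for cocompact $\Gamma_n$ and its kernel is $\sum_\gamma f(g^{-1}\gamma h)$. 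On the other hand, Plancherel inversion gives $\int \textnormal{tr}\,\pi(f)\,d\mu_{\rm Pl}=f(e)$. Statement (2) is therefore equivalent to
\[
\frac{1}{\textnormal{vol}}\int_{\Gamma_n\backslash G}\sum_{\gamma\neq e} f(g^{-1}\gamma g)\,dg\to 0 \quad\text{for all } f\in C_c^\infty(G).
\]

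For (1)$\Rightarrow$(2), fix $f$ with support in a ball of radius $R$. If the injectivity radius at $gK$ exceeds $R+c$, then no nontrivial $\gamma$ has $g^{-1}\gamma g\in\operatorname{supp} f$, so the inner sum vanishes. On the thin part $(Y_n)_{\le R+c}$, we bound the number of $\gamma$ with $d(gK,\gamma gK)\le R$ uniformly in $n$ and $g$ by a packing argument. This is where uniform discreteness enters: the orbit points $\gamma gK$ are $\epsilon$-separated with $\epsilon$ independent of $n$, so at most $\textnormal{vol}(B(2R))/\textnormal{vol}(B(\epsilon/2))$ of them lie in the ball of radius $R$. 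The integrand is then bounded by $C_R\|f\|_\infty$ on the thin part, and the whole expression is at most $C_R\|f\|_\infty\,\textnormal{vol}((Y_n)_{\le R+c})/\textnormal{vol}(Y_n)\to 0$ by Benjamini--Schramm convergence.

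For (2)$\Rightarrow$(1), take $f\ge 0$ that is bi-$K$-invariant, or just conjugation-positive, and equal to $1$ on the ball of radius $2R$. For $gK$ in $(Y_n)_{\le R}$ there is some $\gamma\neq e$ with $d(gK,\gamma gK)\le 2R$. The non-identity sum is therefore at least $1$ on the thin part and nonnegative everywhere, so it dominates $\textnormal{vol}((Y_n)_{\le R})/\textnormal{vol}(Y_n)$. Since (2) forces this sum to tend to $0$, Benjamini--Schramm convergence follows.

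The main technical obstacle is justifying the trace identity and the uniform counting bound, that is, the packing argument via uniform discreteness. Everything else is bookkeeping of measure normalizations between $G$ and $X$.
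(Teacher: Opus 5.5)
Your argument is correct. The paper gives no proof of its own and simply cites \cite{Deitmar_18, 7_samurai}; your pre-trace-formula argument is the standard proof underlying those references: the non-identity orbital terms vanish on the thick part, are uniformly bounded on the thin part by a packing bound coming from uniform discreteness, and for the converse a nonnegative test function equal to $1$ on a ball of radius $2R$ detects the thin part. The one implicit input is torsion-freeness, which the paper's definitions of injectivity radius and uniform discreteness presuppose, and which your counting step needs in order to identify each $\gamma$ with its orbit point $\gamma gK$.
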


For $\lambda\in\ga^*_{\rm un}$ let $L^2(\Gamma\backslash X)_\lambda$ denote the $K$-invariant subspace of the $\pi_\lambda$-isotypic component of $L^2(\Gamma\backslash G)$. Taking $K$-invariants in \eqref{eq:spec-decomp}, we obtain an orthonormal basis $\mathcal{B}_\Gamma=\{\psi_j\}_{j \geq 0}$ of $L^2(\Gamma\backslash X)$ such that $\psi_j\in L^2(\Gamma\backslash X)_{\lambda_j}$. The functions $\psi_j$ are the Maass forms introduced in Section \ref{sec:main-result} and $\lambda_j$ is their spectral parameter. For a $W$-invariant subset $\Omega\subset\ga^*/W$ recall from the statement of Theorem \ref{main-theorem} that $N(\Omega,\Gamma)=\sum_{\lambda_j\in\Omega}\dim L^2(\Gamma\backslash X)_{\lambda_j}$.

The arguments of \cite[\S 6]{Peterson_23}, with slight modifications, imply the following result. 

\begin{proposition} \label{prop_spectral_conv}
    Suppose $\Gamma_n$ is a sequence of cocompact uniformly discrete torsion-free lattices in $G$ such that the corresponding sequence of locally symmetric spaces Benjamini--Schramm converges to $X$. Let $\Omega \subset \ga^*/W$ be a bounded measurable subset such that $\mu_{\rm Pl}(\partial \Omega) = 0$. Then
    \begin{gather*}
        \Big| \frac{N(\Omega, \Gamma_n)}{\textnormal{vol}(Y_n)} - \mu_{\rm Pl}(\Omega) \Big| \to 0
    \end{gather*}
    as $n \to \infty$.
\end{proposition}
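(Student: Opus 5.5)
Under the hypotheses of Theorem \ref{thm_bs_plancherel_conv}, we will deduce the convergence of counting functions from the weak convergence \eqref{eqn_weak_conv}, restricted to bi-$K$-invariant test functions, by a standard approximation argument. For $f\in C_c^\infty(G/\!\!/K)$ we have ${\rm tr}\,\pi(f)=0$ unless $\pi$ is spherical, and ${\rm tr}\,\pi_\lambda(f)=\widehat f(\lambda)$ otherwise. Moreover, $\dim L^2(\Gamma\backslash X)_\lambda = m(\pi_\lambda,\Gamma)$. Hence \eqref{eqn_weak_conv} reads
\[
\frac{1}{{\rm vol}(Y_n)}\sum_j \widehat f(\lambda_j^{(n)}) \longrightarrow \int_{\ga^*/W}\widehat f(\lambda)|c(\lambda)|^{-2}d\lambda,
\]
after adjusting the normalization between ${\rm vol}(\Gamma_n\backslash G)$ and ${\rm vol}(Y_n)$ (these agree, since $K$ has probability measure). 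The goal is to replace $\widehat f$ by $\mathbf 1_\Omega$.

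The main obstacle is that $\widehat f$ is an entire function of Paley--Wiener type, so it cannot be compactly supported. Moreover, the $\lambda_j$ range over the whole unitary spherical dual $\ga^*_{\rm un}/W$, which contains non-tempered parameters where the Plancherel measure vanishes. The first step is therefore to show that the non-tempered spectrum contributes $o({\rm vol}(Y_n))$, and that the spectrum with large $\|\lambda\|$ is controlled.

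For both points we use positive test functions. Take $f=g*g^*$ with $g\in C_c^\infty(G/\!\!/K)$, so that $\widehat f(\lambda)=|\widehat g(\lambda)|^2\ge 0$ on the unitary dual. By Paley--Wiener, for any compact $B\subset\ga^*_{\rm un}$ we can choose $g$ with $|\widehat g|\ge 1$ on $B$. Since $\ga^*_{\rm un}$ lies in a bounded tube $\{\|\Im\lambda\|\le\|\rho\|\}$, and the real parts are unconstrained, we first prove a uniform upper bound $\#\{j:\lambda_j\in B\}\ll {\rm vol}(Y_n)$. This follows from convergence of the left-hand side for $f=g*g^*$.

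To show that non-tempered parameters are negligible, we argue as in \cite[\S6]{Peterson_23}. Fix a compact set $B'$ of non-tempered unitary parameters at distance $\delta>0$ from $\ga^*$. We want $\#\{\lambda_j\in B'\}/{\rm vol}(Y_n)\to 0$. Take a family of test functions $g_R$ supported in balls of radius $R$ (a heat-kernel-type approximation), normalized so that $\widehat{g_R*g_R^*}$ grows like $e^{cR\delta}$ on $B'$ relative to its Plancherel integral. Comparing the two sides of the limit for fixed $R$, and then letting $R\to\infty$, gives the claim. Uniform discreteness is used here only implicitly, through Theorem \ref{thm_bs_plancherel_conv}.

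Approximation on the tempered part is then a Weierstrass/Stone-type sandwich. Since $\mu_{\rm Pl}(\partial\Omega)=0$ and $\Omega$ is bounded, for each $\epsilon>0$ we choose $W$-invariant continuous compactly supported functions $h_-\le\mathbf 1_\Omega\le h_+$ on $\ga^*$ with $\int(h_+-h_-)d\mu_{\rm Pl}<\epsilon$. We then approximate $h_\pm$ from above and below by functions of the form $\widehat f$, with $f$ a finite linear combination of functions $g*g^*$. Density of such spherical transforms in $C_0$ on bounded sets follows from Paley--Wiener and Stone--Weierstrass. The approximation errors are absorbed by $C\,|\widehat{g_0}|^2$ terms, which dominate the tails and the non-tempered region. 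These contribute at most $\epsilon$, using the uniform bound and the non-tempered negligibility. Taking $\limsup$ and $\liminf$, and then letting $\epsilon\to0$, gives $N(\Omega,\Gamma_n)/{\rm vol}(Y_n)\to\mu_{\rm Pl}(\Omega)$.
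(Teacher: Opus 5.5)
\textbf{There is a genuine gap in the lower bound.} Your architecture is the same as the paper's sketch: read the convergence of Theorem \ref{thm_bs_plancherel_conv} on bi-$K$-invariant test functions, then sandwich $\mathbf{1}_\Omega$ between Paley--Wiener transforms via Stone--Weierstrass. The problem is the non-tempered parameters \emph{close to} $\ga^*$.

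Since $N(\Omega,\Gamma_n)$ counts only tempered $\lambda_j$, a minorant $\widehat f\approx h_-$ requires you to show that $\sum_{\lambda_j\ \text{non-tempered}}\widehat f(\lambda_j)$ is $o({\rm vol}(Y_n))+O(\epsilon\,{\rm vol}(Y_n))$. Your $g_R$ argument only treats compact sets at a fixed distance $\delta>0$ from $\ga^*$, and its gain $e^{cR\delta}$ degenerates as $\delta\to0$. Take a non-tempered $\lambda_j=\xi+i\eta$ with $\eta$ small and $\xi\in{\rm supp}(h_-)$. By continuity $\widehat f(\lambda_j)\approx h_-(\xi)$, which may be close to $1$, and your uniform bound only says there are $O({\rm vol}(Y_n))$ such $\lambda_j$. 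Nothing in your argument rules out a positive proportion of the spectrum of $Y_n$ sitting at non-tempered points with $\Im\lambda_j^{(n)}\to0$ and real part in ${\rm int}(\Omega)$. Weak convergence of $\mu_{\Gamma_n}$ on $\ga^*_{\rm un}/W$ cannot tell such points apart from tempered ones. The upper bound is fine, and in fact needs no information on the non-tempered spectrum: extend $h_+$ non-negatively, and the extra terms only help.

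\textbf{The missing input is structural.} If $\pi_\lambda$ is unitary then $\varphi_\lambda(g^{-1})=\overline{\varphi_\lambda(g)}$. Since $\varphi_\lambda(g^{-1})=\varphi_{-\lambda}(g)$ and $\overline{\varphi_\lambda}=\varphi_{-\bar\lambda}$, this gives $\bar\lambda=w\lambda$ for some $w\in W$. Writing $\lambda=\xi+i\eta$, this reads $w\xi=\xi$ and $w\eta=-\eta$. So $\eta\neq0$ forces $w\neq1$, and a non-trivial Weyl element fixing $\xi$ forces $\xi\in\ga^*_{\rm sing}$. Hence non-tempered unitary parameters accumulate on $\ga^*$ only along the Plancherel-null set $\ga^*_{\rm sing}$.

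With this, choose $h_-$ supported in a compact subset of ${\rm int}(\Omega)\cap\ga^*_{\rm reg}$; this is possible because $\partial\Omega$ and $\ga^*_{\rm sing}$ are null. Some neighbourhood of ${\rm supp}(h_-)$ in $\ga^*_\C$ then contains no non-tempered unitary parameter. So $h_-$ extended by zero is continuous on $\ga^*_{\rm un}/W$, the non-tempered terms vanish identically, and your $g_R$ step becomes unnecessary. This is also what the paper's ``Urysohn on $\ga^*_{\rm un}/W$'' step tacitly needs.

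\textbf{A smaller point.} Approximation ``in $C_0$ on bounded sets'' does not control the infinitely many $\lambda_j$ outside that set. To make the $|\widehat{g_0}|^2$ absorption rigorous, proceed as follows:
\begin{itemize}
\item take $\Phi=|\widehat{g_0}|^2$ with $\Phi>0$ on the support of the extended $h$;
\item approximate $h/\Phi$ uniformly on all of $\ga^*_{\rm un}/W$ by transforms $\widehat f$, which is possible since Paley--Wiener functions vanish at infinity there ($\ga^*_{\rm un}$ lies in a bounded tube);
\item use $|h-\widehat f\,\Phi|\le\epsilon\Phi$ globally, where $\widehat f\,\Phi$ is again a spherical transform.
\end{itemize}
Since $\sum_j\Phi(\lambda_j)=O({\rm vol}(Y_n))$, the error is then $O(\epsilon\,{\rm vol}(Y_n))$.
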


For our purposes, a non-zero lower bound on $\frac{N(\Omega, \Gamma_n)}{\text{vol}(Y_n)}$ ultimately suffices. Thus, as in the statement of Theorem \ref{main-theorem}, it suffices to assume that $\Omega$ is compact with non-empty interior, as in this case it contains an open ball.

In the deduction of Proposition \ref{prop_spectral_conv} from Theorem \ref{thm_bs_plancherel_conv}, the main difficulty is that the indicator function of $\Omega$ is not of the form $\widehat{f}$ for some $f \in C_c^\infty(G /\!\!/ K)$. The latter class of functions may be identified, using the Harish-Chandra Paley--Wiener theorem, with the $W$-invariant functions in the Paley--Wiener space $\mathcal{PW}(\ga^*_{\mathbb{C}})$. All such functions restricted to $\ga^*_{\text{un}}/W$ vanish at infinity and thus we may use the Stone--Weierstrass theorem (together with Urysohn's lemma) to approximate $\mathds{1}_\Omega$ on $\ga^*_{\textnormal{un}}/W$ by elements in $\mathcal{PW}(\ga^*_{\mathbb{C}})^W$. From here the arguments in \cite{Peterson_23}, in particular the proofs of Proposition 6.5 and Theorem 1.9, can be repeated.

\subsection{Spectral gap}\label{sec:SG}
If $(\pi, V_{\pi})$ is a (not necessarily irreducible) separable unitary representation of $G$, then we define its integrability exponent by
\begin{gather*}
    q(\pi) := \inf\{q \ge 2 : \langle \pi(g) v_1, v_2 \rangle \in L^q(G) \textnormal{ for $v_1, v_2$ in a dense subspace of $V_{\pi}$}\}.
\end{gather*}
We say that $(\pi, V_{\pi})$ has a spectral gap if $q(\pi) < \infty$, and that a family of unitary representations $\{(\pi_n, V_{\pi_n})\}$ has a uniform spectral gap if we can find a uniform upper bound on $q(\pi_n)$. We call the representation \textit{tempered} if $q(\pi)=2$. 

\section{Spectral estimate}\label{sec:spectral-estimate}

In this section, we establish an upper bound on the spectral sums appearing in \eqref{eq:main-estimate} by averaging over an expanding spherical shell. As in the classical proof of quantum ergodicity, the insertion of this time dependent averaging operator, or wave propagator, will allow us in later sections to use their ergodic properties in the presence of a spectral gap.

We now drop the dependence on $n$ in the subscripts, writing $\Gamma<G$ for a cocompact lattice in $G$, $Y=\Gamma\backslash X$, and $\psi_j$ for the orthonormal basis of Maass forms on $Y$. 

\subsection{Averaging set}\label{sec:av-set}

We fix a simple factor $G_1$ and let $\mathcal{G}_2=G_2\times\cdots \times G_s$ denote the product of the remaining simple factors (if any). Let $K_1$ (resp. $\mathcal{K}_2$) and $A_1$ (resp. $\mathcal{A}_2$) denote the images of $K$ and $A$ inside $G_1$ (resp. $\mathcal{G}_2$). We decompose $\ga$ as $\ga_1\oplus {\rm Lie}(\mathcal{A}_2)$, where $\ga_1={\rm Lie}(A_1)$. For $r>0$ and $H\in\ga_1$ let $B_{\ga_1}(H,r)$ denote the Euclidean ball in $\ga_1$ of radius $r$ and centered at $H$. Similarly, $\mathcal{B}_2(0,r)$ denotes the Euclidean ball in ${\rm Lie}(\mathcal{A}_2)$ of radius $r$, centered at $0$.

Let $H_0\in\overline{\ga}_{1,+}$ be non-zero and $\epsilon_0,t>0$. In practice, $t$ will be large and tending to $\infty$, $\epsilon_0$ will be sufficiently small but fixed. We shall sometimes refer to $H_0$ as the \textit{directing element}; it will be fixed and all implied constants will depend on it. We let
\begin{equation}\label{eq:def-St}
S_t= K_1\exp(B_{\ga_1}(tH_0,\epsilon_0))K_1\subset G_1 \quad\textrm{and}\quad B=\mathcal{K}_2 \exp( \mathcal{B}_2(0,\epsilon_0)) \mathcal{K}_2\subset \mathcal{G}_2.
\end{equation}
We will propagate eigenfunctions along the subset
\begin{equation}\label{eq:StimesB}
E_t = S_t\times B\subset G
\end{equation}
using the right-regular representation $\varrho_{\Gamma\backslash G}$ of $G$ on $L^2(\Gamma\backslash G)$. Note that, for any function $f\in L^1(G)$, the adjoint of $\varrho_{\Gamma\backslash G}(f)$ is given by $\varrho_{\Gamma\backslash G}(f^\vee)$, where $f^\vee(g) := \overline{f(g^{-1})}$. We may then define an operator $U_t$ and its adjoint $U_t^*$ on $L^2(\Gamma\backslash G)$ by
\[
U_t := \varrho_{\Gamma\backslash G}(e^{-t \rho(H_0)} \mathds{1}_{E_t}) \quad\textrm{and}\quad U_t^* := \varrho_{\Gamma\backslash G}(e^{-t \rho(H_0)}\mathds{1}_{E_t^{-1}}).
\]
Note that, for $t$ sufficiently large,
\[
\textnormal{vol}(E_t)\asymp \int_{B_{\ga_1}(tH_0,\epsilon_0)\cap \ga_{1,+}} J(H)dH\asymp e^{2t \rho(H_0)},
\]
where we have used \eqref{eq:Cartan-int} and \eqref{eq:def:J}, and where $J(H)$ here denotes the radial volume factor for $G_1$. It follows that the normalization factor of $e^{-t \rho(H_0)}$ in the definition of $U_t$ is essentially the square-root of the volume of $E_t$. Observe furthermore that the propagation takes place solely within $G_1$, as the ball $B\subset\mathcal{G}_2$ is independent of the parameter $t$. This will eventually allow us to analyze the ergodic properties of $U_t$ assuming only that $G_1$, and not necessarily the other simple factors, satisfies the root data constraints of Theorem \ref{main-theorem}.

We have elected to suppress the dependency in the notation for the sets $E_t, S_t, B$, as well as for the operator $U_t$, on $H_0$ and $\epsilon_0$. While this lightens the notational load, we remark that in some places, such as in Theorems \ref{main-spec-thm} and \ref{main-geom-thm}, it will be necessary to take $\epsilon_0$ sufficiently small. More importantly, it will be crucial in Section \ref{sec_int_vol}, wherein we prove Theorem \ref{intersection-vol},  that $H_0$ is chosen to be extremal.

Some comments are in order on the relation between the sets $E_t$ and the sets used for similar purposes elsewhere in the literature. The averaging operators used in the work of Le Masson--Sahlsten \cite{Le_Masson_Sahlsten_17} were defined by suitably normalized hyperbolic balls. In the higher rank settings of Brumley--Matz \cite{Brumley_Matz_23} and Peterson \cite{Peterson_23}, ``polytopal ball'' averaging operators were used instead. The operator $U_t$ introduced above dispenses with the polytopes, but preserves their core features, by averaging only in a small neighborhood of the directing element $H_0$.  The operator $U_t$, and the choice of $H_0$ implicit in its definition, plays a similar role for the real Lie group $G$ that the normalized Hecke operator
\[
q^{-\langle \mu, \rho \rangle} \mathds{1}_{\mathbf{G}(\mathcal{O}) \varpi^\mu \mathbf{G}(\mathcal{O})}
\]
associated with a choice of dominant cocharacter $\mu$, plays on an algebraic group $\mathbf{G}$ over a non-archimedean local field (with ring of integers $\mathcal{O}$, residue field  of order $q$, and uniformizer $\varpi$). Note that, similarly to the archimedean setting, $\textnormal{vol}(\mathbf{G}(\mathcal{O}) \varpi^{\mu} \mathbf{G}(\mathcal{O})) \asymp q^{\langle \mu, 2 \rho \rangle}$. 

Ultimately, the fundamental problem, which we have already emphasized in Section \ref{sub_sec_int_vol}, is to choose a directing element $H_0$ for which the corresponding intersection volumes are minimized, as in Section \ref{sec_int_vol}. By contrast, the results we prove in this section are valid more generally, with significantly fewer constraints on the group $G$ and the directing element $H_0$. 

\subsection{Main spectral theorem and reduction to a local integral}\label{sec:reduction2local}
Let $a$ be a bounded measurable function on $Y$. When viewed as a right-$K$-invariant function on $\Gamma \backslash G$, the function $a$ determines a multiplication operator on $L^2(\Gamma \backslash G)$. We consider the time average
\begin{equation}\label{def:propagator}
\mathbf{A}(\tau)=\frac{1}{\tau}\int_{\tau}^{2\tau} U_t a U_t^* dt.
\end{equation}
In line with our convention for $U_t$, we have suppressed the dependency on the parameter $\epsilon_0$, as well as the directing element $H_0$, from the notation $\mathbf{A}(\tau)$.

The aim of this section is to prove the following result. 

\begin{theorem}[Spectral estimate]\label{main-spec-thm} Let $G$ be a product of non-compact simple real Lie groups, and $\Gamma<G$ an irreducible lattice. As in Section \ref{sec:av-set}, we fix a simple factor $G_1$ of $G$ and let $H_0\in \overline{\ga}_{1,+}$ be non-zero. Then there exists a finite $W$-stable set of hyperplanes $\{ P_i\}$ in $\ga^*$, depending on $H_0$, such that the following holds. Let $\Omega \subset \ga^*\smallsetminus\cup_i P_i$ be compact and $W$-invariant. There are constants $c, \tau_0, \epsilon_0 > 0$, depending on $\Omega$, such that for all $\tau \geq \tau_0$ and all $a\in L^\infty(Y)$ we have 
\[
\sum_{j: \lambda_j\in\Omega} |\langle a\psi_j,\psi_j\rangle|^2\le c\sum_{j:\lambda_j\in\Omega}\Big|\langle \mathbf{A}(\tau)\psi_j,\psi_j\rangle \Big|^2.
\]
Here, $\mathbf{A}(\tau)$ is defined relative to the parameters $\epsilon_0>0$ and $H_0$. 
\end{theorem}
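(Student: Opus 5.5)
The plan is to compute $\langle \mathbf{A}(\tau)\psi_j,\psi_j\rangle$ exactly in terms of $\langle a\psi_j,\psi_j\rangle$ plus off-diagonal contributions, following the Le Masson--Sahlsten template. Since $U_t^*\psi_j$ lies in the $\pi_{\lambda_j}$-isotypic component but not necessarily in its $K$-invariant line, we do not get a scalar identity directly. Instead we write
\[
\langle \mathbf{A}(\tau)\psi_j,\psi_j\rangle=\frac1\tau\int_\tau^{2\tau}\langle a\,U_t^*\psi_j,U_t^*\psi_j\rangle\,dt .
\]
Then $U_t^*\psi_j=\varrho(e^{-t\rho(H_0)}\mathds 1_{E_t^{-1}})\psi_j$. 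Because $\mathds 1_{E_t}$ is bi-$K$-invariant (it is a bi-$K_1\times\mathcal K_2$ set), $U_t^*\psi_j=h_t(\lambda_j)\psi_j$ is a multiple of $\psi_j$, with
\[
h_t(\lambda)=e^{-t\rho(H_0)}\int_{E_t}\varphi_{\lambda}(g)\,dg .
\]
This is the spherical transform. The identity then becomes exact:
\[
\langle \mathbf{A}(\tau)\psi_j,\psi_j\rangle=\Big(\frac1\tau\int_\tau^{2\tau}|h_t(\lambda_j)|^2dt\Big)\langle a\psi_j,\psi_j\rangle .
\]
The theorem therefore reduces to a purely local lower bound. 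We need $\epsilon_0,\tau_0,c'>0$ such that $F_\tau(\lambda):=\frac1\tau\int_\tau^{2\tau}|h_t(\lambda)|^2dt\ge c'$ for all $\lambda\in\Omega$ and $\tau\ge\tau_0$. Taking $c=c'^{-2}$ then concludes the proof, and it is at this step that the hyperplanes $P_i$ will be chosen. Non-tempered $\lambda_j$ are excluded because $\Omega\subset\ga^*$.

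The integral splits over the factors: $h_t(\lambda)=h^{(1)}_t(\lambda_1)\,h^{(2)}(\lambda_2)$. Here $h^{(2)}$ is the transform of the fixed small ball $B$. For $\epsilon_0$ small it is close to $\mathrm{vol}(B)$ uniformly on the compact set $\Omega$, since $\varphi_\lambda$ is close to $1$ near the identity. So the whole problem is the $G_1$ factor, which by \eqref{eq:Cartan-int} is
\[
h^{(1)}_t(\lambda)=b\,e^{-t\rho(H_0)}\int_{B_{\ga_1}(tH_0,\epsilon_0)\cap\ga_{1,+}}\varphi_\lambda(e^H)J(H)\,dH .
\]
Let $L$ be the centralizer of $H_0$. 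For $H$ near $tH_0$ we have $\beta_L(H)\gg t$, so Proposition \ref{prop_hc_expansion} applies. It gives $\varphi_\lambda(e^H)=e^{-\rho(H)+\rho_L(H)}\theta_L(H,\lambda)+O(e^{-\rho(H)-ct})$. We also have $J(H)=e^{2\rho(H)}(1+O(e^{-ct}))\,e^{-2\rho_L(H)}J_L(H)\cdot(\text{const})$ in that region; more precisely $J^L(H)\approx 2^{-\cdot}e^{2(\rho-\rho_L)(H)}$. Substituting $H=tH_0+Z$ with $|Z|<\epsilon_0$, and using that $\rho_L(H_0)=0$ and that $\varphi^L_{w\lambda}$ is invariant under translation by the center direction (it picks up $e^{iw\lambda(tH_0)}$), we obtain
\[
h^{(1)}_t(\lambda)\approx C\sum_{w\in W_L\backslash W}c^L(w\lambda)\,e^{i t\,w\lambda(H_0)}\,\Psi_{w\lambda}+o(1).
\]
Here $\Psi_{\mu}=\int_{|Z|<\epsilon_0}e^{(\rho-\rho_L)(Z)}\varphi^L_\mu(e^{Z})J_L(Z)\,dZ$ up to normalization, and the exponentials $e^{\rho(tH_0)}$ cancel with the normalization $e^{-t\rho(H_0)}$.

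To bound $F_\tau$ from below, we expand $|h_t|^2$ and time-average over $[\tau,2\tau]$. The cross terms between cosets $w,w'$ with $w\lambda(H_0)\ne w'\lambda(H_0)$ average to $O(1/\tau)$, uniformly on $\Omega$ provided these differences stay bounded away from $0$. This is where the first family of hyperplanes enters. We put into $\{P_i\}$ all hyperplanes $\{\lambda:(w\lambda-w'\lambda)(H_0)=0\}$ with $W_Lw\neq W_Lw'$ (the set $\mathfrak a^*_{\rm bad}$), together with the root hyperplanes where $c^L$ has poles. This leaves
\[
F_\tau(\lambda)\approx C\sum_{w}|c^L(w\lambda)|^2|\Psi_{w\lambda}|^2+O(\tau^{-1})+o_t(1),
\]
and it suffices that one term is bounded below. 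Since $c^L$ is nonvanishing and continuous off those hyperplanes, and $\Psi_\mu\to\mathrm{vol}\cdot\varphi^L_\mu(e)=\mathrm{vol}>0$ as $\epsilon_0\to0$ uniformly for $\mu$ in a compact set, choosing $\epsilon_0$ small gives $|\Psi_{w\lambda}|\ge \tfrac12\Psi_0$. Compactness of $\Omega$ then yields a uniform $c'$.

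The main obstacle I expect is making the asymptotic uniform on $\Omega$ and controlling the off-diagonal averaging. Specifically, I need uniformity of Proposition \ref{prop_hc_expansion} in $\lambda$ near the hyperplanes where the individual $c^L(w\lambda)$ blow up. These are hyperplanes $\lambda(\alpha^\vee)=0$ with $\alpha\notin\Phi_L$; in $\theta_L$ the poles cancel, but individual terms do not. I would handle this simply by including these hyperplanes in $\{P_i\}$, which costs nothing since $\Omega$ avoids them with a positive margin.
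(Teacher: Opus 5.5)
Your proposal is correct and follows essentially the paper's route: the scalar identity \eqref{local factors}, the lower bound for the fixed ball from Lemma \ref{lemma:ball}, the Gangolli--Varadarajan expansion along the centralizer of $H_0$ (Lemma \ref{lemma:expansion}), and the diagonal/off-diagonal analysis of Proposition \ref{prop:int-W-sum} with exceptional set $\ga^*_{\rm bad}$. The differences are only bookkeeping. You keep the weight $e^{\rho^L(Z)}$ inside your local integral $\Psi_\mu$, so your error is $o(1)$ as $t\to\infty$ for fixed $\epsilon_0$, whereas the paper replaces that weight by $1$ and must beat an $O(\epsilon_0^{\dim X_M+1})$ error against an $\epsilon_0^{2\dim X_M}$ main term. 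You also add the root hyperplanes to $\{P_i\}$ by hand, which Lemma \ref{lemma:max-Levi} shows is unnecessary since $\ga^*_{\rm sing}\subseteq\ga^*_{\rm bad}$.
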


We begin by reducing Theorem \ref{main-spec-thm} to a purely local statement (independent of $\Gamma$),  involving only $G_1$. Using the notation introduced in Section \ref{sec:av-set}, we have $\ga^*=\ga_1^*\oplus {\rm Lie}(\mathcal{A}_2)^*$. We may decompose $\lambda\in\ga^*$ as $\lambda=\lambda_1+\lambda_2$ according to this decomposition. Since $U_t^*$ acts on Maass forms $\psi_\lambda$ of spectral parameter $\lambda$ by the scalar
\[
e^{-t \rho(H_0)} \widehat{\mathds{1}_{E_t^{-1}}}(-\lambda) = e^{-t \rho(H_0)}\widehat{\mathds{1}_{E_t}}(\lambda) = e^{-t \rho(H_0)}\widehat{\mathds{1}_{S_t}}(\lambda_1)\widehat{\mathds{1}_B}(\lambda_2),
\]
we find
\begin{align}
\langle \mathbf{A}(\tau)\psi_\lambda,\psi_\lambda\rangle&=\frac{1}{\tau}\int_{\tau}^{2\tau} \langle U_t a U_t^*\psi_\lambda,\psi_\lambda\rangle dt\nonumber\\
&=\frac{1}{\tau}\int_{\tau}^{2\tau} \langle a U_t^*\psi_\lambda, U_t^*\psi_\lambda\rangle dt\nonumber\\
&= |\widehat{\mathds{1}_B}(\lambda_2)|^2 \left(\frac{1}{\tau}\int_{\tau}^{2\tau} e^{-2t \rho(H_0)}|\widehat{\mathds{1}_{S_t}}(\lambda_1)|^2 dt\right)\langle a\psi_\lambda,\psi_\lambda\rangle.\label{local factors}
\end{align}
Here, $\widehat{\mathds{1}_E}$ denotes the Harish-Chandra transform \eqref{eq:HC-transform}, for the group $G_1$ or $\mathcal{G}_2$ as appropriate, of the characteristic function of a subset $E$.

Let $\Omega_2\subset {\rm Lie}(\mathcal{A}_2)^*$ be compact and invariant under the Weyl group for $\mathcal{G}_2$. An elementary argument using compactness, to be given below, shows that $\widehat{\mathds{1}_B}(\lambda_2)$ is bounded away from zero, uniformly for $\lambda_2\in\Omega_2$. 

\begin{lemma}\label{lemma:ball}
Let $G$ be a product of non-compact simple real Lie groups. Let $\Omega\subset\ga^*$ be compact and $W$-invariant. Then there are constants $\epsilon_0,c>0$ such that the characteristic function $\mathds{1}_B$ of $B=K\exp(B_\ga(0,\epsilon_0))K$ satisfies $|\widehat{\mathds{1}_B}(\lambda)|\geq c$ uniformly for $\lambda\in\Omega$. 
\end{lemma}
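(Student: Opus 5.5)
The plan is to compare the normalized transform $\widehat{\mathds{1}_B}(\lambda)/\mathrm{vol}(B)$ with the constant $\varphi_{-\lambda}(e)=1$. By definition \eqref{eq:HC-transform},
\[
\widehat{\mathds{1}_B}(\lambda)=\int_B \varphi_{-\lambda}(g)\,dg ,
\]
so
\[
\widehat{\mathds{1}_B}(\lambda)-\mathrm{vol}(B)=\int_B\big(\varphi_{-\lambda}(g)-1\big)\,dg .
\]
It therefore suffices to show that $|\varphi_{-\lambda}(g)-1|\le 1/2$ for all $g\in B$ and all $\lambda\in\Omega$, once $\epsilon_0$ is small enough. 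Granting this, we get $|\widehat{\mathds{1}_B}(\lambda)|\ge \tfrac12\mathrm{vol}(B)$. Since $\epsilon_0$ is then fixed, $c=\tfrac12\mathrm{vol}(B)$ is a positive constant. (In the Cartan coordinates \eqref{eq:Cartan-int} one sees that $\mathrm{vol}(B)\asymp\epsilon_0^{\dim X}$ is positive for each fixed $\epsilon_0$, and nothing more is needed.)

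The key step is a uniform continuity estimate for the spherical function near the identity. I would use the Harish-Chandra integral \eqref{eqn_spherical_fn}:
\[
\varphi_{-\lambda}(g)=\int_K e^{(-i\lambda+\rho)(\mathcal{H}(kg))}\,dk .
\]
The map $(k,g)\mapsto \mathcal{H}(kg)$ is smooth and vanishes at $g=e$. By compactness of $K$, there is a constant $C$ with $\|\mathcal{H}(k\exp(H)k')\|\le C\|H\|$ for all $k,k'\in K$ and $\|H\|\le 1$. In fact $\|\mathcal{H}(kg)\|\le d(gK,eK)$ by the standard Kostant convexity/distance bound, but the crude smooth bound is enough here.

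Since $\Omega$ is compact, $\|\lambda\|+\|\rho\|\le R$ for all $\lambda\in\Omega$. Hence for $g\in B$ the integrand satisfies
\[
\big|e^{(-i\lambda+\rho)(\mathcal{H}(kg))}-1\big|\le e^{RC\epsilon_0}-1 .
\]
Choosing $\epsilon_0$ with $e^{RC\epsilon_0}-1\le 1/2$ gives the claim. This is also where the $W$-invariance and compactness of $\Omega$ enter: the constants depend only on $\Omega$. No appeal to Proposition \ref{prop_hc_expansion} or to bounds on the $c$-function is needed.

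There is no real obstacle. The only point needing care is that the bound must be uniform in $\lambda\in\Omega$, and not merely valid for each $\lambda$ separately. The explicit integral representation handles this, because the dependence on $\lambda$ enters only through $\|\lambda\|$. For the product situation of \eqref{eq:def-St}, the same argument applies verbatim to $\mathcal{G}_2$ (or to any product group), since the statement concerns a general product $G$.
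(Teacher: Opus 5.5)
Your proof is correct and follows the same strategy as the paper: show that $\varphi_{-\lambda}$ stays uniformly close to $1$ on $B$ for $\lambda\in\Omega$, then integrate over $B$. The only difference is how the uniformity in $\lambda$ is obtained. The paper argues softly, from joint continuity of $(H,\lambda)\mapsto \mathrm{Re}\,\varphi_{-\lambda}(e^H)$ together with a finite subcover of $\Omega$. You instead extract an explicit modulus $e^{RC\epsilon_0}-1$ from the Harish-Chandra integral, which gives a concrete choice $\epsilon_0\asymp (1+\sup_{\lambda\in\Omega}\|\lambda\|)^{-1}$ and makes clear that only boundedness of $\Omega$ is used, not its $W$-invariance.
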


\begin{proof}
As usual, let $\varphi_\lambda$ denote the spherical function on $G$. For all $\lambda\in\ga^*_{\C}$, we have $\varphi_\lambda(e)=1$; moreover, the map
\[
{\rm Re}\,\varphi: \ga\times \ga^*\rightarrow \R,\qquad (H,\lambda)\mapsto {\rm Re}\,\varphi_{-\lambda}(e^H)
\]
is continuous. Thus, for any $\lambda\in\ga^*$ we may find a neighborhood $U_\lambda\times V_\lambda\subset \ga\times\ga^*$ of $(0,\lambda)$ on which ${\rm Re}\, \varphi>1/2$, say. By the compactness of $\Omega$ there exists a finite subcover $\{V_{\lambda_j}\}_{j=1}^N$ of $\{V_\lambda: \lambda\in\Omega\}$; let $U=\bigcap_{j=1}^NU_{\lambda_j}$. Then ${\rm Re}\,\varphi_{-\lambda}(e^H)> 1/2$ for all $H\in U$ and $\lambda\in\Omega$. We can therefore choose $\epsilon_0>0$ sufficiently small so that
\[
{\rm Re}\left(\widehat{\mathds{1}_B}(\lambda)\right) = b_G \int_{B_{\ga}(0,\epsilon_0) \cap \ga_+}  {\rm Re}\,\varphi_{-\lambda}(e^H) J(H)dH >\frac{ b_G}{2} \int_{B_{\ga}(0,\epsilon_0) \cap \ga_+}  J(H) dH > 0,
\]
for all $\lambda \in \Omega$, as desired.
\end{proof}

We are therefore reduced to proving uniform lower bounds for the integral over $t$ in \eqref{local factors}. We note that Lemma \ref{lemma:ball} imposes no further conditions on $\Omega_2$, beyond compactness and Weyl group invariance. In view of the factorization of the scalar factors in \eqref{local factors} according to the components $\lambda_1$ and $\lambda_2$, this implies that the hyperplanes $P_i\subset\ga_1^*\oplus {\rm Lie}(\mathcal{A}_2)^*$ that $\Omega$ must avoid in Theorem \ref{main-spec-thm} can, and will, be taken to contain ${\rm Lie}(\mathcal{A}_2)^*$. Their exact nature is described in the following result (with $G$ playing the role of the privileged factor $G_1$), which will then be enough to prove (a more precise form of) Theorem \ref{main-spec-thm}.

\begin{prop}\label{prop:local-lower-bd}
Let $G$ be a non-compact simple real Lie group. Let $H_0\in\overline{\ga}_+$ be non-zero, with centralizer $M$. Fix a compact and $W$-invariant subset
\begin{equation}\label{eq:Omega-condition}
\Omega\subset\ga^*\smallsetminus \ga^*_{\rm bad},\qquad \ga^*_{\rm bad}:=\bigcup_{w\notin W_M} W\{\lambda\in\ga^*:  \lambda(H_0- wH_0)=0 \big\}.
\end{equation}
Then there is $\epsilon_0>0$ (implicit in the definition of $S_t$) as well as constants $c,\tau_0>0$ such that 
\[
\frac{1}{\tau} \int_{\tau}^{2\tau} e^{-2t \rho(H_0)} |\widehat{\mathds{1}_{S_t}}(\lambda)|^2 dt > c
\]
for all $\lambda\in \Omega$ and all $\tau \geq \tau_0$. 
\end{prop}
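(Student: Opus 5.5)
Compute $\widehat{\mathds{1}_{S_t}}(\lambda)$ asymptotically via the generalized Harish-Chandra expansion (Proposition \ref{prop_hc_expansion}) relative to the Levi $L=M$. By \eqref{eq:HC-transform},
\[
\widehat{\mathds{1}_{S_t}}(\lambda)=b_G\int_{B_\ga(tH_0,\epsilon_0)\cap\ga_+}\varphi_{-\lambda}(e^H)J(H)\,dH .
\]
On the ball $B_\ga(tH_0,\epsilon_0)$ every $\alpha\in\Phi^+\smallsetminus\Phi_M^+$ satisfies $\alpha(H)\ge t\alpha(H_0)-O(\epsilon_0)\gg t$. Hence $\beta_M(H)\gg t$ and the error term in Proposition \ref{prop_hc_expansion} is $O(t^s e^{-ct})$, uniformly for $\lambda$ in the compact set $\Omega$. (If $\Omega$ meets $\ga^*_{\rm sing}$ I would first use the extension to a tube mentioned after Proposition \ref{prop_hc_expansion}, or continuity in $\lambda$. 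Note that $\theta_M$ is holomorphic off the hyperplanes $\alpha^\perp$ with $\alpha\notin\Phi_M$.) Also write $J(H)=e^{2\rho(H)}\cdot 2^{-\dim N}(1+O(e^{-ct}))\cdot J_M(H)e^{-2\rho_M(H)}\cdot(\ldots)$. More precisely, $J^M(H)=e^{2(\rho-\rho_M)(H)}(c_0+O(e^{-ct}))$, while $J_M(H)$ stays bounded since $\alpha(H)=O(\epsilon_0)$ for $\alpha\in\Phi_M$. This gives
\[
e^{-t\rho(H_0)}\widehat{\mathds{1}_{S_t}}(\lambda)=c_0\sum_{w\in W_M\backslash W}c^M(-w\lambda)\int_{B(tH_0,\epsilon_0)\cap\ga_+}e^{\rho(H)-t\rho(H_0)}e^{\rho_M(H)}\varphi^M_{-w\lambda}(e^H)J_M(H)e^{-2\rho_M(H)}\,dH+o(1).
\]

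Next, factor $H=tH_0+Z$ with $|Z|<\epsilon_0$. Since $tH_0$ is central in $M$, we have $\varphi^M_{\mu}(e^{tH_0+Z})=e^{i\mu(tH_0)}\varphi^M_\mu(e^Z)$. Moreover $\rho_M(H_0)=0$, because $H_0$ is orthogonal to $\Phi_M$. So the $t$-dependence is a pure phase:
\[
e^{-t\rho(H_0)}\widehat{\mathds{1}_{S_t}}(\lambda)=\sum_{w\in W_M\backslash W}a_w(\lambda)\,e^{-it(w\lambda)(H_0)}+o(1),
\qquad a_w(\lambda)=c_0\,c^M(-w\lambda)\int_{|Z|<\epsilon_0}e^{(\rho-\rho_M)(Z)}\varphi^M_{-w\lambda}(e^Z)J_M(Z)\,dZ .
\]
(The region is restricted to $\ga_{M,+}$ suitably.) By the argument of Lemma \ref{lemma:ball} applied to $M$, for $\epsilon_0$ small the integral is bounded below in modulus by $c\,{\rm vol}$, uniformly on $\Omega$. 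Also $c^M(-w\lambda)$ is nonvanishing and bounded on $\Omega$, by \eqref{def:cM}–\eqref{eq:cM-domain}, after excluding the relevant hyperplanes, which I would add to $\ga^*_{\rm bad}$ if needed. Thus $|a_w(\lambda)|\ge c>0$ uniformly.

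Finally, average $|\cdot|^2$ over $t\in[\tau,2\tau]$. Expanding the square gives diagonal terms $\sum_w|a_w|^2\ge c$ and cross terms $a_w\overline{a_{w'}}\,\frac1\tau\int_\tau^{2\tau}e^{-it\lambda(w^{-1}H_0-w'^{-1}H_0)}dt$. The cosets $W_M\backslash W$ correspond bijectively to the points $w^{-1}H_0$, since $W_M$ is the stabilizer of $H_0$. So for distinct cosets the frequency is $\lambda(H_0-uH_0)$ transported by $W$ with $u\notin W_M$. This is exactly what $\ga^*_{\rm bad}$ in \eqref{eq:Omega-condition} excludes, and by compactness it is bounded below by $\delta>0$ on $\Omega$. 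Each cross term is therefore $O(1/(\delta\tau))$, and the whole average is at least $c-O(\tau^{-1})-o(1)>c/2$ for $\tau\ge\tau_0$.

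The main obstacle I expect is the uniformity in the first step. The expansion must hold uniformly on $\Omega$, including near $\ga^*_{\rm sing}\cap\Omega$ where individual terms of $\theta_M$ might blow up, and the boundary of $\ga_+$ cuts the ball, since $H_0$ is singular. I would handle the singular $\lambda$ issue by noting that only the hyperplanes $\alpha^\perp$, $\alpha\notin\Phi_M$, matter for $c^M$, and by checking that these are contained in $\ga^*_{\rm bad}$: the reflection $s_\alpha\notin W_M$ gives $\lambda(H_0-s_\alpha H_0)=\alpha(H_0)\lambda(\alpha^\vee)$. The $W_M$-symmetrization handles the rest.
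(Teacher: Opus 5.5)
Your proposal is correct and is essentially the paper's argument: the Gangolli--Varadarajan expansion along $M$ on the ball about $tH_0$ (Lemma \ref{lemma:expansion}), the $t$-dependence isolated as a pure phase because $H_0$ is central in $M$, and then the diagonal/off-diagonal analysis of Proposition \ref{prop:int-W-sum}, in which $\ga^*_{\rm bad}$ excludes exactly the vanishing frequencies. There are two minor differences. First, you keep the factor $e^{(\rho-\rho_M)(Z)}$ inside $a_w(\lambda)$, so your remainder is $o(1)$ as $t\to\infty$ for fixed $\epsilon_0$; the paper instead gets $O(\epsilon_0^{\dim X_M+1})$ and must shrink $\epsilon_0$ against the main term. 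Second, you only check that the polar hyperplanes of the $c^M(-w\lambda)$ lie in $\ga^*_{\rm bad}$ and extend the expansion by continuity in $\lambda$, whereas the paper proves $\ga^*_{\rm sing}\subseteq\ga^*_{\rm bad}$ (Lemma \ref{lemma:max-Levi}) so that Proposition \ref{prop_hc_expansion} applies verbatim.
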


The basic strategy of the proof of Proposition \ref{prop:local-lower-bd} is to use the asymptotic for the spherical function $\varphi_\lambda$ provided by Proposition \ref{prop_hc_expansion} along the direction determined by $H_0$ to reduce the estimate to a weighted sum of complex exponentials. For the off-diagonal terms, one then needs to provide a uniform bound in $\tau$ of the quantity
\[
\max_{\substack{w, w' \in W/W_M\\ w\neq w' \textrm{ in }  W/W_M }}\bigg|\int_{\tau}^{2\tau} e^{-it \lambda (wH_0-w'H_0)}  dt\bigg|.
\]
Such a uniform bound will reflect the oscillation of the integrand, provided the phases $\lambda (wH_0-w'H_0)$ are bounded away from zero. The latter condition is ensured by the hypothesis on $\Omega$.

\subsection{Reduction to complex exponentials}
We now reduce the proof of Proposition \ref{prop:local-lower-bd} to a corresponding lower bound, stated in Proposition \ref{prop:int-W-sum} below, in which the variation in the $t$-parameter is expressed solely through complex exponentials. The key input is the next lemma, which uses the asymptotic formula for $\varphi_\lambda$ provided by Proposition \ref{prop_hc_expansion} as a critical ingredient.

\begin{lemma}\label{lemma:expansion}
Let $G$ be a non-compact simple real Lie group. Let $\Omega\subset\ga_{\rm reg}^*$ be compact and $W$-invariant. Let $H_0\in\overline{\ga}_+$ with centralizer $M$.  Let $X_M = M/K_M$ be the globally symmetric space associated to $M$.  Put $C_M = (b_G/b_M) 2^{-\ell}$, where $\ell = \sum_{\alpha \in \Phi^+ \smallsetminus \Phi_M^+} m_\alpha$ and $b_L$ is defined in \eqref{L-int-formula}. 

For any $\epsilon_0>0$ (implicit in the definition of $S_t$) there is $t_0> 0$ such that for all $\lambda \in \Omega$ and all $t\geq t_0$ we have
    \begin{gather*}
e^{-t \rho(H_0)} \widehat{\mathds{1}_{S_t}}(\lambda) = C_M \sum_{w \in W / W_M }c^M( - w\lambda)\widehat{\mathds{1}_{B^M}}(w\lambda)   e^{-itw\lambda( H_0)} +O( \epsilon_0^{ \dim X_M + 1} ),
    \end{gather*} 
 where $B^M=K_M\exp(B_\ga(0,\epsilon_0))K_M \subset M$.  Here the implied constant depends only on $\Omega$ and $H_0$.
 
\end{lemma}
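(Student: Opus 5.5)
We plan to start from the Cartan integration formula \eqref{eq:Cartan-int}. Since $S_t$ is bi-$K$-invariant, it gives
\[
\widehat{\mathds{1}_{S_t}}(\lambda)=b_G\int_{B_\ga(tH_0,\epsilon_0)\cap\ga_+}\varphi_{-\lambda}(e^H)J(H)\,dH .
\]
For $t$ large, every $H$ in the ball satisfies $\beta_M(H)\ge \tfrac12 t\beta_M(H_0)\to\infty$. This holds because $\alpha(H_0)>0$ for every $\alpha\in\Phi^+\smallsetminus\Phi_M^+$, since $M$ is the centralizer of $H_0$. We therefore apply Proposition \ref{prop_hc_expansion} with $L=M$, using compactness of $\Omega\subset\ga^*_{\rm reg}$ to make the constants uniform:
\[
\varphi_{-\lambda}(e^H)=e^{-\rho(H)+\rho_M(H)}\sum_{w\in W_M\backslash W}c^M(-w\lambda)\varphi^M_{-w\lambda}(e^H)+O\big(t^s e^{-\rho(H)}e^{-2\beta_M(H)}\big).
\]

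Next we factor the Jacobian as $J=J_MJ^M$. For $\alpha\notin\Phi_M$ with $\alpha(H)\asymp t$, we have $\sinh\alpha(H)=\tfrac12 e^{\alpha(H)}(1+O(e^{-2\alpha(H)}))$. Hence
\[
J^M(H)=2^{-\ell}e^{2\rho(H)-2\rho_M(H)}\big(1+O(e^{-ct})\big).
\]
The factor $e^{-\rho+\rho_M}$ from the spherical function then combines with $J^M$ to leave $2^{-\ell}e^{\rho(H)-\rho_M(H)}$.

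The essential point is that $\rho-\rho_M$ vanishes on the roots of $M$. In fact it is $W_M$-invariant and orthogonal to $\Phi_M$. Writing $H=tH_0+Y$ with $|Y|<\epsilon_0$, and using $\rho_M(H_0)=0$ (since $\alpha(H_0)=0$ for $\alpha\in\Phi_M$), we get
\[
(\rho-\rho_M)(H)=t\rho(H_0)+(\rho-\rho_M)(Y).
\]
The extra factor $e^{(\rho-\rho_M)(Y)}$ is $1+O(\epsilon_0)$. Likewise $\varphi^M_{-w\lambda}(e^{tH_0+Y})=e^{-itw\lambda(H_0)}\varphi^M_{-w\lambda}(e^Y)$, because $e^{tH_0}$ is central in $M$. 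After multiplying by $e^{-t\rho(H_0)}$, each summand becomes
\[
2^{-\ell}b_G\,c^M(-w\lambda)e^{-itw\lambda(H_0)}\int_{|Y|<\epsilon_0,\ Y\in\ga_{+}}\varphi^M_{-w\lambda}(e^Y)J_M(Y)(1+O(\epsilon_0))\,dY .
\]
We need to relate this to $\widehat{\mathds{1}_{B^M}}(w\lambda)$, computed by \eqref{L-int-formula} over the chamber $\ga_{M,+}$. Near $tH_0$, the set $\ga_+$ translated by $-tH_0$ coincides, within the $\epsilon_0$-ball, with $\ga_{M,+}$. The $W_M$-sum over cosets and the $W_M$-invariance of the integrand match the chamber conventions, and $b_G/b_M$ accounts for the normalisations. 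This yields the main term with constant $C_M$. Since $W_M\backslash W$ versus $W/W_M$ is a relabelling $w\mapsto w^{-1}$ compatible with the formula, it is harmless.

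It remains to bound the errors. The integral $\int_{|Y|<\epsilon_0}J_M(Y)\,dY$ is $\asymp\epsilon_0^{\dim X_M}$, because $J_M(Y)\asymp\prod_{\Phi_M^+}|\alpha(Y)|^{m_\alpha}$ and $\dim\ga+\sum_{\Phi_M^+}m_\alpha=\dim X_M$. The factor $O(\epsilon_0)$ coming from $e^{(\rho-\rho_M)(Y)}$ and from the Taylor error therefore contributes $O(\epsilon_0^{\dim X_M+1})$, using that $c^M$ and $\varphi^M$ are bounded on $\Omega$. The Gangolli--Varadarajan error contributes $O(t^se^{-ct}\epsilon_0^{\dim X_M})$, which is absorbed once $t\ge t_0(\epsilon_0)$.

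We expect the main obstacle to be the careful bookkeeping of the normalising constants and of the chamber identification near $tH_0$, ensuring the main term matches $\widehat{\mathds{1}_{B^M}}$ exactly rather than up to $O(\epsilon_0)$. We also need the error to be genuinely of order $\epsilon_0^{\dim X_M+1}$, with constants uniform in $\lambda\in\Omega$.
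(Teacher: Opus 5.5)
Your proposal is correct and follows essentially the same route as the paper's proof. The shared steps are: Cartan integration; the Gangolli--Varadarajan expansion along $M$; the asymptotic $e^{-t\rho(H_0)-\rho^M(H)}J^M(H)=2^{-\ell}(1+O(\epsilon_0))$; extracting the phase $e^{-itw\lambda(H_0)}$ via centrality of $e^{tH_0}$ in $M$ (you do this directly in the integrand, the paper via $S_t^M$); the identification $B_\ga(tH_0,\epsilon_0)\cap\ga_+=B_\ga(tH_0,\epsilon_0)\cap\ga_{M,+}$ for large $t$; and choosing $t_0$ depending on $\epsilon_0$ to absorb the exponentially small errors.

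One minor correction concerns the indexing. The summand $c^M(-w\lambda)\widehat{\mathds{1}_{B^M}}(w\lambda)e^{-itw\lambda(H_0)}$ is invariant under $w\mapsto sw$ for $s\in W_M$, but not under $w\mapsto ws$. So the sum genuinely runs over $W_M\backslash W$, which is how the paper's $W/W_M$ should be read, and your relabelling $w\mapsto w^{-1}$ does not preserve the formula as you claim.
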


\begin{proof}
It follows from the definition \eqref{eq:HC-transform} that
    \begin{gather*}
e^{-t \rho(H_0)} \widehat{\mathds{1}_{S_t}}(\lambda)= b_G e^{-t \rho(H_0)} \int_{B_\ga(t H_0,\epsilon_0)_+} \varphi_{-\lambda}(e^H) J(H) dH,
    \end{gather*}
where $B_\ga(t H_0,\epsilon_0)_+=B_\ga(t H_0,\epsilon_0)\cap\ga_+$. Since $\Omega$ avoids the singular locus, we may apply Proposition \ref{prop_hc_expansion} to $\varphi_{-\lambda}$, with $L=M$, to get
\begin{equation}\label{eq:asymp-hatSt}
e^{-t \rho(H_0)} \widehat{\mathds{1}_{S_t}}(\lambda)=\sum_{w \in W / W_M} c^M(-w\lambda) I^M(w\lambda) +O(\mathcal{E}_t),
  \end{equation}
 the implied constant depending only on $\Omega$, where
  \[
  I^M(\lambda)= b_G \int_{B_\ga(t H_0,\epsilon_0)_+} \left(e^{-t \rho(H_0)-\rho^M(H)}J^M(H)\right) \varphi_{-\lambda}^M(e^H) J_M(H)dH,
  \]
with $\rho^M(H)=\rho(H)-\rho_M(H)$, the factor $J_M(H)$ defined as in \eqref{eq:def:JM}, and
\[
\mathcal{E}_t= \int_{B_\ga(t H_0,\epsilon_0)_+} \left(e^{-t \rho(H_0)-\rho(H)}J(H)\right)e^{-2\beta_M(H)}(1+\|H\|)^s  dH,
\]
with $\beta_M(H)$ defined in \eqref{defn:beta} and $s$ as in Proposition \ref{prop_hc_expansion}.

We now let $S_t^M=e^{tH_0}B^M=K_M\exp(B_\ga(tH_0,\epsilon_0))K_M$ be the ball of radius $\epsilon_0$ about $e^{t H_0}$ in $M$. We claim that for any $\epsilon_0>0$ there is $t_0>0$ such that for all $t\geq t_0$ and all $\lambda \in \Omega$, we have
\begin{equation}\label{main-and-error}
  I^M(\lambda) = C_M\widehat{\mathds{1}_{S_t^M}}(\lambda)+O(\epsilon_0^{ \dim X_M + 1}) \qquad\textrm{and}\qquad \mathcal{E}_t=O(\epsilon_0^{ \dim X_M + 1}).
\end{equation}
Inserting this into \eqref{eq:asymp-hatSt}, we recover the statement of the lemma. Indeed, $\max_{\lambda\in\Omega}|c^M( -\lambda)|\ll 1$, since $-\Omega$ is bounded away from the polar hyperplanes of $c^M$, and
\[
        \widehat{\mathds{1}_{S_t^M}}(\lambda) =  \int_M \mathds{1}_{B^M}(e^{-tH_0}m)\varphi_{-\lambda}^M(m) dm=\int_M \mathds{1}_{B^M}(m)\varphi_{-\lambda}^M(e^{tH_0}m) dm= e^{-it\lambda(H_0)} \widehat{\mathds{1}_{B^M}}(\lambda).
  \]
In the last equality, we have used the fact, which follows from \eqref{eqn_spherical_fn}, that $\varphi^M_{\lambda}(e^Z m) = e^{i \lambda(Z)} \varphi^M_{\lambda}(m)$ whenever $Z \in \ga$ centralizes $M$. 

For the main term identity in \eqref{main-and-error}, we begin by observing that the parenthetical expression in the definition of $I^M(\lambda)$ is asymptotically constant on the support of the integral:
\begin{equation}\label{eq:normalization}
e^{-t \rho(H_0)-\rho^M(H)}J^M(H)=2^{-\ell}(1+O(\epsilon_0)),\qquad \forall\, H \in B_\ga(tH_0,\epsilon_0),\;\forall\, t \ge t_0.
\end{equation}
To see this, we use $\alpha(H_0) > 0$ for $\alpha \in \Phi^+ \smallsetminus \Phi_M^+$ to show that, for such $H$:
\[
J^M(H) =2^{-\ell}\prod_{\alpha \in \Phi^+ \smallsetminus \Phi_M^+} \big(e^{\alpha(H)} - e^{-\alpha(H)}\big)^{m_\alpha} = 2^{-\ell}e^{2 \rho^M(H)} +  O(e^{2t \rho^M(H_0) - 2t \beta_M(H_0) }),
\]
where the quantity $\beta_M(H_0) > 0$ is defined in (\ref{defn:beta}).  If we choose $t_0$ such that $e^{ -2t_0 \beta_M(H_0)} < \epsilon_0$, this becomes $J^M(H) = 2^{-\ell} e^{2 \rho^M(H)} (1+O(\epsilon_0))$.  Combining this with
\[
e^{\rho^M(H)}=e^{t\rho^M(H_0)}e^{\rho^M(H-tH_0)}=e^{t\rho(H_0)}(1+O(\epsilon_0)),
\]
we obtain the claim \eqref{eq:normalization}, which may then be inserted into $I^M(\lambda)$ to yield
\[
I^M(\lambda) = C_M b_M \int_{B_\ga(t H_0,\epsilon_0)_+} \varphi_{-w\lambda}^M(e^H) J_M(H)dH + \int_{B_\ga(t H_0,\epsilon_0)_+} O(\epsilon_0) \varphi_{-\lambda}^M(e^H) J_M(H)dH.
\]
To estimate the error term, we may apply the bounds $| \varphi_{-\lambda}^M(e^H) | \le 1$, ${\rm vol}(B_\ga(t H_0,\epsilon_0)_+) \ll \epsilon_0^{\dim \ga}$, and $J_M(H) \ll \epsilon_0^{ | \Phi^+_M |}$ for $H \in B_\ga(t H_0,\epsilon_0)_+$ to see that it is $O( \epsilon_0^{ \dim \ga + | \Phi^+_M | + 1}) = O(\epsilon_0^{ \dim X_M + 1})$, as required.

To express the main term as $C_M\widehat{\mathds{1}_{S_t^M}}(\lambda)$, we fix $t_0$ large enough so that $B_\ga(tH_0,\epsilon_0)_+ = B_\ga(tH_0,\epsilon_0) \cap \ga_{M,+}$ for all $t > t_0$, where $\ga_{M,+}$ is defined in \eqref{aLdef}. In particular, the union $\bigcup_{w\in W_M}wB_\ga(t H_0,\epsilon_0)_+$ is disjoint, with closure $\overline{B}_\ga(t H_0,\epsilon_0)$. Since $J_M(wH)=J_M(H)$, $\varphi_{\lambda}^M(e^{wH}) = \varphi_{\lambda}^M(e^{H})$ for $w \in W_M$, the integration formula \eqref{L-int-formula} implies that
\[
b_M \int_{B_\ga(tH_0,\epsilon_0)_+}\varphi_{-\lambda}^M(e^H) J_M(H)dH = \widehat{\mathds{1}_{S_t^M}}(\lambda)
\]
as required.

To estimate the error term $\mathcal{E}_t$, we first apply \eqref{eq:def:J} to deduce that $e^{-t\rho(H_0)-\rho(H)}J(H)\ll 1$ on $B_\ga(tH_0,\epsilon_0)_+$. Moreover, given $\epsilon_0>0$, there exists $t_0>0$ such that $2\beta_M(H) \geq \tfrac{3}{2} t \beta_M(H_0)$ for all $t\geq t_0$ and $H\in B_\ga(tH_0,\epsilon_0)$. We deduce 
\[
\mathcal{E}_t= O\left(e^{-\tfrac{3}{2} t \beta_M(H_0)}\int_{B_\ga(tH_0,\epsilon_0)_+}(1+\|H\|)^sdH\right)=O(e^{-t \beta_M(H_0)}),
\]
by the polynomial growth in $t$ of the integral.  Choosing $t_0$ such that $e^{ -t_0 \beta_M(H_0)} < \epsilon_0^{\dim X_M + 1}$ completes the proof.\end{proof}

Lemma \ref{lemma:expansion} requires that the compact $\Omega$ avoid the singular locus. Proposition \ref{prop:local-lower-bd}, on the other hand, requires that $\Omega$ avoid $\ga^*_{\rm bad}$, a set of hyperplanes depending on $H_0$ defined in \eqref{eq:Omega-condition}. The following lemma makes use of our assumption that $H_0$ is non-zero to show an inclusion of the former in the latter.

\begin{lemma}\label{lemma:max-Levi}
Let $G$ be a non-compact simple real Lie group. Let $H_0\in\overline{\ga}_+$ be non-zero. Then $\ga^*_{\textnormal{sing}} \subseteq \ga^*_{\textnormal{bad}}$.
\end{lemma}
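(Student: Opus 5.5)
The plan is to show directly that each root hyperplane $\alpha^\perp$, $\alpha\in\Phi^+$, coincides with one of the hyperplanes $\{\lambda: \lambda(H_0-wH_0)=0\}$ for a suitable $w\notin W_M$. Since both $\ga^*_{\rm sing}$ and $\ga^*_{\rm bad}$ are $W$-stable, it is enough to find, for each root hyperplane, some $W$-translate of it that is of this form.

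First I use simplicity. Because $G$ is simple, $\Phi$ is irreducible, so $W$ acts irreducibly on $\ga$. Since $H_0\neq 0$, the orbit $WH_0$ spans $\ga$. Consequently, for any root $\beta$ there is $u\in W$ with $\beta(uH_0)\neq 0$; otherwise $\beta$ would vanish on the span of $WH_0$, which is all of $\ga$. Replacing $\beta$ by $\alpha:=u^{-1}\beta$, and noting that $\beta^\perp = u\,\alpha^\perp$ (viewing the hyperplanes inside $\ga^*$), I reduce to the case of a root $\alpha$ with $\alpha(H_0)\neq 0$.

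Next take the reflection $w=s_\alpha$. Then $H_0-s_\alpha H_0=\alpha(H_0)\alpha^\vee$, so
\[
\{\lambda: \lambda(H_0-s_\alpha H_0)=0\}=\{\lambda:\lambda(\alpha^\vee)=0\}=\alpha^\perp .
\]
It remains to check that $s_\alpha\notin W_M$. Since $W_M$ is the Weyl group of the centralizer $M$ of $H_0$, it fixes $H_0$. On the other hand, $s_\alpha H_0\neq H_0$ because $\alpha(H_0)\neq 0$. Hence $\alpha^\perp\subseteq\ga^*_{\rm bad}$, and by $W$-stability $\beta^\perp\subseteq \ga^*_{\rm bad}$. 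Taking the union over all $\beta$ gives $\ga^*_{\rm sing}\subseteq\ga^*_{\rm bad}$.

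The only genuinely nontrivial input is the spanning claim, which relies on irreducibility of the $W$-action. This is exactly where simplicity of $G$ and the hypothesis $H_0\neq0$ enter: for a product group, a root of one factor could vanish identically on the orbit of an $H_0$ supported in another factor. Alternatively, one can avoid the spanning argument by using that each $W$-orbit of roots in an irreducible system contains a root not orthogonal to any given nonzero vector. I expect this step to be routine.
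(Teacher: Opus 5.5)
Your proof is correct and is essentially the paper's argument: both rest on the identity $H_0-s_\alpha H_0=\alpha(H_0)\alpha^\vee$ and on the $W$-stability of $\ga^*_{\textnormal{bad}}$. The only difference is the step that reaches every root. The paper picks one long and one short root not vanishing on $H_0$ and then uses that $W$ is transitive on roots of each length. You instead use irreducibility of the $W$-action on $\ga$ to conjugate every root to one not vanishing on $H_0$. That route is slightly cleaner and avoids any case analysis by root length (e.g.\ the three root lengths that occur when the restricted root system is non-reduced).
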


\begin{proof}
Since $W_M$ consists precisely of those elements of $W$ which fix $H_0$, $\ga^*_{\textnormal{bad}}$ is exactly the set of $\lambda \in \ga^*$ that vanish on a nonzero element of 
\begin{equation}\label{lines:3}
\bigcup_{w\notin W_M} W.\R(H_0- wH_0)=\bigcup_{w\in W} W\R.(H_0 - wH_0).
\end{equation}
Recalling the definition of $\ga^*_{\textnormal{sing}}$ in \eqref{eq:reg-sing}, we must therefore show that
\begin{equation}\label{eq:W-orbit-subset}
\bigcup_{w\in W} W\R.(H_0 - wH_0) \supseteq \bigcup_{\alpha\in\Phi}\R.\alpha^\vee.
\end{equation}
For $\alpha\in \Phi$ we let $s_\alpha\in W$ denote the orthogonal reflection in $\ga$ across $(\alpha^\vee)^\perp$. In particular,
\begin{equation}\label{eq:diff:weights}
H_0- s_\alpha H_0
 = 2\frac{\langle H_0,\alpha^\vee \rangle}{\langle\alpha^\vee,\alpha^\vee\rangle}\alpha^\vee.
\end{equation}
Note that the subset of all long roots in $\Phi$ spans $\ga^*$, as does the subset of all short roots. Since $H_0\neq 0$ there must therefore be a long root $\beta_1$ and a short root $\beta_2$ such that $\langle H_0,\beta_1^\vee \rangle\neq0$ and $\langle H_0,\beta_2^\vee \rangle \neq 0$. Therefore $\beta_1^\vee$ and $\beta_2^\vee$ are  contained in the left-hand side of \eqref{eq:W-orbit-subset}. Since $W$ acts transitively on the subset of all long roots in $\Phi$ and also on the subset of all short roots, all of $\Phi^\vee$ is therefore contained in the left-hand side of \eqref{eq:W-orbit-subset}.
\end{proof}

\subsection{Proof of Proposition \ref{prop:local-lower-bd}}\label{sec:spectral-proof}

Our goal is to prove Proposition \ref{prop:local-lower-bd} which, as we have seen in Section \ref{sec:reduction2local}, implies Theorem \ref{main-spec-thm}. From Lemmas \ref{lemma:expansion} and \ref{lemma:max-Levi}, it suffices to show the following result. We explain this implication in detail after the conclusion of the proof. 
\begin{prop}\label{prop:int-W-sum}
Let $G$ be a non-compact simple real Lie group. Let $H_0\in\overline{\ga}_+$ be non-zero. Let $\Omega\subset \ga^*\smallsetminus  \ga_{\rm bad}^*$ be compact and $W$-invariant.  There are constants $c,\tau_0>0$ such that for any sufficiently small $\epsilon_0>0$ (present in the definition of $B^M$) we have
\[
 \frac{1}{\tau} \int_{\tau}^{2\tau} \Big| C_M  \sum_{w \in W /  W_M} c^M(-w\lambda)\widehat{\mathds{1}_{B^M}}(w\lambda)e^{-it w\lambda(H_0)} \Big|^2 dt > c \epsilon_0^{2 \dim X_M}
\]
for all $\lambda \in \Omega, \tau \geq \tau_0$.
\end{prop}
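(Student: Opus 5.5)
Write $a_w(\lambda) = C_M\, c^M(-w\lambda)\,\widehat{\mathds{1}_{B^M}}(w\lambda)$ for $w\in W/W_M$. The plan is to expand the square, so that
\[
\frac{1}{\tau}\int_\tau^{2\tau}\Big|\sum_{w} a_w(\lambda)e^{-itw\lambda(H_0)}\Big|^2dt=\sum_{w}|a_w(\lambda)|^2+\sum_{w\neq w'}a_w(\lambda)\overline{a_{w'}(\lambda)}\,\frac{1}{\tau}\int_\tau^{2\tau}e^{-it\lambda(w^{-1}\cdot)}\cdots,
\]
where the off-diagonal integrand is $e^{-it(w\lambda(H_0)-w'\lambda(H_0))}$. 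We then bound the diagonal from below and the off-diagonal above by $O(\tau^{-1})$. First note that $w\lambda(H_0)=\lambda(w^{-1}H_0)$, so the phase is $\lambda(w^{-1}H_0-w'^{-1}H_0)$. This equals $(w'\lambda)(w'w^{-1}H_0-H_0)$, which vanishes only if $w'\lambda$ lies in the hyperplane $\{\mu:\mu(H_0-uH_0)=0\}$ with $u=w'w^{-1}\notin W_M$. (Indeed $u\in W_M$ iff $uH_0=H_0$, and $w\neq w'$ in $W/W_M$ means $w^{-1}H_0\neq w'^{-1}H_0$; here cosets act on $H_0$ via $w^{-1}$, and I will fix conventions so this is consistent.) That is, it vanishes only if $\lambda\in\ga^*_{\rm bad}$. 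By compactness of $\Omega$, all phases are bounded below in absolute value by some $\delta>0$, so each off-diagonal time average is $\le 2/(\delta\tau)$.

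For the diagonal we need $|a_w(\lambda)|\gg\epsilon_0^{\dim X_M}$ for at least one $w$, uniformly on $\Omega$. Since $\Omega\subset\ga^*_{\rm reg}$ by Lemma \ref{lemma:max-Levi}, the function $c^M(-w\lambda)$ is holomorphic and non-vanishing on a neighborhood of the compact set $\Omega$, hence $|c^M(-w\lambda)|\ge c_1>0$. For $\widehat{\mathds{1}_{B^M}}(w\lambda)$ we argue as in Lemma \ref{lemma:ball}, applied to $M$ and the compact set $W\Omega$. For $\epsilon_0$ small, $\mathrm{Re}\,\varphi^M_{-\mu}(e^H)>1/2$ on $B_\ga(0,\epsilon_0)$ for $\mu\in W\Omega$. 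Hence $|\widehat{\mathds{1}_{B^M}}(\mu)|\ge \tfrac12\mathrm{vol}_M(B^M)$, and $\mathrm{vol}_M(B^M)\asymp\epsilon_0^{\dim X_M}$, since $B^M$ is a ball of radius $\epsilon_0$ in $X_M$ (via \eqref{L-int-formula} with $J_M(H)\asymp\prod_{\alpha\in\Phi_M^+}\alpha(H)^{m_\alpha}$ near $0$). This gives a diagonal contribution $\ge c_2\epsilon_0^{2\dim X_M}$, with $c_2$ independent of $\epsilon_0$.

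The main obstacle is bookkeeping the uniformity. The off-diagonal bound is $\sum|a_w||a_{w'}|\cdot 2/(\delta\tau)$. Using the trivial bound $|\widehat{\mathds{1}_{B^M}}|\le\mathrm{vol}(B^M)\ll\epsilon_0^{\dim X_M}$ and $|c^M|\ll1$ on $\Omega$, this is $\ll\epsilon_0^{2\dim X_M}/(\delta\tau)$. Both sides therefore scale as $\epsilon_0^{2\dim X_M}$, so we may choose $\tau_0$ independent of $\epsilon_0$ (depending only on $\delta$, $c_1$ and the upper bound for $c^M$ on $\Omega$) such that the off-diagonal term is at most half the diagonal. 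We then take $c=c_2/2$. The only subtle point is checking that the constant in the Lemma \ref{lemma:ball}-type argument is uniform once $\epsilon_0$ is below a threshold; this holds since the argument gives the lower bound $\tfrac12\mathrm{vol}(B^M)$ for every smaller $\epsilon_0$.
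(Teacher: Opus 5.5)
Your proposal is correct and follows the paper's proof essentially step for step. You expand the square; you bound the diagonal below using the non-vanishing of $c^M$ on the regular set (via Lemma \ref{lemma:max-Levi}) together with the Lemma \ref{lemma:ball}-type bound $|\widehat{\mathds{1}_{B^M}}|\geq \tfrac12\mathrm{vol}(B^M)\asymp\epsilon_0^{\dim X_M}$; and you bound the off-diagonal by $O(\epsilon_0^{2\dim X_M}/\tau)$, using that $\ga^*_{\rm bad}$ is exactly the vanishing locus of the phases. Your remark that the sum must be indexed by right cosets $W_M\backslash W$ for the phases to be well defined is a fair point that the paper's notation $W/W_M$ glosses over, and your exponent $\dim\ga+\sum_{\alpha\in\Phi_M^+}m_\alpha$ for $\mathrm{vol}(B^M)$ is the correct one.
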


\begin{proof}
We expand the square and swap the order of summation and integral to get the sum of the diagonal term
\[
D(\lambda):= \sum_{w \in W / W_M}  \Big|c^M(-w\lambda)\widehat{\mathds{1}_{B^M}}(w\lambda) \Big|^2, 
\]
which is independent of $\tau$, plus the off-diagonal contribution
\[
E(\tau,\lambda):=\sum_{\substack{w, w' \in W / W_M\\ w\neq w' \textrm{ in }\, W / W_M}} c^M(-w\lambda) \overline{c^M(-w'\lambda)}\widehat{\mathds{1}_{B^M}}(w\lambda)\overline{\widehat{\mathds{1}_{B^M}}(w'\lambda)}  \frac{1}{\tau} \int_{\tau}^{2\tau} e^{-it(w \lambda-w'\lambda) (H_0)}  dt.
\]
We claim that for any compact $\Omega \subset \ga^* \smallsetminus \ga^*_{\textnormal{bad}}$ we have
\begin{enumerate}
\item\label{point1} $D(\lambda) \gg \epsilon_0^{2 \dim X_M}$ for all $\lambda\in\Omega$, and 
\item\label{point2} $E(\tau, \lambda)\ll \tau^{-1} \epsilon_0^{2 \dim X_M}$ for all $\lambda\in\Omega$,
\end{enumerate}
with the implied constants depending on $\Omega$. Taking $\tau_0$ sufficiently large will guarantee that $E$ is negligible with respect to $D$ for $\tau \geq \tau_0$.

We begin by proving the stated lower bound on $D(\lambda)$. It will be sufficient to show that if $\epsilon_0>0$ is sufficiently small we have $c^M(-\lambda)\widehat{\mathds{1}_{B^M}}(\lambda) \gg \epsilon_0^{\dim X_M}$ for all $\lambda \in \Omega$. Recall that $c^M(-\lambda)$ is holomorphic and non-vanishing on the open subset containing $\ga_{\rm reg}^*$ described in \eqref{eq:cM-domain}. Since $\Omega\subset \ga^*_{\rm reg}$ and $\Omega$ is compact, we deduce that $c^M(-\lambda)$ is uniformly bounded away from zero on $\Omega$. Furthermore, we have
\[
|\widehat{\mathds{1}_{B^M}}(\lambda)|\ge |{\rm Re} \,\widehat{\mathds{1}_{B^M}}(\lambda)|=\bigg|\int_{B^M}   {\rm Re}\,\varphi_{-\lambda}^M(g) dg \bigg|.
\]
As in the proof of Lemma \ref{lemma:ball} there exists a neighborhood $U$ of the identity in $G$ such that  ${\rm Re}\,\varphi_{-\lambda}(g)> 1/2$ for all $g\in U$ and $\lambda\in\Omega$. We can therefore choose $\epsilon_0>0$ small enough in the definition of $B^M=K_M\exp(B_\ga(0,\epsilon_0))K_M$ so that, for all $\lambda \in \Omega$,
\[
\int_{B^M} {\rm Re}\,\varphi_{-\lambda}^M(g) dg >\frac{1}{2} {\rm vol}(B^M) \gg \epsilon_0^{\dim X_M}.
\]
We deduce that $\widehat{\mathds{1}_{B^M}}(\lambda) \gg \epsilon_0^{\dim X_M}$ for all $\lambda\in\Omega$, proving point \eqref{point1}.

We now address the off-diagonal term. Firstly, since $\lambda \in \ga^*$, we have that $| \widehat{\mathds{1}_{B^M}}(w\lambda) | \le {\rm vol}(B^M) \ll \epsilon_0^{\dim X_M}$. On the other hand $c^M(-w \lambda)$ is singular on a subset of $\ga^*_{\textnormal{sing}}$. Since, by hypothesis, $\Omega$ avoids $\ga^*_{\textnormal{bad}}$, it also avoids  $\ga^*_{\textnormal{sing}}$ by Lemma \ref{lemma:max-Levi}. Therefore for $\lambda \in \Omega$ we also have a uniform bound on $c^M(-w \lambda)$. 

We thus have
\[
E(\tau, \lambda)\ll \epsilon_0^{2 \dim X_M} \max_{\substack{w, w' \in W \\ wW_M\neq w'W_M}} \frac{1}{\tau} \bigg|\int_{\tau}^{2\tau} e^{-it \lambda (wH_0-w'H_0)}  dt\bigg|.
\]
Recall from \eqref{lines:3} that elements in $\ga^*_{\textrm{bad}}$ are precisely those that vanish on a nonzero element of
\begin{align*}
\bigcup_{w'\in W} W\R.(H_0 - w'H_0)&=\bigcup_{w,w'\in W} \R. w(H_0 -w^{-1} w'H_0)\\
&=  \bigcup_{w,w'\in W} \R. (wH_0 - w'H_0).
\end{align*}
We deduce that $\ga^*_{\textnormal{bad}}$ is precisely the locus where the exponential phase vanishes. Since $\lambda$ is confined to a compact $\Omega\subset \ga^*\smallsetminus \ga^*_{\textnormal{bad}}$, the phases appearing in the $t$-integral are all uniformly bounded away from zero. Thus, if $X$ is any one of the differences $wH_0-w'H_0$, then 
\[
\bigg|\int_{\tau}^{2\tau} e^{-it \lambda (X)}  dt\bigg|=|\lambda(X)|^{-1}\big|1-e^{-i\tau\lambda(X)}\big|\le 2|\lambda(X)|^{-1}\ll 1,
\]
which establishes point \eqref{point2}, and completes the proof of Proposition \ref{prop:int-W-sum}.
\end{proof}

We now return to the proof of Proposition \ref{prop:local-lower-bd}.  Define
\begin{align*}
    A_1(t, \lambda, \epsilon_0) &:= C_M \sum_{w \in W / W_M }c^M( - w\lambda)\widehat{\mathds{1}_{B^M}}(w\lambda)   e^{-itw\lambda( H_0)}, \\
    A_2(t, \lambda, \epsilon_0) &:= e^{-t \rho(H_0)} \widehat{\mathds{1}_{S_t}}(\lambda) - A_1(t, \lambda, \epsilon_0).
\end{align*}
Since $c^M(-w \lambda)$ is uniformly bounded on $\Omega$, and $|\widehat{\mathds{1}_{B^M}}(w\lambda)| = O(\epsilon_0^{\dim X_M})$ uniformly in $\lambda \in \Omega$, we get that $A_1(t, \lambda, \epsilon_0) = O(\epsilon_0^{\dim X_M})$ uniformly in $\lambda \in \Omega$ and $t$. On the other hand, we know that $A_2(t, \lambda, \epsilon_0)$ is $O(\epsilon_0^{\dim X_M + 1})$ uniformly for $\lambda \in \Omega$ for $t \geq t_0$ as in Lemma \ref{lemma:expansion}.
We have
\begin{align*}
    &\frac{1}{\tau} \int_{\tau}^{2\tau} e^{-2t \rho(H_0)} |\widehat{\mathds{1}_{S_t}}(\lambda)|^2 dt \\
    = &\frac{1}{\tau} \int_{\tau}^{2 \tau} |A_1(t, \lambda, \epsilon_0)|^2 dt + \frac{1}{\tau} \int_{\tau}^{2 \tau} \Big( 2 \textnormal{Re}\big(A_1(t, \lambda, \epsilon_0) \overline{A_2(t, \lambda, \epsilon_0)} \big) + |A_2(t, \lambda, \epsilon_0)|^2 \Big) dt
\end{align*}
The first term is lower bounded by $c \epsilon_0^{2 \dim X_M}$ by Proposition \ref{prop:int-W-sum}, where $c > 0$ is the constant appearing there. We can subsequently make $\epsilon_0$ small enough to make the second term smaller than $c \epsilon_0^{2 \dim X_M}/2$ in absolute value. Taking $t_0'$ to be the maximum of the $\tau_0$ from Proposition \ref{prop:int-W-sum} and the $t_0$ from Lemma \ref{lemma:expansion} for this choice of $\epsilon_0$, we obtain Proposition \ref{prop:local-lower-bd} with the constants $c \epsilon_0^{2 \dim X_M}/2$ and $t_0'$.

\subsection{Identifying $\ga_{\rm bad}^*$ for extremal $H_0$ in classical groups}\label{sec:hyperplanes}

In this section, we assume that $G$ is simple. We wish to describe the nature of $\ga^*_{\textnormal{bad}}$ in case the directing element $H_0 \in \overline{\ga}_+$ is extremal, as defined in Section \ref{sec:str-sing}.

Thus far we have only defined extremal for the classical types, as well as $E_7$, as these are the only root systems admitting semi-dense root subsystems. However, the spectral estimate (Theorem \ref{main-spec-thm}) holds in any type. In order to have a complete analysis for all types in the following proposition, we shall define extremal elements in case the reduced root system is of the remaining types $E_6, E_8, F_4$ or $G_2$ via the table below. In subsequent sections, the term extremal will again be reserved only for those elements specified in Section \ref{sec:str-sing}. 

\begin{table}[H]
 \begin{tabular}{|c|c|c|}
 \hline
 type of $\Phi_{\text{red}}$ & extremal nodes $\bullet$ & type of $\Phi_{M, \text{red}}$ \\ \hline
  $E_6$ & $\begin{dynkinDiagram}[mark=o]E{6} 
\dynkinRootMark{*}1 \dynkinRootMark{*}6 \end{dynkinDiagram}$ &$D_5$\\ 
  $E_8$ & $\begin{dynkinDiagram}[mark=o]E{8} 
\dynkinRootMark{*}8 \end{dynkinDiagram}$ & $E_7$\\
  $F_4$ & $\begin{dynkinDiagram}[mark=o]F{4} 
\dynkinRootMark{*}1 \dynkinRootMark{*}4 \end{dynkinDiagram}$ & $B_3$ or $C_3$\\
  $G_2$ & $\begin{dynkinDiagram}[mark=o]G{2} 
\dynkinRootMark{*}1 \dynkinRootMark{*}2 \end{dynkinDiagram}$ & $A_1$\\
\hline
 \end{tabular}
 \bigskip
 \caption{Extremal nodes for the remaining exceptional types}
\end{table}

We will show that, under the extremal hypothesis on $H_0$, the inclusion \eqref{eq:W-orbit-subset} is nearly an equality --- in fact, an exact equality in most cases. The basic principle is that the Weyl group $W_M$ of an extremal $H_0$ is large, which will tend to minimize the number of distinct lines $\R.(wH_0-w'H_0)\subset\ga$.

\begin{prop}\label{prop:Weyl-diff}
    Assume that $G$ is simple and let $H_0 \in \overline{\ga}_+$ be extremal.
    \begin{enumerate}
        \item If $\Phi_{\textnormal{red}}$ is of type $A_n, B_n$, $C_n$, or $G_2$ then $\ga^*_{\textnormal{bad}} = \ga^*_{\textnormal{sing}}$.
        \item If $\Phi_{\textnormal{red}}$ is of type $D_n$, then 
        \begin{gather*}
        \ga^*_{\textnormal{bad}} = \ga^*_{\textnormal{sing}} \cup W.(\mathbb{R}. \varpi_1^\vee),
        \end{gather*}
        where $\varpi_1^\vee$ is the unique extremal fundamental coweight. 
    \end{enumerate}
\end{prop}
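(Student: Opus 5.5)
We will work throughout with the reformulation obtained in the proof of Lemma \ref{lemma:max-Levi} and in \eqref{lines:3}. There we showed that $\lambda\in\ga^*_{\rm bad}$ if and only if $\lambda$ vanishes on some nonzero element of
\[
\mathcal{L}(H_0):=\bigcup_{w,w'\in W}\R.(wH_0-w'H_0).
\]
Similarly, $\lambda\in\ga^*_{\rm sing}$ if and only if $\lambda$ vanishes on some coroot line $\R.\alpha^\vee$. Lemma \ref{lemma:max-Levi} already gives $\ga^*_{\rm sing}\subseteq\ga^*_{\rm bad}$. So the proposition comes down to listing the finitely many lines spanned by differences of two points of the orbit $W.H_0$, and checking which of them are coroot lines. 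Since only lines matter, we may replace $H_0$ by the extremal fundamental coweight itself. We may also work with $\Phi_{\rm red}$ throughout: the coroot of a divisible root $2\alpha$ is proportional to $\alpha^\vee$, and $W$ is the Weyl group of $\Phi_{\rm red}$.

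The plan is to compute in standard coordinates, type by type. We identify $\ga$ with a subspace of some $\R^m$ with orthonormal basis $e_i$, and we write down the orbit $W.\varpi^\vee$ of the extremal coweight.

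\emph{Type $A_n$.} Here $\varpi_1^\vee=e_1-\frac{1}{n+1}\sum_i e_i$, and its orbit consists of the $n+1$ points $e_i-\frac1{n+1}\sum e_k$. The differences are $e_i-e_j$, which are exactly the coroots. The other extremal node $\varpi_n^\vee$ has orbit $-W.\varpi_1^\vee$, so it gives the same lines.

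\emph{Types $B_n$ and $C_n$.} The extremal coweight is a multiple of $e_1$ with orbit $\{\pm e_i\}$. The differences are $\pm e_i\pm e_j$ with $i\neq j$, together with $2e_i$. In both types the coroot lines are exactly $\R(e_i\pm e_j)$ and $\R e_i$: in $B_n$ the coroots include $2e_i$, and in $C_n$ they include $e_i$. Hence $\mathcal{L}(H_0)$ is exactly the union of the coroot lines, and $\ga^*_{\rm bad}=\ga^*_{\rm sing}$.

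\emph{Type $D_n$.} The orbit is again $\{\pm e_i\}$, so $\mathcal{L}(H_0)$ consists of the coroot lines $\R(e_i\pm e_j)$ together with the extra lines $\R e_i$, which are not coroot lines in type $D$. These extra lines form exactly $W.(\R\varpi_1^\vee)$. Therefore $\ga^*_{\rm bad}$ is $\ga^*_{\rm sing}$ together with the hyperplanes annihilating a line of $W.(\R\varpi_1^\vee)$. Under the identification $\ga\cong\ga^*$ given by the Killing form, these are the hyperplanes $\lambda(e_i)=0$ for $1\le i\le n$, i.e.\ the orthogonal complements of the lines $W.(\R\varpi_1^\vee)$. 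This is how I read the union $\ga^*_{\rm sing}\cup W.(\R\varpi_1^\vee)$ in part (2): as the union of hyperplanes annihilating these lines, not literally the lines themselves. Since $\ga^*_{\rm bad}$ is a union of hyperplanes, this is the only reading under which (2) is consistent with (1).

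\emph{Type $G_2$.} For either extremal node, the orbit of the fundamental coweight consists of the six vertices of a regular hexagon: the coroots of one length. There are three kinds of difference.
\begin{enumerate}
\item A difference of adjacent vertices is again a coroot of the same length.
\item A difference of opposite vertices is twice a vertex, hence proportional to a coroot.
\item A difference of vertices at distance two is a vector of length $\sqrt3$ times as long, at $30^\circ$ to the hexagon. This is proportional to a coroot of the other length.
\end{enumerate}
So every line in $\mathcal{L}(H_0)$ is a coroot line, and $\ga^*_{\rm bad}=\ga^*_{\rm sing}$.

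The main obstacle is careful bookkeeping rather than any conceptual difficulty. One must make sure the coordinate models of coweights and coroots are correct in each type; in particular, $B_n$ and $C_n$ swap under duality, and both extremal nodes of $A_n$ and of $G_2$ have to be handled. One must also interpret part (2) correctly as a union of hyperplanes, as explained in the $D_n$ case above.
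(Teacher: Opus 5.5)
Your argument is correct, but it takes a different route from the paper's. You compute the orbit $W.\varpi^\vee$ of the extremal coweight explicitly in a coordinate model of each type, and read off all differences $wH_0-w'H_0$ at once.

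The paper instead counts cosets. It uses three ingredients:
\begin{itemize}
\item by \eqref{eq:diff:weights}, $H_0-s_\alpha H_0\in\R\alpha^\vee$;
\item the reflections $s_\alpha$ with $\alpha\in\Phi^+_{\rm red}\smallsetminus\Phi^+_{{\rm red},M}$ give pairwise distinct nontrivial classes in $W/W_M$;
\item a table of Weyl group orders shows $|W/W_M|=|\Phi^+_{\rm red}\smallsetminus\Phi^+_{{\rm red},M}|+1$ in types $A_n,B_n,C_n,G_2$.
\end{itemize}
Hence every class is represented by $1$ or by a reflection. In type $D_n$ exactly one class is missed; it is represented by the double sign change $s_{12}$, which yields $2\varpi_1^\vee$.

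The paper's version is more structural: it explains extremality as ``the orbit is reached by reflections''. The same table also shows at once why this mechanism fails in $E_6,E_7,E_8,F_4$. Your version avoids the group-order bookkeeping and the distinctness of reflections modulo $W_M$. It also lists the difference lines directly, without a separate $W$-translation step. The cost is that you need correct coordinate models and must treat both extremal nodes in $A_n$ and $G_2$; you do both correctly.

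Your reading of part (2) is exactly what the paper proves. You read it as $\ga^*_{\rm sing}$ together with the hyperplanes annihilating the lines of $W.(\R\varpi_1^\vee)$; the paper proves equality in \eqref{eq:W-orbit-subset} at the level of lines in $\ga$. The literal reading would in fact be false. Under the Killing form identification, each line $\R e_i$ already lies in $\ga^*_{\rm sing}$ when $n\ge 3$, so the literal union would just be $\ga^*_{\rm sing}$.
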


\begin{proof}
We must show that
\begin{enumerate}
\item\label{Weyl-lemma2a} If $G$ is of type $A_n$, $B_n$, $C_n$ or $G_2$ then equality holds in \eqref{eq:W-orbit-subset};
\item\label{Weyl-lemma2b} If $G$ is of type $D_n$, then equality holds in \eqref{eq:W-orbit-subset} with the right-hand side replaced by
\[
\Big(\bigcup_{\alpha\in\Phi}\R\alpha^\vee \big) \cup W.(\R.\varpi^\vee_1)
\]
where $\varpi_1^\vee$ is the extremal fundamental coweight (a multiple of $H_0$).
\end{enumerate}
In all cases $H_0$ is, by assumption, a non-zero multiple of an extremal fundamental coweight, which we denote by $\varpi_0^\vee\in\widehat{\Delta}^\vee$. Without loss of generality we can simply take $H_0 = \varpi_0^\vee$.

We begin by assuming that $\Phi_{\textnormal{red}}$ is of type $A_n, B_n$, $C_n$, or $G_2$. For equality to hold in \eqref{eq:W-orbit-subset}, it is enough to show that for any $w \in W/W_M$, the difference $\varpi_0^\vee - w \varpi_0^\vee$ lies in the line spanned by a coroot. In fact, using the computation \eqref{eq:diff:weights}, we need only to show that every non-trivial class in $W/W_M$ can be represented by a reflection in $W$. First note that the reflections $s_\alpha$  for $\alpha\in \Phi_{\textnormal{red}}^+ \smallsetminus \Phi_{\textnormal{red},M}^+$ are pairwise distinct mod $W_M$. Thus, the image of the set $\{1\}\cup \{s_\alpha \mid  \alpha\in \Phi_{\textnormal{red}}^+ \smallsetminus \Phi_{\textnormal{red},M}^+ \}$ in $W/W_M$ has cardinality $|\Phi_{\textnormal{red}}^+ \smallsetminus \Phi_{\textnormal{red},M}^+| + 1$. For point \eqref{Weyl-lemma2a}, it will therefore suffice to check that
\[
 |W/W_M|
 = |\Phi_{\textnormal{red}}^+ \smallsetminus \Phi_{\textnormal{red},M}^+| + 1
\]
if $\Phi_{\textnormal{red}}$ is of type $A_n$, $B_n$, $C_n$, or $G_2$. For that we compute the following table:

\begin{table}[H]
 \begin{tabular}{|c|c|c|>{\bfseries} c|c|c|>{\bfseries}c|}
\hline  type of $G$ & $|W|$ & $|W_M|$ & $|W/W_M|$& $|\Phi_{\textnormal{red}}|$ & $|\Phi_{\textnormal{red},M}|$  & $|\Phi_{\textnormal{red}}^+ \smallsetminus \Phi_{\textnormal{red},M}^+|$ \\\hline
   $A_n$ & $(n+1)!$ & $n!$ &                            n+1&         $n(n+1)$ &  $(n-1)n$ & n\\
   $B_n$ & $2^n n!$ & $2^{n-1} (n-1)!$ &        2n &        $2n^2$ & $2(n-1)^2$ & 2n-1\\
   $C_n$ &  $2^n n!$ & $2^{n-1} (n-1)!$ &          2n &      $2n^2$ & $2(n-1)^2$ & 2n-1\\
   $D_n$ & $2^{n-1} n!$ & $2^{n-2} (n-1)!$ &        2n & $2n(n-1)$ & $2(n-1)(n-2)$ & 2n-2\\
   $E_6$ & 51840 & 1920                                       &27 & 72&   40 & 16\\
$E_7$ & 2903040 & 51840                                 &56 & 126 & 72 & 27 \\
   $E_8$ & 696729600 & 2903040                         &240& 240 &  126 & 57\\
   $F_4$ & 1152 & 48                                               &24&48 & 18 & 15\\
   $G_2$ & 12 & 2                                                 &6& 12& 2 & 5\\ 
   \hline
 \end{tabular}
 \end{table}
\noindent An examination of the boldfaced columns of the first, second, third, and last rows concludes the proof of point \eqref{Weyl-lemma2a}. 

We now take $G$ to have root system $D_n$. According to Table \ref{table1}, we may take as extremely singular element $H_0$ the highest fundamental coweight $\varpi_1^\vee$. To prove point \eqref{Weyl-lemma2b}, we must therefore show that, for any $w\in W/W_M$, the difference $\varpi_1^\vee-w\varpi_1^\vee$ is either in a line spanned by a coroot or by $\varpi_1^\vee=H_0$ itself. The $2n-1$ elements $w\in \{1\}\cup\{s_\alpha: \alpha \in \Phi_{\textnormal{red}}^+ \smallsetminus \Phi_{\textnormal{red},M}^+ \}$ have distinct images in the size $2n$ quotient $W/W_M$, and by \eqref{eq:diff:weights}, satisfy $\varpi_1^\vee-w\varpi_1^\vee\in \bigcup_{\alpha\in\Phi} \R\alpha^\vee$. The remaining coset in $W/W_M$, in the notation of paragraphs (IX) and (X) of Ch. VI, \S 4.8, of \cite{Bourbaki}, is represented by, say, $s_{12}$, which changes the sign of the first and second basis vectors $\varepsilon_1,\varepsilon_2$. Since $\varepsilon_1=\varpi_1^\vee$, this yields $\varpi_1^\vee-s_{12}(\varpi_1^\vee)=\varpi_1^\vee-(-\varpi_1^\vee)=2\varpi_1^\vee$, as required. This concludes the proof of point \eqref{Weyl-lemma2b}, and hence the proposition. \end{proof}

\section{Geometric estimate: statement and reduction steps}\label{sec:geom-reduction}

We deduce from Theorem \ref{main-spec-thm}, under the stated conditions on $\Omega, \tau$, and $a$, that
\[
\sum_{j: \lambda_j\in\Omega} |\langle a\psi_j,\psi_j\rangle|^2\ll \sum_{j,k}\big |\langle \mathbf{A}(\tau)\psi_j,\psi_k\rangle\big|^2=\|\mathbf{A}(\tau)\|_{\rm HS}^2,
\]
where the spectral sum has been extended by positivity, and the right-hand side is the Hilbert--Schmidt norm of the operator $\mathbf{A}(\tau)$. We have thus reduced our problem to bounding the Hilbert--Schmidt norm of $\mathbf{A}(\tau)$.

The remainder of the paper is organized around the proof of Theorem \ref{main-geom-thm}, stated below. 

\begin{theorem}[Geometric estimate]\label{main-geom-thm}
Let $G$ be a product of non-compact simple real Lie groups. Let $X = G/K$, where $K$ is a maximal compact subgroup, be the associated symmetric space. Let $\Gamma<G$ be a torsion free, cocompact, irreducible lattice and set $Y = \Gamma\backslash X$. Assume that $G$ admits a simple factor $G_1$ satisfying condition \eqref{thm:root-system} of Theorem \ref{main-theorem}. Let $H_0\in\overline{\ga}_{1,+}$ be an extremal coweight for $G_1$, identified with an element in $\overline{\ga}_+$ through $\ga_1\subset\ga$.

There are constants $c_1,c_2, c_3,\epsilon_0>0$, depending only on $G$, an integer $k\in\Z_{\geq 0}$, depending only on the reduced root system of $G_1$, and $\theta>0$, depending on the integrability exponent of $G_1$ acting of $L^2_0(\Gamma\backslash G)$, such that, for all $\tau \gg 1$ and all mean-zero functions $a\in L^\infty(Y)$,
\begin{equation}\label{eq:HS-bound}
\|\mathbf{A}(\tau)\|_{\rm HS}^2\ll \|a\|_2^2\frac{(\log\tau)^k}{\tau\theta^2}+\frac{e^{c_1 \tau}}{{\rm InjRad}(Y)^{\dim Y}} {\rm vol}\big( Y_{\le c_2 \tau+c_3}\big)\|a\|_\infty^2. 
\end{equation}
Here, $\mathbf{A}(\tau)$ is defined relative to the parameters $\epsilon_0>0$ and $H_0$.
\end{theorem}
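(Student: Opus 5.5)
We will bound $\|\mathbf{A}(\tau)\|_{\rm HS}^2$ by realizing it as the $L^2$-norm of the automorphic kernel. Lift to $G$ and write $\mathbf{A}(\tau)$ as an integral operator on $\Gamma\backslash G$ with kernel $\sum_{\gamma\in\Gamma}A(\tau)(g,\gamma h)$, where $A(\tau)(g,h)=\frac1\tau\int_\tau^{2\tau}e^{-2t\rho(H_0)}\int_{gE_t\cap hE_t}a(x)\,dx\,dt$ as in \eqref{intro-kernel}, with $E_t$ in place of $S_{tH_0}$. Then
\[
\|\mathbf{A}(\tau)\|_{\rm HS}^2=\int_{\Gamma\backslash G}\int_{G}\Big|\sum_{\gamma}A(\tau)(g,\gamma h)\Big|^2\,dh\,dg\quad\text{(suitably folded).}
\]
The first step is support control. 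If $gE_t\cap hE_t\neq\emptyset$ for some $t\le 2\tau$, then $d(g,h)\le c\tau$; this is the content of Corollary \ref{cor:intersection-range}. Hence only $\gamma$ with $d(g,\gamma h)\ll\tau$ contribute.

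Next we perform the thick--thin split. For $g$ whose projection lies outside $Y_{\le c_2\tau+c_3}$, the ball of radius $\asymp\tau$ about $g$ injects into $Y$. The sum over $\gamma$ then has at most one nonzero term, so the contribution unfolds to the main term
\[
M(\tau)=\int_{Y}\int_{G}|A(\tau)(g,h)|^2\,dh\,dg .
\]
For $g$ in the thin part, we bound each kernel value trivially by $\|a\|_\infty e^{-2t\rho(H_0)}{\rm vol}(E_t)\ll\|a\|_\infty$. The number of $\gamma$ with $d(g,\gamma h)\le c\tau$ is at most ${\rm vol}(B(c\tau))/{\rm InjRad}(Y)^{\dim Y}\ll e^{c\tau}{\rm InjRad}(Y)^{-\dim Y}$, using uniform discreteness via a packing argument. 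Integrating over the thin part then gives the error term $e^{c_1\tau}{\rm InjRad}(Y)^{-\dim Y}{\rm vol}(Y_{\le c_2\tau+c_3})\|a\|_\infty^2$. This is Lemma \ref{lemma:general-HS}.

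The main term requires real work. We change variables to $h=g u$ and write $gE_t\cap guE_t$ as $g(E_t\cap uE_t)$. Minkowski's integral inequality in the $t$-variables then gives, roughly,
\[
M(\tau)\le \int_G\Big(\frac1\tau\int_\tau^{2\tau}e^{-2t\rho(H_0)}\big\|\,\mathcal{A}_{t,u}a\,\big\|_{L^2(Y)}\,dt\Big)^2du,
\]
where $\mathcal{A}_{t,u}a(g)=\int_{E_t\cap uE_t}a(gx)\,dx$ is an averaging operator over the intersection set. Because $a$ has mean zero and $G_1$ has uniform spectral gap, Nevo's ergodic theorem (decay of matrix coefficients, with exponent governed by the integrability exponent) bounds this norm. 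The key point is that the bound is by the \emph{square root of the intersection volume} times a decaying factor. Concretely, we aim for
\[
\|\mathcal{A}_{t,u}a\|_2\ll\theta^{-1}\|a\|_2\,{\rm vol}(E_t\cap uE_t)^{1/2}\,{\rm vol}(\cdot)^{\text{decay}} .
\]
Even more crudely, we split the $L^2$-norm squared as a $K$-bi-invariant averaged kernel and use the $L^q$ integrability of matrix coefficients. The factor $1/\tau$ is gained from the off-diagonal $t\neq t'$ decorrelation in the double $t$-integral: for $|t-t'|$ large, the sets $E_t$ and $E_{t'}$ are disjoint shells, and spectral gap makes their correlation decay exponentially. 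Only $|t-t'|\ll 1$ (up to $\log$) contributes, yielding $1/\tau$. In the $u$-integral we write $u=k_1e^Hk_2$ and reduce to the $G_1$ component, since $B$ is fixed. We then insert Theorem \ref{intersection-vol}, ${\rm vol}(e^HS_t\cap S_t)\ll(\log t)^ke^{\rho(2tH_0-H)}$, valid because $H_0$ extremal makes $\Phi_{M,\rm red}$ semi-dense by Theorem \ref{thm:semi-dense2}. The remaining integral
\[
\int_{\ga_+,\ \rho(H)\le 2t\rho(H_0)}e^{-4t\rho(H_0)}\big((\log t)^ke^{\rho(2tH_0-H)}\big)^{2}\,e^{-\delta(\cdot)}J(H)\,dH
\]
is essentially a polytope-type exponential integral. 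We evaluate it with the degenerate Brion formula of Section \ref{sec:Brion}, which should show that it is $O((\log\tau)^{k})$ rather than polynomial in $\tau$.

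The main obstacle is this last step: arranging the Nevo decay and the intersection-volume bound so that the $H$-integral over the cone $\{\rho(H)\le 2t\rho(H_0)\}$ loses only logarithms. A careless use would integrate $e^{-\rho(H)}J(H)\approx e^{\rho(H)}$ over a region of linear size $t$ and produce powers of $t$. We would first try pairing the $e^{-\rho(H)}$ from the intersection volume against $J(H)$ together with a $\theta$-dependent decay factor $e^{-\theta\rho(H)}$ coming from the spectral gap. Brion's formula on the resulting simplex should then leave only boundary (degenerate-vertex) contributions of size $(\log\tau)^k\theta^{-2}$, giving the first term of \eqref{eq:HS-bound}.
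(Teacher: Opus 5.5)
There is a genuine gap in your main-term estimate. Your treatment of the error term is fine: support control via Corollary \ref{cor:intersection-range} plus the thick--thin bound of Lemma \ref{lemma:general-HS} matches the paper. You also name the right ingredients for $M(\tau)$. But the way you assemble them does not work, and you have no valid source for the factor $1/\tau$.

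\textbf{The claimed $1/\tau$ gain.} You first apply Minkowski's inequality in $t$, which discards every correlation between distinct times. You then attribute the gain $1/\tau$ to decorrelation of the $t\neq t'$ terms. These two steps are incompatible. The decorrelation claim is also unsupported: disjointness of the shells $E_t, E_{t'}$ says nothing about $\langle\varrho(E_t\cap uE_t)a,\varrho(E_{t'}\cap uE_{t'})a\rangle$, and Proposition \ref{prop_nevo} only controls each factor separately.

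\textbf{The Nevo bound and the exponent bookkeeping.} What Proposition \ref{prop_nevo} gives is $\|\mathcal{A}_{t,u}a\|_2\ll {\rm vol}(E_t\cap uE_t)^{1-\theta}\|a\|_2$. There is no $\theta^{-1}$ here, and the saving is the power ${\rm vol}^{-\theta}$ with $\theta<1/2$, not a square root. Your final display instead uses the \emph{square} of the full intersection volume. Then $e^{-4t\rho(H_0)}e^{2\rho(2tH_0-H)}J(H)\asymp 1$, so the $H$-integral is polynomially large in $t$ unless your unspecified factor $e^{-\delta}$ does all the work. Moreover, the spectral gap does not supply a decaying factor $e^{-\theta\rho(H)}$.

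\textbf{What actually happens (Propositions \ref{prop:main-term-bd} and \ref{prop:M-tau}).} Keep the exponent $1-\theta$ and insert Theorem \ref{intersection-vol}:
\[
e^{-2t\rho(H_0)}{\rm vol}(e^HS_t\cap S_t)^{1-\theta}\ll(\log t)^k e^{-2\theta t\rho(H_0)}e^{-(1-\theta)\rho(H)}.
\]
Combined with the Jacobian $e^{2\rho(H)}$ outside the square, the $H$-dependence becomes $e^{+2\theta\rho(H)}$, which grows rather than decays. It is compensated by the constraint $t\ge \|H\|_P-1$ from Corollary \ref{cor:intersection-range}, which you never use. Integrating in $t$ from $\max\{\tau,\|H\|_P-1\}$ gives $\theta^{-1}(\log\tau)^k e^{-2\theta\|H\|_P\rho(H_0)}$, hence
\[
M(\tau)\ll \frac{(\log\tau)^{2k}}{\tau^2\theta^2}\|a\|_2^2\int_{P_{2\tau}}e^{2\theta\rho(H-2\|H\|_PH_0)}\,dH .
\]
The exponent is $\le 0$ on $P_{2\tau}$. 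By Lemma \ref{prop_max_rho} it vanishes exactly along the one-dimensional edge joining $0$ to $2\tau(H_0-w_0H_0)$. The degenerate Brion formula therefore gives $O(\tau)$ for this integral (Corollary \ref{lemma:Brion-bd}), not $O((\log\tau)^k)$.

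So the $1/\tau$ comes from the $\tau^{-2}$ normalization of the time average set against this one-dimensional maximizing face, as in the rank-one argument of Le Masson--Sahlsten. The logarithms come from Theorem \ref{intersection-vol} and the $\theta^{-2}$ from the $t$-integration, not from Brion's formula.
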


The purpose of this section is to reduce the proof of Theorem \ref{main-geom-thm} to Theorem \ref{intersection-vol} from the introduction, applied to the intersection of the spherical shell $S_t\subset G_1$ and its translates in the irreducible symmetric space $X_1=G_1/K_1$. We carry out this reduction by a thick-thin decomposition of the kernel of the operator $\mathbf{A}(\tau)$ in Lemma \ref{lemma:general-HS}, a calculation of the support of this kernel in Corollary \ref{cor:intersection-range}, and, most notably, an invocation of the Nevo ergodic theorem (Proposition \ref{prop_nevo} below). The remaining sections of the paper will then be dedicated to the proof of Theorem \ref{intersection-vol}.

Together, Proposition \ref{prop_spectral_conv}, Theorem \ref{main-spec-thm}, and Theorem \ref{main-geom-thm} yield the estimate \eqref{eq:main-estimate}, completing the proof of Theorem \ref{main-theorem}, as we now explain. Let $\mathbf{A}_n(\tau)$ denote the operator defined in \eqref{def:propagator} with respect to the test function $a_n$. Inserting the aforementioned results, along with $\|a_n\|_2^2\ll {\rm vol}\big(\Gamma_n\backslash G)\|a_n\|_\infty^2$, $\|a_n\|_\infty=O(1)$, the uniformly discrete hypothesis on $\Gamma_n$, and the uniform spectral gap for $G_1$, we find, that for all $n$ and $\tau$ sufficiently large,
\begin{align}
\frac{1}{N(\Gamma_n,\Omega)}\sum_{\lambda_j^{(n)}\in\Omega} |\langle a_n\psi_j^{(n)},\psi_j^{(n)}\rangle|^2&\ll \frac{1}{{\rm vol}\big(\Gamma_n\backslash G)}\sum_{\lambda_j^{(n)}\in\Omega} |\langle a_n\psi_j^{(n)},\psi_j^{(n)}\rangle|^2\nonumber\\
&\ll \frac{1}{{\rm vol}\big(\Gamma_n\backslash G)}\|\mathbf{A}_n(\tau)\|_{\rm HS}^2\nonumber\\
&\ll \frac{(\log\tau)^k}{\tau}+e^{c_1\tau}\frac{{\rm vol}\big( (Y_n)_{\le c_2\tau +c_3}\big)}{{\rm vol}\big(Y_n)}. \label{eq:total-bound}
\end{align}
It remains to find a sequence of $\tau_n\to\infty$ such that the second term goes to zero. It follows from the Benjamini--Schramm convergence of $Y_n$ towards $X$ that there exists a sequence of $R_n$ tending toward infinity such that 
\[
\alpha_n:=\frac{{\rm vol}\big( (Y_n)_{\le R_n}\big)}{{\rm vol}(Y_n)}\rightarrow 0.
\]
Let $r_n$ be a sequence tending to infinity slowly enough so that both $r_n+c_3\leq R_n$ and $e^{c_1 r_n/c_2}\alpha_n\rightarrow 0$. Then, setting $\tau_n=r_n/c_2$, the second term in \eqref{eq:total-bound} is
\[
e^{c_1 r_n/c_2}\frac{{\rm vol}\big( (Y_n)_{\le r_n+c_3}\big)}{{\rm vol}(Y_n)}\leq e^{c_1r_n/c_2}\alpha_n\rightarrow 0,
\]
as desired.

\subsection{A general bound on Hilbert--Schmidt norms}

We begin with a very general upper bound on the Hilbert--Schmidt norm of an integral operator on $L^2(\Gamma\backslash G)$.

\begin{lemma}\label{lemma:general-HS}
Let $G$ be a product of non-compact simple real Lie groups. Let $\Gamma < G$ be a uniform lattice. Let $A$ be a measurable function on $X\times X$ which is invariant under the left diagonal $\Gamma$-action. Let $\mathbf{A}$ be the integral operator on $L^2(\Gamma\backslash G)$ with kernel $\sum_{\gamma\in\Gamma}A(g,\gamma h)$. Then there is $c>0$, depending only on $G$, such that
\[
\|\mathbf{A}\|_{\rm HS}^2\le \int_{\Gamma\backslash G}\int_G|A(g,h)|^2dg dh+ \frac{e^{cT}}{{\rm InjRad}(Y)^{\dim Y}} {\rm vol}\big( Y_{\le 2T}\big)\|A\|_\infty^2,
\]
for all $T > 0$ satisfying $\{g^{-1}h: (g,h)\in {\rm supp}(A)\}\subset K\exp(B_\ga (0,T))K$. 
\end{lemma}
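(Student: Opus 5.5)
We write the Hilbert--Schmidt norm as the $L^2$-norm of the kernel over a fundamental domain:
\[
\|\mathbf{A}\|_{\rm HS}^2=\int_{\Gamma\backslash G}\int_{\Gamma\backslash G}\Big|\sum_{\gamma\in\Gamma}A(g,\gamma h)\Big|^2dh\,dg .
\]
We then split the outer variable $g$ (equivalently its image $x\in Y$) according to whether $\textnormal{InjRad}_Y(x)>2T$ (thick part) or $\le 2T$ (thin part). Before splitting, unfold the inner integral to all of $G$ in the diagonal terms. Precisely, $\int_{\Gamma\backslash G}|\sum_\gamma A(g,\gamma h)|^2dh$ is controlled by $\int_G |A(g,h)|^2 dh$ when at most one $\gamma$ contributes for each $h$, and by a multiplicity count otherwise.

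\textbf{Thick part.} Fix $g$ with $\textnormal{InjRad}_Y(gK)>2T$. By the support hypothesis, $A(g,\gamma h)\neq0$ forces $g^{-1}\gamma h\in K\exp(B_\ga(0,T))K$, i.e. $d(gK,\gamma hK)<T$. If two distinct $\gamma,\gamma'$ both contributed, then $d(\gamma hK,\gamma' hK)<2T$. This gives a nontrivial element of $\Gamma$ moving a point of the ball $B(gK,T)$ by less than $2T$, which contradicts injectivity radius $>2T$ at $gK$ (with the convention that the ball of radius $2T$ embeds). Hence for each $h$ at most one term survives, and
\[
\int_{\Gamma\backslash G}\Big|\sum_\gamma A(g,\gamma h)\Big|^2dh=\int_G|A(g,h)|^2dh .
\]
Integrating over the thick part of $\Gamma\backslash G$ is bounded by the full first term of the claimed inequality.

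\textbf{Thin part.} For $g$ in the preimage of $Y_{\le 2T}$, bound $|\sum_\gamma A(g,\gamma h)|\le \|A\|_\infty\,N(g,h)$, where
\[
N(g,h)=\#\{\gamma: d(gK,\gamma hK)<T\}.
\]
Then $\int_{\Gamma\backslash G}N(g,h)^2dh\le \sup_h N(g,h)\cdot\int_{\Gamma\backslash G}N(g,h)\,dh$. The integral equals ${\rm vol}(B_X(T))$ after unfolding, and ${\rm vol}(B_X(T))\ll e^{cT}$ since volume growth in $X$ is exponential. For the supremum, a standard packing argument applies. The balls $B(\gamma hK,\,{\rm InjRad}(Y)/2)$ are pairwise disjoint, because $\Gamma$ is torsion free and ${\rm InjRad}(Y)$ is a global lower bound on displacement. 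They all lie in $B(gK,T+{\rm InjRad}(Y))$, so
\[
N\le e^{c(T+1)}/{\rm vol}\,B_X({\rm InjRad}(Y)/2)\ll e^{cT}\,{\rm InjRad}(Y)^{-\dim Y},
\]
using that small balls have volume $\asymp r^{\dim Y}$; large injectivity radius is harmless after adjusting $c$. Integrating over $g$ in the thin part, whose volume in $\Gamma\backslash G$ is ${\rm vol}(Y_{\le 2T})$ since $K$ has measure one, gives the second term.

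\textbf{Main obstacle.} I expect the only delicate step to be the packing bound. We need volume-growth constants uniform in $T$ and in the small-radius regime, which is why the factor $e^{cT}{\rm InjRad}(Y)^{-\dim Y}$ appears. The thick-part disjointness argument also requires care with the precise definition of injectivity radius; if necessary, the threshold $2T$ can be adjusted by absorbing constants into $c$.
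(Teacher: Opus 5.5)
Your proposal is correct and follows the same approach as the paper, which proves this lemma only by citing Lemma 5.2 of Brumley--Matz (an adaptation of Lemma 5.1 of Le Masson--Sahlsten); your argument is the standard thick--thin splitting behind those references, with at most one contributing $\gamma$ on the thick part, and on the thin part a packing bound $\#\{\gamma\}\ll e^{cT}\,\mathrm{InjRad}(Y)^{-\dim Y}$ combined with $\mathrm{vol}(B_X(T))\ll e^{cT}$. Two of your glosses are justified: large injectivity radius is indeed harmless, because a nonempty thin part forces $\mathrm{InjRad}(Y)\le 2T$; and the two orders of integration in the first term agree by the diagonal $\Gamma$-invariance of $A$.
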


\begin{proof}
See Lemma 5.2 of \cite{Brumley_Matz_23}, which is an adaptation of Lemma 5.1 of \cite{Le_Masson_Sahlsten_17} to the general setting.
\end{proof}

To prove Theorem \ref{main-geom-thm}, we shall apply Lemma \ref{lemma:general-HS} with the integral operator $\mathbf{A}(\tau)$, defined in \eqref{def:propagator}. This will require examining its kernel, which is described in the following result.

Throughout this subsection and the next, we may work under more general hypotheses on $G$ and $H_0$ than those imposed by the conditions of Theorem \ref{main-geom-thm}. The condition on the subsystem of reduced roots of $G_1$ and the extremal coweight hypothesis on $H_0$ will only play a role starting in subsection \ref{sec:MTbound}.

\begin{lemma}\label{lemma:kernel}
The integral operator $\mathbf{A}(\tau)$ on $L^2(\Gamma\backslash G)$, as defined in \eqref{def:propagator}, has kernel \; $\sum_{\gamma\in\Gamma} A(\tau)(g,\gamma h)$, where $A(\tau)$ is the  function on $\Gamma\backslash (G\times G)$ given by
\[
A(\tau)(g,h)=\frac{1}{\tau}\int_{\tau}^{2\tau} e^{-2t\rho(H_0)}\int_{gE_t\cap hE_t}a(x)dx dt.
\]
\end{lemma}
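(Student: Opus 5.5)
We compute the kernel directly from the definitions, one operator at a time. For $f\in L^2(\Gamma\backslash G)$ we have
\[
U_t^*f(x)=e^{-t\rho(H_0)}\int_G \mathds{1}_{E_t^{-1}}(y)f(xy)\,dy=e^{-t\rho(H_0)}\int_G \mathds{1}_{E_t}(h^{-1}x)f(h)\,dh,
\]
where we substituted $h=xy$ and used $\mathds{1}_{E_t^{-1}}(x^{-1}h)=\mathds{1}_{E_t}(h^{-1}x)$. Similarly,
\[
U_t f(g)=e^{-t\rho(H_0)}\int_G\mathds{1}_{E_t}(g^{-1}x)f(x)\,dx.
\]
Composing these with multiplication by $a$, viewed as a left-$\Gamma$-invariant function on $G$, gives
\[
U_taU_t^*f(g)=e^{-2t\rho(H_0)}\int_G\int_G \mathds{1}_{E_t}(g^{-1}x)\,a(x)\,\mathds{1}_{E_t}(h^{-1}x)\,f(h)\,dh\,dx.
\]
The indicator conditions are exactly $x\in gE_t\cap hE_t$.

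The next step is to fold the $h$-integral over $\Gamma$. Write $h$ over a fundamental domain for $\Gamma\backslash G$ as $\gamma h$ with $\gamma\in\Gamma$. By the left $\Gamma$-invariance of $f$, and Fubini (justified by compact support and boundedness of the integrand), the $h$-integral becomes
\[
\int_{\Gamma\backslash G}\sum_{\gamma\in\Gamma}\Big(\int_{gE_t\cap\gamma hE_t}a(x)\,dx\Big)f(h)\,dh .
\]
Only finitely many $\gamma$ contribute, since $E_t$ is compact. Averaging in $t$ over $[\tau,2\tau]$ and moving the $t$-integral inside, which is legitimate because everything is bounded and compactly supported uniformly for $t\in[\tau,2\tau]$, produces the claimed kernel $\sum_\gamma A(\tau)(g,\gamma h)$.

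Finally, we check that $A(\tau)$ is invariant under the diagonal left $\Gamma$-action. This follows from the change of variables $x\mapsto\gamma x$ together with $a(\gamma x)=a(x)$, since $\gamma gE_t\cap\gamma hE_t=\gamma(gE_t\cap hE_t)$ and $dx$ is left-invariant. The computation also requires $a$ to be right $K$-invariant when viewed on $G$, but integrating over $x\in G$ with $dx$ either on $G$ or on $X$ is harmless given our normalization of Haar measure.

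There is no real obstacle here. The only points needing care are the inversion conventions in the adjoint $U_t^*$ (that $f^\vee$ gives $E_t^{-1}$) and the correct direction of the translates $gE_t$ versus $E_tg$.
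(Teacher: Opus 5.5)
Your proof is correct and is essentially the paper's argument: the paper writes $U_taU_t^*f(g)$ directly as a double integral over $E_t\times E_t^{-1}$ and substitutes $h=gh_1h_2$, then $x=gh_1$. That is the same computation as composing your kernels for $U_t$ and $U_t^*$, followed by the same folding over $\Gamma$ and averaging in $t$. One minor correction: $f\in L^2(\Gamma\backslash G)$ need not be bounded, so Fubini should be justified by the local integrability of $f$ on $G$ together with the compactness of $E_t$, not by boundedness of the integrand.
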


\begin{proof}
By definition, for a function $f \in L^2(\Gamma \backslash G)$,
\[
(U_t aU_t^*)f(g)=e^{-2t\rho(H_0)}\int_{E_t}a(gh_1)\int_{E_t^{-1}} f(gh_1h_2)dh_2dh_1.
\]
We change variables in the inner sum by setting $h=gh_1h_2$ and use Fubini's theorem to get
\begin{align*}
(U_t aU_t^*)f(g)&=e^{-2t\rho(H_0)}\int_{E_t}a(gh_1)\int_G f(h){\mathds{1}}_{E_t^{-1}}(h_1^{-1}g^{-1}h)dhdh_1\\
&=e^{-2t\rho(H_0)}\int_G f(h)\int_G a(gh_1){\mathds{1}}_{E_t^{-1}}(h_1^{-1}g^{-1}h){\mathds{1}}_{E_t}(h_1)dh_1 dh.
\end{align*}
We then change variables via $x=gh_1$, and use the left-$\Gamma$-invariance of $f$, to get
\begin{align*}
(U_t aU_t^*)f(g)&=e^{-2t\rho(H_0)}\int_G f(h)\int_G a(x){\mathds{1}}_{E_t^{-1}}(x^{-1}h){\mathds{1}}_{E_t}(g^{-1}x)dx dh\\
&=e^{-2t\rho(H_0)}\int_G f(h)\int_{gE_t\cap hE_t} a(x)dx dh\\
&=e^{-2t\rho(H_0)}\int_{\Gamma\backslash G} f(h)\sum_{\gamma\in\Gamma} \int_{gE_t\cap \gamma hE_t} a(x)dx dh.
\end{align*}
Averaging over $t$ yields the result.
\end{proof}

\subsection{The support of the kernel function}\label{sec:ker-supp}
To obtain the second term in the estimate \eqref{eq:HS-bound} via an application of Lemma \ref{lemma:general-HS}, it will be necessary to calculate the real parameter $T$ which bounds the size of the support of the kernel.

It follows from Lemma \ref{lemma:kernel}, and the bi-$K$-invariance of the sets $E_t$, that 
\begin{equation}\label{eq:support-kernel}
\begin{aligned}
\{g^{-1}h: (g,h)\in {\rm supp}(A(\tau))\} &\subseteq \bigcup_{\tau \le t \le 2\tau} \{g^{-1}h: gE_t\cap hE_t\neq\emptyset\}\\
& = \bigcup_{\tau \le t \le 2\tau} K\{e^H : e^H E_t\cap  E_t\neq\emptyset\}K.
\end{aligned}
\end{equation}
We therefore need to estimate the Euclidean norm of the largest element $H\in\bar{\ga}_+$ for which $e^HE_t$ intersects $E_t$. Now, recall from \eqref{eq:StimesB} that $E_t$ factorizes as $S_t\times B \subset G_1\times\mathcal{G}_2$, so that 
\begin{equation}\label{eq:inter-fact}
e^HE_t\cap E_t=(e^{H_1} S_t\cap S_t)(e^{H_2}B\cap B), \quad\textrm{where }\; H=H_1+H_2\in\ga_1\oplus {\rm Lie}(\mathcal{A}_2).
\end{equation}
We may take $\epsilon_0>0$ small enough in the definition \eqref{eq:def-St} of $B$ so that $e^{H_2}B\cap B=\emptyset$ if $\|H_2\|\geq 1$. It therefore suffices to estimate the Euclidean norm of the largest element $H\in\bar{\ga}_{1,+}$ for which $e^HS_t$ intersects $S_t$. For this reason, for the rest of this subsection, we may and shall assume that $G=G_1$ is a non-compact simple real Lie group and $E_t=S_t$.

We begin by proving the following triangle inequality in the irreducible symmetric space $X=G/K$. Given $H \in \ga$, let $\textnormal{Conv}(W.H)$ denote the convex hull of the $W$-orbit of $H$. We define a partial order on $\bar{\ga}_+$ via $H_1 \preceq H_2$ if and only if $\textnormal{Conv}(W. H_1) \subseteq \textnormal{Conv}(W. H_2)$. We define a ``Weyl chamber-valued metric'' on $G/K$ via $d_{\bar{\ga}_+}(gK, hK) = H \in \bar{\ga}_+$ if $g^{-1} h \in K e^H K$.

\begin{lemma}\label{lemma:triangle}
For all $x, y, z \in G/K$, we have
\[
        d_{\bar{\ga}_+}(x, z) \preceq d_{\bar{\ga}_+}(x, y) + d_{\bar{\ga}_+}(y, z).
  \]
    \end{lemma}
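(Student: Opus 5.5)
By the invariance of $d_{\bar{\ga}_+}$ under left translation by $G$, it suffices to take $y$ at the base point. Writing $x = g_1K$ and $z = g_2K$, we then have $d_{\bar{\ga}_+}(x,y) = H_1$ with $g_1^{-1} \in Ke^{H_1}K$, and $d_{\bar{\ga}_+}(y,z) = H_2$ with $g_2 \in Ke^{H_2}K$. The quantity $d_{\bar{\ga}_+}(x,z)$ is the Cartan projection $\mu(g_1^{-1}g_2)$, where $\mu: G \to \bar{\ga}_+$ is defined by $g \in Ke^{\mu(g)}K$. The statement therefore reduces to the classical \emph{Cartan projection triangle inequality}:
\[
\mu(e^{H_1}ke^{H_2}) \in \textnormal{Conv}\big(W.(H_1+H_2)\big) \qquad (k\in K,\ H_1,H_2\in\bar{\ga}_+).
\]
Note that containment of a point of $\bar{\ga}_+$ in $\textnormal{Conv}(W.(H_1+H_2))$ implies containment of its full convex hull of $W$-orbit, since the latter set is convex and $W$-stable. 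So this reduced inequality is equivalent to the partial-order statement.

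To prove the reduced inequality, I use the standard characterization: for $H,H'\in\bar{\ga}_+$, $H\preceq H'$ if and only if $\varpi(H)\le\varpi(H')$ for every dominant weight $\varpi$. This holds because $\textnormal{Conv}(W.H')\cap\bar{\ga}_+ = \{H\in\bar{\ga}_+ : H'-H \in \sum_\alpha \R_{\geq0}\alpha^\vee\}$, and dominant weights are dual to the positive coroot cone. It therefore suffices to take fundamental weights (or highest weights of irreducible representations) $\varpi$.

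For each such weight, I would choose a finite-dimensional irreducible representation $(\pi,V)$ of $G$ (or of a finite cover; it suffices to work at the level of $\mathfrak g$ and the simply connected group, since $\mu$ is unaffected) whose restricted highest weight is a positive multiple of $\varpi$. I equip $V$ with a $K$-invariant inner product in which $\pi(\mathfrak p)$ acts by self-adjoint operators. Then $\|\pi(e^H)\|_{\rm op}=e^{\max_{\nu}\nu(H)}$ over the weights $\nu$ of $V$, and for dominant $H$ this maximum is attained by the highest weight, so it equals $e^{c\varpi(H)}$. The operator norm is bi-$K$-invariant and submultiplicative, so
\[
e^{c\varpi(\mu(e^{H_1}ke^{H_2}))}\le e^{c\varpi(H_1)}e^{c\varpi(H_2)},
\]
which gives $\varpi(\mu(\cdot))\le\varpi(H_1+H_2)$ as required.

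The main obstacle is ensuring that enough representations exist. Specifically, I need that the restricted highest weights of spherical (or arbitrary) finite-dimensional representations span the positive multiples of every fundamental restricted weight. This follows from the Cartan–Helgason theorem: the highest weights of spherical representations form a lattice of full rank inside the dominant cone generated by the fundamental restricted weights, so some positive integer multiple of each $\varpi_\alpha$ occurs. A second technical point is the dominance characterization of the convex hull, which is standard (Kostant's convexity lemma, or Bourbaki). Alternatively, the whole statement can be quoted from Kostant's convexity theorem or from Kapovich–Leeb–Millson.
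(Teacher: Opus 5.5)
Your proof is correct, but it takes a different route from the paper's.

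The paper moves the problem to the flat model. It quotes the Kapovich--Leeb--Millson theorem: a triangle in $G/K$ with $\bar{\ga}_+$-valued side lengths has a counterpart in $\mathfrak p$ with the same side lengths for the Cartan motion group. This reduces the lemma to $\kappa(X+Y)\preceq\kappa(X)+\kappa(Y)$ on $\mathfrak p$. That inequality follows from Kostant's linear convexity theorem $\Pi_{\ga}(\mathrm{Ad}(K)X)=\mathrm{Conv}(W.X)$ together with the Minkowski-sum inclusion of Lemma \ref{lemma:conv-sum}.

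You instead stay inside the group:
\begin{enumerate}
\item You reduce to $\mu(e^{H_1}ke^{H_2})$.
\item You use the dual description of $\mathrm{Conv}(W.H')\cap\bar{\ga}_+$ (the same Helgason lemma the paper uses to prove Lemma \ref{lemma:conv-sum}) to turn $\preceq$ into the scalar inequalities $\varpi_\alpha(\cdot)\le\varpi_\alpha(\cdot)$ for the fundamental weights.
\item You obtain these from submultiplicativity and bi-$K$-invariance of operator norms in finite-dimensional representations of (a cover of) $G$.
\item Cartan--Helgason supplies, for each $\alpha\in\Delta$, a representation whose restricted highest weight is a positive multiple of $\varpi_\alpha$.
\end{enumerate}

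Your route avoids the Kapovich--Leeb--Millson theorem. Its full content, equality of the side-length cones for $G/K$ and $\mathfrak p$, is far more than the weak triangle inequality needed here. Your route also makes Lemma \ref{lemma:conv-sum} unnecessary, since additivity is automatic for linear functionals. The cost is passing to a covering group and relying on a representation-theoretic existence result. By contrast, once the transfer theorem is granted, the paper's argument is a short convexity computation in $\mathfrak p$.
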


\begin{proof}
We first reduce the proof to the corresponding Lie algebra version, using \cite{Kapovich_Leeb_Millson_08}. Let $\mathfrak{p}$ be the orthogonal complement to $\mathfrak{k}$ with respect to the Cartan involution; thus $\mathfrak{p}$ is invariant under $\textnormal{Ad}(K)$. We have a Cartan decomposition with respect to adjoint $K$-action on $\mathfrak{p}$, namely $\mathfrak{p} = \bigsqcup_{H \in \overline{\ga}_+} K.H$.

Let $\kappa: \mathfrak{p} \to\overline{\ga}_+$ denote the corresponding Cartan projection, and let $d^{\, \inf}_{\bar{\ga}_+}: \mathfrak{p} \times \mathfrak{p} \to \overline{\ga}_+$, $d_{\bar{\ga}_+}^{\, \inf}(X, Y) = \kappa(Y-X)$, denote the associated Weyl chamber-valued metric; this metric is invariant under the action of the Cartan motion group $K \ltimes \mathfrak{p}$. According to Main Theorem 1.2 of \cite{Kapovich_Leeb_Millson_08}, given three points $x, y, z \in G/K$ such that $d_{\bar{\ga}_+}(x, y) = \lambda_1$, $d_{\bar{\ga}_+}(y, z) = \lambda_2$ and $d_{\bar{\ga}_+}(x, z) = \lambda_3$, there exist three points $X, Y, Z \in \mathfrak{p}$ such that $d_{\bar{\ga}_+}^{\inf}(X, Y) = \lambda_1$, $d_{\bar{\ga}_+}^{\inf}(Y, Z) = \lambda_2$ and $d_{\bar{\ga}_+}^{\inf}(X, Z) = \lambda_3$ (and vice versa). Thus, it suffices to show that for any $X, Y, Z \in \mathfrak{p}$ we have that
    \begin{gather*}
        d_{\bar{\ga}_+}^{\, \inf}(X, Z) \preceq d_{\bar{\ga}_+}^{\, \inf}(X, Y) + d_{\bar{\ga}_+}^{\, \inf}(Y, Z).
    \end{gather*}
This is in turn equivalent to showing that 
    \begin{gather}
        \kappa(X + Y) \preceq \kappa(X) + \kappa(Y) \label{eqn_triangle_ineq2}
    \end{gather}
for any $X, Y \in \mathfrak{p}$.

    Recall that the Killing form on $\mathfrak{p}$ is positive definite. Let $\Pi_\ga: \mathfrak{p} \to \ga$ denote the orthogonal projection with respect to the Killing form. By the Kostant convexity theorem \cite[Theorem 8.2]{Kostant_73}, for any $X \in \mathfrak{p}$, we have $\Pi_\ga(\textnormal{Ad}(K).X) = \textnormal{Conv}(W. X)$. Now, given $X, Y \in \mathfrak{p}$, we wish to show \eqref{eqn_triangle_ineq2}. By applying an element of $K$, we may assume that $X + Y \in \bar{\ga}_+$, and thus $\kappa(X + Y) = X + Y$. We also have that $X = k_1.\kappa(X)$ and $Y = k_2.\kappa(Y)$ for some $k_i \in K$. We thus have
    \begin{align*}
        \kappa(X + Y) &= X+Y \\
                    &= \Pi_\ga(X + Y) \\
                    &= \Pi_\ga(X) + \Pi_\ga(Y) \\
                    &= \Pi_\ga(k_1.\kappa(X)) + \Pi_\ga(k_2.\kappa(Y)) \\
                    & \in \textnormal{Conv}(W.\kappa(X)) + \textnormal{Conv}(W.\kappa(Y)) \\
                    & \subseteq \textnormal{Conv}(W.(\kappa(X) + \kappa(Y))),
    \end{align*}
the last equality following from Lemma \ref{lemma:conv-sum} below.
\end{proof}

\begin{lemma}\label{lemma:conv-sum}

If $X, Y \in \bar{\ga}_+$, then $\textnormal{Conv}(W.X) + \textnormal{Conv}(W.Y) \subseteq \textnormal{Conv}(W.(X+Y))$.

\end{lemma}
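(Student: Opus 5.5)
Since $\textnormal{Conv}(W.(X+Y))$ is convex, and $\textnormal{Conv}(W.X)+\textnormal{Conv}(W.Y)$ is the convex hull of the Minkowski sum $W.X+W.Y$, it suffices to show that $uX+vY\in\textnormal{Conv}(W.(X+Y))$ for all $u,v\in W$. Applying $u^{-1}$, which preserves $\textnormal{Conv}(W.(X+Y))$, we may take $u=1$. So the claim reduces to showing that $X+wY\in\textnormal{Conv}(W.(X+Y))$ for every $w\in W$, with $X,Y\in\bar{\ga}_+$.

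For this I would use the standard description of the Weyl polytope of a dominant element: for $Z\in\bar{\ga}_+$, a point $V\in\ga$ lies in $\textnormal{Conv}(W.Z)$ if and only if $\varpi_\alpha(Z-\tilde V)\ge 0$ for all $\alpha\in\Delta$. Here $\tilde V\in\bar{\ga}_+$ is the dominant representative of $W.V$. Equivalently, $Z-\tilde V$ lies in the cone spanned by the simple coroots. The ``if'' direction is the one needed here; it is a classical result (Bourbaki, or as a consequence of Kostant convexity, already quoted above). It can also be seen by a separation argument: a linear functional $\mu$ is maximized on $\textnormal{Conv}(W.Z)$ at $Z$ when $\mu$ is dominant, since $Z-wZ$ is a nonnegative combination of simple coroots.

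With this, I would use the separation form directly, which avoids computing the dominant representative $\tilde V$. It suffices to check that for every $\mu\in\ga^*$,
\[
\mu(X+wY)\le\max_{v\in W}\mu(v(X+Y)).
\]
Choose $u\in W$ with $u^{-1}\mu$ dominant. Then the right-hand side equals $(u^{-1}\mu)(X+Y)$, because $X+Y$ is dominant and a dominant functional attains its maximum over the $W$-orbit of a dominant element at that element. On the left,
\[
\mu(X)=(u^{-1}\mu)(u^{-1}X)\le (u^{-1}\mu)(X)
\]
by the same fact, applied to $X$ dominant. Similarly,
\[
\mu(wY)=(u^{-1}\mu)(u^{-1}wY)\le (u^{-1}\mu)(Y).
\]
Adding these two inequalities gives the desired bound, and the Hahn--Banach separation theorem concludes the proof.

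The only real input is the fact that for $\mu$ and $Z$ both dominant, $\mu(vZ)\le\mu(Z)$ for all $v\in W$. This holds because $Z-vZ\in\sum_{\alpha\in\Delta}\R_{\ge0}\alpha^\vee$, a standard fact proved by induction on the length of $v$ via simple reflections, and because $\mu$ is nonnegative on the simple coroots. I expect no serious obstacle; the main care needed is keeping track of the identification of $\ga$ with $\ga^*$ and of coroots versus roots, which is harmless since $\ga$ carries a $W$-invariant inner product.
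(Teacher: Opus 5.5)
Your proof is correct, but it works on the dual side from the paper's.

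The paper uses $W$-invariance to move the \emph{point} $H=H_1+H_2$ into $\bar{\ga}_+$. It then relies on Helgason's description $\textnormal{Conv}(W.Z)\cap\bar{\ga}_+=(Z-{}_+C)\cap\bar{\ga}_+$ (Ch.~IV, \S 8, Lemma 8.3), where ${}_+C$ is the cone spanned by the simple coroots. The easy inclusion $\textnormal{Conv}(W.X)\subset X-{}_+C$ gives $\langle H_1,H'\rangle\le\langle X,H'\rangle$ for dominant $H'$, and likewise for $H_2$ and $Y$. The nontrivial inclusion, that a dominant $H$ with $X+Y-H\in{}_+C$ lies in $\textnormal{Conv}(W.(X+Y))$, is cited as a black box to conclude.

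You instead move each \emph{functional} $\mu$ into the dominant chamber and decide membership via support functions. In effect you show $\max_{\textnormal{Conv}(W.Z)}\mu=\mu^+(Z)$, where $\mu^+$ is the dominant element of $W\mu$, and this is additive in $Z\in\bar{\ga}_+$. Your only inputs are finite-dimensional separation and the elementary fact that a dominant functional is maximized on $W.Z$ at $Z$. The paper itself proves that fact in the special case $\mu=\rho$ in Lemma~\ref{prop_max_rho}. So you are essentially reproving the needed direction of Helgason's lemma rather than citing it. Your route is more self-contained; the paper's is shorter given the reference.

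Two minor remarks. First, your reduction to points $X+wY$ with $u=1$ is unnecessary: your two-line estimate bounds $\mu(H_1+H_2)$ for arbitrary $H_1\in\textnormal{Conv}(W.X)$ and $H_2\in\textnormal{Conv}(W.Y)$. Second, the dominance-order description of the Weyl polytope in your second paragraph is never actually used.
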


\begin{proof}

Since both sides of the inclusion are $W$-invariant, it suffices to show
\[
(\textnormal{Conv}(W.X) + \textnormal{Conv}(W.Y)) \cap \bar{\ga}_+\subset \textnormal{Conv}(W.(X+Y)) \cap \bar{\ga}_+.
\]
We apply \cite[Ch. IV, \S 8, Lemma 8.3]{Helgason_00} to express the right-hand side as $(X + Y - {}_+ C) \cap \bar{\ga}_+$, where
\[
{}_+ C = \{ H \in \ga : \langle H, H' \rangle \ge 0 \text{ for all } H' \in \bar{\ga}_+ \}.
\]
We are therefore reduced to showing that
\[
(\textnormal{Conv}(W.X) + \textnormal{Conv}(W.Y)) \cap \bar{\ga}_+\subset \{H\in \bar{\ga}_+: \langle H, H' \rangle \le \langle X+Y, H' \rangle,\; \forall\; H' \in \bar{\ga}_+\}.
\]
Let $H = H_1 + H_2\in\overline{\ga}_+$, where $H_1 \in \textnormal{Conv}(W.X)$ and $H_2 \in \textnormal{Conv}(W.Y)$.  By another application of \cite[Ch. IV, \S 8, Lemma 8.3]{Helgason_00}, we have $\textnormal{Conv}(W.X) \subset X - {}_+ C$, which implies that $\langle H_1, H' \rangle \le \langle X, H' \rangle$ for all $H' \in \bar{\ga}_+$. Likewise $\langle H_2, H' \rangle \le \langle Y, H' \rangle$ for all $H' \in \bar{\ga}_+$.  Adding these gives the desired inequality.
\end{proof}

Let $H_0\in\overline{\ga}_+$ be non-zero. Let 
\begin{equation}\label{defn:P}
P= \textnormal{Conv}(W.(H_0 -w_0H_0)) \cap \bar{\ga}_+,
\end{equation}
where $w_0$ is the longest element of the Weyl group $W$. By definition, $w_0$ is the unique element of maximal length with respect to the simple reflections $\{s_\alpha\mid \alpha\in\Delta\}$. The relevant property of $w_0$ in this context is that it satisfies $-w_0(\overline{\ga}_+)=\overline{\ga}_+$. For $t>0$ write $P_t=tP$. We define a ``polytopal norm'' on $\bar{\ga}_+$ by putting $\|H\|_P := \inf \{t : H \in P_t \}$. 

\begin{corollary}\label{cor:intersection-range}
There is $\epsilon_0>0$ small enough (in the definition of $S_t$) such that any $H \in \bar{\ga}_+$ for which $e^H S_t \cap S_t \neq \emptyset$ satisfies $\|H\|_P \leq t + 1$. In particular,
\[
\{g^{-1}h: (g,h)\in {\rm supp}(A(\tau))\}\subset K\exp(B_\ga (0,T))K,
\]
where $T=(\tau+1)\|H_0-w_0 H_0\|$.
\end{corollary}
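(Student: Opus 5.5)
The plan is to use the Weyl chamber-valued triangle inequality of Lemma \ref{lemma:triangle}. Suppose $e^H S_t\cap S_t\neq\emptyset$ with $H\in\bar{\ga}_+$. Then there is a point $x=gK$ with $g\in S_t$ and $g=e^H s$ for some $s\in S_t$. Let $o=K$ be the base point and $y=e^HK$. We have $d_{\bar{\ga}_+}(o,x)=H_1$ and $d_{\bar{\ga}_+}(y,x)=H_2$, where $H_1$ and $H_2$ lie in $B_\ga(tH_0,\epsilon_0)\cap\bar{\ga}_+$, by the definition of $S_t$ and left-invariance of $d_{\bar{\ga}_+}$.

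\textbf{Step 1: apply the triangle inequality.} Lemma \ref{lemma:triangle} gives
\[
H=d_{\bar{\ga}_+}(o,y)\preceq d_{\bar{\ga}_+}(o,x)+d_{\bar{\ga}_+}(x,y).
\]
Since $(e^Hs)^{-1}e^H\in s^{-1}$, the element $d_{\bar{\ga}_+}(x,y)$ is the Cartan projection of $s^{-1}$, namely $-w_0H_2$. Hence
\[
H\in {\rm Conv}\bigl(W.(H_1-w_0H_2)\bigr)\cap\bar{\ga}_+ .
\]

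\textbf{Step 2: compare with $tP$.} Write $H_1=tH_0+E_1$ and $-w_0H_2=-tw_0H_0+E_2$, with $\|E_i\|\le\epsilon_0$. Then $H_1-w_0H_2=t(H_0-w_0H_0)+E$ with $\|E\|\le 2\epsilon_0$, and $H_1-w_0H_2$ lies in $\bar{\ga}_+$. We need ${\rm Conv}(W.(tH_0'+E))\cap\bar{\ga}_+\subset (t+1)P$, where $H_0'=H_0-w_0H_0\in\bar{\ga}_+$ is nonzero.

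Using Helgason's description from Lemma \ref{lemma:conv-sum}, for $Z\in\bar{\ga}_+$ we have ${\rm Conv}(W.Z)\cap\bar{\ga}_+=(Z-{}_+C)\cap\bar{\ga}_+$. The partial order $\preceq$ is therefore the dominance order: $Z\preceq Z'$ if and only if $Z'-Z\in{}_+C$. So it suffices to show
\[
(t+1)H_0'-(tH_0'+E)=H_0'-E\in{}_+C .
\]

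\textbf{Main obstacle.} The main point is this last inclusion. It holds when $H_0'$ is in the interior of ${}_+C$ and $\epsilon_0$ is small. Interiority holds because $\bar{\ga}_+\smallsetminus\{0\}\subset{\rm int}({}_+C)$. Indeed, for $G$ simple every fundamental coweight pairs strictly positively with every nonzero $H'\in\bar{\ga}_+$, since the inverse Cartan matrix has positive entries for irreducible root systems. Hence $\langle H_0',H'\rangle\ge c\|H'\|$ on $\bar{\ga}_+$ for some $c>0$, and choosing $2\epsilon_0<c$ gives $\langle H_0'-E,H'\rangle\ge0$.

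\textbf{Conclusion.} It follows that $H\in(t+1)P$, that is, $\|H\|_P\le t+1$.

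For the kernel support, we use \eqref{eq:support-kernel} together with the reduction via \eqref{eq:inter-fact} to the $G_1$ factor (the $\mathcal{G}_2$ part contributes $\|H_2\|<1$, which can be absorbed after enlarging the constant, or handled by the same argument). Every $H\in P_t$ satisfies $\|H\|\le t\|H_0'\|$, because elements of ${\rm Conv}(W.Z)$ have norm at most $\|Z\|$ by convexity of the norm and $W$-invariance. Taking $t\le 2\tau$ bounds the relevant $H$ by a constant multiple of $\tau+1$, giving $T$ as stated, up to the harmless normalization of $\tau$ versus $2\tau$ in the time average.
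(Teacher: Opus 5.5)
Your argument is correct and essentially the paper's. You apply Lemma \ref{lemma:triangle} in the same way, identifying the second leg as $-w_0$ of a Cartan projection in $B_\ga(tH_0,\epsilon_0)$; the only difference is how the $\epsilon_0$-errors are absorbed. You handle them jointly, via the dual-cone description of $\preceq$ and the strict positivity of $\langle H_0-w_0H_0,\cdot\rangle$ on $\bar{\ga}_+\smallsetminus\{0\}$. The paper instead chooses $\epsilon_0$ with $B_\ga(0,\epsilon_0)\subset\textnormal{Conv}(W.H_0)$, bounds each leg separately by $(t+1)H_0$ and $(t+1)(-w_0H_0)$, and then adds; this is an equivalent use of irreducibility.

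Your caveat about $t\le 2\tau$ is also right. The bound $\|H\|_P\le t+1$ over $t\in[\tau,2\tau]$ (the paper's argument included) really gives $T=(2\tau+1)\|H_0-w_0H_0\|$ rather than $(\tau+1)\|H_0-w_0H_0\|$. This only changes the constants $c_1,c_2,c_3$ used downstream.
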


\begin{proof}
We may take $\epsilon_0$ small enough so that $B_{\ga}(0, \epsilon_0) \subset \textnormal{Conv}(W.H_0)$, and thus $B_{\ga}(t H_0, \epsilon_0) \subset \textnormal{Conv}(W.(t + 1)H_0)$. We apply Lemma \ref{lemma:triangle} with $y \in e^H S_t \cap S_t$, $x =K$, and $z = e^H K$, to obtain

\[
H = d_{\bar{\ga}_+}(K, e^H K)  \preceq d_{\bar{\ga}_+}(K, y) + d_{\bar{\ga}_+}(y, e^H K).
\]
Since $y\in S_t$ we have $d_{\bar{\ga}_+}(K, y)\in  B_{\ga}(t H_0, \epsilon_0) $, so that, by the choice of $\epsilon_0$, the first term satisfies $d_{\bar{\ga}_+}(K, y) \preceq (t + 1)H_0$. Furthermore, from the property of $w_0$ recalled above, if $g\in Ke^HK$ then $g^{-1}\in Ke^{-w_0H}K$. The Cartan projection $\kappa$ therefore satisfies $\kappa(g^{-1})=-w_0\kappa(g)$. We deduce that
\[
d_{\bar{\ga}_+}(x,y)=\kappa(g^{-1}h)=\kappa((h^{-1}g)^{-1})=-w_0\kappa(h^{-1}g)=-w_0d_{\bar{\ga}_+}(y,x)
\]
for any $x=gK,y=hK\in G/K$. In particular, the second term satisfies
\[
d_{\bar{\ga}_+}(y, e^H K)=-w_0d_{\bar{\ga}_+}(e^H K,y)=-w_0d_{\bar{\ga}_+}(K,e^{-H}y)\preceq (t+1) (-w_0H_0),
\]
where we have used the $G$-invariance of $d_{\bar{\ga}_+}$ and the fact that $e^{-H}y\in S_t$. It follows that $H\in P_{t+1}$. To obtain the second statement, we apply \eqref{eq:support-kernel} and the fact that $P_t$ is contained in the Euclidean ball in $\ga$ of radius $t\|H_0-w_0 H_0\|$.
\end{proof}

Let $c>0$ be as in Lemma \ref{lemma:general-HS}. If we set $c_1=c\|H_0-w_0 H_0\|$ and $c_2=c_3=2\|H_0-w_0 H_0\|$, we obtain the second term in \eqref{eq:HS-bound}.

\subsection{Main term bound}\label{sec:MTbound}
We denote the main term in Lemma \ref{lemma:general-HS} by
\begin{equation}\label{eq:main-term}
M(\tau)=\int_{\Gamma\backslash G}\int_G| A(\tau)(g,h)|^2dg dh.
\end{equation}
The goal of this subsection is to show, under the assumptions laid out in Theorem \ref{main-geom-thm}, and assuming Theorem \ref{intersection-vol}, that $M(\tau)$ is bounded by the first term in the estimate \eqref{eq:HS-bound}. This will then fully reduce the proof of Theorem \ref{main-geom-thm} to that of Theorem \ref{intersection-vol}.

We shall make use of the following ergodic theorem due to Nevo \cite{Nevo}:
\begin{proposition}[Nevo] \label{prop_nevo}
Let $G$ be a non-compact simple real Lie group with finite center acting by measure-preserving transformations on a probability space $(\Sigma,\mu)$. Let $r_\Sigma$ denote the action on $L^2(\Sigma,\mu)$ and $r^0_\Sigma$ the restriction to the orthocomplement $L^2_0(\Sigma,\mu)$ to the constant functions.  Assume that $r_\Sigma^0$ has a spectral gap. There exist constants $\theta, C > 0$, depending on the integrability exponent for $r_\Sigma^0$, such that for any measurable set $E \subset G$ of finite measure, we have
    \begin{gather*}
        \big\| r_\Sigma^0({\rm vol}(E)^{-1}\mathds{1}_E)\big\|_{\textnormal{op}} \leq C \textnormal{vol}(E)^{-\theta},
    \end{gather*}
    where $\|\cdot \|_{\textnormal{op}}$ denotes the operator norm.
\end{proposition}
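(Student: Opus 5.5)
Since Proposition \ref{prop_nevo} is quoted from \cite{Nevo}, the aim here is only to recall the structure of a proof: reduce to the regular representation by a tensor power trick, then use the Kunze--Stein phenomenon.

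\textbf{Step 1 (tensor power).} Let $q=q(r^0_\Sigma)<\infty$ be the integrability exponent, and fix an integer $n\ge q/2$. By the theorem of Cowling--Haagerup--Howe, the tensor power $(r^0_\Sigma)^{\otimes n}$ has matrix coefficients in $L^{2+\varepsilon}(G)$ for every $\varepsilon>0$ on a dense subspace. It is therefore weakly contained in the regular representation $\lambda_G$. Consequently, for any $f\ge 0$ in $L^1(G)$,
\[
\|(r^0_\Sigma)^{\otimes n}(f)\|_{\rm op}\le \|\lambda_G(f)\|_{\rm op}.
\]

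\textbf{Step 2 (descending from the tensor power).} The goal is
\[
\|r^0_\Sigma(f)\|_{\rm op}\le \|(r^0_\Sigma)^{\otimes n}(f)\|_{\rm op}^{1/n}\,\|f\|_1^{1-1/n}
\qquad\text{for } f\ge0 .
\]
First replace $f$ by $f^\vee * f$ to reduce to bounding $\sup_{\|v\|=1}\langle r^0_\Sigma(F)v,v\rangle$ with $F=f^\vee*f\ge0$. For a unit vector $v$, write $c_v(g)=\langle r^0_\Sigma(g)v,v\rangle$. Then
\[
|\langle r^0_\Sigma(F)v,v\rangle|\le \int F\,|c_v|\le \|F\|_1^{1-1/n}\Big(\int F\,|c_v|^n\Big)^{1/n}
\]
by Hölder's inequality with respect to the measure $F\,dg$. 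Next note $|c_v|^n=|c_{v^{\otimes n}}|$, where $c_{v^{\otimes n}}$ is the corresponding matrix coefficient of the tensor power.

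This is the step I expect to be the main obstacle. The quantity $\int F\,|c_{v^{\otimes n}}|$ involves an absolute value inside the integral, so it is not directly a matrix coefficient $\langle (r^0_\Sigma)^{\otimes n}(F)v^{\otimes n},v^{\otimes n}\rangle$. I would try two routes, in this order:
\begin{itemize}
\item take $n$ even and use $|c_v|^2=\langle (r\otimes\bar r)(g)\,v\otimes\bar v,\,v\otimes\bar v\rangle$, working with $r\otimes\bar r$ in place of $r$, which makes the relevant coefficient genuinely a matrix coefficient;
\item failing that, avoid the absolute value altogether by bounding $|c_v|\le C_v\,\Xi^{1/n}$ for $K$-finite $v$ via Cowling--Haagerup--Howe, and then handle the dependence on the $K$-type dimension by a separate density argument.
\end{itemize}

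\textbf{Step 3 (Kunze--Stein).} By the Kunze--Stein phenomenon, for some $p<2$ one has $\|\lambda_G(f)\|_{\rm op}\le C\|f\|_p$. Applying this with $f=\mathrm{vol}(E)^{-1}\mathds 1_E$ gives $\|f\|_p=\mathrm{vol}(E)^{1/p-1}$ and $\|f\|_1=1$. (If $f$ was replaced by $f^\vee*f$ in Step 2, the same bound holds for $F$ via Young's inequality, or one works directly with $f$ positive.)

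\textbf{Conclusion.} Combining the three steps yields
\[
\|r^0_\Sigma(\mathrm{vol}(E)^{-1}\mathds 1_E)\|_{\rm op}\le C\,\mathrm{vol}(E)^{-(1-1/p)/n}.
\]
So the proposition holds with $\theta=(1-1/p)/n$, where $n$ depends only on $q(r^0_\Sigma)$ and $C$ depends only on $G$ and that same integrability exponent.
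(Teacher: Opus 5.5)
The paper gives no proof of its own here; it cites Nevo and, in Remark~\ref{rmk_best_theta}, points to the Gorodnik--Nevo sketch (weak containment of a tensor power in the regular representation, then Kunze--Stein), and your outline is that argument, correct once you commit to your first route: with $n=2m$ even and $c_{v,w}(g)=\langle r^0_\Sigma(g)v,w\rangle$, one has $\int f\,|c_{v,w}|^{2m}\,dg=\langle\sigma(f)V,W\rangle$ for $\sigma=(r^0_\Sigma)^{\otimes m}\otimes(\overline{r^0_\Sigma})^{\otimes m}$, $V=(v\otimes\bar v)^{\otimes m}$, $W=(w\otimes\bar w)^{\otimes m}$, and your Step~1 applies verbatim to $\sigma$, since its coefficients on a dense subspace are products of $2m$ coefficients of $r^0_\Sigma$ and their conjugates. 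That commitment is necessary, not optional: the displayed target of Step~2 with $(r^0_\Sigma)^{\otimes n}$ is false for general unitary representations (for the character $\chi(x)=e^{ix}$ of $\R$, $n=2$, and $f$ a probability density concentrated near $x=0$ and $x=\pi/2$, one has $\chi^2(f)\approx 0$ while $|\chi(f)|\approx 2^{-1/2}$), and the $K$-type-dependent constants of your second route do not by themselves bound the operator norm for an arbitrary measurable $E$.
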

\begin{remark} \label{rmk_best_theta}
    In Section 4.1 of \cite{Gorodnik_Nevo} a sketch of the proof of Proposition \ref{prop_nevo} is given. From that discussion one sees that the best possible admissible value of $\theta$ by the proof technique presented there is $\theta = 1/2 - \varepsilon$, corresponding to $r_\Sigma^0$ being tempered; the corresponds to taking $n = 1$ in the first bullet point and $r = \frac{1}{2} - \varepsilon$ in the second bullet point of loc. cit.
\end{remark}

The following preliminary bound is valid under more general assumptions on $G$ and $H_0$ than those laid out in Theorem \ref{main-geom-thm}.

\begin{proposition} \label{prop:main-term-bd}
Let $G$ be a product of non-compact simple real Lie groups and $\Gamma<G$ an irreducible lattice. Fix a simple factor $G_1$ of $G$ and let $H_0\in\overline{\ga_1}_+$ be non-zero. 

There is $\theta>0$, depending on the integrability exponent of $G_1$ acting on $L^2_0(\Gamma\backslash G)$, and $\epsilon_0>0$ (implicit in the definition of $S_t$ and $E_t$), such that for $\tau>1$, and mean-zero $a\in L^\infty(Y)$,
\[
M(\tau)\ll \frac{\|a\|_2^2}{\tau^2} \int_{P_{\tau +1}} e^{2 \rho( H)} \Big| \int_{\max\{\tau, \|H\|_P - 1\}}^{2\tau} e^{- 2 t\rho( H_0 )} \textnormal{vol}(e^HS_t \cap S_t)^{1-\theta}dt \Big|^2 dH,
\]
where the $H$-integral is taken over $P_{\tau+1}\subset G_1$.
\end{proposition}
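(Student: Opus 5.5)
Write $M(\tau)=\int_{\Gamma\backslash G}\int_G |A(\tau)(g,h)|^2\,dg\,dh$. The plan has three steps. First, reparametrize the inner variable by the relative position $u=g^{-1}h$. Second, apply Minkowski's integral inequality to move the $L^2(\Gamma\backslash G)$-norm inside the $t$-integral. Third, bound each resulting norm by Nevo's theorem (Proposition \ref{prop_nevo}) applied to an averaging set of volume ${\rm vol}(e^HS_t\cap S_t)$. The intersection volume is untouched here, so that Theorem \ref{intersection-vol} can later be inserted.

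\emph{Step 1: change of variables.} Substitute $h=gu$, so $dg\,dh$ becomes $dg\,du$ with $g\in\Gamma\backslash G$ and $u\in G$. By Lemma \ref{lemma:kernel},
\[
\int_{gE_t\cap guE_t}a(x)\,dx=\int_{E_t\cap uE_t}a(gy)\,dy=\varrho(\mathds{1}_{E_t\cap uE_t})a(g),
\]
writing $\varrho=\varrho_{\Gamma\backslash G}$. Hence
\[
M(\tau)=\int_G\Big\|\frac1\tau\int_\tau^{2\tau}e^{-2t\rho(H_0)}\varrho(\mathds{1}_{E_t\cap uE_t})a\,dt\Big\|_{L^2(\Gamma\backslash G)}^2du.
\]
Minkowski's inequality then gives
\[
M(\tau)\le \frac{1}{\tau^2}\int_G\Big(\int_\tau^{2\tau}e^{-2t\rho(H_0)}\|\varrho(\mathds{1}_{E_t\cap uE_t})a\|_2\,dt\Big)^2du.
\]

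\emph{Step 2: Nevo's bound.} Use the factorization \eqref{eq:inter-fact}, $E_t\cap uE_t=(S_t\cap u_1S_t)\times(B\cap u_2B)$. The set $S_t\cap u_1S_t$ lies in $G_1$, and $a$ has mean zero, hence lies in $L^2_0$. The natural move is to let $G_1$ act on $\Gamma\backslash G$; irreducibility of $\Gamma$ gives ergodicity, and the spectral-gap hypothesis supplies the integrability exponent. The $\mathcal{G}_2$-factor contributes an operator of norm at most ${\rm vol}(B\cap u_2B)$, by the trivial bound.

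The main obstacle is that $\varrho$ of a product set is the composition of the two factor operators, and the $\mathcal{G}_2$-operator need not preserve $L^2_0$ in a way compatible with Nevo's bound for the $G_1$-part. I would handle this by composing in the right order: apply the $\mathcal{G}_2$-operator first. It commutes with $G_1$, and it preserves $L^2_0$ because it preserves constants and is normal up to adjoint. Nevo's bound is then applied to the $G_1$-set, giving
\[
\|\varrho(\mathds{1}_{E_t\cap uE_t})a\|_2\ll {\rm vol}(S_t\cap u_1S_t)^{1-\theta}\,{\rm vol}(B\cap u_2B)\,\|a\|_2 .
\]
If ${\rm vol}(S_t\cap u_1S_t)$ is small (below $1$, say), the trivial bound by the volume itself is at least as good, up to constants.

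\emph{Step 3: reduce to $H$.} Integrate out $u_2$: the set $\{u_2 : B\cap u_2B\neq\emptyset\}$ has bounded volume. For $u_1=k e^H k'$, bi-$K_1$-invariance of $S_t$ gives ${\rm vol}(S_t\cap u_1S_t)={\rm vol}(e^HS_t\cap S_t)$, and the Cartan integration formula \eqref{eq:Cartan-int} converts the $u_1$-integral into $\int_{\ga_{1,+}}\cdots J(H)\,dH$ with $J(H)\ll e^{2\rho(H)}$. Finally, Corollary \ref{cor:intersection-range} shows the integrand vanishes unless $\|H\|_P\le t+1$. This restricts $H$ to $P_{2\tau+1}$ and $t$ to $t\ge\max\{\tau,\|H\|_P-1\}$. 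The stated $P_{\tau+1}$ should correspond to the paper's normalization; otherwise I would replace it by $P_{2\tau+1}$, which is harmless for the later steps. This yields the claimed bound.
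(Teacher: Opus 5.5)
Your proof is correct and has the same skeleton as the paper's: Minkowski in $t$, Nevo's bound for the $G_1$-part, the trivial bound for the $\mathcal{G}_2$-part, the Cartan integration formula with $J(H)\ll e^{2\rho(H)}$, and the support bound of Corollary \ref{cor:intersection-range}.

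The one genuine difference is how you separate the two factors.
\begin{itemize}
\item The paper decomposes $L^2_0(\Gamma\backslash G)$ into irreducibles $\pi=\pi_1\otimes\sigma$. It then compares $\|\pi_1(e^{H_1}S_t\cap S_t)\|_{\rm op}$ with the operator norm of the $G_1$-action on $L^2_0$.
\item You write $\varrho(\mathds{1}_{E_t\cap uE_t})$ as a product of two commuting operators. This is shorter and avoids the discrete spectral decomposition.
\end{itemize}
The obstacle you raise is not real. $L^2_0(\Gamma\backslash G)$ is $G$-stable, so every $\varrho(f)$ with $f\in L^1(G)$ preserves it, and the order of composition is irrelevant. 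You should replace your justification via ``normal up to adjoint'' with this fact.

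On the domain, do not defer to a ``normalization''. Since $t$ runs up to $2\tau$, Corollary \ref{cor:intersection-range} only gives $\|H\|_P\le 2\tau+1$. The range $\tau+1$ genuinely does not suffice. For $H=2\tau(H_0-w_0H_0)$ one has $\|H\|_P=2\tau>\tau+1$. Yet $e^{2\tau H_0}\in S_{2\tau}$, and $e^{-H}e^{2\tau H_0}=e^{2\tau w_0H_0}\in K_1e^{2\tau H_0}K_1$, so $S_{2\tau}\cap e^HS_{2\tau}\neq\emptyset$.

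The paper's proof asserts the truncation to $P_{\tau+1}$ from the same corollary, which appears to be a slip. Your $P_{2\tau+1}$ is what the argument actually yields. The change is harmless downstream: in Proposition \ref{prop:M-tau} the polytope integral is bounded through Corollary \ref{lemma:Brion-bd} by $\ll\tau$ for any fixed dilation $P_{c\tau}$.
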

\begin{proof}
For a measurable set $E\subset G$ of finite measure it will be convenient to let $\varrho_{\Gamma\backslash G}(E)=\varrho_{\Gamma\backslash G}({\rm vol}(E)^{-1}{\mathds{1}}_E)$.  Inserting Lemma \ref{lemma:kernel} into the definition of $M(\tau)$ and changing variables $x\mapsto hx$ and $g\mapsto h^{-1}g$, we have
\begin{align*}
M(\tau)&=\frac{1}{\tau^2}\int_{\Gamma\backslash G}\int_G \bigg| \int_{\tau}^{2\tau} e^{-2t\rho(H_0)}\int_{gE_t\cap hE_t}a(x)dx dt\bigg|^2dg dh\\
&=\frac{1}{\tau^2}\int_{\Gamma\backslash G}\int_G \bigg| \int_{\tau}^{2\tau} e^{-2t\rho(H_0)}\int_{gE_t\cap E_t}a(hx)dx dt\bigg|^2dg dh.
\end{align*}
Using the Cartan decomposition for the integral over $g\in G$ and the fact that $kE_t=E_t$ for all $k\in K$, this equals
\[
\frac{1}{\tau^2}\int_{\Gamma\backslash G}\int_K \int_{\ga_+} \bigg| \int_{\tau}^{2\tau} e^{-2t\rho(H_0)}\int_{ke^H E_t\cap E_t}a(hx)dx dt\bigg|^2 J(H) dH dk dh.
\]
Changing $x\mapsto kx$ and using again that $k^{-1}E_t=E_t$, we get
\[
\frac{1}{\tau^2}\int_{\Gamma\backslash G}\int_K \int_{\ga_+} \bigg| \int_{\tau}^{2\tau} e^{-2t\rho(H_0)}\int_{e^H E_t\cap E_t}a(hkx)dx dt\bigg|^2 J(H) dH dk dh.
\]
Changing variables $h\mapsto hk^{-1}$, we can absorb the integral over $k\in K$ into the integral over $h\in \Gamma\backslash G$. Further recalling that $J(H) \ll e^{ 2 \rho ( H )}$, the above expression is then bounded by
\[
\frac{1}{\tau^2}\int_{\Gamma\backslash G}\int_{\ga_+} e^{2 \rho( H)}\bigg| \int_{\tau}^{2\tau} e^{-2t\rho(H_0)} {\rm vol}(e^H E_t\cap E_t)\big(\varrho_{\Gamma\backslash G} (e^H E_t\cap E_t)a \big)(h) dt\bigg|^2dH dh.
\]
We write $H=H_1+H_2\in \ga_1\oplus{\rm Lie}(\mathcal{A}_2)$ as in \eqref{eq:inter-fact}. Taking $\epsilon_0$ small enough in the definition of $B\subset\mathcal{G}_2$, we may truncate the $H_2$ integral to $\|H_2\|\leq 1$ and apply ${\rm vol}(e^{H_2}B\cap B) \ll 1$. From Corollary \ref{cor:intersection-range} we may truncate the $H_1$ integral to $\|H_1\|_P\leq \tau+1$. By the same token, given $H_1 \in P_{\tau + 1}$, the $t$ for which $e^{H_1}S_t \cap S_t \neq \emptyset$ satisfy $\|H_1\|_P - 1 \leq t $. Inserting this, and applying the Minkowski integral inequality, we obtain an upper bound of the form
\[
\frac{1}{\tau^2} \int_{P_{\tau + 1}\times \mathcal{B}_2(0,1)} e^{2 \rho(H_1)} \Big| \int_{\max\{\tau, \|H_1\|_P - 1\}}^{2\tau} e^{- 2t \rho( H_0 )} \textnormal{vol}(e^{H_1}S_t \cap  S_t)\|\varrho_{\Gamma\backslash G}(e^H E_t\cap E_t)a\|_2dt \Big|^2 dH,
\]
where, using the notation of Section \ref{sec:av-set}, $\mathcal{B}_2(0,r)\subset {\rm Lie}(\mathcal{A}_2)$ is the Euclidean ball of radius $r$, centered at $0$. Let $\varrho^0_{\Gamma\backslash G}$ denote the right-regular representation of $G$ on  $L^2_0(\Gamma\backslash G)$. Since $a\in L^2_0(\Gamma\backslash G)$, we have
\[
\|\varrho_{\Gamma\backslash G}(e^H E_t \cap E_t)a\|_2=\|\varrho_{\Gamma\backslash G}^0(e^H E_t \cap E_t)a\|_2\leq \|a\|_2\|\varrho_{\Gamma\backslash G}^0(e^H E_t \cap E_t)\|_{\rm op},
\]
where $\varrho^0_{\Gamma\backslash G}(E)$ is defined similarly as  $\varrho_{\Gamma\backslash G}(E)$.
From the spectral decomposition  \eqref{eq:spec-decomp}, we have
\[
\|\varrho_{\Gamma\backslash G}^0(e^H E_t \cap E_t)\|_{\rm op} = \sup_{\pi\subset L^2_0(\Gamma\backslash G)} \|\pi(e^H E_t \cap E_t)\|_{\rm op}.
\]
Since $\pi$ is irreducible, it is a tensor product of irreducible representations $\pi_i$ of the simple factors $G_1,\ldots ,G_s$ of $G$. We write $\pi=\pi_1\otimes \sigma$, where $\sigma=\pi_2\otimes\cdots\otimes\pi_s$ is an irreducible representation of $\mathcal{G}_2$. Then, from \eqref{eq:inter-fact}, $\|\pi(e^H E_t \cap E_t)\|_{\rm op}$ is
\[
\|\pi_1(e^{H_1} S_t \cap S_t)\|_{\rm op}\|\sigma(e^{H_2} B \cap B)\|_{\rm op} \ll \|\pi_1(e^{H_1} S_t \cap S_t)\|_{\rm op}\leq \|\varrho_{\Gamma\backslash G}^0\big|_{G_1}(e^{H_1} S_t \cap S_t)\|_{\rm op}.
\]
Here we have used the trivial bound $\|\sigma(e^{H_2} B \cap B)\|_{\rm op} \leq \|\mathds{1}_{e^{H_2} B \cap B}\|_{L^1(G)}$ (independently of $\sigma$). 

Since $\Gamma$ is irreducible in $G$, the restriction of $\rho_{\Gamma\backslash G}^0$ to any simple factor has a spectral gap; see \cite[p.3]{Kelmer_Sarnak} for details. 
Proposition \ref{prop_nevo}, applied to the simple factor $G_1$ acting by right-translation on $\Sigma=\Gamma\backslash G$, implies that there is $\theta>0$, depending on the integrability exponent of $G_1$ acting of $L^2_0(\Gamma\backslash G)$, such that
\[
\|\varrho_{\Gamma\backslash G}^0|_{G_1}(e^{H_1}S_t \cap S_t)\|_{\rm op}\ll  {\rm vol}(e^{H_1} S_t \cap S_t)^{-\theta}.
\]
Inserting this (and renaming $H_1$ to $H$) yields the statement.\end{proof}

We now make full avail of the assumptions on $G$ and $H_0$ in Theorem \ref{main-geom-thm} to conclude the reduction of its proof to Theorem \ref{intersection-vol}.

\begin{proposition}\label{prop:M-tau}
Let $G$ and $H_0$, and the simple factor $G_1$, be as in the statement of Theorem \ref{main-geom-thm}. There is a positive integer $k$, depending only on the reduced root system of $G_1$, and a constant $\theta>0$, depending only on the integrability exponent of $G_1$ acting on $L^2_0(\Gamma\backslash G)$, such that for for all $\tau \gg 1$,
\[
M(\tau)\ll  \frac{(\log \tau)^k}{\tau \theta^2}\|a\|^2_2 .
\]
\end{proposition}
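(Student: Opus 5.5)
The plan is to combine Proposition \ref{prop:main-term-bd} with Theorem \ref{intersection-vol}, applied in $G_1$ with the extremal $H_0$. By Theorem \ref{thm:semi-dense2}, $\Phi_{M,\mathrm{red}}$ is semi-dense in the reduced root system of $G_1$, so the volume bound holds. For $t\in[\tau,2\tau]$ and $\tau\gg1$ it gives
\[
\mathrm{vol}(e^HS_t\cap S_t)\ll (\log \tau)^k e^{\rho(2tH_0-H)}.
\]
Since the volume appears to the power $1-\theta$ with $1-\theta>0$, we obtain
\[
e^{-2t\rho(H_0)}\mathrm{vol}(e^HS_t\cap S_t)^{1-\theta}\ll (\log\tau)^{k}\, e^{-2\theta t\rho(H_0)}\, e^{-(1-\theta)\rho(H)}.
\]
(Here we use $(\log\tau)^{k(1-\theta)}\le(\log\tau)^k$.)

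Next we perform the $t$-integral over $[\max\{\tau,\|H\|_P-1\},2\tau]$. Integrating $e^{-2\theta t\rho(H_0)}$ from the lower endpoint $t_*=\max\{\tau,\|H\|_P-1\}$ gives at most $\frac{1}{2\theta\rho(H_0)}e^{-2\theta t_*\rho(H_0)}$. This is the source of the $\theta^{-2}$ once squared. Squaring and inserting into Proposition \ref{prop:main-term-bd}, we get
\[
M(\tau)\ll \frac{\|a\|_2^2(\log\tau)^{2k}}{\tau^2\theta^2}\int_{P_{\tau+1}} e^{2\theta\rho(H)}\,e^{-4\theta t_*(H)\rho(H_0)}\,dH .
\]
It remains to show that this $H$-integral is $O(\tau)$, possibly up to further logarithms, which are absorbed by renaming $k$. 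For that we need to compare $\rho(H)$ with $2t_*\rho(H_0)$ on the polytope $P_{\tau+1}$. The key geometric point is that $\rho$ attains its maximum over $P$ at the vertex $H_0-w_0H_0$, where it equals $2\rho(H_0)$, because $\rho(-w_0H_0)=\rho(H_0)$. Hence $\rho(H)\le 2\|H\|_P\rho(H_0)\le 2(t_*+1)\rho(H_0)$. The integrand is therefore bounded by $e^{2\theta\rho(H)-4\theta t_*\rho(H_0)}\ll e^{-\theta(4t_*\rho(H_0)-2\rho(H))}$, which decays away from the face of $P_{t_*}$ where $\rho$ is maximal.

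The main obstacle is evaluating this last integral sharply. I would decompose $P_{\tau+1}$ into the shells $\{\|H\|_P\in[s,s+1]\}$. For $s\le\tau+1$ we have $t_*=\tau$, and the integrand is $e^{-2\theta(2\tau\rho(H_0)-\rho(H))}$, which is exponentially small unless $H$ lies within $O(1/\theta)$ of the supporting hyperplane $\{\rho=2\tau\rho(H_0)\}$. The integral over the slab of $P_{\tau+1}$ near this face is then controlled by the $(r-1)$-dimensional volume of the face of $tP$ maximizing $\rho$. This is where the degenerate Brion formula of Section \ref{sec:Brion} enters. If the maximizing face is the single vertex $\tau(H_0-w_0H_0)$, the integral is $O_\theta(1)$. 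In general it is $\ll \tau^{d}$, where $d$ is the dimension of the $\rho$-maximal face of $P$, and we must verify that $d\le 1$. If this fails, I would fall back to keeping the $(\log\tau)^k$ from the volume bound together with the extra factor $e^{-2\theta(t-\tau)\rho(H_0)}$ when $\|H\|_P>\tau$. The goal is a total integral of $O(\tau(\log\tau)^{k'})$, which yields the stated $M(\tau)\ll (\log\tau)^k\tau^{-1}\theta^{-2}\|a\|_2^2$.
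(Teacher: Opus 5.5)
Your first two steps agree with the paper: you insert Theorem \ref{intersection-vol} (via Theorem \ref{thm:semi-dense2}) into Proposition \ref{prop:main-term-bd} and do the $t$-integral, which gives a factor $\theta^{-1}e^{-2\theta t_*\rho(H_0)}$. The gap is in the final $H$-integral, which is the real content of the proposition. You leave it open: ``we must verify that $d\le 1$'', followed by an unspecified fallback.

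The missing ingredient is the uniqueness statement of Lemma \ref{prop_max_rho}. For $X\in\overline{\ga}_+$, $\rho(X)-\rho(wX)$ is a non-negative combination of the values $\alpha(X)$ over positive roots $\alpha$ with $w\alpha<0$. Equality forces all these roots to vanish on $X$. Then $w$ lies in the Weyl group of the centralizer of $X$ by Proposition \ref{lemma:WL-stable}, so $wX=X$. Applied to $X=H_0-w_0H_0$, this gives $\rho(H)\le 2\|H\|_P\rho(H_0)$ on $\overline{\ga}_+$, with equality exactly on the ray $\R_{\ge 0}(H_0-w_0H_0)$. So the $\rho$-maximal face of $P$ is a vertex, and the zero set of the relevant exponent is one-dimensional.

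You also need the correct range of $H$. Since $t$ runs up to $2\tau$, Corollary \ref{cor:intersection-range} only gives $\|H\|_P\le t+1\le 2\tau+1$. Indeed $H=t(H_0-w_0H_0)$ with $t$ near $2\tau$ gives a nonempty intersection. So the region $P_{\tau+1}$ written in Proposition \ref{prop:main-term-bd} should be read as $P_{2\tau+1}$; otherwise the lower limit $\max\{\tau,\|H\|_P-1\}$ would always equal $\tau$. The paper's proof accordingly integrates over $P_{2\tau}$.

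Your slab analysis with $t_*=\tau$ only covers $\|H\|_P\le\tau+1$. There, once uniqueness is known, it yields $O_\theta(1)$, which is too good and signals that the decisive region is missing. On the shell $\tau+1<\|H\|_P\le 2\tau+1$ the lower endpoint is $\|H\|_P-1$. The integrand is then $\asymp e^{2\theta(\rho(H)-2\|H\|_P\rho(H_0))}$, which has size $1$ along a segment of the ray of length $\asymp\tau$. This is exactly where the factor $\tau$ comes from.

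The paper handles all $H$ at once:
\begin{itemize}
\item it bounds $e^{-2\theta t_*\rho(H_0)}\ll e^{-2\theta\|H\|_P\rho(H_0)}$;
\item it splits $\overline{\ga}_+$ into the cones $C_j$, on which $\|H\|_P=\ell_j(H)$ is linear;
\item it applies the degenerate Brion formula \eqref{eq:degen-Brion} with $\lambda=2\theta(\rho-2\rho(H_0)\ell_j)$.
\end{itemize}
By the uniqueness above, this $\lambda$ is $\le 0$ on $P\cap C_j$ and vanishes there at most along the edge from $0$ to $H_0-w_0H_0$. This gives $I_j(\tau)\ll\tau$, hence the bound $(\log\tau)^{2k}\tau^{-1}\theta^{-2}\|a\|_2^2$.
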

\begin{proof}
If $M$ is the centralizer of $H_0$ in $G_1$, it follows from Theorem \ref{thm:semi-dense} that $\Phi_{{\rm red},M}$ is a semi-dense root system in the reduced roots of $G_1$. Taking $\tau$ sufficiently large, we may insert Theorem \ref{intersection-vol} into Proposition \ref{prop:main-term-bd}, to get
\[
M(\tau)\ll \frac{\|a\|_2^2}{\tau^2}\int_{P_{\tau + 1}}  e^{2\theta\rho(H)}\Big| \int_{\max\{\tau, \|H\|_P - 1\}}^{2\tau}  (\log t)^k  e^{-2\theta t \rho(H_0)}dt \Big|^2 dH.
\]
The inner integral is $O(\frac{(\log \tau)^k}{\theta} e^{-2\theta \|H\|_P \rho(H_0)})$,
so that for $\tau \geq 1$,
\begin{gather} \label{eqn_polytope_integral}
M(\tau)\ll   \frac{(\log \tau)^{2k}}{\tau^2\theta^2}\|a\|^2_2  \int_{P_{2 \tau}} e^{2\theta\rho(H - 2 \|H\|_P H_0)} dH.
\end{gather}
We may then apply Corollary \ref{lemma:Brion-bd} below, and rename $2k$ to $k$, to conclude the proof.
\end{proof}

\subsection{Application of the degenerate Brion's formula}\label{sec:Brion}
Recall the polytope $P$ from \eqref{defn:P}. Let $F_j\subset\overline{\ga}_+$, $1\leq j\leq J$, denote the codimension one faces of $P$ whose relative interior lies in $\ga_+$. Let $\ell_j\in\ga^*$ denote the linear functional characterized by $F_j=P\cap\{H\in\ga\mid \ell_j(H)=1\}$. We define the associated polyhedral cones $C_j=\{\lambda H: H\in F_j, \lambda\in \R_+\}$. Then $P\cap C_j=\{H\in C_j: \ell_j(H)\leq 1\}$, and if $H\in C_j$ then $H\in tP$ precisely when $\ell_j(H)\leq t$. Recalling from Section \ref{sec:ker-supp} the definition $\|H\|_P= \inf \{t : H \in tP\}$, it follows that $\|H\|_P=\ell_j(H)$ for all $H\in C_j$. As $\overline{\ga}_+=\cup_j C_j$, the integral in \eqref{eqn_polytope_integral} can be broken up as
\[
\int_{P_{2 \tau}} e^{2\theta\rho(H - 2 \|H\|_P H_0)} dH=\sum_{j=1}^J I_j(\tau),\qquad I_j(\tau)=\int_{P_{2 \tau}\cap C_j} e^{2\theta\rho(H - 2 \ell_j(H)} H_0) dH.
\]
Now, the integrals $I_j(\tau)$ could be expressed, via Brion's formula \cite{Brion, Barvinok}, as a weighted sum of the value of the integrand at the extremal vertices of $P_{2 \tau}\cap C_j$, \textit{provided that the values of the exponent at adjacent vertices are distinct}. This property does not, however, hold for $I_j(\tau)$, as we presently show.

Observe that the vertices of $P_{2 \tau} \cap C_j$ are simply the vertices of $P \cap C_j$ dilated by a factor of $2 \tau$. That both $0$ and $H_0 -w_0H_0$ are adjacent vertices of $P \cap C_j$ follows directly from the definition of $P$ in \eqref{defn:P} and the definition of $C_j$ above. Now, the exponent in $I_j(\tau)$ clearly vanishes at $0$. Moreover, since $\|v\|_P = 1$ at every vertex $v$ of $P \cap C_j$ other than $0$, the exponent at such a vertex is
\begin{gather*}
\rho(v) - 2 \rho(H_0)=\rho( v) -  \rho( H_0 -w_0H_0),
\end{gather*}
which then clearly also vanishes at $v=H_0-w_0H_0$.

In preparation for a more general form of Brion's formula, which allows for the exponent at adjacent vertices to be the same, we now show that $\rho(v) -  \rho(H_0 -w_0H_0)< 0$ for all other vertices of $P \cap C_j$. 

\begin{lemma}\label{prop_max_rho}
For any $H \in \bar{\ga}_+$, the linear functional $\rho$ attains its maximum on $\textnormal{Conv}(W.H)$ uniquely at $H$. In particular, $\rho(v) - 2\rho(H_0)<0$ for all vertices $v$ of the polytope $P \cap C_j$ other than $0$ and $H_0-w_0H_0$.
\end{lemma}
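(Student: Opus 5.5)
The plan has two parts: first prove that $\rho$ is uniquely maximized on $\textnormal{Conv}(W.H)$ at $H$, then apply this to the polytope $P=\textnormal{Conv}(W.(H_0-w_0H_0))\cap\overline{\ga}_+$.

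\emph{First claim.} A linear functional on a convex polytope attains its maximum on the convex hull of the vertices where it is maximal. So it suffices to show $\rho(wH)<\rho(H)$ for every $w\in W$ with $wH\neq H$. The vertices of $\textnormal{Conv}(W.H)$ lie among the $wH$.

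Recall the standard fact (used implicitly in Lemma \ref{lemma:conv-sum} via \cite[Ch. IV, Lemma 8.3]{Helgason_00}) that for $H\in\overline{\ga}_+$ and $w\in W$ one has $H-wH=\sum_{\alpha\in\Delta}c_\alpha\alpha^\vee$ with all $c_\alpha\ge 0$, where $\alpha^\vee$ is identified with $H_\alpha$ up to positive scalars. One can see this by induction on the length of $w$: write $w=s_\beta w'$ with $\ell(w)=\ell(w')+1$, so that $w'^{-1}\beta>0$. Then $wH=w'H-\beta(w'H)\beta^\vee$, and $\beta(w'H)=(w'^{-1}\beta)(H)\ge 0$.

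Now $\rho$ pairs positively with every simple coroot: $\rho(\alpha^\vee)>0$ for $\alpha\in\Delta$, since $2\rho$ is a positive combination of positive roots and $s_\alpha\rho=\rho-m'\alpha$ with $m'>0$. Hence $\rho(H-wH)=\sum c_\alpha\rho(\alpha^\vee)\ge 0$, with equality only if all $c_\alpha=0$, that is, only if $wH=H$. This gives uniqueness of the maximizer.

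\emph{Second claim.} Set $v_0:=H_0-w_0H_0$; this lies in $\overline{\ga}_+$ because $-w_0$ preserves $\overline{\ga}_+$. Moreover $\rho(v_0)=2\rho(H_0)$, since $\rho\circ w_0=-\rho$. Every vertex $v$ of $P\cap C_j$ lies in $P\subset\textnormal{Conv}(W.v_0)$, so the first claim gives $\rho(v)\le\rho(v_0)=2\rho(H_0)$, with equality only when $v=v_0$. For $v=0$ we have $\rho(0)=0<2\rho(H_0)$, but $0$ is excluded from the statement anyway. Therefore every vertex other than $0$ and $v_0$ satisfies $\rho(v)-2\rho(H_0)<0$.

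\emph{Main obstacle.} The only delicate point is the positivity $H-wH\in\sum_{\alpha\in\Delta}\R_{\ge0}\alpha^\vee$ together with strictness. I would either cite Helgason's lemma directly or use the length induction sketched above.
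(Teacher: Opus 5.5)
Your proof is correct, but its key positivity step is dual to the paper's. The paper also reduces to the orbit $W.H$. It then writes $\rho-w^{-1}\rho$ as a (multiplicity-weighted) sum over the inversion set $\{\alpha\in\Phi^+ : w\alpha<0\}$ and evaluates at the dominant $H$, so that $\rho(H)-\rho(wH)$ is a sum of the nonnegative numbers $\alpha(H)$. Strictness then needs an extra step: equality forces the inversion set into $\Phi_L$ (the roots vanishing on $H$), and hence $w\in W_L$ by Proposition \ref{lemma:WL-stable}.

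You instead put the positivity on the $\ga$ side, via $H-wH\in\sum_{\alpha\in\Delta}\R_{\ge 0}\alpha^\vee$, and the strictness on the $\rho$ side, via $\rho(\alpha^\vee)>0$ for $\alpha\in\Delta$. This makes the equality case immediate, and Proposition \ref{lemma:WL-stable} is not needed. The paper's version makes the deficit $\rho(H)-\rho(wH)$ explicit in terms of root values. Yours is more self-contained. Combined with the inclusion $\textnormal{Conv}(W.H)\subset H-{}_+C$ from \cite[Ch. IV, \S 8, Lemma 8.3]{Helgason_00} (already used in Lemma \ref{lemma:conv-sum}), it even gives the claim on the whole polytope without passing to vertices, since $\rho$ is strictly positive on ${}_+C\smallsetminus\{0\}$.

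One small tightening: $\rho(\alpha^\vee)>0$ does not follow merely from $2\rho$ being a positive combination of positive roots. The actual reason is that $s_\alpha$ permutes $\Phi^+\smallsetminus\{\alpha,2\alpha\}$ preserving multiplicities, which gives $\rho(\alpha^\vee)=m_\alpha+2m_{2\alpha}$.

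Your explicit check that $H_0-w_0H_0\in\overline{\ga}_+$ and $\rho(H_0-w_0H_0)=2\rho(H_0)$ correctly fills in the ``in particular'' clause, which the paper leaves implicit.
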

\begin{proof}
Since $\rho$ is linear, its maximum on $\textnormal{Conv}(W.H)$ is attained at a vertex. Since every vertex of $\textnormal{Conv}(W.H)$ belongs to $W.H$, it is enough to show that the maximal value of $\rho$ on $W.H$ is attained uniquely at $H$.

Denote by $L$ the centralizer of $H$ in $G$ and by $\Phi_L$ the roots of $A$ in $L$. Note that, for any $w\in W$, we have $\rho-w^{-1}\rho = \sum_{\alpha\in \Phi^+,\, w\alpha<0}\alpha$ so that $\rho(H)-\rho(wH)= (\rho-w^{-1}\rho)(H) = \sum_{\alpha\in \Phi^+,\, w\alpha<0} \alpha(H)\ge0$. If equality holds then $\{\alpha\in \Phi^+:w\alpha<0\}\subset\Phi_L$, since $\Phi_L$ consists precisely of those roots that vanish on $H$. But then $w\in W_L$ by \eqref{eq:WI} of Proposition \ref{lemma:WL-stable}. Since $W_L$ is precisely the stabilizer of $H$ in $W$, the case of equality $\rho(H)=\rho(wH)$ implies $wH=H$, so among all points of $W.H$ the value of $\rho$ is uniquely maximal at $H$.
%
\end{proof}

For $\lambda\in\ga^*$, the degenerate version of Brion's formula \cite{Peterson_24} implies that
\begin{equation}\label{eq:degen-Brion}
    \int_{P_{2 \tau} \cap C_j} e^{\lambda(H)} dH = \sum_{\substack{\text{faces $F$ of $P \cap C_j$ on} \\ \text{which  $\lambda(F) = $ constant}}} c_F  \textnormal{vol}(F)  (2 \tau)^{\textnormal{dim}(F)}  e^{2 \tau  \lambda(F)},
\end{equation}
where the constants $c_F$ are independent of $\tau$, and $c_F$ is positive on the maximal dimensional face maximizing $\lambda$. We apply \eqref{eq:degen-Brion} with $\lambda=2\theta(\rho - 2\rho(H_0)\ell_j)\in\ga^*$ and use the preceding discussion to obtain the following result.

\begin{cor}\label{lemma:Brion-bd}
We have $I_j(\tau) \ll \tau$.
\end{cor}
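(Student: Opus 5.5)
Apply the degenerate Brion formula \eqref{eq:degen-Brion} to $I_j(\tau)$ with $\lambda=2\theta(\rho-2\rho(H_0)\ell_j)$; this reproduces the integrand, since $I_j(\tau)=\int_{P_{2\tau}\cap C_j}e^{\lambda(H)}\,dH$. Each face $F$ of $P\cap C_j$ on which $\lambda$ is constant then contributes a term of size $\tau^{\dim F}e^{2\tau\lambda(F)}$. To bound $I_j(\tau)$ it therefore suffices to show two things: first, that $\lambda\le 0$ on all of $P\cap C_j$; second, that every face on which $\lambda$ is identically zero has dimension at most $1$.

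The first step is to compute the values of $\lambda$ at the vertices of $P\cap C_j$. At $0$ we have $\lambda=0$. Every other vertex $v$ satisfies $\|v\|_P=\ell_j(v)=1$, so $\lambda(v)=2\theta(\rho(v)-2\rho(H_0))$. Since $w_0$ maps $\Phi^+$ onto $-\Phi^+$, we have $\rho(-w_0H_0)=\rho(H_0)$, and hence $2\rho(H_0)=\rho(H_0-w_0H_0)$. Note that $H_0-w_0H_0$ lies in $\overline{\ga}_+$, because both $H_0$ and $-w_0H_0$ do. Lemma \ref{prop_max_rho}, applied to $H=H_0-w_0H_0$, then shows that $\rho(v)-2\rho(H_0)\le 0$ for every $v\in P\subset\textnormal{Conv}(W.(H_0-w_0H_0))$, with equality only at $v=H_0-w_0H_0$. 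Consequently $\lambda\le 0$ at all vertices, and by linearity $\lambda\le 0$ on the whole polytope $P\cap C_j$. This gives the first claim, and it also shows that every exponential factor $e^{2\tau\lambda(F)}$ in \eqref{eq:degen-Brion} is at most $1$.

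The second step is to identify the faces on which $\lambda$ vanishes identically. Such a face $F$ has all of its vertices in the zero set of $\lambda$. By the vertex computation above, the only vertices where $\lambda=0$ are $0$ and $H_0-w_0H_0$. So $F$ is either one of these two vertices, or the edge joining them. That edge is a face of $P\cap C_j$, as was noted before Lemma \ref{prop_max_rho}. Hence every face with $\lambda(F)=0$ has dimension at most $1$.

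It remains to estimate the terms in \eqref{eq:degen-Brion}. Faces with $\lambda(F)=0$ contribute $O(\tau^{\dim F})=O(\tau)$. Faces on which $\lambda$ is a negative constant contribute $O(\tau^{\dim F}e^{-c\tau})$ for some $c>0$, which is also $O(\tau)$. There are only finitely many faces, and the constants $c_F$ do not depend on $\tau$, so summing gives $I_j(\tau)\ll\tau$.

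The step needing most care is the appeal to \eqref{eq:degen-Brion} together with the exact bookkeeping of which faces have $\lambda$ identically zero. In particular, one has to confirm that $0$ and $H_0-w_0H_0$ are the only vertices where $\lambda$ vanishes. This rests entirely on the uniqueness part of Lemma \ref{prop_max_rho}.
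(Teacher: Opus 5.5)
Your proposal is correct and follows the paper's argument. You apply the degenerate Brion formula \eqref{eq:degen-Brion} with $\lambda=2\theta(\rho-2\rho(H_0)\ell_j)$, and use Lemma \ref{prop_max_rho} (via $2\rho(H_0)=\rho(H_0-w_0H_0)$) to show that $\lambda\le 0$ on $P\cap C_j$, vanishing only at the vertices $0$ and $H_0-w_0H_0$; hence only faces of dimension at most one contribute non-decaying terms, giving $O(\tau)$.
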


This concludes the proof of Proposition \ref{prop:M-tau} and therefore of Theorem \ref{main-geom-thm}, subject to the verification of Theorem \ref{intersection-vol}.

\section{A spectral approach to bounding intersection volumes} \label{sec_int_vol}

In this section we reduce the proof of Theorem \ref{intersection-vol}, which bounds the intersection volume ${\rm vol}(e^HS_t \cap S_t)$ under suitable assumptions on $G$ and $H_0$, to one combinatorial result and two spectral theoretic results. One of these spectral theoretic results is Theorem \ref{sph-fn-bd} which has already been stated in the introduction; it is valid for general $G$ and $H_0$. The other is stated below as Theorem \ref{thm:elem-spec-int} and crucially requires the assumptions on $G$ and $H_0$ present in Theorem \ref{intersection-vol}. These assumptions are assured to hold for groups having classical and $E_7$ root systems by the combinatorial result, Theorem \ref{thm:semi-dense2}.

After having made this reduction in this section, we then prove Theorem \ref{thm:semi-dense2} in Section \ref{sec:big-subsystems}, Theorem \ref{thm:elem-spec-int} in Section \ref{sec:elem-spec-int}, and Theorem \ref{sph-fn-bd} in Section \ref{sec:sph-fn}.

Throughout this section, we shall usually assume that $G$ is semisimple, as most of the results hold in this generality, while indicating clearly the places where we need to impose the condition of $G$ being simple.  We extend the definition of $S_t$ from \eqref{eq:def-St} to the case of $G$ semisimple in the obvious way.

\subsection{An initial bound due to J.-P. Anker} \label{sub_sec_anker}
We first present a bound on $\textnormal{vol}(e^H S_t \cap S_t)$ communicated to us by J.-P. Anker which gives a weaker bound but with much less work. This will serve as a brief illustration of how harmonic analytic information, such as bounds on the spherical function and the $c$-function, can be used to estimate the geometric intersection volume, and will act as a yardstick for measuring the extent to which a more elementary approach falls short of the bounds in Theorem \ref{intersection-vol}. In this subsection with take $H_0, H \in \overline{\ga}_+$ to be arbitrary. 

We first recall that, for $\lambda \in \mathfrak{a^*}$ we have the bound
\begin{gather*}
    |\varphi_\lambda(e^H)| \leq \varphi_0(e^H). 
\end{gather*}
See, for instance, \cite[Prop. 4.6.1]{Gangolli_Varadarajan_88}. Inserting this, we find
\begin{align*}
    \textnormal{vol}(e^H S_t \cap S_t) &= (\mathds{1}_{S_t} * \mathds{1}_{S_t}^\vee)(e^H) \\
    &= \int_{\ga_+^*} |\widehat{\mathds{1}}_{S_t}(\lambda)|^2 \varphi_\lambda(e^H) |c(\lambda)|^2 d \lambda \\
    & \leq \varphi_0(e^H) \int_{\ga_+^{*}} |\widehat{\mathds{1}}_{S_t}(\lambda)|^2 |c(\lambda)|^2 d \lambda \\
    & = \varphi_0(e^H) \|\mathds{1}_{S_t}\|^2_{L^2} = \varphi_0(e^H) \textnormal{vol}(S_t).
\end{align*}
We have that $\textnormal{vol}(S_t) \asymp e^{2 \rho(t H_0)}$ and by \cite{Anker_87} we have
\begin{gather*}
    \varphi_0(e^H) \asymp e^{-\rho(H)} \prod_{\alpha \in \Phi_{\textnormal{red}}^+} (1 + \alpha(H)).
\end{gather*}
Letting $L$ denote the centralizer of $H$ in $G$, we thus get
\begin{equation}\label{eqn_poly_bound}
    \textnormal{vol}(e^H S_t \cap S_t) \ll e^{\rho(2 t H_0 - H)} \prod_{\alpha \in \Phi_{\textnormal{red}}^+} (1 + \alpha(H)) \ll e^{\rho(2 t H_0 - H)} \| H \|^{|\Phi_{\textnormal{red}}^+ \smallsetminus \Phi_{\textnormal{red}, L}^+|},
\end{equation}
as all roots in $\Phi_{\textnormal{red}, L}^+$ vanish on $H$.  

Inserting \eqref{eqn_poly_bound} (applied to the simple factor $G_1$ of $G$), instead of Theorem \ref{intersection-vol}, into the analysis leading to \eqref{eqn_polytope_integral} would yield
    \begin{gather*}
        M(\tau) \ll \frac{\|a\|^2_2}{\tau^2\theta^2} \int_{P_{2 \tau}} \|H\|^{2 |\Phi_{\textnormal{red}}^{1 +} \smallsetminus \Phi_{\textnormal{red}, L_1}^+| (1-\theta)} e^{\theta \rho(H - 2 \|H\|_P H_0)} dH.
    \end{gather*}
Here, $\Phi_{\textnormal{red}}^1$ and $\Phi_{\textnormal{red}}^{1 +}$ are the reduced roots and positive reduced roots of the simple factor $G_1$, respectively, and $\Phi_{\textnormal{red}, L_1}^+$ denotes the positive reduced roots of $L_1$, the centralizer of $H$ in $G_1$. Now Remark \ref{rmk_best_theta} shows that $2 |\Phi_{\textnormal{red}}^{1 +} \smallsetminus \Phi_{\textnormal{red}, L_1}^+| (1-\theta) \geq 1$ for all $H\neq0$, in which case the above integral is at least $\tau^2$, cancelling out the factor $\frac{1}{\tau^2}$ and thus obtaining no decay. We conclude that a bound such as \eqref{eqn_poly_bound} would not suffice to complete the proof of Theorem \ref{main-theorem}. 

The point of Theorem \ref{intersection-vol} is to strengthen this bound to replace the polynomial factor in $H$ with a log factor, in the case where $H_0$ is taken to be extremal.

\subsection{The endpoint Kunze--Stein phenomenon} \label{subsec_eks}
The Kunze--Stein phenomenon \cite{Kunze_Stein} says that if $G$ is a semisimple real Lie group, then for any $p \in [1, 2)$ there is $c_p>0$ such that, for all $f \in L^p(G)$ and $g \in L^2(G)$, we have
\begin{gather*}
    \|f * g\|_2 \leq c_p \|f\|_p \|g\|_2.
\end{gather*}
In fact, the Kunze--Stein phenomenon is one of the main ingredients powering the Nevo ergodic theorem \cite{Nevo}, as stated in Proposition \ref{prop_nevo}.

In case $G$ has real rank one, Ionescu \cite{Ionescu_00} strengthened this to an endpoint estimate
\begin{equation}\label{endpoint-K-S}
    \|f * g \|_{2, \infty} \leq c \|f\|_{2, 1} \|g\|_{2, 1},
\end{equation}
where $\|\cdot\|_{p, q}$ is the $(p, q)$ Lorentz norm. It was noticed by the second author that if $G$ is a group satisfying the endpoint Kunze--Stein phenomenon for bi-$K$-invariant functions, then
\[
\textnormal{vol}(e^H S_t \cap S_t) \ll e^{\rho(2 t H_0 - H)}
\]
for any choice of $H_0$ and $H$. J.-P. Anker later communicated to us a simplified argument of this implication, which we now present.

For the purposes of this subsection, we shall deviate slightly from our earlier notation. For any $H\in\overline{\ga}_+$ and $r>0$ we define
\begin{gather*}
    S_{H, r} := K \exp(B_{\ga}(H, r))K.
\end{gather*}
The reason for introducing this more precise notation can be seen in the following elementary statement, where we keep track of this radius.
\begin{lemma}\label{eqn_anker1}
Let $H_0, H \in \overline{\ga}_+$ and $r>0$. We have
\[
    \textnormal{vol}(e^H S_{H_0, r} \cap S_{H_0, r}) \leq \textnormal{vol}(g S_{H_0, 2r} \cap S_{H_0, r})
\]
for all $g \in S_{H, r}$.
\end{lemma}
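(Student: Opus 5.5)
The plan is to reduce the inequality to a set inclusion: for every $g\in S_{H,r}$ we aim to show
\[
e^H S_{H_0,r}\subseteq g S_{H_0,2r}
\]
up to a fixed left translation that preserves volume. Then
\[
e^H S_{H_0,r}\cap S_{H_0,r}\subseteq gS_{H_0,2r}\cap S_{H_0,r},
\]
and the volume inequality follows.

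Since $g\in S_{H,r}=K\exp(B_\ga(H,r))K$, write $g=k_1e^{H'}k_2$ with $\|H'-H\|<r$. The intersection volume is unchanged if $g$ is replaced by $k g$ for $k\in K$: indeed $kS_{H_0,r}=S_{H_0,r}$, so
\[
\textnormal{vol}(kgS\cap S)=\textnormal{vol}(k(gS\cap S))=\textnormal{vol}(gS\cap S)
\]
by left invariance of Haar measure. Likewise right multiplication by $k_2$ is absorbed into the bi-$K$-invariant set $S_{H_0,2r}$. Hence we may assume $g=e^{H'}$.

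It then suffices to show $e^HS_{H_0,r}\subseteq e^{H'}S_{H_0,2r}$, that is, $e^{H-H'}S_{H_0,r}\subseteq S_{H_0,2r}$. Set $Z=H-H'\in\ga$, so that $\|Z\|<r$, and take $y\in S_{H_0,r}$. The quantity to control is the Cartan projection of $e^Zy$.

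Consider the Weyl chamber-valued distance $d_{\bar\ga_+}$, or its Euclidean length, which is the Riemannian distance. We have
\[
d(K,e^ZyK)\le d(K,yK)+d(yK,e^ZyK),
\]
but a bound on the length alone does not locate the Cartan projection near $H_0$. What is needed is a Lipschitz property of the Cartan projection: $\|\kappa(ab)-\kappa(b)\|\le\|\kappa(a)\|$, i.e.\ the Cartan projection is $1$-Lipschitz for the Euclidean norm under left multiplication. This is a known fact for symmetric spaces of nonpositive curvature (Kapovich–Leeb–Millson, or the standard fact that $d_{\bar\ga_+}$ is $1$-Lipschitz in each argument with respect to Riemannian distance). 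Applying it with $a=e^Z$, $b=y$ gives
\[
\|\kappa(e^Zy)-H_0\|\le \|Z\|+\|\kappa(y)-H_0\|<2r,
\]
so $e^Zy\in S_{H_0,2r}$.

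The main obstacle is justifying this Lipschitz property cleanly. One could try to derive it from Lemma \ref{lemma:triangle}, but containment in convex hulls does not directly give a Euclidean Lipschitz bound. Instead, I would cite the fact that the map $X\times X\to\bar\ga_+$ given by $d_{\bar\ga_+}$ satisfies $\|d_{\bar\ga_+}(x,z)-d_{\bar\ga_+}(y,z)\|\le d(x,y)$. This is proved in Kapovich–Leeb–Millson, and also follows from the fact that $\bar\ga_+$-valued distance is realized by a flat and the projection to the model Weyl chamber is $1$-Lipschitz. The remaining steps are routine bookkeeping with the $K$-invariances and the left-invariance of Haar measure.
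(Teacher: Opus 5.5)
Your proof is correct. It uses the same reduction as the paper: bi-$K$-invariance lets one take $g=e^{H'}$ with $\|H'-H\|<r$, after which it suffices to show $e^{Z}S_{H_0,r}\subseteq S_{H_0,2r}$ for $\|Z\|<r$. The key step, however, is genuinely different.

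The paper applies Lemma~\ref{lemma:triangle},
\[
d_{\overline{\ga}_+}(e^{H+Z},x)\preceq d_{\overline{\ga}_+}(e^{H+Z},e^H)+d_{\overline{\ga}_+}(e^H,x),
\]
observes that the right-hand side lies in $B_\ga(H_0,2r)$, and concludes. Your remark that convex-hull containment gives no Euclidean bound is exactly right. The relation $U\preceq V$ only says $U\in\textnormal{Conv}(W.V)$, which is an upper bound in the dominance order. This hull always contains $0$, and $0$ lies outside $B_\ga(H_0,2r)$ once $\|H_0\|\ge 2r$. So the triangle inequality does not place $d_{\overline{\ga}_+}(e^{H+Z},x)$ in $B_\ga(H_0,2r)$, and the paper's final inference does not follow as written.

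The paper's route is more economical, since it only reuses Lemma~\ref{lemma:triangle}, but it controls the Cartan projection from one side only. Your route uses the two-sided estimate $\|\kappa(e^Zy)-\kappa(y)\|\le\|Z\|$, which is what the inclusion $e^ZS_{H_0,r}\subseteq S_{H_0,2r}$ actually requires.

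You do not need an outside citation for the Lipschitz property; it follows from the two tools already used in the proof of Lemma~\ref{lemma:triangle}.
\begin{enumerate}
\item The Kapovich--Leeb--Millson transfer, applied to the triangle with vertices $e^{-Z}K$, $K$, $yK$, produces three points of $\mathfrak{p}$ with the same $\overline{\ga}_+$-valued side lengths $\kappa(e^Z)$, $\kappa(y)$, $\kappa(e^Zy)$. It therefore suffices to prove $\|\kappa(A)-\kappa(B)\|\le\|A-B\|$ for $A,B\in\mathfrak{p}$.
\item That inequality holds because $\|\kappa(A)\|=\|A\|$ and $\langle A,B\rangle\le\langle\kappa(A),\kappa(B)\rangle$. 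To see the latter, conjugate so that $A=\kappa(A)$. Then $\langle A,B\rangle=\langle\kappa(A),\Pi_\ga B\rangle$, where $\Pi_\ga B\in\textnormal{Conv}(W.\kappa(B))$ by Kostant's convexity theorem. Finally, $\langle H,wH'\rangle\le\langle H,H'\rangle$ for all $H,H'\in\overline{\ga}_+$.
\end{enumerate}
The same inequality restricted to $\ga$ also justifies a step you use without comment. For $y\in Ke^{Y}K$ with $\|Y-H_0\|<r$, it gives $\|\kappa(y)-H_0\|\le\|Y-H_0\|<r$, since $\kappa(H_0)=H_0$.
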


\begin{proof}
It will be enough to show that, for any $Z \in B_{\ga}(0, r)$,
\begin{equation}\label{eqn_anker1bis}
\textnormal{vol}(e^H S_{H_0, r} \cap S_{H_0, r}) \leq \textnormal{vol}(e^{H+Z} S_{H_0, 2r} \cap S_{H_0, r}).
\end{equation}
Indeed, for any $g \in S_{H, r}$ there is $Z \in B_{\ga}(0, r)$ such that $g\in Ke^{H+Z} K$. Since the sets $S_{H_0, r}$ are bi-$K$-invariant, it follows that $\textnormal{vol}(g S_{H_0, 2r} \cap S_{H_0, r}) = \textnormal{vol}(e^{H+Z} S_{H_0, 2r} \cap S_{H_0, r})$.

The inequality \eqref{eqn_anker1bis} in turn follows from
\[
e^H S_{H_0, r} \subset e^{H+Z} S_{H_0, 2r},
\]
valid for any $Z \in B_{\ga}(0, r)$. To see this inclusion, suppose that $x \in e^H S_{H_0, r}$. We deduce from Lemma \ref{lemma:triangle} that
\begin{gather*}
    d_{\overline{\ga}_+}(e^{H + Z}, x) \preceq d_{\overline{\ga}_+}(e^{H+Z}, e^H) + d_{\overline{\ga}_+}(e^H, x).
\end{gather*}
The first term on the right-hand side lies in $B_{\ga}(0, r)$, and the second term lies in $B_{\ga}(H_0, r)$. This implies that the sum lies in $B_{\ga}(H_0, 2r)$, as desired.
\end{proof}

We can re-express Lemma \ref{eqn_anker1} as stating that
\begin{gather*}
    \textnormal{vol}(e^H S_{H_0, r} \cap S_{H_0, r}) \mathds{1}_{S_{H}, r} \leq \mathds{1}_{S_{H_0}, r} * \mathds{1}_{S_{H_0}, 2r}^\vee.
\end{gather*}
We now apply the endpoint Kunze--Stein property \eqref{endpoint-K-S} to obtain
\begin{gather*}
    \textnormal{vol}(e^H S_{H_0, r} \cap S_{H_0, r}) \| \mathds{1}_{S_{H, r}}\|_{2, \infty} \leq \|\mathds{1}_{S_{H_0, r}}\|_{2, 1}  \|\mathds{1}_{S_{H_0}, 2r}\|_{2, 1}.
\end{gather*}
Finally we have the elementary estimate
\begin{gather*}
    \| \mathds{1}_{S_{H_0}, r} \|_{2, q} \asymp e^{\rho(H_0)}
\end{gather*}
for all $1 \leq q \leq \infty$. This implies $\textnormal{vol}(e^H S_{H_0, r} \cap S_{H_0, r}) \ll e^{\rho(2 H_0 - H)}$. 

\begin{remark}
As remarked in \cite{Ionescu_00}, the bound \eqref{endpoint-K-S} fails when $G$ is of rank at least $2$. The fact that Theorem \ref{intersection-vol} nevertheless recovers the same estimate on intersection volumes (up to $\log$ factors) for balls directed by extremal $H_0$, suggests that a version of the endpoint Kunze--Stein phenomenon, in which one restricts to bi-$K$-invariant functions supported near extremal elements, may extend to the higher rank setting. 
\end{remark}

\subsection{Reduction to a spectral estimate}\label{sec:red-to-spec}

We now return to the usual notation for $S_t$, as defined in \eqref{eq:def-St}. 

We introduce a smooth bump function that dominates $S_t$ to facilitate the analysis. Let $\psi \in C_c^\infty(\mathfrak{a})$ be a non-negative $W_M$-invariant function such that $\psi \geq 1$ on $B_{\mathfrak{a}}(0, \epsilon_0)$. Let $\psi_t$ be the translate of $\psi$ by $t H_0$, given by $\psi_t(Z) = \psi(Z - t H_0)$, which satisfies $h_t \geq 1$ on $B_{\mathfrak{a}}(t H_0, \epsilon_0)$. We then define $k_t \in C_c^\infty(G)$ to be the bi-$K$-invariant function on $G$ that satisfies $k_t(e^H) = \sum_{w \in W/W_M} \psi_t(H)$. Then $k_t \geq 1$ on $S_t$.

We have $\textnormal{vol}( S_t \cap e^H S_t ) \le (k_t * k_t^\vee)(e^H)$. It therefore suffices to estimate the convolution $k_t * k_t^\vee$.  We shall do this using the Harish-Chandra transform, by writing
\begin{equation}
\label{kHC}
(k_t * k_t^\vee)(e^H) = \int_{ \ga^*_+} | \widehat{k}_t(\lambda) |^2 \varphi_\lambda(e^H) | c(\lambda)|^{-2} d\lambda.
\end{equation}
We shall bound this expression using Theorem \ref{sph-fn-bd}, together with the following result which is obtained from that theorem through a uniform integration by parts argument. We recall the definition \eqref{Thetadef} of the function $\Theta(H,\lambda)$ appearing in the statement of Theorem \ref{sph-fn-bd}.
\begin{prop}
\label{khatdecay}
Let $G$ be a non-compact semisimple real Lie group with finite center. Assume Theorem \ref{sph-fn-bd}. Let $H_0\in\overline{\ga}_+$ be used to define $k_t$. For any $N > 0$ we have
\[
\widehat{k}_t(\lambda) \ll_N (1 + \| \lambda \|)^{-N} e^{ \rho(tH_0)} \Theta(tH_0,\lambda),
\]
uniformly in $t$.
\end{prop}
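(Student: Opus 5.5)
The approach is to write $\widehat{k}_t(\lambda)$ as a radial integral and bound it in two ways, the second obtained by integrating by parts against the Laplacian. By \eqref{eq:HC-transform},
\[
\widehat{k}_t(\lambda) = b_G\int_{\ga_+} k_t(e^H)\varphi_{-\lambda}(e^H)J(H)\,dH ,
\]
and for $t$ large the support of $k_t$ on $\ga_+$ is contained in $B_\ga(tH_0,R)$, where $R$ is the radius of ${\rm supp}\,\psi$. On this set $J(H)\ll e^{2\rho(H)}\le e^{2\rho(tH_0)}e^{O(1)}$. For small $t$ we only need a constant bound, which is trivial.

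The first step is the trivial bound, with $N=0$. Theorem \ref{sph-fn-bd} with $\lambda\in\ga^*$ gives $\varphi_{-\lambda}(e^H)\ll(1+\|\lambda\|)^a\Theta(H,\lambda)e^{-\rho(H)}$. For $H\in B_\ga(tH_0,R)$ we have $|\alpha(H)-t\alpha(H_0)|\ll 1$ for every $\alpha$. Hence $f_\alpha(H,w\lambda)\ll f_\alpha(tH_0,w\lambda)$, because $\min(x+1,y)$ changes by at most a bounded factor when $x$ moves by $O(1)$. This gives $\Theta(H,\lambda)\ll\Theta(tH_0,\lambda)$ uniformly. Integrating over a region of bounded volume then yields
\[
\widehat{k}_t(\lambda)\ll(1+\|\lambda\|)^a e^{\rho(tH_0)}\Theta(tH_0,\lambda).
\]

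The second step produces the decay in $\lambda$. Let $\Delta$ be the Laplacian on $X$, so that $\Delta\varphi_{-\lambda}=(\|\lambda\|^2+\|\rho\|^2)\varphi_{-\lambda}$ with the appropriate sign convention. Since $k_t$ is compactly supported, smooth and bi-$K$-invariant, self-adjointness gives
\[
(\|\lambda\|^2+\|\rho\|^2)^m\,\widehat{k}_t(\lambda)=\widehat{\Delta^m k_t}(\lambda).
\]
I then apply the first step to $\Delta^m k_t$ in place of $k_t$. This requires uniform control, in $t$, of $\Delta^m k_t$ on its support. The radial part of $\Delta$ is $L_\ga+\sum_{\alpha\in\Phi^+}m_\alpha\coth(\alpha(H))\partial_{H_\alpha}$. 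Away from the walls of $\ga_+$, the functions $\coth\alpha(H)$ and all their derivatives are bounded. Near the walls, $k_t$ coincides with a $W_M$-invariant function $\psi_t$, so the singular terms are handled by the smooth extension of the radial Laplacian of $M$-type directions. More simply, $\Delta^m k_t$ is the bi-$K$-invariant function $g\mapsto(\Delta^m k)$, and one can argue directly that it is left-translated: $k_t$ is a $K$-average of a translate of a fixed smooth function near $e^{tH_0}$. The roots in $\Phi^+\smallsetminus\Phi_M^+$ satisfy $\alpha(H)\gg t$ on the support, so their $\coth$ terms are $1+O(e^{-ct})$ with all derivatives. The roots in $\Phi_M$ see exactly the radial Laplacian of $M$ applied to a fixed, translated $W_M$-invariant bump, which is a fixed smooth function. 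Hence $\Delta^m k_t(e^H)$ is bounded and supported in $B_\ga(tH_0,R)$, uniformly in $t$. The first step then gives $\widehat{\Delta^m k_t}(\lambda)\ll(1+\|\lambda\|)^a e^{\rho(tH_0)}\Theta(tH_0,\lambda)$. Taking $2m\ge N+a$ concludes the proof.

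The main obstacle is this uniformity of $\Delta^m k_t$ in $t$, in particular the treatment of the singular $\coth$ terms along the walls of $\Phi_M$. I would resolve this by writing $\Delta=\Delta_M+(\text{terms from }\Phi^+\smallsetminus\Phi_M^+)$ in radial coordinates. The first piece acts on a translate of a fixed smooth $M$-bi-$K_M$-invariant function, independent of $t$ because $e^{tH_0}$ is central in $M$. The second piece consists of exponentially close to constant-coefficient first-order operators.
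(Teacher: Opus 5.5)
Your proposal is correct and follows essentially the same route as the paper: integrate by parts against $\Delta$, then control $\Delta^N k_t$ uniformly in $t$. For that, split the translated radial Laplacian into $\Delta_{M,\rm rad}$ plus the $\coth(\alpha(\cdot))X_\alpha$ terms for $\alpha\in\Phi^+\smallsetminus\Phi_M^+$, which are harmless since $\alpha(H)\gg t$ on the support. Finally insert Theorem \ref{sph-fn-bd} together with $J(H)\ll e^{2\rho(H)}$ and $\Theta(H,\lambda)\ll\Theta(tH_0,\lambda)$ near $tH_0$.

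The paper is more precise on one point. The $\coth$ terms make later iterates $t$-dependent, so one needs $\Delta_{M,\rm rad}$ to be a bounded operator on $C_c^\infty(\ga)^{W_M}$. The paper obtains this from Helgason's isomorphism $C_c^\infty(X_M)^{K_M}\cong C_c^\infty(\ga)^{W_M}$. Knowing only that $\Delta_{M,\rm rad}$ sends the fixed bump $\psi$ to a fixed smooth function is not enough.
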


Assuming Theorem \ref{sph-fn-bd}, we insert Proposition \ref{khatdecay} into \eqref{kHC} to obtain
\[
(k_t * k_t^\vee)(e^H) \ll_N e^{ \rho( 2tH_0 - H)} \int_{ \ga^{*}_+} (1 + \| \lambda \|)^{-N} \Theta(tH_0, \lambda)^2 \Theta( H, \lambda) |c(\lambda)|^{-2} d\lambda.
\]
The right-hand side is now ``elementary", in the sense that, once one replaces $c(\lambda)$ by standard majorants obtained by Stirling's formula, what remains are explicit combinatorial expressions in the $H$, $H_0$, and $\lambda$. Note that $(k_t * k_t^\vee)(e^H) = 0$ if $t \ll \|H\|$. We must therefore prove the following estimate of the elementary spectral integral.
\begin{theorem}\label{thm:elem-spec-int}
Let $G$ be a non-compact simple real Lie group. Let $H_0 \in\overline{\mathfrak{a}}_+$ and denote by $M$ its centralizer in $G$. If $\Phi_{M,\textnormal{red}}$ is semi-dense in $\Phi_{\textnormal{red}}$, then there exists an integer $k\in\Z_{\geq 0}$ such that
\be
\label{spectral-int-bd}
\int_{ \ga^{*}_+} (1 + \| \lambda \|)^{-N} \Theta(tH_0, \lambda)^2 \Theta( H, \lambda) |c(\lambda)|^{-2} d\lambda \ll (\log t)^k
\ee
for $\| H \| \ll t$.
\end{theorem}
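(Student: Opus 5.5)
Because the weight $(1+\|\lambda\|)^{-N}$ absorbs any polynomial growth at infinity, everything rests on the region $\|\lambda\|\ll 1$. There I would replace $|c(\lambda)|^{-2}$ by its majorant \eqref{eqn_c_lambda_small}, namely $\prod_{\alpha\in\Phi^+_{\rm red}}|\langle\lambda,\alpha\rangle|^2$. In the complementary region $\|\lambda\|\gg 1$, each $f_\alpha$ is bounded by a constant times $1+|\langle\lambda,\alpha\rangle|^{-1}$, and the $c$-function bound \eqref{eqn_c_lambda_big} grows only polynomially. So that part of the integral is $O(1)$ after taking $N$ large, provided the singular factors $|\langle\lambda,\alpha\rangle|^{-1}$ are cancelled by $|c|^{-2}$ in the way explained next.

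Expanding $\Theta(tH_0,\lambda)^2\Theta(H,\lambda)$ via \eqref{Thetadef} gives a finite sum over triples $(w_1,w_2,w_3)\in W^3$ of products over $\alpha\in\Phi^+_{\rm red}$. I would re-index the roots so that each integrand becomes $\prod_{\beta\in\Phi^+_{\rm red}}g_\beta(\lambda)$, where each $g_\beta$ is a product of at most three $f$-factors attached to $\pm\beta$ times $|\langle\lambda,\beta\rangle|^2$. This uses that $W$ permutes $\Phi_{\rm red}$ up to sign and that $|\langle w\lambda,\alpha\rangle|=|\langle\lambda,w^{-1}\alpha\rangle|$.

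Now compare the two $\Theta$ factors. For $\Theta(tH_0,\cdot)$, a root $\alpha$ with $\alpha(H_0)=0$, i.e. $\alpha\in\Phi_{M,\rm red}$, gives $f_\alpha=1$. For every other root, $f_\alpha\le \min(t,|\langle\lambda,\alpha\rangle|^{-1})+1$. For $\Theta(H,\cdot)$ all factors are at most $\min(\|H\|,|\langle\lambda,\beta\rangle|^{-1})+1$, and $\|H\|\ll t$. Consequently a root $\beta$ contributes at most $|\langle\lambda,\beta\rangle|^{2-m_\beta}$, where $m_\beta\in\{0,1,2,3\}$ counts how many of the three $f$-factors attached to $\beta$ are non-trivial, and each singular factor is truncated at scale $t^{-1}$. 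The roots with $m_\beta\le 2$ are harmless. The danger lies in roots with $m_\beta=3$, which give $|\langle\lambda,\beta\rangle|^{-1}$ cut off at $1/t$.

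I would then bound the integral over $\|\lambda\|\le 1$ in polar-type coordinates around the flats where several $\langle\lambda,\beta\rangle$ vanish simultaneously. The generic bound for such an integral is $\log t$ raised to the number of bad roots, provided that no subspace carries too many bad roots. Precisely, the requirement is that for every semistandard subsystem $\Psi$ spanning a $d$-dimensional space, the total singular order $\sum_{\beta\in\Psi}(m_\beta-2)$ is at most $d$, with the power of $\log$ coming from the equality cases. Since $m_\beta=3$ forces both $w_1^{-1}\beta$ and $w_2^{-1}\beta$ to lie outside $\Phi_{M,\rm red}$, the semi-density of $\Phi_{M,\rm red}$ (Definition \ref{def:big}) should be exactly the counting statement that guarantees this inequality for every $\Psi$ and every $w_1,w_2$. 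A Fubini/induction argument over the lattice of flats, integrating out one coordinate $\langle\lambda,\beta\rangle$ at a time, then gives $\ll(\log t)^k$ with $k\le|\Phi^+_{\rm red}|$.

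The main obstacle is this last step. One has to organize the truncated singular integral near intersections of root hyperplanes, for instance by a dyadic decomposition of each $|\langle\lambda,\beta\rangle|\in[t^{-1},1]$ together with a volume count of the resulting boxes. One also has to check that the semi-density inequality, applied to each subsystem $\Psi$ generated by the roots that are simultaneously small, produces a convergent (at worst logarithmically divergent) sum. I would first attempt the dyadic count. The volume of $\{\lambda:|\langle\lambda,\beta\rangle|\approx 2^{-j_\beta}\}$ is bounded via the rank of the span of the roots involved, which reduces everything to exactly the semi-density counting inequality.
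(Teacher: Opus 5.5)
Your reduction is correct and reaches the counting condition the paper uses, but you organize the final integration differently.

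\textbf{The paper's route.} It uses the same majorants as you: $|c(\lambda)|^{-2}\ll(1+\|\lambda\|)^d\prod_\alpha|\langle\lambda,\alpha\rangle|^2$, and $f$-factors bounded by $\min(t,|\langle\lambda,\alpha\rangle|^{-1}+1)$, trivial on $\Phi_M$. It then writes $\lambda(x)=\sum_i x_i\varpi_i$ and splits $\mathbb{R}^r_{>0}$ into barycentric cells $T_\sigma=\{x_{\sigma(1)}>\cdots>x_{\sigma(r)}\}$. These cells are refined according to how many $x_{\sigma(i)}$ exceed $t^{-1}$. On $T_\sigma$, every $|\langle\lambda(x),\alpha\rangle|$ with $\alpha\in\Phi_{\sigma,i-1}\smallsetminus\Phi_{\sigma,i}$ is $\asymp x_{\sigma(i)}$ (Lemma \ref{rootsize}). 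Your multi-hyperplane truncated singular integral therefore becomes an explicit iterated one-variable integral. Semi-density is needed only along the flag $w\Phi_{\sigma,i}$, in the form $S_M(\sigma,i,w)+r\ge i$ (Corollary \ref{cor:S-non-negative}), and each equality case costs one factor of $\log t$.

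\textbf{The triples $(w_1,w_2,w_3)$.} The paper never needs them. On $T_\sigma$ the bound for $\Theta(H,\lambda)$ is independent of $w$, and $\Theta(tH_0,\lambda)^2\ll\sum_w(\cdots)^2$, so only $w_1=w_2$ occurs. In your version you must average the semi-density inequalities for $w_1^{-1}\Psi$ and $w_2^{-1}\Psi$. That averaging does yield $|\Psi^+|-|\Psi^+\cap w_1\Phi_M|-|\Psi^+\cap w_2\Phi_M|\le\mathrm{rank}\,\Psi$, which is your flat condition.

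\textbf{Your dyadic step.} What you call the main obstacle is exactly what the barycentric subdivision dissolves. Your route is more general: it would give a log-type bound for any exponent data on a root-hyperplane arrangement satisfying the flat condition. It can be completed, but it needs steps you have not supplied:
\begin{enumerate}
\item cap all dyadic levels at $t^{-1}$, so there are only $O((\log t)^{|\Phi^+|})$ boxes;
\item bound each box's volume by $\prod_{\beta\in B}2^{-j_\beta}$ over a best basis $B$ of roots;
\item check that at each scale $\delta$ the set of roots with $|\langle\lambda,\beta\rangle|\lesssim\delta$ is, up to bounded changes of scale, the full root set of a flat. This holds because any root in its span is also $\ll\delta$, and it is what lets the flat condition apply scale by scale.
\end{enumerate}

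\textbf{The region $\|\lambda\|\gg 1$.} It is not $O(1)$ in rank $\ge 2$. Root hyperplanes run off to infinity, and crossing the wall of a root with $m_\beta=3$ still costs $\log t$. The same mechanism handles this region, as the paper does uniformly through the factors $(1+x_{\sigma(1)})^{N'}$.
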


The combination of Theorem \ref{thm:semi-dense2}, Theorem \ref{sph-fn-bd}, Proposition \ref{khatdecay}, and Theorem \ref{thm:elem-spec-int} then implies Theorem \ref{intersection-vol}. We prove Proposition \ref{khatdecay} in the next subsection, leaving the proofs of Theorem \ref{thm:semi-dense2}, Theorem \ref{sph-fn-bd}, and Theorem \ref{thm:elem-spec-int}, which should be thought of as the workhorse of our approach, to the remainder of the paper.

\subsection{Proof of Proposition \ref{khatdecay}}

We let $\Delta$ denote the Laplacian on the symmetric space $X=G/K$. Recall that for $\lambda\in\ga^*$ we have $\Delta\varphi_\lambda =(\|\rho\|^2+\|\lambda\|^2)\varphi_\lambda$, see \cite[p.427, (7)]{Helgason_00}. Let $N$ be a positive integer. Integrating by parts (using Green's second identity) $N$ times we obtain
\[
\widehat{k}_t(\lambda)=(\|\rho\|^2+\|\lambda\|^2)^{-N}\int_X k_t(x)\Delta^N\varphi_{-\lambda}(x)dx=(\|\rho\|^2+\|\lambda\|^2)^{-N}\int_X \Delta^N k_t(x)\varphi_{-\lambda}(x)dx.
\]
Here we have identified the right $K$-invariant functions $k_t(g)$ and $\varphi_{-\lambda}(g)$ with the corresponding functions on $X$. Since the integrand is also left $K$-invariant we can use the integral decomposition \eqref{eq:Cartan-int} to write
\begin{equation}\label{eq:IPP}
\widehat{k}_t(\lambda)=b_G(\|\rho\|^2+\|\lambda\|^2)^{-N}\int_{\ga_+} \Delta^N k_t(e^H)\varphi_{-\lambda}(e^H)J(H)dH.
\end{equation}
Proposition \ref{khatdecay} will follow from the following bound on $\Delta^Nk_t$.

\begin{lemma}\label{lemma:DeltaM}
Let $\psi\in C_c^\infty(\ga)^{W_M}$ and $k_t$ be as defined in Section \ref{sec:red-to-spec}. Let $B\subset\ga$ be a $W_M$-invariant compact subset such that $\textnormal{supp} (\psi) \subset B$. Then, for all $H\in\ga$ and all $N\ge 0$,
\[
\Delta^Nk_t(e^H)\ll_N \sum_{w\in  W/W_M}{\mathds{1}}_B(H-wtH_0),
\]
uniformly in $t>0$.
\end{lemma}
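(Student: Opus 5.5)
We write the Laplacian in polar coordinates. On bi-$K$-invariant (radial) functions $f(e^H)$, $\Delta$ acts as a second order differential operator on $\ga_+$:
\[
\Delta f = -L_\ga f - \sum_{\alpha\in\Phi^+} m_\alpha \coth(\alpha(H))\, \partial_{H_\alpha} f,
\]
where $L_\ga$ is the Euclidean Laplacian on $\ga$ (up to sign conventions). The coefficients $\coth\alpha(H)$ are singular on the walls of $\ga_+$. On the support of $k_t$ (the union of $W$-translates of $tH_0+B$), only roots vanishing on $H_0$ can make $\alpha(H)$ small. This is the point to handle with care.

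First reduce to a single translate. By $W$-invariance of $k_t(e^H)$ and of the radial part, it suffices to bound $\Delta^N k_t$ near $tH_0+B$. For $t\ge t_0$ large, the only term of $\sum_{w\in W/W_M}\psi_t(w^{-1}\cdot)$ whose support meets a neighbourhood of $tH_0+B$ is $\psi_t$ itself, so $k_t(e^H)=\psi(H-tH_0)$ there. For $t\le t_0$, $k_t$ ranges over a compact family of smooth compactly supported bi-$K$-invariant functions, so $\Delta^N k_t$ is bounded uniformly and supported in a fixed compact set. After possibly enlarging $B$ by a fixed amount, which is harmless for the indicator bound, this case is trivial.

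Now split the radial part. For $\alpha\in\Phi^+\smallsetminus\Phi_M^+$ we have $\alpha(H)\ge t\beta_M(H_0)-C$ on the support, so $\coth\alpha(H)$ and all its derivatives are uniformly bounded there, with coefficients converging to $1$ as $t\to\infty$. The remaining terms $-L_\ga-\sum_{\alpha\in\Phi_M^+}m_\alpha\coth\alpha(H)\partial_{H_\alpha}$ form exactly the radial part $\Delta_M$ of the Laplacian of $M$, since $\alpha(tH_0)=0$ for $\alpha\in\Phi_M$ and $\coth\alpha(H)=\coth\alpha(H-tH_0)$. Hence
\[
\Delta\, k_t(e^H)=\big(\Delta_M\tilde\psi\big)(e^{H-tH_0})+\sum_{\alpha\notin\Phi_M}m_\alpha\coth(\alpha(H))\,\partial_{H_\alpha}\psi(H-tH_0),
\]
where $\tilde\psi$ is the bi-$K_M$-invariant function on $M$ with radial profile $\psi$. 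This function is smooth on $M$ because $\psi$ is $W_M$-invariant and smooth. Its $\Delta_M$-image is again a smooth bi-$K_M$-invariant compactly supported function, so its radial profile is again $W_M$-invariant, smooth, and supported in $B$.

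Iterate $N$ times. By induction, $\Delta^N k_t(e^{H})$ near $tH_0$ is a finite sum of terms. Each term is a product of polynomials in $\coth\alpha(H)$ and its derivatives, for $\alpha\notin\Phi_M$, with $t$-independent bounds, times $D(\Delta_M^{j}\tilde\psi)$ evaluated at $H-tH_0$, where $D$ is a constant-coefficient operator. Applying $\Delta_M$ to such a term is legitimate because $\partial_{H_\alpha}$ for $\alpha\notin\Phi_M$ is not $W_M$-invariant. The cleaner route is to note that the $\coth$ coefficients for $\alpha\notin\Phi_M$, summed over $W_M$-orbits of roots, give a $W_M$-invariant first order operator with smooth bounded coefficients. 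The combination is therefore a $W_M$-invariant differential operator with smooth coefficients bounded uniformly in $t$, and it preserves smooth $W_M$-invariant functions supported in $B$. This gives the bound uniformly in $t$ and supported where claimed.

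The main obstacle is exactly this smoothness-across-$M$-walls issue: the singular $\coth\alpha$ terms for $\alpha\in\Phi_M$ must be absorbed into the genuine Laplacian of $M$, so that the bounds do not blow up on the walls.
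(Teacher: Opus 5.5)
Your argument is correct and is essentially the paper's: you split the radial Laplacian near $tH_0$ into $\Delta_{M,{\rm rad}}$, the radial part of the Laplacian on $X_M=M/K_M$, which preserves smooth $W_M$-invariant functions by Helgason's restriction theorem, plus the first-order operator built from the $\coth\alpha$ terms with $\alpha\in\Phi^+\smallsetminus\Phi^+_M$; that operator is $W_M$-equivariant and its coefficients are bounded with all derivatives uniformly for $t\ge t_0$ on the support. Two things should be tightened: uniformity in $t$ needs $\Delta_{M,{\rm rad}}$ to be \emph{bounded} on $W_M$-invariant smooth functions supported in $B$, not merely to preserve smoothness (the paper gets this because Helgason's restriction map is a topological isomorphism with continuous inverse), and your first iteration attempt, using non-invariant constant-coefficient operators $D$, should be dropped in favour of the $W_M$-equivariant grouping you then describe.
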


To see how Proposition \ref{khatdecay} follows from this, we insert Lemma \ref{lemma:DeltaM} and Theorem \ref{sph-fn-bd} into \eqref{eq:IPP} to get
\[
\widehat{k}_t(\lambda)\ll_N (1+\|\lambda\|)^{-N} \int_{(tH_0 + B) \cap \mathfrak{a}_+} e^{-\rho(H)} J(H) \Theta(H, \lambda) dH.
\]
Recalling the definition \eqref{eq:def:J}, we have $J(H)\ll e^{2\rho (H)}$ for all $H\in\ga_+$. For $t$ large, the integrand is $O(e^{\rho(t H_0)} \Theta(t H_0, \lambda))$, yielding the bound of Proposition \ref{khatdecay}.

We now turn to proving Lemma \ref{lemma:DeltaM}.  We will prove this by working in radial coordinates.  Let $x_0\in X = G/K$ be the point corresponding to the trivial coset $K$. Let $\Delta_{\rm rad}$ denote the radial part of $\Delta$ relative to the $K$-action on $X$ with $A_+.x_0$ as a transversal manifold, where $A_+ = \exp(\mathfrak{a}_+)$. Then by the bi-$K$-invariance of $k_t$ we have $\Delta k_t(e^H)=\Delta_{\rm rad}k_t(e^H)$. 

We recall the expression for $\Delta_{\rm rad}$ from \cite[Ch. II, Prop. 3.9]{Helgason_00}. Let $\Delta_\mathfrak{a}$ denote the Laplacian on the flat manifold $A.x_0\subset X$. Then
\[
\Delta_{\rm rad}=\Delta_\mathfrak{a}+\sum_{\alpha\in\Phi^+} m_\alpha \coth(\alpha) X_\alpha,
\]
where $m_\alpha=\dim \g_\alpha$, and the element $X_\alpha\in\ga$, defined by $\langle X_\alpha,H\rangle = \alpha(H)$, is viewed as a differential operator on $A.x_0$. Since $A.x_0$ is isometric to $\mathfrak{a}$, we can equivalently work on $\mathfrak{a}$, and instead view $\Delta_{\mathfrak{a}}$ and $X_\alpha$ as differential operators on $\mathfrak{a}$.

For $H\in\ga$ let $T_H f(X)= f(X -H)$ denote the translation operator by $H$. Thus 
\begin{gather*}
    k_t(e^H) = \sum_{w \in W/ W_M} \psi(w H - t H_0) = \sum_{w \in W/ W_M } T_{t H_0} \psi(w H).
\end{gather*}
We define the translated operator $\Delta_{\rm rad}^t = T_{-t H_0} \Delta_{\rm rad} T_{t H_0}$. Explicitly, $\Delta_{\rm rad}^t$ is given by
\[
\Delta_{\rm rad}^t = \Delta_\mathfrak{a} + \sum_{\alpha \in \Phi^+} m_\alpha \coth( \alpha( \cdot - tH_0) ) X_\alpha.
\]
Therefore,
\begin{gather*}
    \Delta^N k_t(e^H) = \sum_{w \in W/ W_M } \big(T_{t H_0} (\Delta_\text{rad}^{t})^N \psi \big)(w H) = \sum_{w \in W / W_M} \big((\Delta_\text{rad}^{t})^N \psi \big)(w H - t H_0).
\end{gather*}
It therefore suffices to prove that $(\Delta_{\rm rad}^t)^N \psi \ll_N {\mathds{1}}_B$ uniformly in $t$. This follows from the following lemma.

\begin{lemma}
    Suppose $\mathcal{B}$ is a bounded subset of $C_c^\infty(\mathfrak{a})^{W_M}$ whose elements are supported in a compact set $\mathcal{K}$. If $t_0 > 0$ is such that the functions $H \mapsto \alpha(H - t H_0)$ are all non-vanishing on $\mathcal{K}$ for $t \geq t_0$ and $\alpha \in \Phi^+ \smallsetminus \Phi^+_M$, then the set $\bigcup_{t \geq t_0} \Delta^t_{\textnormal{rad}} \mathcal{B}$ is also bounded in $C_c^\infty(\mathfrak{a})^{W_M}$.
\end{lemma}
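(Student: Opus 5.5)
The plan is to show directly that $\Delta^t_{\rm rad}$ maps $\mathcal{B}$ into a set which is bounded in every $C^k$ seminorm, supported in $\mathcal{K}$, and consists of $W_M$-invariant functions. Write
\[
\Delta^t_{\rm rad}=\Delta_\ga+\sum_{\alpha\in\Phi^+_M} m_\alpha\coth(\alpha(\cdot-tH_0))X_\alpha+\sum_{\alpha\in\Phi^+\smallsetminus\Phi^+_M} m_\alpha\coth(\alpha(\cdot-tH_0))X_\alpha .
\]
Since $\alpha(H_0)=0$ for $\alpha\in\Phi_M$, the middle sum equals $\sum_{\alpha\in\Phi_M^+}m_\alpha\coth(\alpha)X_\alpha$, which is independent of $t$. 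Together with $\Delta_\ga$ it is exactly the radial part $\Delta^M_{\rm rad}$ of the Laplacian of the symmetric space $M/K_M$, restricted to $W_M$-invariant functions. Support is clearly preserved, since these are differential operators.

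First I will handle the $t$-independent part $\Delta^M_{\rm rad}$. The classical fact is that the radial Laplacian of $X_M$ sends $W_M$-invariant smooth functions on $\ga$ to $W_M$-invariant smooth functions. Indeed, such a function $f$ corresponds to a smooth bi-$K_M$-invariant function on $M$, and applying the Laplacian of $X_M$ to it gives another smooth bi-$K_M$-invariant function. The singular coefficients $\coth(\alpha)$ are compensated because $X_\alpha f$ vanishes on $\ker\alpha$, as $f$ is invariant under $s_\alpha$. Quantitatively, I will use
\[
\coth(\alpha(H))X_\alpha f(H)=\frac{\alpha(H)}{\tanh\alpha(H)}\cdot\frac{X_\alpha f(H)}{\alpha(H)} .
\]
The function $u\mapsto u/\tanh u$ is real-analytic. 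By a Hadamard/Taylor division argument, $X_\alpha f/\alpha=\int_0^1 \partial(\ldots)\,ds$ has each $C^k$ seminorm bounded by a $C^{k+2}$ seminorm of $f$ on a fixed compact set. Hence $\Delta^M_{\rm rad}$ is a continuous linear map on $C^\infty_{\mathcal{K}}(\ga)^{W_M}$, and so it maps bounded sets to bounded sets.

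Next I will handle the remaining terms, for $\alpha\in\Phi^+\smallsetminus\Phi^+_M$. By hypothesis $\alpha(H-tH_0)\neq0$ on $\mathcal{K}$ for $t\ge t_0$. Moreover $\alpha(H_0)>0$, so $|\alpha(H-tH_0)|\ge c>0$ uniformly for $H\in\mathcal{K}$ and $t\ge t_0$. This holds because the quantity is continuous in $(H,t)$, nonvanishing on $\mathcal{K}\times[t_0,T]$, and grows linearly for large $t$. Therefore $\coth(\alpha(\cdot-tH_0))$ and all its derivatives are bounded on $\mathcal{K}$ uniformly in $t\ge t_0$, since every derivative of $\coth$ is bounded on $\{|u|\ge c\}$. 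The Leibniz rule then bounds each $C^k$ seminorm of these terms by a constant, independent of $t$, times a $C^{k+1}$ seminorm of $f$.

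It remains to check that the sum of these non-$M$ terms is itself $W_M$-invariant, even though the individual terms need not be. I will argue that $W_M$ fixes $H_0$ and permutes $\Phi^+\smallsetminus\Phi^+_M$, as shown in Proposition \ref{lemma:WL-stable}, while $m_{w\alpha}=m_\alpha$. For $w\in W_M$ and $W_M$-invariant $f$, the operator $g\mapsto \coth(\alpha(\cdot-tH_0))X_\alpha g$ is transported by $w$ to the one indexed by $w\alpha$, so the sum is preserved. The main obstacle I expect is making the division step in the $\Phi_M$ part quantitatively uniform, which I will do via the Taylor integral formula in the direction of the coroot.
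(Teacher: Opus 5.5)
Your proof is correct. It uses the same decomposition as the paper,
\[
\Delta^t_{\rm rad}=\Delta_{M,\rm rad}+\sum_{\alpha\in\Phi^+\smallsetminus\Phi^+_M}m_\alpha\coth(\alpha(\cdot-tH_0))X_\alpha ,
\]
and the same Leibniz-rule treatment of the non-$M$ terms. You diverge in how you show that $\Delta_{M,\rm rad}$ is bounded on $C^\infty_{\mathcal{K}}(\ga)^{W_M}$.

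The paper argues abstractly. It invokes Helgason's theorem that the restriction map $R:C_c^\infty(X_M)^{K_M}\to C_c^\infty(\ga)^{W_M}$ is a topological isomorphism, writes $\Delta_{M,\rm rad}=R\circ\Delta_M\circ E$ with $E=R^{-1}$, and concludes from the continuity of the three factors.

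You instead cancel the singularity of $\coth\alpha$ by hand. The $s_\alpha$-invariance of $f$ forces $X_\alpha f$ to vanish on $\ker\alpha$. The Hadamard formula along a vector $v$ with $\alpha(v)=1$ then makes $X_\alpha f/\alpha$ smooth, with $C^k$ norms controlled by $C^{k+2}$ norms of $f$. Finally, $u/\tanh u$ is a smooth multiplier on the compact set $\mathcal{K}$.

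What each route buys:
\begin{itemize}
\item Your argument is elementary and self-contained, and gives an explicit loss of two derivatives.
\item It does not need the fact that smooth $W_M$-invariant functions on $\ga$ extend smoothly to $K_M$-invariant functions on $X_M$. Your parenthetical remark about bi-$K_M$-invariant functions appeals to that fact, but your quantitative argument never uses it.
\item The paper's route is shorter, given the citation.
\end{itemize}

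You also make explicit two points the paper leaves implicit.
\begin{itemize}
\item The uniform lower bound $|\alpha(H-tH_0)|\ge c>0$ on $\mathcal{K}\times[t_0,\infty)$, which you get from compactness plus linear growth in $t$. The paper only says the derivatives of $\coth$ are bounded on $(\varepsilon,\infty)$, without saying why $\varepsilon$ can be taken uniform in $t$; moreover the relevant arguments are actually negative for large $t$.
\item The $W_M$-equivariance of the non-$M$ part, which follows from $wH_0=H_0$, $w(\Phi^+\smallsetminus\Phi^+_M)=\Phi^+\smallsetminus\Phi^+_M$ and $m_{w\alpha}=m_\alpha$. This is what keeps the image inside $C^\infty_c(\ga)^{W_M}$, so the lemma can be iterated to bound $(\Delta^t_{\rm rad})^N\psi$.
\end{itemize}
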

\begin{proof}
The operator $\Delta_{\rm rad}^t$ has singularities along the root hyperplanes corresponding to roots in $\Phi^+_M$.  To deal with these, it will be convenient to also introduce the symmetric space $X_M = M / K_M$, with Laplacian $\Delta_M$.  The radial part of $\Delta_M$ is given by
\[
\Delta_{M, \rm rad} = \Delta_{\ga} + \sum_{\alpha \in \Phi^+_{M}} m_\alpha \coth( \alpha) X_\alpha,
\]
so that
\[
\Delta_{\rm rad}^t = \Delta_{M, \rm rad} + \sum_{\alpha \in \Phi^+ \smallsetminus \Phi_{M}^{+} } m_\alpha \coth( \alpha( \cdot - tH_0) ) X_\alpha.
\]
The advantage of doing this is that we have written $\Delta_{\rm rad}^t$ as the sum of the radial part of the smooth operator $\Delta_M$, and the operators $\coth( \alpha( \cdot - tH_0) ) X_\alpha$ for $\alpha \in \Phi^+ \smallsetminus \Phi^{+}_{M}$, which are non-singular on functions supported on any fixed compact set satisfying the hypotheses of the lemma.

We shall prove that $\Delta_{M, \rm rad}$ is bounded on $C^\infty_c(\mathfrak{a})^{W_M}$ by reducing to the boundedness of $\Delta_M$. To do this, we define the restriction operator $R : C^\infty_c(X_M)^{K_M} \to C^\infty_c(\mathfrak{a})^{W_M}$.  Helgason proves in \cite[Ch. II, \S 5, Th. 5.8]{Helgason_00} that $R$ is an isomorphism of topological vector spaces, and therefore has a continuous inverse which we denote by $E : C^\infty_c(\mathfrak{a})^{W_M} \to C^\infty_c(X_M)^{K_M}$. 

We have $\Delta_{M, \rm rad} = R \circ \Delta_M \circ E$, and as the operators $E$, $R$, and $\Delta_M$ are all bounded on $C^\infty_c(\ga)^{W_M}$ or $C^\infty_c(X_M)^{K_M}$, we see that $\Delta_{M, \rm rad}$ is also bounded on $C^\infty_c(\ga)^{W_M}$, and thus $\bigcup_{t \geq t_0} \Delta_{M, \text{rad}} \mathcal{B}$ is a bounded set.

For any choice of $\varepsilon > 0$, we have that for every $n$ there exists a $C_n$ such that $|\frac{d^n}{d x^n} \coth(x)| < C_n$ for $x \in (\varepsilon, \infty)$. Therefore, by the product rule, we conclude that
\begin{gather*}
    \bigcup_{t \geq t_0} \Big(\sum_{\alpha \in \Phi^+ \smallsetminus \Phi^+_{M}} m_{\alpha} \coth(\alpha (\cdot - t H_0)) X_\alpha \Big) \mathcal{B}
\end{gather*}
is also a bounded set. 
\end{proof}

\section{Some properties of root subsystems}\label{sec:big-subsystems}

In this section, we introduce the notions of \textit{semi-dense} and \textit{extremal} root subsystems. The former are defined by a combinatorial inequality which appears naturally in the proof of Theorem \ref{intersection-vol}, while the latter, when well-defined, are easy to construct in practice. We then prove Theorems \ref{thm:semi-dense} and \ref{thm:semi-dense2}, by examining when such root subsystems exist and how they relate to one another.

\subsection{Semi-dense and extremal root subsystems}

Throughout this section $\Phi$ will denote a reduced root system of rank $n$ in a vector space $V$ with $\Delta$ as a base of simple roots.

We say that a root subsystem of $\Phi$ is \textit{standard} if it is the root subsystem generated by a subset of the simple roots, and that it is \textit{semistandard} if it is the image of a standard root subsystem under an element of the Weyl group. Semistandard root subsystems can be characterized as those arising as $\Phi \cap V'$ where $V'$ is some subspace of $V$ (Chapter VI,\S 1, Proposition 24 of \cite{Bourbaki}). 

\begin{definition}\label{def:big}
Let $\Phi$ be a reduced root system. We say that a semistandard root subsystem $\Phi_0\subset\Phi$ is {\normalfont semi-dense} if, for every semistandard root subsystem $\Psi\subset\Phi$ we have
    \begin{equation}\label{eqn_root_ineq}
        |\Psi \cap \Phi_0| + \textnormal{rank}(\Psi) \geq \frac{1}{2} |\Psi|. 
    \end{equation}
    \end{definition}

\begin{remark}
    Suppose $\Phi = \Phi^{(1)} \sqcup \dots \sqcup \Phi^{(m)}$ with each $\Phi^{(j)}$ irreducible. Suppose $\Phi^{(1)}$ admits a semi-dense root subsystem $\Phi^{(1)}_0$. Then the root subsystem $\Phi^{(1)}_0 \sqcup \Phi^{(2)} \sqcup \dots \sqcup \Phi^{(m)}$ is semi-dense in $\Phi$. 
\end{remark}

As an example of a maximal root subsystem which is \textit{not} semi-dense, let $\Phi$ be of type $B_3$ and take for $\Phi_0$ the $A_1 \times A_1$ root subsystem of $\Phi$ which is obtained by removing the middle node from the Dynkin diagram of $B_3$. We test the inequality \eqref{eqn_root_ineq} with $\Psi=\Phi$. Then $\Phi$ has 18 roots, and $\Phi_0$ has 4 roots. Thus $|\Psi \cap \Phi_0| + \text{rank}(\Psi)=|\Phi_0| + \text{rank}(\Phi)= 4 + 3$ which is smaller than $\frac{1}{2}|\Psi|=\frac{1}{2}|\Phi| = 9$.

If $\Phi$ is of type $A_n$, one can show that any maximal semistandard root subsystem is semi-dense. We will, however, only prove this for a special type of maximal root subsystems, which we call \textit{extremal}, in Proposition \ref{Sibd} below. We introduce the notion of extremal root subsystems for root systems of classical type and $E_7$. This will be our primary source of examples of semi-dense root subsystems:

\begin{definition}\label{def:extremal} 
Let $\Phi$ be a rank $n$ irreducible root system. Let $\Phi_0$ be a rank $n-1$, irreducible, semistandard,  root subsystem of $\Phi$. Then $\Phi_0$ is said to be {\rm extremal} if 
\begin{enumerate}
\item $\Phi$ is of classical type, and $\Phi_0$ in the same family as $\Phi$ (with the convention that $B_1= A_1$, $C_2= B_2$, and $D_3= A_3$), or
\item $\Phi$ is of type $E_7$ and $\Phi_0$ is of type $E_6$.
\end{enumerate}
If $\Phi$ is a reducible rank $n$ root system, $\Phi=\Phi^{(1)}\sqcup\dots\sqcup \Phi^{(m)}$ with all $\Phi^{(j)}$ irreducible, we call a rank $n-1$, semistandard root subsystem $\Phi_0= \Phi_0^{(1)}\sqcup\dots\sqcup \Phi_0^{(m)}$ of $\Phi$ extremal if there exists $1\le m_0\le m$ such that $\Phi_0^{(j)}=\Phi^{(j)}$ for all $j\neq m_0$ and $\Phi_0^{(m_0)}$ is extremal in $\Phi^{(m_0)}$ in the previous sense. 
\end{definition}
We include the degenerate case of $\Phi=A_1$, $\Phi_0=A_0=\emptyset$ in this definition. Note that in this case $\Phi_0$ is semi-dense in $\Phi$: $\Psi=\Phi$ is the only non-empty root subsystem of $\Phi$, and therefore $|\Psi\cap\Phi_0|=0$, $\textnormal{rank}(\Psi)=1$, and $|\Psi|=2$.

The terminology is intentionally reminiscent of the class of extremal fundamental coweights, introduced in Section \ref{sec:str-sing}. Indeed, extremal root subsystems, as defined above, are simply root subsystems obtained by applying an element of $W$ to the root subsystem $\Phi_{M, \textrm{red}}$ of $\Phi_{\textrm{red}}$, with $M$ the centralizer of an extremal element. Though not all semi-dense root subsystems are extremal, the extremal ones provide a natural class of semi-dense root subsystems for all types containing a semi-dense root subsystem.



\subsection{Classical root systems}\label{sec:big-root}

We begin by showing that for classical roots systems, extremal implies semi-dense. This will prove the direct implication of Theorem \ref{thm:semi-dense}, as well as Theorem \ref{thm:semi-dense2}, for classical groups.
\begin{proposition}
\label{Sibd}
    Suppose $\Phi$ is an irreducible root system of classical type. Then any extremal root subsystem is semi-dense. 
\end{proposition}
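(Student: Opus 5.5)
Since the inequality \eqref{eqn_root_ineq} is invariant under the action of $W$ on the pair $(\Phi_0,\Psi)$, I would first put $\Phi_0$ in a normal form using the standard coordinate models of Bourbaki. After conjugating by $W$, an extremal $\Phi_0$ is exactly the set of roots of $\Phi$ that do not involve the first coordinate $\varepsilon_1$. Concretely, $\Phi_0=\Phi\cap \varepsilon_1^\perp$ in types $B_n,C_n,D_n$, and $\Phi_0=\{\varepsilon_i-\varepsilon_j: i,j\neq 1\}$ in type $A_n$. So for any semistandard $\Psi$, the set $\Psi\smallsetminus\Phi_0$ consists of the roots of $\Psi$ whose expression involves $\varepsilon_1$. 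Using $|\Psi|=|\Psi\cap\Phi_0|+|\Psi\smallsetminus\Phi_0|$, the inequality \eqref{eqn_root_ineq} becomes
\[
|\Psi\smallsetminus\Phi_0|\;\le\;|\Psi\cap\Phi_0|+2\,\textnormal{rank}(\Psi).
\]

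The next step is to reduce to irreducible $\Psi$. Write $\Psi=\Psi_1\sqcup\dots\sqcup\Psi_m$ as an orthogonal union of irreducible components. Each $\Psi_i$ is again of the form $\Phi\cap V_i'$ with $V_i'$ the span of $\Psi_i$, so it is still semistandard. All three quantities in the displayed inequality are additive over components, so it suffices to prove it for each $\Psi_i$.

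For irreducible $\Psi$ I would use the classical description of closed subsystems cut out by subspaces in the coordinate models. Up to $W$, such a $\Psi$ is one of two kinds:
\begin{itemize}
\item[(a)] a type $A_k$ system $\{\pm(\eta_i\varepsilon_{a_i}-\eta_j\varepsilon_{a_j})\}$ on $k+1$ distinct coordinates, with signs $\eta_i\in\{\pm1\}$ (only $\eta\equiv 1$ in type $A_n$);
\item[(b)] a type $B_k$, $C_k$ or $D_k$ system on $k$ coordinates, of the same type as $\Phi$ in the cases where it occurs.
\end{itemize}
Justifying this list is the step I expect to require the most care. It should follow from the fact that $\Psi$ has a base which is a subset of $w\Delta$, together with the shape of the connected subdiagrams of classical Dynkin diagrams. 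If $\varepsilon_1$ is not among the coordinates of $\Psi$, then $\Psi\subset\Phi_0$ and there is nothing to prove. Otherwise I would count the roots involving $\varepsilon_1$ in each case.

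\begin{itemize}
\item[(a)] In type $A_k$, at most $2k$ roots involve $\varepsilon_1$, and $|\Psi\cap\Phi_0|=k(k-1)$. The inequality reads $2k\le k(k-1)+2k$, which holds.
\item[(b)] In types $B_k$ and $C_k$, exactly $4(k-1)+2$ roots involve $\varepsilon_1$, and $|\Psi\cap\Phi_0|=2(k-1)^2$. The difference of the two sides is $2(k-1)(k-2)\ge 0$.
\item[(b)] In type $D_k$, exactly $4(k-1)$ roots involve $\varepsilon_1$, and $|\Psi\cap\Phi_0|=2(k-1)(k-2)$. The difference is $2(k-2)^2\ge 0$.
\end{itemize}

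This completes the plan. The arithmetic in each case is immediate; the substantive input is the classification of irreducible semistandard subsystems of classical root systems together with the normal form of extremal $\Phi_0$.
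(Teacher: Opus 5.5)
Your argument is correct and is essentially the paper's. Both reduce to irreducible $\Psi\not\subset\Phi_0$, and your counts in (a) and (b) are exactly the paper's base-case check $|\Phi_0|+\textnormal{rank}(\Phi)\ge\frac12|\Phi|$, applied to $\Psi$ in place of $\Phi$, with $\Psi\cap\Phi_0$ as the extremal subsystem of $\Psi$. The difference is organizational. The paper keeps $\Psi$ standard, moves $\Phi_0$ by $W$, and shows that $\Psi\cap\Phi_0$ is extremal in $\Psi$ by descending through maximal standard subsystems. You instead fix $\Phi_0=\Phi\cap\varepsilon_1^\perp$ and list all irreducible semistandard $\Psi$ directly in coordinates. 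Your families are $W$-stable (arbitrary coordinates and signs), so the phrase ``up to $W$'' does no harm even though $\Phi_0$ is already fixed.
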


\begin{proof}
Let $\Phi_0\subset\Phi$ be extremal and let $\Psi\subset\Phi$ be semistandard. In fact, it will be enough to verify Definition \ref{def:big} with $\Psi$ standard. Indeed, writing $\Psi=w\Psi_{\rm st}$, where $\Psi_{\rm st}\subset\Phi$ is a standard root subsystem, we have $|\Psi \cap \Phi_0| = |\Psi_{\rm st} \cap w^{-1} \Phi_0|$. Since $\textnormal{rank}(\Psi)=\textnormal{rank}(\Psi_{\rm st})$ and $|w^{-1}\Phi_0|=|\Phi_0|$, inequality \eqref{eqn_root_ineq} is true for $\Psi$ if and only if it is true for $\Psi_{\rm st}$. 

We now let $\Psi$ be an arbitrary standard root subsystem of $\Phi$. To prove the proposition, it is enough to establish the inequality \eqref{eqn_root_ineq} for each of the irreducible factors of $\Psi$. We may therefore assume that $\Psi$ is irreducible. Moreover, we may assume that $\Psi\not\subset\Phi_0$ since otherwise the inequality \eqref{eqn_root_ineq}  is obviously true: $|\Psi|+\textnormal{rank}(\Psi)\geq |\Psi|/2$. We shall assume throughout the remainder of this proof that $\Psi$ is \textit{standard, irreducible and not contained in $\Phi_0$.}

Our strategy is to reduce  the verification of \eqref{eqn_root_ineq} to the base case $\Psi=\Phi$, which can be checked numerically. In this case we must verify
    \begin{equation}\label{basic-big-ineq}
        |\Phi_0|+\textnormal{rank}(\Phi) \geq \frac{1}{2} |\Phi|,
    \end{equation}
which can be checked by inspection of the following table:

    \begin{table}[H]
    \centering
 \begin{tabular}{|c|c|c|c|c|}
 \hline
  type of $\Phi$ & type of $\Phi_0$ & $|\Phi|$ & $|\Phi_0|$ \\ \hline  
  $A_n$ &$A_{n-1}$ & $n(n+1)$ & $(n-1)n$ \\
  $B_n$& $B_{n-1}$& $2 n^2$ & $2 (n-1)^2$  \\
  $C_n$& $C_{n-1}$& $2 n^2$& $2 (n-1)^2$ \\
  $D_n$& $D_{n-1}$& $2 n(n-1)$& $2(n-1)(n-2)$ \\
  \hline
 \end{tabular}
\end{table}
\noindent It remains then to show that when $\Psi\neq \Phi$ then $\Psi \cap \Phi_0$ is extremal in $\Psi$, in which case we can apply the base case to conclude. 
It will be enough to prove this claim for $\Psi$ maximal. Note that maximal standard subsystems correspond to removing one simple root from $\Delta$.

We divide the proof of the above claim according to the root type.

\medskip

\noindent \textit{Type $A_n$.} The root system of type $A_n$ has a model as vectors in $\mathbb{R}^{n+1}$ of the form $e_i - e_j$ with $i \neq j$. Let $V$ be the subspace spanned by the roots (i.e., the hyperplane orthogonal to $e_1 + \dots + e_{n+1}$). A basis of simple roots is $e_1 - e_2, \dots, e_{n} - e_{n+1}$. Let $\Phi'_0$ be the extremal root subsystem generated by $e_2 - e_3, \dots, e_n - e_{n+1}$. This is exactly the intersection of $\Phi$ with the hyperplane orthogonal to $e_1$. All extremal root subsystems are of the form $\Phi_0=w \Phi'_0$ for some $w\in W$. The extremal root subsystems are therefore precisely the root subsystems of the form
\begin{equation}\label{eq:ext:root:sub:A}
\Phi_0 =\Phi\cap \{\langle e_j, x \rangle = 0\}
\end{equation} 
for some $1\leq j\leq n+1$.

Fix $1\leq j\leq n+1$ and suppose $\Phi_0$ is as in \eqref{eq:ext:root:sub:A}.
    We wish to show that $\Phi_0\cap \Psi$ is extremal in $\Psi$. Since $\Psi$ is a maximal standard root subsystem, there exists $1\leq l<n+1$ such that $\Psi$ is obtained by removing $e_l-e_{l+1}$ from the set of simple roots. 
    Let $V^{n+1}_l$ and $W^{n+1}_l$ denote the linear subspaces of $\mathbb{R}^{n+1}$ generated by $e_1,\ldots, e_l$ and $e_{l+1},\ldots,e_{n+1}$, respectively. Then $\Psi=(\Psi\cap V^{n+1}_l) \sqcup (\Psi\cap W^{n+1}_l)=:\Psi_1\sqcup\Psi_2$ with  $\Psi_1$ and $\Psi_2$ irreducible root systems of type $A_{l-1}$ and $A_{n-l}$, respectively. Moreover, $\Phi_0\cap \Psi$ equals either $\Psi_{1,0}\sqcup\Psi_2$ or $\Psi_1\sqcup \Psi_{2,0}$, depending on whether $j\le l$ or $j\ge l+1$, where $\Psi_{k,0}=\Psi_k\cap \{\langle e_j, x \rangle = 0\}$. By the characterization of extremal root subsystems given just before \eqref{eq:ext:root:sub:A} $\Psi_{k,0}$ is therefore extremal in $\Psi_k$, and thus $\Phi_0\cap \Psi$ is extremal in $\Psi$.
    

\medskip

    \noindent \textit{Types $B_n$ and $C_n$.} The $B_n$ and $C_n$ cases are virtually identical, and thus we just discuss $B_n$. The analysis is in turn very similar to the $A_n$ case. We have as a model of our root system vectors of the form $\pm e_i \pm e_j$ with $i \neq j$ and $\pm e_j$ inside of $\mathbb{R}^n$. We may take as basis $e_1 - e_2, e_2 - e_3, \dots, e_{n-1} - e_n, e_n$. Let $\Phi_0'$ be the extremal root subsystem generated by the $e_2 - e_3, \dots, e_{n-1} - e_n, e_n$. This is exactly the intersection of $\Phi$ with the hyperplane orthogonal to $e_1$. All other extremal root subsystems are of the form $\Phi_0=w \Phi_0'$. There are exactly $n$ of these all of the form $\Phi \cap \{\langle e_j, x \rangle = 0\}$ for some $1 \leq j \leq n$ (because the orbit of $e_1$ under $W$ is exactly vectors of the form $\pm e_j$).

    Suppose $\Phi_0 = \Phi \cap \{ \langle e_j, x \rangle = 0\}$. If $\Psi$ is obtained by removing one of the first $n-1$ simple roots, we again have $\Psi=(\Psi\cap V^{n}_l) \times (\Psi\sqcup W^{n}_l)=:\Psi_1\sqcup\Psi_2$ for some $1\le l<n$, but with $\Psi_2$ now of type $B_{n-l}$ or $C_{n-l}$. The claim then follows as in the $A_n$ case. If $\Psi$ is obtained by removing $e_n$, then $\Psi$ is the usual $A_{n-1}$ root system in $\mathbb{R}^{n}$ and we are also back in the previous case.
    

\medskip

\noindent \textit{Type $D_n$.} The type $D_n$ root system has a model as vectors in $\mathbb{R}^n$ of the form $\pm e_i \pm e_j$ with $i \neq j$. We may take $\Delta = \{e_1 - e_2, e_2 - e_3, \dots, e_{n-1} - e_n, e_{n-1} + e_n \}$. Let $\Phi_0'$ be the standard extremal root subsystem obtained by removing $e_1 - e_2$ from $\Delta$. Then $\Phi_0' = \Phi \cap \{\langle e_1, x \rangle = 0\}$. All other extremal root subsystems are of the form $\Phi_0=w \Phi'_0$; each such one can be expressed as $\Phi\cap \{\langle e_j, x \rangle = 0\}$ for some $j$.

 Suppose $\Phi_0= \Phi\cap \{\langle e_j, x \rangle = 0\}$. If $\Psi$ is obtained by deleting one of the first $n-2$ simple roots, then we proceed as before by looking at the intersection of $\Psi$ with $V_l^n$ and $W_l^n$ (now $\Psi_2$ is of type $D_{n-l}$). If $\Psi$ is obtained by deleting the last simple root, $\Psi$ is just the usual $A_{n-1}$ in $\mathbb{R}^n$ and we are back in the first case. Finally, if $\Psi$ is obtained by removing the second to last root in $\Delta$, then $\Psi$ is also of type $A_{n-1}$ but with basis $e_1-e_2,\ldots, e_{n-2}-e_{n-1}, e_{n-1}+e_n$. But since $\{ \langle e_n, x \rangle = 0 \}= \{ \langle -e_n, x \rangle = 0 \}$, we are again back in the first case.

\end{proof}

\subsection{The type $E_7$ root system}
We now complete the proof of the direct implication of Theorem \ref{thm:semi-dense}, and of Theorem \ref{thm:semi-dense2}, by treating the exceptional group $E_7$.
\begin{prop} \label{lemma_e_7}
    Let $\Phi$ be of type $E_7$, and let $\Phi_0$ be an extremal root subsystem. Then $\Phi_0$ is semi-dense.
\end{prop}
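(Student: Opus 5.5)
We follow the strategy of Proposition \ref{Sibd}. First, since $|\Psi\cap\Phi_0|$, $\textnormal{rank}(\Psi)$ and $|\Psi|$ are unchanged when both $\Psi$ and $\Phi_0$ are moved by $w\in W$, it is enough to test \eqref{eqn_root_ineq} for standard $\Psi$ against every $W$-conjugate of $\Phi_0$. Since \eqref{eqn_root_ineq} is additive over irreducible factors of $\Psi$, we may further take $\Psi$ irreducible and not contained in $\Phi_0$.

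The extremal subsystem is $\Phi_0=\Phi\cap\varpi_7^{\perp}$, of type $E_6$. Its $W$-conjugates are the subsystems $\Phi\cap v^\perp$, where $v$ runs over the $W$-orbit of the minuscule coweight $\varpi_7^\vee$. This orbit is the $56$-element orbit listed in the table of Section \ref{sec:hyperplanes}. For every root $\beta$ we have $\langle v,\beta\rangle\in\{0,\pm1\}$, so
\[
|\Psi\cap\Phi_0|=|\Psi|-2\,\#\{\beta\in\Psi^+:\langle v,\beta\rangle=\pm1\}.
\]
Set $m_\Psi(v)=\#\{\beta\in\Psi^+:\langle v,\beta\rangle\neq0\}$. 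The inequality \eqref{eqn_root_ineq} then becomes
\[
m_\Psi(v)\le \tfrac14|\Psi|+\tfrac12\textnormal{rank}(\Psi).
\]
The projection of $v$ to $\textnormal{span}(\Psi)$ is a nonnegative combination, up to $W_\Psi$, of minuscule or zero coweights of the irreducible system $\Psi$. Hence $m_\Psi(v)$ equals the number of positive roots of $\Psi$ outside a Levi subsystem cut out by a single minuscule coweight of $\Psi$, or it is $0$. If $\Psi$ acted with larger weights, $\langle v,\beta\rangle$ could exceed $1$, which is impossible.

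It therefore suffices to check, for each irreducible type $\Psi$ that embeds in $E_7$ and each minuscule coweight $\mu$ of $\Psi$, the bound
\[
|\Psi^+|-|\Psi^+_\mu|\le \tfrac14|\Psi|+\tfrac12\textnormal{rank}(\Psi).
\]
The possible types are $A_k$ with $k\le7$, $D_k$ with $k\le 6$, $E_6$ and $E_7$.

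\begin{itemize}
\item Type $A_k$, with $\mu$ cutting out $A_{j-1}\times A_{k-j}$: the left side is $j(k+1-j)$. Only the $A_k$ that actually occur inside $E_7$ need to be checked, with $j$ determined by how $v$ restricts. For the worst case $j=\lfloor (k+1)/2\rfloor$ this requires care; for example $A_7$, $j=4$ gives $16\le 14+3.5$.
\item Type $D_k$: the vector coweight gives $2k-2$, and the spin coweights give $k(k-1)/2$. For $D_6$ the spin case gives $15\le 15+3$.
\item Type $E_6$: the bound is $16\le 18+3$.
\item Type $E_7$: the bound is $27\le 31.5+3.5$.
\end{itemize}

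The main obstacle is confirming that each of the pairs above actually satisfies the bound, or showing that the bad pairs cannot arise. The worst cases are $A_k$ with a middle minuscule coweight, for example $A_5$ with $j=3$, which gives $9\le 7.5+2.5$. I would tabulate all cases explicitly and verify each by hand or by computer. A brute-force check over all $56$ vectors $v$ and all standard $\Psi$ in $E_7$, of which there are $2^7$, is the fallback.
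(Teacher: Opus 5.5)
Your argument is correct and takes a different route from the paper. The paper works in the $\mathbb{R}^8$ model, lists the $56$ vectors $w(e_1+e_8)$, and verifies \eqref{eqn_root_ineq} by a Sage enumeration over all $2^7$ standard $\Psi$ and all $56$ normals. You instead use that $\varpi_7^\vee$ is minuscule, which reduces everything to a type-by-type count.

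Your ``larger weights'' sentence can be made precise as follows. The projection $v_\Psi$ pairs with $\Psi$ in $\{0,\pm1\}$, so its $W_\Psi$-dominant conjugate $\mu$ satisfies $\langle\mu,\theta_\Psi\rangle\le 1$, where $\theta_\Psi$ is the highest root of $\Psi$. Every simple root of $\Psi$ occurs in $\theta_\Psi$ with positive coefficient, so $\mu$ is $0$ or a single minuscule fundamental coweight, not a combination of several.

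What you call the main obstacle disappears, because there are no bad pairs. For $A_k$ you have $j(k+1-j)\le\tfrac14(k+1)^2\le\tfrac14k(k+3)$ for every $k\ge1$. For $D_k$ you have $2k-2\le\tfrac12k^2$ and $\tfrac12k(k-1)\le\tfrac12k^2$. The cases $E_6$ and $E_7$ are as you computed. The irreducible semistandard subsystems of $E_7$ are only $A_k$ with $k\le 6$, $D_4,D_5,D_6,E_6,E_7$; $A_7$ is not a Levi, but including it does no harm.

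The paper's check is mechanical and needs no structure theory. Yours is computer-free and explains why $E_7$ works. It also gives more: the inequality holds for every irreducible subsystem $\Psi$, not only semistandard ones. And it shows that $\Phi\cap v^\perp$ is semi-dense for any minuscule coweight $v$ of a simply-laced $\Phi$.

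That last consequence applies verbatim to $D_5\subset E_6$, which is cut out by the minuscule $\varpi_1^\vee$. It therefore contradicts the claim in Proposition \ref{lemma:exceptional-small} that no subsystem of $E_6$ is semi-dense. For minuscule $v$, a subsystem of type $A_2$ can never be disjoint from $\Phi\cap v^\perp$: if $a,b,a+b\in\{0,\pm1\}$, one of them must vanish. Consistently, in the counterexample given there, the root $\tfrac12(e_1+\cdots+e_8)\in\Psi$ is orthogonal to $e_1+e_2-3e_6+e_8$, so $w\Phi_0\cap\Psi\neq\emptyset$.
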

\begin{proof}
    We have as a model for the $E_7$ root system those vectors in $\mathbb{R}^8$ of the form $e_i - e_j$ with $i \neq j$ as well as all permutations of $(\frac{1}{2}, \frac{1}{2}, \frac{1}{2}, \frac{1}{2}, -\frac{1}{2}, -\frac{1}{2}, -\frac{1}{2}, -\frac{1}{2})$. A base of simple roots is given by
    \begin{align*}
        \{& -e_2 + e_3, -e_3 + e_4, -e_4 + e_5, -e_5 + e_6, -e_6 + e_7, -e_7 + e_8, \\
        &\frac{1}{2}(e_1 + e_2 + e_3 + e_4 - e_5 - e_6 - e_7 - e_8) \}.
    \end{align*}
    Notice that all of $\Phi$ is orthogonal to $(1, \dots, 1)$. Let $\Phi_0$ be the standard root subsystem obtained by removing $-e_7 + e_8$ from the base; then $\Phi_0$ is of type $E_6$ and exactly consists of those roots orthogonal to $e_1 + e_8$.

    The Weyl group of $E_7$ has order $2^{10} \cdot 3^4 \cdot 5 \cdot 7$, and that of $E_6$ has order $2^7 \cdot 3^4 \cdot 5$. By the orbit-stabilizer theorem, we get that the $W$-orbit of $e_1 + e_8$ is of size $2^3 \cdot 7$. These are exactly the $2^2 \cdot 7$ vectors of the form $e_i + e_j$ with $i \neq j$, as well as the $2^2 \cdot 7$ vectors of the form $\frac{1}{2}(e_1 + \dots + e_8) - (e_i + e_j)$ with $i \neq j$.

    From here it is now easy to formulate a simple algorithm to verify \eqref{eqn_root_ineq}. Each standard root subsystem can be found by choosing a subset of the base and then calculating which roots lie in their span. Then for each such standard root subsystem $\Psi$ and each vector $v = w.(e_1 + e_8)$ in the $W$-orbit of $e_1 + e_8$ described in the previous paragraph, we calculate the number of roots that are orthogonal to $v$. This determines $|\Psi \cap w \Phi_0|$ from which \eqref{eqn_root_ineq} can be easily checked. We carried out this algorithm in Sage \cite{sage} and found that in all cases \eqref{eqn_root_ineq} was satisfied; see Appendix \ref{sage} for the Sage code that was used.
\end{proof}

\subsection{Exceptional root systems, not of type $E_7$}
Finally, we prove that no root subsystem among the exceptional types $E_6, E_8, F_4, G_2$ is semi-dense, establishing the reverse implication of Theorem \ref{thm:semi-dense}. This accounts for the exclusion of such root systems in Theorem \ref{main-theorem}. 

\begin{prop}\label{lemma:exceptional-small}
    Suppose $\Phi$ is of type $E_6, E_8, F_4$ or $G_2$. Then no root subsystem of $\Phi$ is semi-dense.
\end{prop}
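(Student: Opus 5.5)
Since \eqref{eqn_root_ineq} must hold for every semistandard $\Psi$, it suffices, for each candidate $\Phi_0$, to exhibit a single semistandard $\Psi$ violating it; the most natural first test is $\Psi=\Phi$ itself, which requires
\[
|\Phi_0| + \textnormal{rank}(\Phi) < \tfrac12 |\Phi|.
\]
Moreover, a semi-dense $\Phi_0$ is by definition semistandard, and if $\Phi_0\subsetneq\Phi$ then it lies in a maximal proper semistandard subsystem $\Phi\cap V'$ with $V'$ a hyperplane. Such a $V'$ is spanned by roots, so up to $W$ the subsystem is the standard Levi obtained by deleting one node from the Dynkin diagram. The quantity $|\Psi\cap\Phi_0|$ is monotone in $\Phi_0$, so it suffices to treat these maximal ones. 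For $\Phi_0=\Phi$ itself, note that the paper implicitly intends proper subsystems (centralizers of nonzero $H_0$); I would state this convention explicitly. Hence the plan reduces to a finite check over the nodes of each of the four diagrams.

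For each node I compute $|\Phi_0|$ from the type of the subdiagram and compare with $\tfrac12|\Phi|-\textnormal{rank}(\Phi)$. These thresholds are $36-6=30$ for $E_6$, $120-8=112$ for $E_8$, $24-4=20$ for $F_4$, and $6-2=4$ for $G_2$. In $G_2$, the maximal proper subsystems are $A_1$ (long or short), with $2<4$. In $F_4$, the largest node-deletions are $B_3$ and $C_3$, with $18<20$, and the others ($A_2\times A_1$, with $8$ roots) are smaller. In $E_8$, the largest is $E_7$, with $126$ roots, which is \emph{not} $<112$, so $\Psi=\Phi$ fails to rule it out there. In $E_6$, the largest is $D_5$, with $40\not<30$. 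Thus for $G_2,F_4$ the test $\Psi=\Phi$ suffices, while for $E_6$ and $E_8$ I must use smaller $\Psi$ for the large Levis. For the small Levis of $E_6$ and $E_8$, the test $\Psi=\Phi$ will again work: in $E_8$, $D_7$ has $84$ roots and $A_7$ has $56$, while $E_6\times A_1$ has $74$, and the rest are smaller, all below $112$; in $E_6$, $A_5\times A_1$ has $32$, which is not below $30$.

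The main obstacle is therefore $E_8\supset E_7$ and the $E_6$ Levis $D_5$ and $A_5\times A_1$, where I choose $\Psi$ a proper semistandard subsystem positioned in a \emph{bad relative position} to $\Phi_0$, ideally meeting it as little as possible. The concrete idea is to take $\Psi$ of type $E_7$ (inside $E_8$), respectively $D_5$ (inside $E_6$), namely a $W$-conjugate of the Levi itself, so that $\Psi\cap\Phi_0$ is small. For $E_8$ with $\Phi_0=E_7$ and $\Psi=wE_7$, the threshold is $63-7=56$, so I need $|\Psi\cap\Phi_0|<56$. Since $E_7=v^\perp\cap\Phi$ for $v$ in the $240$-element orbit (the roots), taking $\Psi=u^\perp\cap\Phi$ with $u,v$ roots at angle $60^\circ$ gives intersection $\{u,v\}^\perp\cap\Phi$, which is of type $E_6$, with $72$ roots. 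That is too big, and no choice of $u$ makes the intersection of two hyperplane sections smaller than codimension-two, so this $\Psi$ fails. I would therefore instead take $\Psi$ irreducible of smaller rank, e.g.\ $\Psi$ of type $A_k$ or $D_k$ in generic position, with $\Psi\cap\Phi_0=\Psi\cap v^\perp$, and seek $|\Psi\cap v^\perp|+k<\tfrac12|\Psi|$. For $\Psi=D_k$ and $v$ chosen so that $v^\perp\cap\Psi$ is as small as possible (e.g.\ of type $A_{k-1}$, with $k(k-1)$ roots), the requirement becomes $k(k-1)+k<k(k-1)$, which is false. I expect the honest resolution to be a computer search, mirroring Proposition \ref{lemma_e_7}: enumerate standard $\Psi$ and the $W$-orbit of the coweight defining each Levi, and record a violating pair for every node. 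I would present the hand argument for $G_2$, $F_4$ and the small Levis, and cite the Sage enumeration of Appendix \ref{sage}, adapted, for the remaining cases.
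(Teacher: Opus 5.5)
Your reduction is correct and more careful than the paper's. Monotonicity of $|\Psi\cap\Phi_0|$ in $\Phi_0$, together with $W$-invariance, reduces everything to node-deletions. You are also right that $\Phi_0=\Phi$ is trivially semi-dense and must be excluded. The test $\Psi=\Phi$ for $G_2$, $F_4$ and the small Levis is exactly what the paper does. One slip: deleting the branch node of $E_6$ gives $A_5$, with $30$ roots, not $A_5\times A_1$. The latter has rank $6$ and is not semistandard; for $A_5$, the test $\Psi=\Phi$ gives the equality $30+6=36$.

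The genuine gap is that you exhibit no violating $\Psi$ for $E_7\subset E_8$ or $A_5\subset E_6$, and defer to an unperformed search. You tried large $\Psi$ ($wE_7$, $D_k$), where the hyperplane section keeps too many roots. The missing idea is to go down to rank two. Both subsystems are $v^\perp\cap\Phi$ with $v$ the highest root. Take $\Psi$ to be an $A_2$-subsystem containing $v$; it is semistandard, being $\Phi$ intersected with its span. No two roots of $A_2$ are orthogonal, so $\Psi\cap\Phi_0=\emptyset$, and hence $|\Psi\cap\Phi_0|+\textnormal{rank}(\Psi)=2<3=\tfrac12|\Psi|$. This is the paper's argument, phrased there via a translate $w\Phi_0$ and a standard $A_2$.

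For $D_5\subset E_6$, however, your search would come back empty, because this subsystem \emph{is} semi-dense. The statement is therefore false for $E_6$, and with it the ``only if'' direction of Theorem \ref{thm:semi-dense}. Here $\Phi_0=v^\perp\cap\Phi$ with $v=\varpi_1^\vee$ minuscule, so $\langle wv,\alpha\rangle\in\{-1,0,1\}$ for all $w\in W$ and $\alpha\in\Phi$.

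In particular the $A_2$ trick is unavailable: if $\alpha,\beta$ span an $A_2$, the values of $wv$ on $\alpha$, $\beta$, $\alpha+\beta$ cannot all be non-zero. By additivity over irreducible components, it suffices to test $\Psi$ of type $A_k$ ($k\le 5$), $D_4$, $D_5$ or $E_6$:
\begin{itemize}
\item On $A_k$, with roots $e_i-e_j$, the functional is $c_i-c_j$ where $c$ takes at most two values. So at least $a(a-1)+b(b-1)$ roots lie in $w\Phi_0$, with $a+b=k+1$. For $k=1,\dots,5$ this is at least $0,2,4,8,12$, against the required $0,1,3,6,10$.
\item On $D_4$ and $D_5$ the only admissible functionals are $0$, $\pm e_i$ and $\tfrac12(\pm1,\dots,\pm1)$. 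These retain at least $12$ and $20$ roots, against the required $8$ and $15$.
\item For $\Psi=\Phi$ one gets $40\ge 30$.
\end{itemize}

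The paper's purported counterexample is in error. The root $-\tfrac12(e_1+\cdots+e_8)$ lies in its $\Psi$ and is orthogonal to $e_1+e_2-3e_6+e_8$, since those coefficients sum to zero. Hence $|\Psi\cap w\Phi_0|=2$ and $2+2\ge 3$. With the $A_2$ idea your plan proves the claim for $G_2$, $F_4$ and $E_8$, but no argument can handle $E_6$.
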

\begin{proof}
    In the case of $G_2$ this is immediate: the only root subsystem is of type $A_1$, and if we take $\Psi= \Phi$ we see that the inequality fails ($G_2$ has 12 roots). 

    The case of $F_4$ is also straightforward: one may check that if we take $\Psi =\Phi$ (which has 48 roots) and $\Phi_0$ to be any standard proper root subsystem (which will be of type $C_3$, $B_3$, or $A_2\times A_1$ if they are maximal), then the desired inequality fails when taking $\Psi=\Phi$. 

    The type $E$ cases are more subtle and the desired inequality ``just barely'' fails to be true (and in fact it holds for $E_7$!). If we take for $\Phi_0$ the $E_7$ standard root subsystem in $E_8$, or the $D_5$ standard root subsystem in $E_6$, then the desired inequality in fact holds for any $\Psi$ standard. Constructing counterexamples to \eqref{eqn_root_ineq} therefore requires more work.

    We start with the case of $\Phi$ being the $E_8$ root system, which has 240 roots. For every maximal standard subroot system of $E_8$ other than $E_7$, one may compute that the desired inequality fails already simply taking $\Psi = \Phi$. Thus we are just left to analyze the case of $\Phi_0$ of type $E_7$.
    
    We can take as our model for $\Phi$ the elements in $\mathbb{R}^8$ of the form $\pm e_i \pm e_j$ with $i \neq j$ as well as $\pm \frac{1}{2} e_1 \pm \frac{1}{2} e_2 \pm \dots \pm \frac{1}{2} e_8$ with an even number of minus signs (sum of coefficients is even). We can take as our basis of simple roots: 
\begin{align*}
    \{& e_1 - e_2, e_2 - e_3, e_3 - e_4, e_4 - e_5, e_5 - e_6, e_6 - e_7, e_6 + e_7, \\
    &-\frac{1}{2}(e_1 + e_2 + e_3 + e_4 + e_5 + e_6 + e_7 + e_8)\}.
\end{align*}
     Let $\Phi_0$ be the $E_7$ standard root subsystem which is obtained from removing $e_1 - e_2$ from the base. This is exactly the intersection of $\Phi$ with the hyperplane orthogonal to $e_1 - e_8$. 

Let $w \in W$ be the element switching $e_3$ and $e_8$. Then $w \Phi_0$ consists of roots orthogonal to $e_1 - e_3$. Consider the standard root subsystem $\Psi$ generated by $e_1 - e_2$ and $e_2 - e_3$. This generates an $A_2$ type root system. However, no root in this root subsystem is orthogonal to $e_1 - e_3$. Therefore $w \Phi_0 \cap \Psi = \emptyset$. Thus $|w \Phi_0 \cap \Psi| + \textnormal{rank}(\Psi) = 2$ but $\frac{1}{2} |\Psi| = 3$. Thus the desired inequality does not hold for $E_8$.

Now suppose $\Phi$ is of type $E_6$. Recall that $E_6$ has 72 roots. One may easily check that if we take $\Phi_0$ to be any maximal subroot system other than type $D_5$ or $A_5$, then the desired inequality fails with $\Psi = \Phi$. Thus we must only analyze further the cases of $A_5$ and $D_5$.

Recall that the $A_2$ root system has a model as vectors in $\mathbb{R}^3$ of the form $(1,-1, 0)$, $(1, 0, -1)$, $(0, 1, -1)$ or their negatives. We may take as our model of $E_6$ those vectors in $\mathbb{R}^9 = \mathbb{R}^3 \times \mathbb{R}^3 \times \mathbb{R}^3$ where we separately take a copy of $A_2$ inside of each of three copies of $\mathbb{R}^3$, together with vectors of the form $(A; B; C)$ where $A, B, C \in \{(\frac{2}{3}, -\frac{1}{3}, -\frac{1}{3}), (-\frac{1}{3}, \frac{2}{3}, -\frac{1}{3}), (-\frac{1}{3}, -\frac{1}{3}, \frac{2}{3})\}$, and vectors of the form $(A'; B'; C')$ where $A', B', C' \in \{(-\frac{2}{3}, \frac{1}{3}, \frac{1}{3}), (\frac{1}{3}, -\frac{2}{3}, \frac{1}{3}), (\frac{1}{3}, \frac{1}{3}, -\frac{2}{3})\}$. We can take as our simple roots
\begin{align*}
   \{ & e_8 - e_9, e_7 - e_8, e_5 - e_6, e_4 - e_5, e_2 - e_3, \\
   &\frac{1}{3}e_1 - \frac{2}{3} e_2 + \frac{1}{3} e_3 - \frac{2}{3} e_4 + \frac{1}{3} e_5 + \frac{1}{3} e_6 - \frac{2}{3} e_7 + \frac{1}{3} e_8 + \frac{1}{3} e_9 \}.
\end{align*}
These roots are all orthogonal to $e_1 + e_2 + e_3, e_4 + e_5 + e_6$, and $e_7 + e_8 + e_9$.

First we let $\Phi_0$ be the standard root system of type $A_5$. This is obtained taking the root subsystem generated by the simple roots with $e_2 - e_3$ removed; equivalently these are the roots orthogonal to $e_1 - e_3$. Let $w \in W$ be the element switching $e_1$ and $e_2$. Then $e_1 - e_3$ is moved to $e_2 - e_3$, and $w \Phi_0$ is those roots orthogonal to $e_2 - e_3$. Let $\Psi$ be the root subsystem generated by the last two simple roots in the list above. Then $\Psi$ is of type $A_2$. However no vector in $\Psi$ is orthogonal to $e_2 - e_3$. Therefore $|w \Phi_0 \cap \Psi| = 0$, and we see that \eqref{eqn_root_ineq} fails.

Lastly we consider $\Phi_0$ of type $D_5$. We shall use a different model for $E_6$ than the one above. Specifically, we shall take $\Phi$ to be the root subsystem of $E_8$ obtained from removing the simple roots $e_1 - e_2$ and $e_2 - e_3$ from the base of $E_8$. Then $\Phi$ is the root subsystem orthogonal to $e_1 - e_2$ and $e_1 - e_8$. Let $\Phi_0$ be the root subsystem of $\Phi$ obtained by further removing the element $e_3 - e_4$ from the base. These are the roots which lie in the intersection of the space containing $\Phi$ with the hyperplane orthogonal to $e_1 + e_2 - 3 e_3 + e_8$. Let $w \in W$ be the element which switches $e_3$ and $e_6$. Then $w \Phi_0$ is orthogonal to $e_1 + e_2 - 3 e_6 + e_8$. Let $\Psi$ be the standard root subsystem generated by $e_6 + e_7$ and $-\frac{1}{2}(e_1 + \dots + e_8)$; then $\Psi$ is of type $A_2$. However, no element in $\Psi$ is orthogonal to $e_1 + e_2 - 3 e_6 + e_8$, so $w \Phi_0 \cap \Psi = \emptyset$. Thus, like before, \eqref{eqn_root_ineq} fails to hold.
\end{proof}

\section{Bounding the elementary spectral integral}\label{sec:elem-spec-int}

In this section we prove Theorem \ref{thm:elem-spec-int}. Let $r$ be the rank of $G$. We define coordinates on $\mathfrak{a}^*_+$ by the map $x \in \R_{> 0}^r \mapsto \lambda(x) = x_1 \varpi_1 + \ldots + x_r \varpi_r \in \mathfrak{a}^*_+$.  Applying this change of coordinates, the integral in \eqref{spectral-int-bd} becomes
\be
\label{spectral-int-2}
\int_{ \R_{> 0}^r} (1 + \| x \|)^{-N} \Theta(tH_0, \lambda(x))^2 \Theta( H, \lambda(x)) |c(\lambda(x))|^{-2} dx.
\ee
We shall bound this integral by decomposing $\R_{> 0}^r$ into a barycentric subdivision and a further refinement at the scale $t^{-1}$. Similar decompositions of $\ga_+$, rather than $\ga_+^*$, were considered in \cite{Marshall,Zhang}. Since only reduced root systems occur in this section, we shall simplify notation by dropping the subscript $\Phi_{\textnormal{red}}$ everywhere, so that $\Phi$, $\Phi_M$, $\Phi_M^+$, etc. denote sets of reduced roots.

\subsection{Barycentric subdivision and corresponding estimates}
The first subdivision we consider is determined by an ordering of the coordinates $x_i$. For each permutation $\sigma \in S_r$, let
\[
T_\sigma=\{x\in \R_{>0}^r: x_{\sigma(1)} > \cdots > x_{\sigma(r)}\}.
\]
Away from the hyperplanes $x_i=x_j$, the regions $T_\sigma$ form a disjoint decomposition of  $\R_{> 0}^r$. We refer to this as the \textit{barycentric subdivision} of $\R_{>0}^r$, since it induces the usual barycentric subdivision of the affine simplex $\{x\in \R_{>0}^r: x_1+\cdots +x_r=1\}$.

\begin{figure}[ht]
\centering

\begin{tikzpicture}[scale=4.2]
    \coordinate (A) at (0,0);
    \coordinate (B) at (1,0);
    \coordinate (C) at (0.5,0.8660254);
    \coordinate (M12) at ($(A)!0.5!(B)$);
    \coordinate (M13) at ($(A)!0.5!(C)$);
    \coordinate (M23) at ($(B)!0.5!(C)$);

    \draw[thick] (A)--(B)--(C)--cycle;
    \draw[thick] (C)--(M12);
    \draw[thick] (B)--(M13);
    \draw[thick] (A)--(M23);
\end{tikzpicture}

\caption{\label{bary-fig} The barycentric subdivision when $r=3$.}
\end{figure}
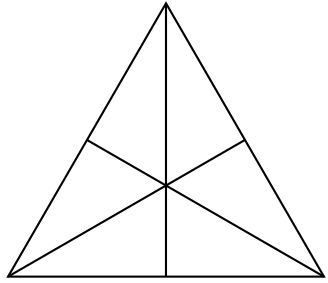

For $1 \le i \le r$, define $\Phi_{\sigma, i}$ to be the set of roots orthogonal to $\varpi_{\sigma(1)}, \ldots, \varpi_{\sigma(i)}$. In particular, $\Phi_{\sigma,r}=\emptyset$. We furthermore set $\Phi_{\sigma,0}=\Phi$. We observe that $\Phi_{\sigma, i}$ is the standard sub-root system consisting of all roots expressible as linear combinations of simple roots in $\Delta\smallsetminus\{\alpha_{\sigma(1)}, \ldots, \alpha_{\sigma(i)}\}$.

\begin{lemma}
\label{rootsize}
Let $1\le i\le r$. If $\alpha \in \Phi_{\sigma, i-1} \smallsetminus \Phi_{\sigma, i}$, then $| \langle \lambda(x), \alpha \rangle | \asymp x_{\sigma(i)}$ on $T_\sigma$.
\end{lemma}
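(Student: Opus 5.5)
We expand the root in the simple root basis and pair it with $\lambda(x)$ using the duality between fundamental weights and simple coroots. Write $\alpha=\sum_{j=1}^r n_j\alpha_j$ with all $n_j$ of the same sign; replacing $\alpha$ by $-\alpha$ if necessary, we may assume $\alpha>0$, so $n_j\ge 0$. We then write
\[
\langle \lambda(x),\alpha\rangle=\sum_{j=1}^r x_j\langle\varpi_j,\alpha\rangle=\sum_{j=1}^r x_j n_j\langle\varpi_j,\alpha_j\rangle=\sum_{j=1}^r x_j n_j\frac{\langle\alpha_j,\alpha_j\rangle}{2},
\]
using $\langle\varpi_j,\alpha_k\rangle=\delta_{jk}\langle\alpha_k,\alpha_k\rangle/2$, which holds because $\varpi_j(\alpha_k^\vee)=\delta_{jk}$. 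So the pairing is a sum of non-negative terms, each a bounded positive constant times $x_jn_j$.

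Next we locate the support of $\alpha$. By the observation just before the lemma, $\Phi_{\sigma,i-1}$ consists of the roots supported on $\Delta\smallsetminus\{\alpha_{\sigma(1)},\dots,\alpha_{\sigma(i-1)}\}$. Hence $\alpha\in\Phi_{\sigma,i-1}$ gives $n_{\sigma(1)}=\cdots=n_{\sigma(i-1)}=0$. Likewise, $\alpha\notin\Phi_{\sigma,i}$ means $\alpha$ is not orthogonal to all of $\varpi_{\sigma(1)},\dots,\varpi_{\sigma(i)}$. The only remaining possibility is $n_{\sigma(i)}\ge1$.

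Therefore $\langle\lambda(x),\alpha\rangle$ is a sum over $j\in\{\sigma(i),\dots,\sigma(r)\}$. For these indices we have $x_j\le x_{\sigma(i)}$ on $T_\sigma$, so the sum is at most a constant times $x_{\sigma(i)}$. The constant depends only on the root system: the coefficients $n_j$ are bounded, and $\alpha$ ranges over a finite set. For the lower bound, all terms are non-negative and the term $j=\sigma(i)$ contributes at least $x_{\sigma(i)}\min_k\langle\alpha_k,\alpha_k\rangle/2$. Together these give $|\langle\lambda(x),\alpha\rangle|\asymp x_{\sigma(i)}$ with constants uniform in $x\in T_\sigma$.

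The only point needing care is the support statement $n_{\sigma(i)}\ne0$. It rests on the earlier identification of $\Phi_{\sigma,i}$ as the standard subsystem generated by the remaining simple roots: a root orthogonal to $\varpi_j$ has $n_j=0$, and conversely. Everything else is a direct computation.
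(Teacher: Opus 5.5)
Your proof is correct and follows the same route as the paper. Both reduce to $\alpha>0$, observe that $\langle\alpha,\varpi_{\sigma(j)}\rangle$ vanishes for $j<i$ and is non-negative for $j\ge i$, and then bound the resulting sum above and below by multiples of $x_{\sigma(i)}$ using the ordering on $T_\sigma$. Your explicit formula $\langle\varpi_j,\alpha\rangle=n_j\langle\alpha_j,\alpha_j\rangle/2$ makes you slightly more careful than the paper: the paper asserts $\langle\alpha,\varpi_{\sigma(j)}\rangle>0$ for all $i\le j\le r$, whereas in general only $\geq 0$ holds, with strict positivity at $j=i$, which is exactly what you verify.
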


\begin{proof}
We may assume that $\alpha \in \Phi^+$.  Let $e_1, \ldots, e_r$ be the standard basis for $\R^r$. If $x \in T_\sigma$, then $x = x_{\sigma(1)} e_{\sigma(1)} + \ldots + x_{\sigma(r)} e_{\sigma(r)}$ with $x_{\sigma(1)} > \ldots > x_{\sigma(r)}$. We have $\langle \alpha, \varpi_{\sigma(j)} \rangle = 0$ for $1 \le j \le i-1$, and $\langle \alpha, \varpi_{\sigma(j)} \rangle > 0$ for $i \le j \le r$, so that
\begin{align*}
\langle \lambda(x), \alpha \rangle & = \langle x_{\sigma(1)} \varpi_{\sigma(1)} + \cdots + x_{\sigma(r)} \varpi_{\sigma(r)}, \alpha \rangle \\
& = x_{\sigma(i)} \langle \varpi_{\sigma(i)}, \alpha \rangle + \cdots + x_{\sigma(r)} \langle \varpi_{\sigma(r)}, \alpha \rangle \asymp x_{\sigma(i)},
\end{align*}
as required.
\end{proof}

We now use Lemma \ref{rootsize} to estimate the various terms appearing in \eqref{spectral-int-2} on $T_\sigma$.

\begin{lemma}\label{lemma:barycentric-bounds}
Let $\sigma\in S_r$. For $x\in T_\sigma$ and $\|H\|\ll t$, we have
\be
\label{thetabd}
\Theta(H, \lambda(x)) \ll \prod_{i = 1}^r \min( t, x_{\sigma(i)}^{-1} +1)^{|\Phi_{\sigma, i-1}^+| - |\Phi_{\sigma, i}^+| }.
\ee
Furthermore we have
\be
\label{thetaH0bd}
\Theta(tH_0, \lambda(x)) \ll \sum_{w \in W} \prod_{i = 1}^r \min( t, x_{\sigma(i)}^{-1} +1)^{n_M(\sigma, i, w) },
\ee
where, for $1\le i\le r$, we have put $n_M(\sigma, i, w) = |(\Phi^+ \smallsetminus \Phi^+_M) \cap w(\Phi_{\sigma, i-1} \smallsetminus \Phi_{\sigma, i})|$.
Finally, we have
\be
\label{cnear}
|c(\lambda(x))|^{-2}\ll (1+x_{\sigma(1)})^{d} \prod_{i = 1}^r x_{\sigma(i)}^{ 2|\Phi_{\sigma, i-1}^+| - 2|\Phi_{\sigma, i}^+| },
\ee
where $d := \sum_{\alpha \in \Phi^+} m_\alpha + m_{2 \alpha}$ is the sum of the multiplicities of all relative roots.

\end{lemma}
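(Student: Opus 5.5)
The plan is to treat the three bounds separately. Each one comes from partitioning the positive (reduced) roots according to Lemma \ref{rootsize}. For each $\alpha\in\Phi^+$ there is a unique index $i$ with $\alpha\in\Phi_{\sigma,i-1}\smallsetminus\Phi_{\sigma,i}$, since $\Phi_{\sigma,0}=\Phi$ and $\Phi_{\sigma,r}=\emptyset$. On $T_\sigma$ we then have $|\langle\lambda(x),\alpha\rangle|\asymp x_{\sigma(i)}$. The number of positive roots attached to index $i$ is $|\Phi^+_{\sigma,i-1}|-|\Phi^+_{\sigma,i}|$. This accounts for the exponents in \eqref{thetabd} and \eqref{cnear}.

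For \eqref{thetabd}, fix $w\in W$ and bound each factor $f_\alpha(H,w\lambda)$ in \eqref{Thetadef}. Since $\|H\|\ll t$, we have $|\alpha(H)|+1\ll t$. Also $\langle w\lambda,\alpha\rangle=\langle\lambda,w^{-1}\alpha\rangle$, and $\pm w^{-1}\alpha$ ranges over $\Phi^+$ bijectively as $\alpha$ does. So for each $w$,
\[
\prod_{\alpha\in\Phi^+} f_\alpha(H,w\lambda)\ll\prod_{\beta\in\Phi^+}\min\big(t,\,|\langle\lambda,\beta\rangle|^{-1}+1\big).
\]
Grouping the $\beta$ by their index $i$ and applying Lemma \ref{rootsize} gives the stated product, uniformly in $w$. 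The sum over $W$ only contributes a constant $|W|$.

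For \eqref{thetaH0bd}, use the same approach but keep the $w$-dependence. If $\alpha\in\Phi^+_M$, then $\alpha(tH_0)=0$ and $f_\alpha\le 2$. If $\alpha\in\Phi^+\smallsetminus\Phi^+_M$, bound $f_\alpha\ll\min(t,|\langle\lambda,w^{-1}\alpha\rangle|^{-1}+1)$. Writing $w^{-1}\alpha=\pm\beta$ with $\beta\in\Phi_{\sigma,i-1}\smallsetminus\Phi_{\sigma,i}$, the number of $\alpha$ landing in index $i$ is $|(\Phi^+\smallsetminus\Phi^+_M)\cap w(\Phi_{\sigma,i-1}\smallsetminus\Phi_{\sigma,i})|=n_M(\sigma,i,w)$. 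The one point to check is the sign bookkeeping: $\Phi_{\sigma,i-1}\smallsetminus\Phi_{\sigma,i}$ is closed under negation, so $\alpha\in w(\Phi_{\sigma,i-1}\smallsetminus\Phi_{\sigma,i})$ exactly when $w^{-1}\alpha$ lies in that set. Hence each positive root outside $\Phi_M$ is counted exactly once, and the count is correct.

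For \eqref{cnear}, use the product formula \eqref{eq:explicit-c}, with the factor $|c_\alpha(s)|^{-2}$ at $s=i\langle\lambda,\alpha\rangle/\langle\alpha,\alpha\rangle$, which is purely imaginary. When $|s|\le 1$, the simple pole of $\Gamma(s)$ gives $|c_\alpha(s)|^{-2}\ll|s|^2$, as in \eqref{eqn_c_lambda_small}. When $|s|\ge1$, \eqref{eqn_c_lambda_big} gives $|c_\alpha(s)|^{-2}\ll|s|^{m_\alpha+m_{2\alpha}}$. These combine into
\[
|c_\alpha(s)|^{-2}\ll|s|^2(1+|s|)^{m_\alpha+m_{2\alpha}}
\]
for all $s$, using $m_\alpha+m_{2\alpha}\ge 1\ge 0$ and allowing a loose exponent. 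Now $|s|\asymp x_{\sigma(i)}\le x_{\sigma(1)}$ by Lemma \ref{rootsize}. Taking the product over $\alpha$, the factors $(1+|s|)$ combine into $(1+x_{\sigma(1)})^{d}$, and the factors $|s|^2$ combine into the product of $x_{\sigma(i)}^{2(|\Phi^+_{\sigma,i-1}|-|\Phi^+_{\sigma,i}|)}$.

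The main care needed is this last step. The $|s|^2$ factor must be retained even for large $|s|$, which is legitimate since it only weakens the bound there. The constants must also be kept uniform over the compact region where $|s|\asymp1$. The other two bounds are pure bookkeeping.
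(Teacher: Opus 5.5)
Your proposal is correct and follows essentially the same route as the paper: bound $|\alpha(H)|+1$ (resp.\ $|\alpha(tH_0)|+1$) by $t$, partition the positive roots into the strata $w(\Phi_{\sigma,i-1}\smallsetminus\Phi_{\sigma,i})$ (your reparametrization via $\pm w^{-1}\alpha$ amounts to the same thing), apply Lemma \ref{rootsize}, and combine the small- and large-argument $c$-function bounds into $(1+x_{\sigma(1)})^{d}\prod_{\alpha\in\Phi^+}|\langle\lambda(x),\alpha\rangle|^2$. The only cosmetic difference is that for $\alpha\in\Phi_M^+$ one has $f_\alpha(tH_0,w\lambda)=1$ exactly, so your bound $f_\alpha\le 2$ is harmless but slightly loose.
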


\begin{proof}
We begin with $\Theta(H, \lambda(x))$.  We first apply the trivial estimate $\alpha(H) \ll t$, which gives
\[
\Theta(H, \lambda(x)) \ll \sum_{w \in W} \prod_{\alpha \in \Phi^+} \min( t, | \langle w\lambda(x), \alpha \rangle |^{-1} +1).
\]
We next break the product over $\Phi^+$ into the subsets $\Phi^+ \cap w(\Phi_{\sigma, i-1} \smallsetminus \Phi_{\sigma, i})$ and apply Lemma \ref{rootsize} to each subset, which gives
\begin{align*}
\prod_{\alpha \in \Phi^+} \min( t, | \langle w\lambda(x), \alpha \rangle |^{-1} +1) & \asymp \prod_{i = 1}^r \prod_{\alpha \in \Phi^+ \cap w(\Phi_{\sigma, i-1} \smallsetminus \Phi_{\sigma, i}) } \min( t, x_{\sigma(i)}^{-1} +1) \\
& = \prod_{i = 1}^r \min( t, x_{\sigma(i)}^{-1} +1)^{ |\Phi_{\sigma, i-1}^+| - |\Phi_{\sigma, i}^+| },
\end{align*}
yielding \eqref{thetabd}.

Next, we handle the $c$-function bound. When $|\langle \lambda(x), \alpha \rangle|$ is small, by \eqref{eqn_c_lambda_small} we have
\begin{gather*}
    |c_\alpha(\lambda(x))|^{-2} \ll |\langle \lambda(x), \alpha \rangle|^2.
\end{gather*}
When $|\langle \lambda(x), \alpha \rangle|$ is large, by \eqref{eqn_c_lambda_big} we instead have
\begin{gather*}
    |c_\alpha(\lambda(x))|^{-2} \ll |\langle \lambda(x), \alpha \rangle|^{m_\alpha + m_{2 \alpha}} \ll (1 + x_{\sigma(1)})^{m_\alpha + m_{2 \alpha}}. 
\end{gather*}
Combining these bounds, we obtain for all $x\in T_{\sigma}$
\begin{gather*}
    |c(\lambda(x))|^{-2} \ll (1 + x_{\sigma(1)})^{d} \prod_{\alpha \in \Phi^+} |\langle \lambda(x), \alpha \rangle|^2.
\end{gather*}
Applying the same argument used to prove \eqref{thetabd} we find \eqref{cnear}.

Finally, we apply a similar argument to $\Theta(tH_0, \lambda)$, with the added observation that $\alpha(tH_0) = 0$ for $\alpha \in \Phi^+_M$.  These roots therefore make no contribution to the formula for $\Theta(tH_0, \lambda)$, while for $\alpha \in \Phi^+ \smallsetminus \Phi^+_M$ we have $\alpha(tH_0) \ll t$ as before.  It follows that
\[
\Theta(tH_0, \lambda(x)) \ll \sum_{w \in W} \prod_{\alpha \in \Phi^+ \smallsetminus \Phi^+_M} \min( t, | \langle w\lambda(x), \alpha \rangle |^{-1} +1).
\]
As before, we partition $\Phi^+ \smallsetminus \Phi^+_M$ into the subsets $(\Phi^+ \smallsetminus \Phi^+_M) \cap w(\Phi_{\sigma, i-1} \smallsetminus \Phi_{\sigma, i})$. This gives
\begin{align*}
\prod_{\alpha \in \Phi^+ \smallsetminus \Phi^+_M} \min( t, | \langle w\lambda(x), \alpha \rangle |^{-1} +1) & \asymp \prod_{i = 1}^r \prod_{ \substack{ \alpha \in (\Phi^+ \smallsetminus \Phi^+_M) \\ \cap w(\Phi_{\sigma, i-1} \smallsetminus \Phi_{\sigma, i}) } } \min( t, x_{\sigma(i)}^{-1} +1) \\
& = \prod_{i = 1}^r \min( t, x_{\sigma(i)}^{-1} +1)^{ n_M(\sigma, i, w) },
\end{align*}
yielding \eqref{thetaH0bd}.
\end{proof}

\subsection{Singular refinement and corresponding estimates}
We now refine the estimates in Lemma \ref{lemma:barycentric-bounds} according to how many of the ordered coordinates $x_{\sigma(i)}$ exceed $t^{-1}$. We partition each barycentric chamber $T_\sigma$ into the following disjoint subregions:
\[
T_{\sigma, l}=\begin{cases}
\{x\in T_\sigma :  t^{-1} > x_{\sigma(1)}\}, & l=0;\\
\{x\in T_\sigma : x_{\sigma(l)} > t^{-1} > x_{\sigma(l+1)}\}, & 1\le l\le r-1;\\
\{x\in T_\sigma : x_{\sigma(r)} > t^{-1} \}, & l=r.
\end{cases}
\]
Note that $l$ records the number of coordinates $x_{\sigma(i)}$ that are greater than $t^{-1}$.  Similar decompositions of $\ga_+$, rather than $\ga_+^*$, are used in \cite{Marshall,Zhang}; \cite{Zhang} refers to these as the barycentric--semiclassical subdivision.

\begin{figure}[ht]
\centering
\begin{tikzpicture}[scale=4.2]
    \coordinate (A) at (0,0);
    \coordinate (B) at (1,0);
    \coordinate (C) at (0.5,0.8660254);
    \coordinate (M12) at ($(A)!0.5!(B)$);
    \coordinate (M13) at ($(A)!0.5!(C)$);
    \coordinate (M23) at ($(B)!0.5!(C)$);

    \def\eps{0.08}

    \coordinate (L1a) at ({1-\eps},0);
    \coordinate (L1b) at ({0.5*(1-\eps)},{0.8660254*(1-\eps)});
    \coordinate (L2a) at ({\eps},0);
    \coordinate (L2b) at ({\eps+0.5*(1-\eps)},{0.8660254*(1-\eps)});
    \coordinate (L3a) at ({0.5*\eps},{0.8660254*\eps});
    \coordinate (L3b) at ({1-0.5*\eps},{0.8660254*\eps});

    \draw[thick] (A)--(B)--(C)--cycle;
    \draw[thick] (C)--(M12);
    \draw[thick] (B)--(M13);
    \draw[thick] (A)--(M23);

    \draw[dashed] (L1a)--(L1b);
    \draw[dashed] (L2a)--(L2b);
    \draw[dashed] (L3a)--(L3b);
\end{tikzpicture}

\caption{The singular refinement of the barycentric subdivision when $r=3$ and $t>3$. The subregion $T_{\sigma,0}$ does not appear since it does not meet the affine simplex $\{x\in \R_{>0}^3: x_1+x_2+x_3=1\}$.}
\end{figure}
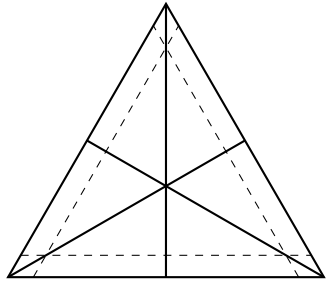

It will be convenient to introduce the following notation.

\begin{definition}\label{def:s-and-S}
For $\sigma\in S_r$, $w\in W$, and $1\le i\le r$, let
\[
s_M(\sigma, i, w)= |\Phi_{\sigma, i-1}^+| - |\Phi_{\sigma, i}^+| - 2n_M(\sigma, i, w),
\]
where $n_M(\sigma,i,w)$ was defined in Lemma \ref{lemma:barycentric-bounds}. For $1\le i\le r+1$ put
\[
S_M(\sigma, i, w) = |\Phi_{\sigma, i}^+| - 2|(\Phi^+ \smallsetminus \Phi^+_M) \cap w \Phi_{\sigma, i}|.
\]
\end{definition}

In particular, since $\Phi_{\sigma,r}=\emptyset$, we have $S_M(\sigma, r+1, w)=0$, and since $\Phi_{\sigma,0}=\Phi$ we have $S_M(\sigma, 1, w)=|\Phi^+| - 2|(\Phi^+ \smallsetminus \Phi^+_M)|=|\Phi^+| - 2(|\Phi^+| -|\Phi^+_M|)=-|\Phi^+| + 2|\Phi^+_M|$.

\begin{lemma}\label{lemma:refined-bound}
Let $\sigma\in S_r$, $w\in W$, and  $1\le l\le r+1$. There exists an $N'$ such that for $\lambda(x)\in T_{\sigma,l}$ and $\|H\|\ll t$ we have
\[
 \Theta(tH_0, \lambda(x))^2 \Theta( H, \lambda(x)) |c(\lambda(x))|^{-2} \ll  (1 + x_{\sigma(1)})^{N'} \Big(\sum_{w \in W} t^{-S_M(\sigma, l,w) } \prod_{i = 1}^{l} x_{\sigma(i)}^{s_M(\sigma,i,w) }\Big),
\]
where here, and throughout, a product from $i=1$ to $l$ is assigned the value 1 when $l=0$. 
\end{lemma}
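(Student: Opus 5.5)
The plan is to multiply the three bounds of Lemma \ref{lemma:barycentric-bounds} and then simplify each factor on $T_{\sigma,l}$ according to whether $x_{\sigma(i)}$ lies above or below $t^{-1}$. On $T_{\sigma,l}$ the coordinates are ordered and satisfy $x_{\sigma(i)}>t^{-1}$ for $i\le l$ and $x_{\sigma(i)}<t^{-1}$ for $i>l$. Two consequences follow:
\begin{itemize}
\item for $i\le l$ we have $\min(t,x_{\sigma(i)}^{-1}+1)\asymp x_{\sigma(i)}^{-1}(1+x_{\sigma(i)})$, which is $\ll x_{\sigma(i)}^{-1}(1+x_{\sigma(1)})$;
\item for $i>l$ we have $\min(t,x_{\sigma(i)}^{-1}+1)\asymp t$.
\end{itemize}
Write $a_i=|\Phi^+_{\sigma,i-1}|-|\Phi^+_{\sigma,i}|$.

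First I would expand $\Theta(tH_0,\lambda(x))^2$ using \eqref{thetaH0bd}. The square of a sum over $w$ is at most $|W|$ times the sum of squares, so it suffices to treat a single $w$ with exponents $2n_M(\sigma,i,w)$. Inserting \eqref{thetabd} and \eqref{cnear}, the $i$-th factor for $i\le l$ becomes
\[
x_{\sigma(i)}^{-a_i-2n_M(\sigma,i,w)+2a_i}=x_{\sigma(i)}^{s_M(\sigma,i,w)},
\]
times powers of $(1+x_{\sigma(1)})$. I would absorb all of these powers, together with the $(1+x_{\sigma(1)})^d$ from the $c$-function, into $(1+x_{\sigma(1)})^{N'}$.

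For $i>l$ the $i$-th factor is $t^{a_i+2n_M(\sigma,i,w)}\,x_{\sigma(i)}^{2a_i}$. Here I use $x_{\sigma(i)}<t^{-1}$ together with $a_i\ge0$ to bound $x_{\sigma(i)}^{2a_i}\le t^{-2a_i}$. This gives $t^{-a_i+2n_M(\sigma,i,w)}=t^{-s_M(\sigma,i,w)}$.

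The remaining step is the telescoping identity
\[
\sum_{i=l+1}^{r}s_M(\sigma,i,w)=S_M(\sigma,l+1,w).
\]
This holds because $\sum_{i>l}a_i=|\Phi^+_{\sigma,l}|$, since $\Phi_{\sigma,r}=\emptyset$. It also uses that the sets $\Phi_{\sigma,i-1}\smallsetminus\Phi_{\sigma,i}$ for $i>l$ partition $\Phi_{\sigma,l}$, so the $n_M$ terms sum to $|(\Phi^+\smallsetminus\Phi^+_M)\cap w\Phi_{\sigma,l}|$.

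This produces the exponent $t^{-S_M(\sigma,l+1,w)}$, so there is an index mismatch with the statement's $t^{-S_M(\sigma,l,w)}$. I expect the convention to amount to $S_M(\sigma,l+1,w)$ under the stated range $1\le l\le r+1$; that is, the statement's $l$ is shifted by one, with $T_{\sigma,l-1}$ and the product up to $l-1$. I would reconcile the indexing accordingly, since the content is identical.

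The only delicate point is the $i\le l$ estimate when $x_{\sigma(i)}$ is large. There, $x_{\sigma(i)}^{-1}+1$ is not comparable to $x_{\sigma(i)}^{-1}$, but the discrepancy is at most a factor $(1+x_{\sigma(i)})\le(1+x_{\sigma(1)})$ raised to a bounded power. If $s_M$ is negative, the bound $x^{-1}+1\ge x^{-1}$ goes the wrong way only by the same polynomial factor, so taking $N'$ large, for instance $N'=d+3|\Phi^+|$, covers every case.
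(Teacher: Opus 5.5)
Your estimates are the paper's own. You bound $\min(t,x_{\sigma(i)}^{-1}+1)$ by $x_{\sigma(i)}^{-1}(1+x_{\sigma(i)})$ for $i\le l$ and by $t$ for $i>l$, use $x_{\sigma(i)}<t^{-1}$ on the $c$-function factors with $i>l$, square the sum over $w$, and multiply. The paper reads off the total power $t^{|\Phi^+_{\sigma,l}|+2|(\Phi^+\smallsetminus\Phi^+_M)\cap w\Phi_{\sigma,l}|-2|\Phi^+_{\sigma,l}|}$ directly instead of telescoping, but that is the same computation.

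\textbf{The error is in your final identification of the exponent.} Your own justification shows
\[
\sum_{i=l+1}^{r}s_M(\sigma,i,w)=|\Phi^+_{\sigma,l}|-2\,|(\Phi^+\smallsetminus\Phi^+_M)\cap w\Phi_{\sigma,l}|.
\]
By the displayed formula in Definition~\ref{def:s-and-S}, the right-hand side is $S_M(\sigma,l,w)$, not $S_M(\sigma,l+1,w)$. Equivalently, iterate Lemma~\ref{lemma:S-properties}(1) down from $S_M(\sigma,r,w)=0$, which holds because $\Phi_{\sigma,r}=\emptyset$. So there is no index mismatch. On $T_{\sigma,l}$, $0\le l\le r$, you have proved exactly the stated bound $t^{-S_M(\sigma,l,w)}\prod_{i\le l}x_{\sigma(i)}^{s_M(\sigma,i,w)}$.

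You should therefore drop the proposed reindexing rather than adopt it. Pairing $T_{\sigma,l-1}$ with $t^{-S_M(\sigma,l,w)}$ gives a right-hand side equal to what you actually proved on that region times $t^{s_M(\sigma,l,w)}$. That claim is unjustified whenever $s_M(\sigma,l,w)<0$, which can happen. It is also not the pairing of region and exponent that Corollary~\ref{cor:red-to-I-integral} and Lemma~\ref{lemma_power_k} use.

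The confusion comes from off-by-one slips in the paper:
\begin{itemize}
\item the range ``$1\le l\le r+1$'' in the statement, which clashes with its own convention for $l=0$, should read $0\le l\le r$;
\item the remark ``$S_M(\sigma,r+1,w)=0$'' after Definition~\ref{def:s-and-S} should read $S_M(\sigma,r,w)=0$;
\item the remark ``$S_M(\sigma,1,w)=-|\Phi^+|+2|\Phi^+_M|$'' should read $S_M(\sigma,0,w)=-|\Phi^+|+2|\Phi^+_M|$.
\end{itemize}
The corrected indexing is the convention actually used in Lemma~\ref{lemma:S-properties}, Corollary~\ref{cor:S-non-negative}, and Lemma~\ref{lemma_power_k}, where ``$S(r)=0$''.

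Finally, your worry about negative $s_M$ is moot. Every factor enters with a non-negative exponent: $|\Phi^+_{\sigma,i-1}|-|\Phi^+_{\sigma,i}|$, twice that, or $2n_M(\sigma,i,w)$. So upper-bounding each $\min(\cdot)$ before multiplying never goes the wrong way, and $N'=d+3|\Phi^+|$ suffices.
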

\begin{proof}
On $T_{\sigma, l}$ we have $x_{\sigma(i)} > t^{-1}$ for $i \le l$ and  $x_{\sigma(i)} < t^{-1}$ for $i \geq l+1$, and so
\begin{align*}
\min( t, x_{\sigma(i)}^{-1} +1) & \asymp x_{\sigma(i)}^{-1} + 1 = x_{\sigma(i)}^{-1}( 1 + x_{\sigma(i)}), \quad i \le l, \\
\min( t, x_{\sigma(i)}^{-1} +1) & = t \le t ( 1 + x_{\sigma(i)}), \quad i \ge l+1.
\end{align*}
As a result, we deduce from the bound \eqref{thetabd} for $\Theta(H, \lambda)$ that
\[
\Theta(H, \lambda(x)) \ll (1 + x_{\sigma(1)})^{|\Phi^+|} t^{|\Phi_{\sigma, l}^+| } \prod_{i = 1}^{l} x_{\sigma(i)}^{ -|\Phi_{\sigma, i-1}^+| + |\Phi_{\sigma, i}^+|}.
\]
Likewise, the bound \eqref{thetaH0bd} for $\Theta(tH_0, \lambda)$ becomes
\[
\Theta(tH_0, \lambda(x)) \ll (1 + x_{\sigma(1)})^{|\Phi^+ \smallsetminus \Phi^+_M|} \sum_{w \in W}t^{|(\Phi^+ \smallsetminus \Phi^+_M) \cap w \Phi_{\sigma, l}|}  \prod_{i = 1}^{l} x_{\sigma(i)}^{- n_M(\sigma, i, w) }.
\]
We simplify the bound \eqref{cnear} for $|c(\lambda(x))|^{-2}$ to
\[
|c(\lambda(x))|^{-2} \ll (1 + x_{\sigma(1)})^{d} t^{ - 2|\Phi_{\sigma, l}^+|} \prod_{i = 1}^{l} x_{\sigma(i)}^{ 2|\Phi_{\sigma, i-1}^+| - 2|\Phi_{\sigma, i}^+| }.
\]
Combining these estimates yields the lemma.\end{proof}

For $1\le l\le r$, we put
\begin{equation}\label{eq:defn-I-int}
I_{M,\sigma, l,w}(t)=  \int_{ x_{\sigma(1)} > \cdots > x_{\sigma(l)} > t^{-1} } (1 + x_{\sigma(1)})^{-N} \prod_{i = 1}^{l} x_{\sigma(i)}^{s_M(\sigma, i,w) } dx_{\sigma(1)} \ldots dx_{\sigma(l)}
\end{equation}
and set $I_{M,\sigma, 0,w}(t)=1$.

\begin{cor}\label{cor:red-to-I-integral}
In the above notation, and under the conditions of Lemma \ref{lemma:refined-bound}, we have, for $N''$ much larger than $N'$,
\[
\int_{ T_{\sigma, l} } (1 + x_{\sigma(1)})^{-N''} \Theta(tH_0, \lambda(x))^2 \Theta( H, \lambda(x)) |c(\lambda(x))|^{-2} dx\ll \sum_{w\in W} t^{ -S_M(\sigma,l,w) +l - r}I_{M,\sigma, l,w}(t).
\]
\end{cor}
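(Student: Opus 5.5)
The plan is to integrate the pointwise bound of Lemma \ref{lemma:refined-bound} over $T_{\sigma,l}$, separating the "large" coordinates $x_{\sigma(1)},\dots,x_{\sigma(l)}$ from the "small" ones $x_{\sigma(l+1)},\dots,x_{\sigma(r)}$. Choose $N''\ge N'+N$, so that $(1+x_{\sigma(1)})^{-N''}(1+x_{\sigma(1)})^{N'}\le (1+x_{\sigma(1)})^{-N}$. Then Lemma \ref{lemma:refined-bound} bounds the integrand on $T_{\sigma,l}$ by
\[
\sum_{w\in W} t^{-S_M(\sigma,l,w)}\,(1+x_{\sigma(1)})^{-N}\prod_{i=1}^{l}x_{\sigma(i)}^{s_M(\sigma,i,w)}.
\]
The key observation is that this expression does not depend on $x_{\sigma(l+1)},\dots,x_{\sigma(r)}$. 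We may therefore interchange the finite sum over $w$ with the integral and treat each $w$ separately.

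For fixed $w$, use Fubini and enlarge the domain. The region $T_{\sigma,l}$ lies inside the product of
\[
\{x_{\sigma(1)}>\cdots>x_{\sigma(l)}>t^{-1}\}
\]
in the first $l$ ordered coordinates with
\[
\{0<x_{\sigma(i)}<t^{-1},\ l+1\le i\le r\}
\]
in the remaining ones. The first set is exactly the domain in \eqref{eq:defn-I-int}. On the second set the integrand is constant in the variables $x_{\sigma(l+1)},\dots,x_{\sigma(r)}$, so integrating them out contributes at most $(t^{-1})^{r-l}=t^{l-r}$. The remaining integral over the first $l$ coordinates is exactly $I_{M,\sigma,l,w}(t)$. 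This gives the term $t^{-S_M(\sigma,l,w)+l-r}I_{M,\sigma,l,w}(t)$.

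For $l=0$ there are no large coordinates: the integrand is bounded by $\sum_w t^{-S_M(\sigma,0,w)}$ times a harmless bounded factor, and $T_{\sigma,0}$ has volume at most $t^{-r}$. This matches the convention $I_{M,\sigma,0,w}(t)=1$. The indexing convention in Definition \ref{def:s-and-S} for $S_M$ at $l=0$ should be read consistently with Lemma \ref{lemma:refined-bound}, namely with $\Phi_{\sigma,0}=\Phi$.

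I expect no real obstacle here. The only point needing care is checking that the exponent $N$ in $I_{M,\sigma,l,w}$ is compatible with $N''$ chosen much larger than $N'$. This is fine because the powers of $(1+x_{\sigma(1)})$ coming from Lemma \ref{lemma:refined-bound} are uniform in $t$ and $w$, and the implied constants depend only on $\sigma$, $l$ and the root data.
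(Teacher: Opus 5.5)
Your proposal is correct and matches the paper's argument. You apply Lemma \ref{lemma:refined-bound}, absorb $(1+x_{\sigma(1)})^{N'}$ into $(1+x_{\sigma(1)})^{-N''}$, and integrate out $x_{\sigma(l+1)},\dots,x_{\sigma(r)}$, each confined to $(0,t^{-1})$, to get $t^{l-r}$; the paper bounds the nested ordered integral by the cube $[0,t^{-1}]^{r-l}$, which is the same as your enlargement of the domain. Your remark that the indices in Definition \ref{def:s-and-S} and Lemma \ref{lemma:refined-bound} should run over $0\le l\le r$, with $\Phi_{\sigma,0}=\Phi$ covering the $l=0$ case, is also right.
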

\begin{proof}
From Lemma \ref{lemma:refined-bound} it follows that the left-hand side is bounded by
\[
\sum_{w \in W} t^{ -S_M(\sigma, l,w) }\int_{ T_{\sigma, l} } (1 + x_{\sigma(1)})^{-N} \prod_{i = 1}^{l} x_{\sigma(i)}^{s_M(\sigma,i,w) } dx_{\sigma(1)} \ldots dx_{\sigma(r)}.
\]
When $l=r$ this gives the stated bound. For $0\le l\le r-1$, we perform the integrals over the variables which don't appear in the product, to find 
\[
\int_0^{t^{-1}}\int_0^{x_{\sigma(l+1)}}  \cdots \int_0^{x_{\sigma(r)}}  dx_{\sigma(l+1)} \ldots dx_{\sigma(r)}\le \int_0^{t^{-1}}\cdots  \int_0^{t^{-1}} dx_{\sigma(l+1)} \ldots dx_{\sigma(r)}=t^{l-r},
\]
as desired.
\end{proof}

\subsection{Intersection cardinalities of root systems}

Before proceeding to estimate the integrals appearing in Corollary \ref{cor:red-to-I-integral}, we establish a few important properties of the quantities $s_M(\sigma,i,w)$ and $S_M(\sigma,i,w)$ appearing in Definition \ref{def:s-and-S}.
\begin{lemma}\label{lemma:S-properties}
Let $\sigma\in S_r$, $w\in W$, and $\Phi_M$ a standard sub-root system. Then
\begin{enumerate}
\item\label{S-recursive} for $1\le i\le r$ we have $S_M(\sigma,i,w) + s_M(\sigma,i,w) = S_M(\sigma,i-1,w)$;
\item\label{S-relation} for $0\le i\le r$ we have $S_M(\sigma,i,w)=|\Phi_M\cap w\Phi_{\sigma,i}|-\frac12 |\Phi_{\sigma,i}|$.
\end{enumerate}
\end{lemma}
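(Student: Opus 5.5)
Both identities follow directly from Definition \ref{def:s-and-S}. The only input we need is that the sets involved are finite and nested, $\Phi_{\sigma,i}\subset\Phi_{\sigma,i-1}$, together with the symmetry of a root system under $\alpha\mapsto-\alpha$. (Definition \ref{def:s-and-S} only defines $S_M$ for $i\ge1$; we take the same formula at $i=0$, which is what the statement presupposes.)

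For \eqref{S-recursive}, we expand
\[
S_M(\sigma,i,w)+s_M(\sigma,i,w)=|\Phi^+_{\sigma,i-1}| - 2\Big(|(\Phi^+\smallsetminus\Phi^+_M)\cap w\Phi_{\sigma,i}| + n_M(\sigma,i,w)\Big).
\]
Since $\Phi_{\sigma,i}\subset\Phi_{\sigma,i-1}$, the set $w\Phi_{\sigma,i-1}$ is the disjoint union of $w\Phi_{\sigma,i}$ and $w(\Phi_{\sigma,i-1}\smallsetminus\Phi_{\sigma,i})$. Intersecting this union with $\Phi^+\smallsetminus\Phi^+_M$ shows that the bracket equals $|(\Phi^+\smallsetminus\Phi_M^+)\cap w\Phi_{\sigma,i-1}|$. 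The right-hand side is therefore $S_M(\sigma,i-1,w)$ by definition.

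For \eqref{S-relation}, we use that $\Phi_{\sigma,i}$ is a root subsystem, hence stable under negation, and so is $w\Phi_{\sigma,i}$. This gives $|\Phi^+_{\sigma,i}|=\frac12|\Phi_{\sigma,i}|$. The same symmetry applies to the sets $\Phi\smallsetminus\Phi_M$ and $w\Phi_{\sigma,i}$: their intersection $E:=(\Phi\smallsetminus\Phi_M)\cap w\Phi_{\sigma,i}$ is stable under negation and splits evenly into positive and negative roots. Hence $|(\Phi^+\smallsetminus\Phi^+_M)\cap w\Phi_{\sigma,i}|=\frac12|E|$, using $\Phi^+\smallsetminus\Phi^+_M=\Phi^+\cap(\Phi\smallsetminus\Phi_M)$. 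Next, since $w\Phi_{\sigma,i}$ is the disjoint union of $E$ and $\Phi_M\cap w\Phi_{\sigma,i}$, we have
\[
|E| = |w\Phi_{\sigma,i}| - |\Phi_M\cap w\Phi_{\sigma,i}| = |\Phi_{\sigma,i}| - |\Phi_M\cap w\Phi_{\sigma,i}|.
\]
Substituting these two facts into the definition gives
\[
S_M(\sigma,i,w)=\tfrac12|\Phi_{\sigma,i}| - |\Phi_{\sigma,i}| + |\Phi_M\cap w\Phi_{\sigma,i}| = |\Phi_M\cap w\Phi_{\sigma,i}|-\tfrac12|\Phi_{\sigma,i}|,
\]
as claimed. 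As a sanity check, at $i=0$ this gives $|\Phi_M|-\frac12|\Phi|=2|\Phi_M^+|-|\Phi^+|$, matching the computation after Definition \ref{def:s-and-S}.

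There is no real obstacle here. The only point that needs care is the parity argument: the positive half of $(\Phi\smallsetminus\Phi_M)\cap w\Phi_{\sigma,i}$ is counted exactly once, which holds because $w\Phi_{\sigma,i}$ is a $W$-translate of a root subsystem and so closed under negation, even though it is not positive-stable.
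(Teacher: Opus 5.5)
Your proof is correct and follows the same direct computation from Definition~\ref{def:s-and-S} as the paper. Part (1) uses the disjoint decomposition $w\Phi_{\sigma,i-1}=w\Phi_{\sigma,i}\sqcup w(\Phi_{\sigma,i-1}\smallsetminus\Phi_{\sigma,i})$, part (2) halves using negation-stability of $(\Phi\smallsetminus\Phi_M)\cap w\Phi_{\sigma,i}$, and you extend the formula to $i=0$. Your write-up is in fact more careful than the paper's, which has a sign slip ($+2$ for $-2$) in the middle line of (1) and leaves both the halving step and the $i=0$ convention implicit.
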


\begin{proof}
For point \eqref{S-recursive} we have
\begin{align*}
S_M(\sigma,i-1,w)-S_M(\sigma,i,w)&=|\Phi_{\sigma, i-1}^+| - 2|(\Phi^+ \smallsetminus \Phi^+_M) \cap w \Phi_{\sigma, i-1}|\\
&\qquad\qquad\;-(|\Phi_{\sigma, i}^+| - 2|(\Phi^+ \smallsetminus \Phi^+_M) \cap w \Phi_{\sigma, i}|)\\
&=|\Phi_{\sigma, i-1}^+| -|\Phi_{\sigma, i}^+|+ 2|(\Phi^+ \smallsetminus \Phi^+_M) \cap w (\Phi_{\sigma, i-1}\smallsetminus  \Phi_{\sigma, i})|\\
&=s_M(\sigma,i,w).
\end{align*}
For point \eqref{S-relation} we calculate
\begin{align*}
S_M(\sigma, i, w) &= |\Phi_{\sigma, i}^+| - 2|(\Phi^+ \smallsetminus \Phi^+_M) \cap w\Phi_{\sigma, i}^+|\\
&= \frac12 |\Phi_{\sigma, i}| - |(\Phi \smallsetminus \Phi_M) \cap w \Phi_{\sigma, i}|\\
&= \frac12 |\Phi_{\sigma, i}| - (|w\Phi_{\sigma, i}|-  |\Phi_M\cap w\Phi_{\sigma, i} |),
\end{align*}
which simplifies to the stated identity.\end{proof}

\subsection{Integrating over the refined subdivision}

By contrast with the preceding steps, in which $H_0$, and hence $\Phi_M$, was allowed to be arbitrary, the remainder of the argument will rely critically on the semi-dense hypothesis on $\Phi_M$ present in the statement of Theorem \ref{thm:elem-spec-int}.

\begin{cor}\label{cor:S-non-negative}
Let $\Phi_M$ be a semi-dense root subsystem. Fix $\sigma\in S_r$, $w\in W$, and $0\le i\le r$. Then $S_M(\sigma,i,w)+r\ge i$.
\end{cor}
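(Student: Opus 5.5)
The plan is to rewrite $S_M(\sigma,i,w)$ with Lemma \ref{lemma:S-properties}\eqref{S-relation} and then apply Definition \ref{def:big} to a suitable semistandard subsystem. By that lemma,
\[
S_M(\sigma,i,w)=|\Phi_M\cap w\Phi_{\sigma,i}|-\tfrac12|\Phi_{\sigma,i}|.
\]
We take $\Psi=w\Phi_{\sigma,i}$ as the test subsystem. Here $\Phi_{\sigma,i}$ is the standard subsystem generated by $\Delta\smallsetminus\{\alpha_{\sigma(1)},\ldots,\alpha_{\sigma(i)}\}$, so $\Psi$ is semistandard. Since $|w\Phi_{\sigma,i}|=|\Phi_{\sigma,i}|$, the semi-dense inequality \eqref{eqn_root_ineq} applied to $\Psi$ gives
\[
|\Phi_M\cap w\Phi_{\sigma,i}|-\tfrac12|\Phi_{\sigma,i}|\ge -\operatorname{rank}(\Psi).
\]
This yields $S_M(\sigma,i,w)\ge-\operatorname{rank}(\Phi_{\sigma,i})$.

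Next we compute the rank. The subsystem $\Phi_{\sigma,i}$ has base $\Delta\smallsetminus\{\alpha_{\sigma(1)},\dots,\alpha_{\sigma(i)}\}$, which consists of $r-i$ linearly independent simple roots, so $\operatorname{rank}(\Phi_{\sigma,i})=r-i$. Hence $S_M(\sigma,i,w)\ge i-r$, that is, $S_M(\sigma,i,w)+r\ge i$.

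The boundary cases need a separate word. For $i=r$ we have $\Phi_{\sigma,r}=\emptyset$, so $S_M=0$ and the claim $r\ge r$ is trivial. For $i=0$ we have $\Psi=w\Phi=\Phi$, which is the base case of Definition \ref{def:big}.

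The only real subtlety is bookkeeping. First, Definition \ref{def:big} is stated for semistandard $\Psi$, and $w\Phi_{\sigma,i}$ qualifies because it is the Weyl translate of a standard subsystem. Second, the semi-dense hypothesis is applied in the reduced root system, which is the convention fixed for this section. Finally, the $\Phi_M$ appearing in Lemma \ref{lemma:S-properties} is the same semistandard (in fact standard) subsystem that is assumed semi-dense, so no further argument is needed.
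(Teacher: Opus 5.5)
Your proposal is correct and is exactly the paper's argument: rewrite $S_M(\sigma,i,w)$ via Lemma~\ref{lemma:S-properties}\eqref{S-relation}, apply the semi-dense inequality \eqref{eqn_root_ineq} with $\Phi_0=\Phi_M$ and $\Psi=w\Phi_{\sigma,i}$, and use $\textnormal{rank}(\Phi_{\sigma,i})=r-i$, a step the paper leaves implicit. Your separate treatment of $i=0$ and $i=r$ is harmless but unnecessary, since the general argument already covers both.
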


\begin{proof}
We apply Lemma \ref{lemma:S-properties}\eqref{S-relation}, and the defining inequality \eqref{eqn_root_ineq} for semi-dense root subsystems, with $\Phi_0 = \Phi_M$ and $\Psi=w\Phi_{\sigma, i}$.
\end{proof}

Recall the integral $I_{M,\sigma, l,w}(t)$ defined in \eqref{eq:defn-I-int}. When combined with Corollary \ref{cor:red-to-I-integral}, the following lemma completes the proof of Theorem \ref{thm:elem-spec-int}.

\begin{lemma} \label{lemma_power_k}
Let $\Phi_M$ be a semi-dense root subsystem. Fix $\sigma\in S_r$, $w\in W$. For $1\le l\le r$ we let
\[
e_M(\sigma,l,w)=\#\{1\le i\le l: S_M(\sigma,i,w)+r= i\}.
\]
and put $e_M(\sigma,0,w)=0$. Then
\[
t^{ -S_M(\sigma,l,w) +l - r}I_{M,\sigma, l,w}(t)\ll (\log t)^{e_M(\sigma,l,w)},
\]
the implied constant depending only on the integer $N$ appearing in the definition of $I_{M,\sigma, l,w}(t)$. \end{lemma}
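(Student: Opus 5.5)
We will prove the lemma by induction on $l$, integrating out the smallest variable $x_{\sigma(l)}$ first. The case $l=0$ is trivial: $I_{M,\sigma,0,w}(t)=1$, and Corollary \ref{cor:S-non-negative} with $i=0$ gives $S_M(\sigma,0,w)+r\ge 0$. Hence $t^{-S_M(\sigma,0,w)-r}\le 1=(\log t)^0$. To make the induction close, we will prove a slightly stronger statement. Define the truncated integral
\[
J_l(u):=\int_{u> x_{\sigma(1)}>\cdots>x_{\sigma(l)}>t^{-1}}(1+x_{\sigma(1)})^{-N}\prod_{i=1}^l x_{\sigma(i)}^{s_M(\sigma,i,w)}\,dx,
\]
allowing $u=\infty$, and write $S_i:=S_M(\sigma,i,w)+r-i$. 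By Corollary \ref{cor:S-non-negative}, $S_i\ge 0$. Since $S_0=S_M(\sigma,0,w)+r$, Lemma \ref{lemma:S-properties}\eqref{S-recursive} gives $s_M(\sigma,i,w)=S_{i-1}-S_i-1$. The lemma is the statement $t^{-S_l}J_l(\infty)\ll(\log t)^{e_M(\sigma,l,w)}$. Note that $e_M(\sigma,l,w)$ counts the indices $i\le l$ with $S_i=0$.

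The key step is to integrate the innermost variable $x_{\sigma(l)}$ over $(t^{-1},x_{\sigma(l-1)})$. Its exponent is $S_{l-1}-S_l-1$, so the result is:
\begin{itemize}
\item $\ll x_{\sigma(l-1)}^{S_{l-1}-S_l}$ if $S_{l-1}>S_l$;
\item $\ll \log t$ (for $x_{\sigma(l-1)}\ll 1$; when $x_{\sigma(l-1)}$ is large, the factor $\log(1+x_{\sigma(l-1)})$ is absorbed by the weight $(1+x_{\sigma(1)})^{-N}$) if $S_{l-1}=S_l$;
\item $\ll t^{S_l-S_{l-1}}$ if $S_{l-1}<S_l$.
\end{itemize}
Multiplying by $t^{-S_l}$, we get $x^{S_{l-1}-S_l}t^{-S_l}\le x^{S_{l-1}}t^{-S_{l-1}}\cdot(xt)^{-S_l}\le x^{S_{l-1}}t^{-S_{l-1}}$ in the first case, using $x_{\sigma(l-1)}>t^{-1}$ and $S_l\ge0$. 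In the third case the bound is exactly $t^{-S_{l-1}}$. In every case the $l$-variable problem therefore reduces to the $(l-1)$-variable problem, with the exponent of $x_{\sigma(l-1)}$ raised by at most $S_{l-1}-S_l$ and the outer power of $t$ changed to $t^{-S_{l-1}}$. This suggests carrying an induction hypothesis of the form
\[
\int (1+x_{\sigma(1)})^{-N}\prod_{i<j} x_{\sigma(i)}^{s_i}\, x_{\sigma(j)}^{s_j+a}\,dx\ \ll\ t^{S_j-a}(\log t)^{\#\{\cdots\}}
\]
for $0\le a\le S_j$. Equivalently, one tracks which exponents are raised and argues directly via the chain $S_{i-1}=S_i+s_i+1$.

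The main obstacle is bookkeeping the logarithms. A $\log t$ should appear only at indices where $S_i=0$, which are the endpoint cases of the semi-dense inequality. However, the modified exponent $s_i+a$ can hit $-1$ even when $S_i>0$, and this would produce a spurious logarithm. I would handle this by not integrating naively. Instead, I would bound $\prod_i x_{\sigma(i)}^{s_i}$ on the ordered region by $\prod_i (x_{\sigma(i)}/x_{\sigma(i-1)})^{\dots}$, i.e. change variables to the ratios $y_i=x_{\sigma(i)}/x_{\sigma(i-1)}\in(0,1)$ together with $y_1=x_{\sigma(1)}$. In the ratio variables the integrand is $\prod_i y_i^{S_{i-1}-1}\cdots$, or more precisely $x_{\sigma(i)}^{\,S_{i-1}-S_i-1}$ rewritten telescopically, so the exponent of each $y_i$ is $S_{i-1}-1$ in the log-Jacobian sense, and the constraint $x_{\sigma(l)}>t^{-1}$ becomes $\prod y_i>t^{-1}$. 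Then $t^{-S_l}\prod_i x_{\sigma(i)}^{\dots}$ is bounded by $\prod_i (t\,x_{\sigma(i)})^{-S_i}$ times $\prod_i dx_{\sigma(i)}/x_{\sigma(i)}$. Since each $tx_{\sigma(i)}>1$ and $S_i\ge0$, each factor in the log-variable $u_i=\log(tx_{\sigma(i)})\in(0,\log t+O(1))$ contributes $\int e^{-S_iu_i}du_i$. This is $O(1)$ if $S_i>0$ and $O(\log t)$ if $S_i=0$, with the variable $x_{\sigma(1)}$ cut off at large values by the weight $(1+x_{\sigma(1)})^{-N}$. Verifying this telescoping identity, namely $t^{-S_l}\prod_{i\le l}x_{\sigma(i)}^{S_{i-1}-S_i-1}\le x_{\sigma(1)}^{S_0}\prod_{i\le l}(tx_{\sigma(i)})^{-S_i}\,x_{\sigma(i)}^{-1}$ with the harmless boundary factor $x_{\sigma(1)}^{S_0}$ controlled by $N$, is the crux. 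Once it holds, the bound $(\log t)^{e_M(\sigma,l,w)}$ follows by Fubini after dropping the ordering constraints.
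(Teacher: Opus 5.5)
Your final step fails as written. The ``telescoping identity'' you single out as the crux (with your $S_i=S_M(\sigma,i,w)+r-i\ge 0$),
\[
t^{-S_l}\prod_{i\le l}x_{\sigma(i)}^{S_{i-1}-S_i-1}\le x_{\sigma(1)}^{S_0}\prod_{i\le l}(tx_{\sigma(i)})^{-S_i}x_{\sigma(i)}^{-1},
\]
goes the wrong way. The left side divided by the right side equals $\prod_{i=1}^{l-1}(t x_{\sigma(i+1)})^{S_i}$. This is $\ge 1$ on the region $x_{\sigma(l)}>t^{-1}$, and strictly larger than $1$ as soon as some $S_i>0$ with $i<l$. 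The correct telescoping is
\[
t^{-S_l}\prod_{i\le l}x_{\sigma(i)}^{S_{i-1}-S_i-1}=x_{\sigma(1)}^{S_0}\prod_{i=1}^{l-1}\Big(\frac{x_{\sigma(i+1)}}{x_{\sigma(i)}}\Big)^{S_i}(tx_{\sigma(l)})^{-S_l}\prod_{i\le l}x_{\sigma(i)}^{-1}.
\]
In your variables $u_i=\log(tx_{\sigma(i)})$ this reads $\prod_{i<l}e^{-S_i(u_i-u_{i+1})}e^{-S_lu_l}$. It does not factor over the $u_i$. You also cannot drop the ordering constraints, because $e^{-S_i(u_i-u_{i+1})}$ is unbounded when $u_{i+1}>u_i$ and $S_i>0$. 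So the concluding ``Fubini with $\int e^{-S_iu_i}du_i$'' is unjustified.

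The same slip already occurs in your naive inductive step. The inequality $x^{S_{l-1}-S_l}t^{-S_l}\le x^{S_{l-1}}t^{-S_{l-1}}(xt)^{-S_l}$ is false: at $x=1$ it asserts $t^{-S_l}\le t^{-S_{l-1}-S_l}$, although $S_{l-1}>0$ in that case.

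The idea is repairable. Use the gap variables $v_i=u_i-u_{i+1}$ for $i<l$ and $v_l=u_l$. Then:
\begin{itemize}
\item the Jacobian is $1$, the ordered region becomes $\{v_i>0\}$, and $u_1=\sum_i v_i$;
\item the integrand is $x_{\sigma(1)}^{S_0}(1+x_{\sigma(1)})^{-N}\prod_{i\le l}e^{-S_iv_i}$, with $x_{\sigma(1)}=e^{u_1}/t$;
\item on $u_1\le\log t+1$ the prefactor is $O(1)$, and each $v_i\in(0,\log t+1)$ contributes $\min(S_i^{-1},\log t+1)$;
\item the shells $u_1\in(\log t+j,\log t+j+1]$ carry an extra factor $e^{-(N-S_0)j}$, so they sum harmlessly once $N>S_0$.
\end{itemize}
This gives exactly $(\log t)^{e_M(\sigma,l,w)}$.

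The paper avoids changes of variables altogether. It first uses $t^{-S_l}\le x_{\sigma(l)}^{S_l}$, which makes the exponent of the innermost variable $S_{l-1}-1\ge -1$. Integrating that variable gives $x_{\sigma(l-1)}^{S_{l-1}}/S_{l-1}$, or $\ll(\log t)(1+x_{\sigma(1)})$ when $S_{l-1}=0$. The next variable then again carries exponent $S_{l-2}-1$, so the induction closes with no spurious logarithms. That iteration produces a $\log t$ for each $0\le i\le l-1$ with $S_i=0$. This is an index shift from $e_M$, immaterial for Theorem~\ref{thm:elem-spec-int}; your corrected gap-variable argument recovers the stated count exactly.
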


\begin{proof}
For notational simplicity, we drop the dependence on $w$, $\sigma$, and $M$ in the notation $I_{M,\sigma, l,w}(t)$, $S_M(\sigma,l,w)$ and $s_M(\sigma,i,w)$, writing simply $I_l(t)$, $S(l)$ and $s(i)$ for these quantities. In particular, $S(r)=0$.

For $l=0$, recall from the discussion preceding Corollary \ref{cor:red-to-I-integral} that, by convention, $I_0(t)=1$. We must therefore show that $S(0)+r\ge 0$, which follows from Corollary \ref{cor:S-non-negative}.

For $1\le l\le r$ we let
\[
J_l(t)=\int_{ x_{\sigma(1)} > \cdots > x_{\sigma(l)} > t^{-1} } (1 + x_{\sigma(1)})^{-N} x_{\sigma(l)}^{ S(l) + r-l} \prod_{i = 1}^{l} x_{\sigma(i)}^{s(i) } dx_{\sigma(1)} \ldots dx_{\sigma(l-1)}.
\]
As was the case for $I_{M,\sigma, l,w}(t)$, we do not indicate the dependence on $N$ in the notation, and we shall allow $N$ to be taken sufficiently large. From the inequality $(t^{-1})^{ S(l) + r-l } < x_{\sigma(l-1)}^{ S(l) + r-l }$, it follows that $t^{ -S(l) +l - r }I_l(t)<J_l(t)$. We shall bound $J_l(t)$ by induction.

The base case is when $l=1$, which we handle first. Since $N$ can be taken sufficiently large, and $S(1)+s(1)=S(0)$ from Lemma \ref{lemma:S-properties}\eqref{S-recursive}, we have
\[
J_2(t)=\int_{t^{-1}}^\infty (1 + x_{\sigma(1)})^{-N} x_{\sigma(1)}^{ S(1) +s(1)+ r - 1}  dx_{\sigma(1)}\ll \int_{t^{-1} }^1 x_{\sigma(1)}^{ S(1) +s(1)+ r - 1}  dx_{\sigma(1)}=\int_{t^{-1} }^1 x_{\sigma(1)}^{ S(0)+ r - 1}  dx_{\sigma(1)}.
\]
From Corollary \ref{cor:S-non-negative} the exponent satisfies $S(0) + r -1 \ge -1$. If this exponent is $\ge 0$ then we may extend the lower limit of integration to $0$ and obtain a bound of $1$. When the exponent is $-1$, the integral is bounded by $\log t$. This establishes the lemma when $l=1$. 

We now assume that $2\le l\le r$ and aim to show that
\begin{equation}\label{eq:J-induction}
J_l(t)\ll (\log t)^{\delta_l} J_{l-1}(t),\quad\textrm{where}\quad
\delta_l=
\begin{cases} 
0,& S(l-1)+r> l-1;\\
1,& S(l-1)+r= l-1.
\end{cases}
\end{equation}
For this we consider the inner integral over $x_{\sigma(l)}$ in $J_l(t)$. Using the recursive identity $S(l)+s(l)=S(l-1)$ from Lemma \ref{lemma:S-properties}\eqref{S-recursive} we have
\[
\int_{ t^{-1}}^{ x_{\sigma(l-1)} } x_{\sigma(l)}^{S(l) +r-l+s(l)} dx_{\sigma(l)}=\int_{ t^{-1}}^{ x_{\sigma(l-1)} } x_{\sigma(l)}^{S(l-1) + r-l} dx_{\sigma(l-1)}.
\]
From Corollary \ref{cor:S-non-negative} the exponent satisfies $S(l-1) + r-l\ge -1$, with equality occurring precisely when $\delta_l=1$. If $\delta_l=0$, then this exponent is $\ge 0$ and we may extend the lower limit of integration to $0$ and obtain a bound of $x_{\sigma(l-1)}^{S(l-1) + r-(l-1)}$, yielding \eqref{eq:J-induction}. If $\delta_l = 1$, then the inner integration over $x_{\sigma(l-1)}$ now gives
\[
\int_{ t^{-1}}^{ x_{\sigma(l-1)} } x_{\sigma(l)}^{-1} dx_{\sigma(l)} = \log x_{\sigma(l-1)} - \log(t^{-1}) \ll  (1 + x_{\sigma(1)})+\log t\ll (\log t)(1 + x_{\sigma(1)}).
\]
The $\log t$ term may be brought to the outside of the integral, and the $1 + x_{\sigma(1)}$ may be absorbed, yielding
\[
J_l(t)\ll (\log t)\int_{ x_{\sigma(1)} > \cdots > x_{\sigma(l-1)} > t^{-1} } (1 + x_{\sigma(1)})^{-N}  \prod_{i = 1}^{l-1} x_{\sigma(i)}^{s(i) } dx_{\sigma(1)} \ldots dx_{\sigma(l-1)}.
\]
Since, in the definition of $J_{l-1}$, the power of $x_{\sigma(l-1)}$ in front of the product $\prod_{i=1}^{l-1}x_{\sigma(i)}^{s(i) }$ is $S(l-1)+r-(l-1)=0$ when $\delta_l=1$, the integral above is $(\log t)J_{l-1}(t)$, establishing \eqref{eq:J-induction} in this case as well.
\end{proof}


\section{Bounds for spherical functions}
\label{sec:sph-fn}

This section contains the proof of Theorems \ref{sph-fn-bd} and \ref{sph-fn-cx}.  Section \ref{sec:sph-fn-conseq} provides some illustration of these theorems, and explains why Theorem \ref{sph-fn-cx} should be sharp, and Section \ref{sec:sph-fn-prev} discusses the relation between these theorems and previous work.  Section \ref{sec:sph-fn-outline} gives an outline of the proof of Theorem \ref{sph-fn-bd}, and the proof is carried out in Sections \ref{sec:sph-fn-prelim} to \ref{sec:Lmainbd}.  We prove Theorem \ref{sph-fn-cx} in Section \ref{sec:cx-pf}.

\subsection{Illustration of Theorems \ref{sph-fn-bd} and \ref{sph-fn-cx}}
\label{sec:sph-fn-conseq}

In this section, we make some remarks to help the reader understand the significance of Theorems \ref{sph-fn-bd} and \ref{sph-fn-cx}.  This includes explicating these theorems in the case of ${\rm SL}_2(\C)$, deriving some simpler consequences of Theorem \ref{sph-fn-bd} for a general group, and explaining why Theorem \ref{sph-fn-cx} should be sharp based on stationary phase considerations.  We also note that the bounds of Theorems \ref{sph-fn-bd} and \ref{sph-fn-cx} are equivalent in the case when $G$ is a complex group and $\lambda \in \ga^*$ is bounded, by Lemma \ref{sph-bd-equiv} below.

\subsubsection{The case of ${\rm SL}_2(\C)$}

When $G = {\rm SL}_2(\C)$, we have the following explicit formula for the spherical function.  If $\alpha$ is the unique element of $\Phi^+$, we may identify $\ga^*_\C$ with $\C$ and $\ga$ with $\R$ using the basis elements $\alpha$ and $\alpha^\vee/2$ respectively.  We then have
\be
\label{SL2C}
\varphi_\lambda(e^t) = \frac{\sin(\lambda t)}{\lambda \sinh t},
\ee
and we may compare Theorems \ref{sph-fn-bd} and \ref{sph-fn-cx} with the bounds that may be deduced from this formula.  For simplicity, we shall assume that $\lambda$ is real, and we assume without loss of generality that $\lambda$ and $t$ are non-negative.  In this case, Theorem \ref{sph-fn-bd} states that
\be
\label{SL2C1}
\varphi_\lambda(e^t) \ll (1 + \lambda)^a \min( t+1, \lambda^{-1} + 1) e^{-t},
\ee
and Theorem \ref{sph-fn-cx} states
\be
\label{SL2C2}
\varphi_\lambda(e^t) \ll e^{-t} \frac{t+1}{t\lambda +1}.
\ee
We note that these bounds are of the same strength if $\lambda$ is bounded, as established in Lemma \ref{sph-bd-equiv}, and it may be checked that \eqref{SL2C2} is stronger than \eqref{SL2C1} as expected.

We shall show that \eqref{SL2C2} is sharp with complete uniformity in $t$ and $\lambda$.  To do this, we consider the cases $t\lambda < 1$ and $t \lambda \ge 1$ separately.  When $t\lambda < 1$ we have $\sin(t \lambda) \asymp t\lambda$, so that \eqref{SL2C} implies that $\varphi_\lambda(e^t) \asymp t / \sinh t$.  On the other hand, \eqref{SL2C2} gives
\[
\varphi_\lambda(e^t) \ll (t+1) e^{-t} \asymp t / \sinh t,
\]
which shows that \eqref{SL2C2} is optimal in this range.

When $t\lambda \ge 1$ the best bound one can give for $\sin(\lambda t)$ is $|\sin(\lambda t)| \le 1$, and so by \eqref{SL2C} the best bound one can give for $\varphi_\lambda(e^t)$ is $\varphi_\lambda(e^t) \ll 1/(\lambda \sinh t)$.  In comparison, \eqref{SL2C2} becomes
\[
\varphi_\lambda(e^t) \ll e^{-t} \frac{t+1}{t\lambda},
\]
and this is equivalent to $\varphi_\lambda(e^t) \ll 1/(\lambda \sinh t)$ as one sees by applying the asymptotic $1/\sinh t \asymp e^{-t}(t+1)/t$.

The bound \eqref{SL2C2} reflects the fact that the decay of $\varphi_0(e^t) = t / \sinh(t) \asymp t e^{-t}$ is slower than that of $\varphi_\lambda$ for $\lambda \neq 0$, and moreover that this transition starts to be observed at the point $e^t$ when $\lambda \asymp 1/t$.  Theorem \ref{sph-fn-cx} can be viewed as an extension of this bound to a general group, where the function $(t+1)/(t\lambda +1)$ is replaced by a product of similar expressions over $\Phi_\textnormal{red}^+$.

\subsubsection{Consequences on a general group}

We next derive some simpler corollaries of Theorem \ref{sph-fn-bd}, both because of their intrinsic interest, and to facilitate the comparison between Theorem \ref{sph-fn-bd} and previous bounds for the spherical function in the next section.  First, we make the observation that when $\lambda$ is at distance $\gg 1$ from the singular set, Theorem \ref{sph-fn-bd} simply gives
\be
\label{sphericalcor1}
\varphi_\lambda(e^H) \ll (1 + \| \lambda \|)^a \underset{w \in W}{\max} \, e^{-(\rho + w \Im \lambda)(H) }.
\ee
Recall the definition of $f_\alpha(H,\lambda)$ in \eqref{defn-falpha}. For general $\lambda$, we may apply the bound $f_\alpha(H,\lambda) \le |\alpha(H)| + 1$ to deduce
\be
\label{sphericalcor2}
\varphi_\lambda(e^H) \ll (1 + \| \lambda \|)^a \underset{w \in W}{\max} \, e^{-(\rho + w \Im \lambda)(H) } \prod_{\alpha \in \Phi_\textnormal{red}^+} (|\alpha(H)| + 1).
\ee
Theorem \ref{sph-fn-bd} may be viewed as giving an interpolation between these two bounds, based on how singular $\lambda$ is.  

Finally, we have the following.

\begin{cor}

Let $\eta$ denote the maximum size of the sets $\Phi^+_{\textnormal{red},M}$, where $M$ runs over the maximal standard Levi subgroups of $G$.  We then have
\[
\varphi_\lambda(e^H) \ll (1 + \| \lambda \|)^a (1 + \| H \|)^{ \eta} \min( \| H \| + 1, \| \lambda \|^{-1} + 1)^{|\Phi_\textnormal{red}^+| - \eta} \underset{w \in W}{\max} \, e^{-(\rho + w \Im \lambda)(H) }
\]
\end{cor}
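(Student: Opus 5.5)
The corollary should follow directly from Theorem \ref{sph-fn-bd}. The only work is to bound $\Theta(H,\lambda)$ by $(1+\|H\|)^{\eta}\min(\|H\|+1,\|\lambda\|^{-1}+1)^{|\Phi^+_{\rm red}|-\eta}$. Since $\Theta$ is a sum over the finite group $W$ and the proposed bound is $W$-invariant in $\lambda$, it suffices to bound a single product
\[
\prod_{\alpha\in\Phi^+_{\rm red}} f_\alpha(H,\lambda').
\]
We may also reduce to $\lambda$ real, because $\lambda\in\ga^*(\kappa)$ only affects $|\langle\lambda,\alpha\rangle|$ through real and imaginary parts. Two trivial bounds will be used throughout: $f_\alpha(H,\lambda)\le |\alpha(H)|+1\ll 1+\|H\|$, and $f_\alpha(H,\lambda)\le \min(\|H\|+1,|\langle\lambda,\alpha\rangle|^{-1}+1)$, up to constants.

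The key claim is that for any nonzero $\lambda$ at most $\eta$ reduced positive roots $\alpha$ satisfy $|\langle\lambda,\alpha\rangle|\le c\|\lambda\|$, for a suitable small constant $c>0$ depending only on $\Phi$. For every other root we have $|\langle\lambda,\alpha\rangle|^{-1}\ll\|\lambda\|^{-1}$, so its factor is $\ll\min(\|H\|+1,\|\lambda\|^{-1}+1)$. For the at most $\eta$ small roots we use the bound $|\alpha(H)|+1\ll 1+\|H\|$. Multiplying the two groups of factors gives the corollary. When $\lambda=0$ the bound reads $(1+\|H\|)^{|\Phi^+_{\rm red}|}$, which is the trivial one.

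The main obstacle is proving the claim. The set of roots $\alpha$ with $|\langle\lambda/\|\lambda\|,\alpha\rangle|\le c$ should, for $c$ small, be contained in the root subsystem $\Phi\cap V'$ orthogonal to some nonzero vector $\mu$. I will argue by compactness of the unit sphere. For each unit $\mu$, the roots with $|\langle\mu,\alpha\rangle|<c_\mu$ are exactly those orthogonal to $\mu$, where $c_\mu$ is the minimum of $|\langle\mu,\alpha\rangle|$ over roots not orthogonal to $\mu$. By continuity this persists on a neighbourhood of $\mu$ with $c_\mu/2$. A finite subcover then gives a uniform $c$ such that the small roots for $\lambda$ lie in $\mu^\perp\cap\Phi$ for some nearby unit $\mu$. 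That set is a proper semistandard subsystem, hence $W$-conjugate to a subsystem of a maximal standard Levi; its number of positive roots is therefore at most $\eta$. The root subsystem $\mu^\perp\cap\Phi$ is $\pm$-symmetric, and conjugation preserves the count of roots; halving the count gives the number of positive roots in it, which is $\le\eta$.
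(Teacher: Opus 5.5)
Your proposal is correct in substance, but it establishes the key counting claim by a different route from the paper, and one step needs tightening.

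\textbf{Comparison with the paper.} The paper first bounds $\Theta(H,\lambda)\ll\prod_{\alpha\in\Phi^+_{\textnormal{red}}}\min(\|H\|+1,|\langle\lambda,\alpha\rangle|^{-1}+1)$. It then replaces $\lambda$ by $i\lambda$ if necessary, so that $\|\Re\lambda\|\gg\|\lambda\|$. Next it conjugates so that $\Re\lambda=\sum_j x_j\varpi_j$ is dominant, which forces some coordinate $x_k\gg\|\lambda\|$. Every positive root with positive $\alpha_k$-coefficient then satisfies $|\langle\lambda,\alpha\rangle|\ge\langle\Re\lambda,\alpha\rangle\ge x_k\langle\varpi_k,\alpha\rangle\gg\|\lambda\|$. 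The remaining roots are exactly $\Phi^+_{\textnormal{red},M}$ for the maximal standard Levi $M$ attached to $\Delta\smallsetminus\{\alpha_k\}$. This argument is explicit, has effective constants, and produces the maximal Levi directly, with no appeal to semistandard subsystems. Your compactness argument on the unit sphere gives an ineffective constant $c$. It also needs the standard fact that $\Phi\cap\mu^\perp$ is $W$-conjugate to a proper standard subsystem. Granting that, it is a clean argument, and the rest of your reasoning is fine: the $W$-sum, the monotonicity step $(1+\|H\|)^k\min(\cdot)^{|\Phi^+_{\textnormal{red}}|-k}\le(1+\|H\|)^\eta\min(\cdot)^{|\Phi^+_{\textnormal{red}}|-\eta}$ for $k\le\eta$, and the count of positive roots in $\mu^\perp\cap\Phi_{\textnormal{red}}$.

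\textbf{The step to tighten.} Your sentence ``we may reduce to $\lambda$ real'' is not justified as written. Simply replacing $\lambda$ by $\Re\lambda$ fails. Take $\lambda=i\nu$ with $\nu\in\ga^*$ small and $\|H\|\gg\|\nu\|^{-1}$; this is allowed in $\ga^*(\kappa)$. Then $\Re\lambda=0$ and you only get $(1+\|H\|)^{|\Phi^+_{\textnormal{red}}|}$, which is weaker than the claimed bound. There are two easy fixes.
\begin{enumerate}
\item Use the paper's trick. Since $|\langle\lambda,\alpha\rangle|^2=\langle\Re\lambda,\alpha\rangle^2+\langle\Im\lambda,\alpha\rangle^2$, apply your real claim to whichever of $\Re\lambda$, $\Im\lambda$ has norm at least $\|\lambda\|/\sqrt{2}$.
\item Run your compactness argument directly on the unit sphere of $\ga^*_\C$. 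For $\mu\neq0$, the set $\{\alpha\in\Phi:\langle\mu,\alpha\rangle=0\}=\Phi\cap(\Re\mu)^\perp\cap(\Im\mu)^\perp$ is still a proper semistandard subsystem.
\end{enumerate}
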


\begin{proof}
We first apply the bound $\alpha(H) \ll \| H \|$ to obtain
\[
f_\alpha(H,w\lambda) \ll \min( \| H \| + 1, |\langle w\lambda, \alpha \rangle|^{-1} + 1),
\]
which gives
\[
\Theta(H,\lambda) \ll \prod_{\alpha \in \Phi_\textnormal{red}^+} \min( \| H \| + 1, |\langle \lambda, \alpha \rangle|^{-1} + 1)
\]
and therefore
\[
\varphi_\lambda(e^H) \ll (1 + \| \lambda \|)^a \underset{w \in W}{\max} \, e^{-(\rho + w \Im \lambda)(H) } \prod_{\alpha \in \Phi_\textnormal{red}^+} \min( \| H \| + 1, |\langle \lambda, \alpha \rangle|^{-1} + 1).
\]
It will therefore be enough to show that
\[
\prod_{\alpha \in \Phi_\textnormal{red}^+} \min( \| H \| + 1, |\langle \lambda, \alpha \rangle|^{-1} + 1) \ll (1 + \| H \|)^{ \eta} \min( \| H \| + 1, \| \lambda \|^{-1} + 1)^{|\Phi_\textnormal{red}^+| - \eta}
\]
for any $\lambda \in \ga^*_\C$.  We are free to multiply $\lambda$ by $i$, and so we may assume that $\| \Re \lambda \| \gg \| \lambda \|$.  We may also assume that $\Re \lambda \in \overline{\ga}^*_+$.  It follows that there is some maximal standard Levi subgroup $M$ such that $|\langle \lambda, \alpha \rangle| \gg \| \lambda \|$ for all $\alpha \in \Phi_\textnormal{red}^+ \smallsetminus \Phi^+_{\textnormal{red},M}$, and therefore that
\[
\min( \| H \| + 1, |\langle \lambda, \alpha \rangle|^{-1} + 1) \ll \min( \| H \| + 1, \| \lambda \|^{-1} + 1)
\]
for $\alpha \in \Phi_\textnormal{red}^+ \smallsetminus \Phi^+_{\textnormal{red},M}$.  This implies
\begin{align*}
\prod_{\alpha \in \Phi_\textnormal{red}^+} \min( \| H \| + 1, |\langle \lambda, \alpha \rangle|^{-1} + 1) & \ll \prod_{\alpha \in \Phi^+_{\textnormal{red},M}} (\| H \| + 1) \prod_{\alpha \in \Phi_\textnormal{red}^+ \smallsetminus \Phi^+_{\textnormal{red},M}} \min( \| H \| + 1, \| \lambda \|^{-1} + 1) \\
& = (1 + \| H \|)^{ |\Phi^+_{\textnormal{red},M}|} \min( \| H \| + 1, \| \lambda \|^{-1} + 1)^{|\Phi_\textnormal{red}^+ \smallsetminus \Phi^+_{\textnormal{red}, M}|} \\
& \le (1 + \| H \|)^\eta \min( \| H \| + 1, \| \lambda \|^{-1} + 1)^{|\Phi_\textnormal{red}^+| - \eta}
\end{align*}
as required.
\end{proof}

\subsubsection{Theorem \ref{sph-fn-cx} and stationary phase}

We now explain why the bound of Theorem \ref{sph-fn-cx} is to be expected, and should be sharp, from the viewpoint of stationary phase applied to the integral \eqref{euclideandef}. The critical set of the phase function appearing in this integral is understood, see \cite[\S 1]{Duistermaat_Kolk_Varadarajan_79}.  In particular, when $Z = H \in \ga$, the integral in \eqref{euclideandef} reduces to one over $K/Z_K(\ga)$, and \cite{Duistermaat_Kolk_Varadarajan_79} shows that when $H$ and $\lambda$ are both regular the critical points occur at $W$ and are non-degenerate.  Moreover, for each $w \in W$, there is a fixed basis for the tangent space $T_w(K/Z_K(\ga))$ in which the Hessian is diagonal with eigenvalues $\alpha(H) \langle \lambda, w\alpha \rangle$, each occurring with multiplicity 2 \cite[Prop. 1.4]{Duistermaat_Kolk_Varadarajan_79}. (The fact that the multiplicities are 2 is due to $G$ being complex.)  By stationary phase, the strongest bound one can give for the contribution of the integral near $w$ to \eqref{euclideandef} is therefore $(1 + |\alpha(H) \langle \lambda, w\alpha \rangle|)^{-1}$, and summing these over $W$ gives the bound of Theorem \ref{sph-fn-cx}.

\subsection{Relation with previous work}
\label{sec:sph-fn-prev}

We now discuss the relation between Theorem \ref{sph-fn-bd} and existing bounds for the spherical function.  Compared with previous results, Theorem \ref{sph-fn-bd} has strong dependence on $H$, but weak dependence on $\lambda$, at least when $\lambda$ is large.  This is convenient for us, on both counts.  It is crucial for the applications in this paper that our bounds have good dependence on $H$.  On the other hand, the fact that the bound of Theorem \ref{sph-fn-bd} is polynomially growing in $\lambda$ does not matter for us, as in our applications there are always other factors present that decay rapidly in $\lambda$.
\medskip

On a general group, the problem of giving bounds for the spherical function that are uniform in $H$ and $\lambda$ is a difficult one, and far from being completely solved.  If we restrict our attention to bounds that hold for all $H$ and $\lambda$, with $\lambda$ varying over $\ga^*$ or $\ga^*_\C$, the only previous results we are aware of are obtained by reducing directly to the case of $\lambda = 0$, and as a result do not capture most of the interaction between $H$ and $\lambda$.

To describe these bounds, it follows from Kostant's convexity theorem that
\[
|\varphi_\lambda(e^H)| \le \varphi_0(e^H) \, \underset{w \in W}{\max} \, e^{-w \Im \lambda(H) }
\]
for any $\lambda \in \ga^*_\C$, see for instance \cite[Prop 4.6.1]{Gangolli_Varadarajan_88}.  Combined with the Harish-Chandra bound for $\varphi_0$ \cite[Theorem 4.6.4]{Gangolli_Varadarajan_88}, which states that
\be
\label{HCbound}
\varphi_0(e^H) \ll (1 + \| H \|)^{ |\Phi_\textnormal{red}^+| } e^{-\rho(H)},
\ee
we obtain
\be
\label{HCbound2}
\varphi_\lambda(e^H) \ll (1 + \| H \|)^{ |\Phi_\textnormal{red}^+| } \underset{w \in W}{\max} \, e^{-(\rho + w \Im \lambda)(H) }.
\ee
The bound \eqref{HCbound} was strengthened by Anker \cite{Anker_87} to
\be
\label{Ankerbd}
\varphi_0(e^H) \asymp \prod_{\alpha \in \Phi_\textnormal{red}^+} (1 + |\alpha(H)|) e^{-\rho(H)},
\ee
which gives the corresponding improvement in \eqref{HCbound2} to
\be
\label{Ankerbd2}
\varphi_\lambda(e^H) \ll \prod_{\alpha \in \Phi_\textnormal{red}^+} (1 + |\alpha(H)|) \underset{w \in W}{\max} \, e^{-(\rho + w \Im \lambda)(H) }.
\ee
We note that the upper bound for $\varphi_0$ provided by Theorem \ref{sph-fn-bd} is the same as the upper bound in \eqref{Ankerbd}, so that the theorem is sharp in this case.  Moreover, \eqref{Ankerbd2} is equivalent to the corollary \eqref{sphericalcor2} of Theorem \ref{sph-fn-bd}, if we restrict $\lambda$ to a bounded set in
\begin{equation}\label{temp-tube}
\ga^*(\kappa)=\{ \lambda \in \ga^*_\C : \| \Im \lambda \| < \kappa \},
\end{equation} 
 the tube of radius $\kappa>0$ about $\ga^*\subset\ga_\C^*$.
\medskip

The paper \cite{Narayanan_Pasquale_Pusti_14} proves an asymptotic for $\varphi_\lambda$ similar to \eqref{Ankerbd} for any $\lambda$. However, the use of the Harish-Chandra expansion in their proof means that their result only holds for $H$ in a set of the form $H_0 + \ga_+$ for a fixed $H_0 \in \ga_+$.  Moreover, their error terms are not uniform in $\lambda$. We have not checked how this asymptotic compares with Theorem \ref{sph-fn-bd}. We also mention Cor. 4.5.5 of \cite{Gangolli_Varadarajan_88}, which implies a uniform estimate for $\varphi_\lambda(e^H)$ when $H \in H_0 + \ga_+$ and $\lambda \in \lambda_0 + \ga^{*}_+$ for fixed $\lambda_0 \in \ga^{*}_+$.
\medskip

Finally, we assume that $G$ is complex, and discuss the strength of Theorem \ref{sph-fn-cx}, as well as its relation to existing bounds for the spherical function in this case.  

Bounds for $\varphi_\lambda$ and $\varphi_\lambda^E$ in the complex case were studied by Barlet--Clerc \cite{Barlet_Clerc_86}, Clerc \cite{Clerc_87}, and Cowling--Nevo \cite{Cowling_nevo_01}, and may be deduced from the results of Duistermaat--Kolk--Varadarajan \cite{Duistermaat_Kolk_Varadarajan_79}.  Theorem \ref{sph-fn-cx} strengthens these results, subject to the condition that $\lambda$ is real.  However, we note that the papers \cite{Barlet_Clerc_86,Clerc_87,Cowling_nevo_01} also obtain results for complex $\lambda$, and \cite{Cowling_nevo_01} also bounds certain derivatives of $\varphi_\lambda(e^H)$ in the $H$-variable.

\subsection{Outline of proof of Theorem \ref{sph-fn-bd}}
\label{sec:sph-fn-outline}

We now give an outline of the proof of Theorem \ref{sph-fn-bd}.  We first discuss the case when $G = \textnormal{SL}_2(\mathbb{R})$, before describing the modifications required in the general case.  As in the case of ${\rm SL}_2(\C)$ discussed above, we shall identify $\ga$ and $\ga^*_\C$ with $\R$ and $\C$ using the basis elements $\alpha^\vee/2$ and $\alpha$ respectively, where $\alpha \in \Phi^+$ is the unique positive root.  Thus we wish to show that there exist $a, \kappa > 0$ such that
\begin{gather}
\label{SL2bd}
    |\varphi_\lambda(e^t)| \ll (1 + |\lambda|)^a e^{-t/2 + |\Im \lambda| t} \min (1 + t, 1 + |\lambda|^{-1})
\end{gather}
for $t \geq 0$ and $|\Im \lambda| < \kappa$.

First, we note that $|\varphi_\lambda(e^t)| \le e^{|\Im \lambda| t} \varphi_0(e^t)$, which follows by taking absolute values in the Harish-Chandra integral formula and using the fact that $H(k e^t) \le t$ for all $k \in {\rm SO}(2)$.  This implies \eqref{SL2bd} in the range where $t < 1$ (or $t < R$ for any fixed $R$).

Now suppose $t \geq 1$. Let $\theta(t, \lambda)$ be defined as
\begin{gather}
    \theta(t, \lambda) := c(\lambda) e^{i \lambda t} + c(-\lambda) e^{-i \lambda t}.
\end{gather}
The asymptotic formula of Gangolli--Varadarajan, recalled in Proposition \ref{prop_hc_expansion}, states that there is $\kappa > 0$ such that
\be
\label{SL2GV}
    |\varphi_{\lambda}(e^t) - e^{-t/2} \theta(t, \lambda)| \ll (1 + t)^a (1 + |\lambda|)^a e^{-(1/2 + \varepsilon) t}
\ee
for $|\Im \lambda| < \kappa$, where $a, \varepsilon > 0$ are constants (see \cite[\S 5.1]{Gangolli_Varadarajan_88} for an illustrative discussion of the rank one case).  We may take these to be the constants $a$ and $\kappa$ in our bound \eqref{SL2bd}.  As the right hand side of \eqref{SL2GV} is less than the right hand side of \eqref{SL2bd}, it suffices to show that
\begin{gather}
\label{thetabd1}
    |\theta(t, \lambda)| \ll (1 + |\lambda|)^a e^{|\Im \lambda| t} \min (1 + t, 1 + |\lambda|^{-1}),
\end{gather}
for $t \geq 1$ and $|\Im \lambda| < \kappa$. In fact, we will show the stronger bound
\be
\label{thetabd2}
|\theta(t, \lambda)| \ll e^{|\Im \lambda| t} \min (1 + t, 1 + |\lambda|^{-1}).
\ee

In the case of ${\rm SL}_2(\R)$, the explicit nature of $\theta(t, \lambda)$ means that this can be exhibited quite directly, and we do this first before describing the argument that must be used in the case of a general group.  We first suppose that $|\lambda| \geq 1$.  If we assume that $\kappa < 1$, we have the bound $|c(\lambda)| \ll_\kappa 1$ for $|\Im \lambda| < \kappa$. We thus straight away get
\begin{gather*}
    |\theta(t, \lambda)| \ll e^{|\Im \lambda| t} \ll e^{|\Im \lambda| t} \min(1 + |\lambda|^{-1}, 1 + t),
\end{gather*}
as required.

We next suppose that $|\lambda| < 1$.  The $c$-function is a meromorphic function with a simple pole at $\lambda = 0$. Despite this singularity, $\theta(t, \lambda)$ can be extended holomorphically to $\lambda = 0$.  If we write $c(\lambda) = a_{-1}\lambda^{-1} + g(\lambda)$ where $g(\lambda)$ is an entire function, we have
\begin{gather*}
    c(\lambda) e^{i \lambda t} + c(-\lambda) e^{-i \lambda t} = a_{-1} \frac{e^{i \lambda t} - e^{-i \lambda t}}{\lambda} + g(\lambda)e^{i \lambda t} + g(-\lambda) e^{-i \lambda t}.
\end{gather*}
Because $|\lambda| < 1$, the last two terms above can be bounded by $C e^{|\Im \lambda| t}$ for some $C$. The first term can be bounded in two different ways, as
\[
\Big| \frac{e^{i \lambda t} - e^{-i \lambda t}}{\lambda} \Big| \leq 2 |\lambda|^{-1} e^{|\Im \lambda| t} \quad \text{and} \quad \Big| \frac{e^{i \lambda t} - e^{-i \lambda t}}{\lambda} \Big| \leq 2 t e^{|\Im \lambda| t},
\]
where the first bound is elementary and the second follows from applying the mean value theorem to $f(x) = e^{i \lambda x} - e^{-i \lambda x}$ on the interval $[0,t]$.  We thus get the bound
\begin{gather*}
    |\theta(t, \lambda)| \ll e^{|\Im \lambda| t} \min(1 + |\lambda|^{-1}, 1 + t),
\end{gather*}
as required.  This completes the proof of Theorem \ref{sph-fn-bd} in this case.
\medskip

For later purposes, it will be convenient to use a different method, based on the maximum modulus principle, to establish \eqref{thetabd2}. To describe it, we first observe that if $\kappa < 1$, we have
\[
c(\lambda) \ll_\kappa 1 + | \lambda |^{-1}
\]
when $| \Im \lambda | < \kappa$, as follows from \eqref{eqn_c_lambda_big}. This implies that
\be
\label{thetabd3}
\theta(t, \lambda) \ll (1 + | \lambda |^{-1}) e^{ | \Im \lambda | t},
\ee
which establishes \eqref{thetabd2} when $| \lambda | \ge t^{-1}$.  We therefore assume that $| \lambda | < t^{-1}$. For these $\lambda$, we shall prove \eqref{thetabd2} by applying the maximum modulus principle, on the disk of radius $2 t^{-1}$ centered at $\lambda$. This gives
\be
\label{thetamax}
\theta(t, \lambda) \leq \max_{\alpha} \theta(t, \lambda + 2t^{-1} e^{i\alpha}).
\ee
As discussed above, we are free to assume that $t \ge R$ for any fixed $R$, and we choose $R$ large enough that $3 R^{-1} < \kappa$. This implies that the contour $\lambda + 2t^{-1} e^{i\alpha}$ is contained in the strip $| \Im z | < \kappa$, and lies outside the disk $|z| < t^{-1}$. We may therefore combine \eqref{thetabd3} and \eqref{thetamax} to obtain
\begin{align*}
\theta(t, \lambda) & \ll \max_{\alpha} (1 + | \lambda + 2t^{-1} e^{i\alpha} |^{-1}) e^{ | \Im (\lambda + 2t^{-1} e^{i\alpha}) | t} \\
& \le 2e^{ | \Im \lambda | t} \max_{\alpha} (1 + | \lambda + 2t^{-1} e^{i\alpha} |^{-1}),
\end{align*}
and moreover, because $| \lambda + 2t^{-1} e^{i\alpha} | \ge t^{-1}$, this gives
\[
\theta(t, \lambda) \ll (1 + t)e^{ | \Im \lambda | t}
\]
as required.
\medskip

We finish this section by describing the modifications that must be made to this approach in the case of a general group, and we now let $G$ be general again.  We shall apply the asymptotic of Gangolli--Varadarajan along maximal Levi subgroups of $G$.  To do this, for each maximal Levi $L$ we define a cone $\mathcal{C}_L$ in $\ga$ that is adapted to the centralizer of $L$ in $\ga$, and such that $\overline{\ga}_+ \subset B_R(0) \cup \bigcup_L \mathcal{C}_L$.  As before, the case of $H \in B_R(0)$ may be handled easily, so we fix $L$ and bound $\varphi_\lambda(e^H)$ for $H \in \mathcal{C}_L$.  If we define
\[
\theta_L(H, \lambda) = \sum_{w \in W_L \backslash W} c^L(w \lambda) \varphi^L_{w \lambda}(e^H),
\]
then Proposition \ref{prop_hc_expansion} reduces the problem to proving that
\be
\label{thetaoutline}
\theta_L(H, \lambda) \ll (1 + \| \lambda \|)^a \Theta(H, \lambda) \underset{w \in W}{\max} \, e^{-(\rho_L + w \Im \lambda)(H)}.
\ee
For this purpose, recall from \eqref{temp-tube} the notation $\mathfrak{a}^*(\kappa)$, where $\kappa>0$. For a constant $C>0$, we define
\begin{equation}\label{defn:a-reg-kappa}
\mathfrak{a}_{\rm reg}^* (\kappa,C) =\{\lambda\in\mathfrak{a}^*(\kappa): |\langle\alpha ,\lambda \rangle|\ge C\;\forall\;\alpha \in \Phi_{\textnormal{red}} \}
\end{equation}
and $\mathfrak{a}_{\rm sing}^* (\kappa,C) = \mathfrak{a}^* (\kappa) \smallsetminus \mathfrak{a}_{\rm reg}^* (\kappa,C)$. We may show \eqref{thetaoutline} when $\lambda \notin \mathfrak{a}_{\rm reg}^* (\kappa, \| H \|^{-1})$ by applying Theorem \ref{sph-fn-bd} inductively to $L$, together with standard bounds for $c^L$.  We then deduce \eqref{thetaoutline} for $\lambda \in \mathfrak{a}_{\rm reg}^* (\kappa, \| H \|^{-1})$ from this using the maximum principle.  We do this by choosing a contour around $\lambda$ of the form $\lambda + \nu e^{i\alpha}$, for some $\nu \in \ga^*_\C$ depending on $\lambda$, such that
\[
\underset{\alpha}{\max} \, \Theta(H, \lambda + \nu e^{i\alpha}) \, \underset{w \in W}{\max} \, e^{-(\rho_L + w \Im (\lambda + \nu e^{i\alpha}))(H)} \ll \Theta(H, \lambda) \, \underset{w \in W}{\max} \, e^{-(\rho_L + w \Im \lambda)(H)}.
\]
We require $\nu$ to satisfy $\| \nu \| \ll \| H \|^{-1}$, so that the exponential terms are roughly constant, and also that $\lambda + \nu e^{i\alpha}$ is at distance $\gg \| H \|^{-1}$ from all root hyperplanes.  In Lemma \ref{lemma:diskchoice}, we show that we may do this by choosing $\nu = C \| H \|^{-1} \rho$, where $C \asymp 1$ depends on $\lambda$.  We take $\rho$ as the direction of $\nu$ because it is not orthogonal to any root, and we need to choose $C$ depending on $\lambda$ because we now have multiple root hyperplanes to avoid, and a choice of $C$ that avoids one hyperplane may end up intersecting another.

\subsection{Preliminaries}
\label{sec:sph-fn-prelim}

We recall the notation $\mathfrak{a}^* (\kappa)$ and $\mathfrak{a}_{\rm reg}^* (\kappa,C)$ from \eqref{temp-tube} and \eqref{defn:a-reg-kappa}.

\begin{lemma}
\label{lemma:diskchoice}
    Let $0 < \kappa' < \kappa$ be given. Then there exists $\sigma > 1$ depending only on $G$, and $D > 0$ depending on $\kappa$, $\kappa'$, and $G$ such that the following holds. For each $s < D$ and $\lambda \in \mathfrak{a}^*(\kappa')$, there exists $C = C(s,\lambda) \in [1, \sigma]$ such that we have $\lambda + z \rho \in \mathfrak{a}^*_{\textnormal{reg}}(\kappa, s)$ for $|z| = C s$.  
\end{lemma}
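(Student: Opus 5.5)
Fix the finitely many reduced roots $\alpha\in\Phi_{\textnormal{red}}$ and write, for $\lambda\in\ga^*(\kappa')$ and $z\in\C$,
\[
\langle \lambda+z\rho,\alpha\rangle = \langle\lambda,\alpha\rangle + z\langle\rho,\alpha\rangle,
\]
where $\langle\rho,\alpha\rangle\neq 0$ for every root, since $\rho$ is regular. Set $m=\min_\alpha|\langle\rho,\alpha\rangle|>0$ and $M'=\max_\alpha|\langle\rho,\alpha\rangle|$. The condition $\lambda+z\rho\in\ga^*_{\rm reg}(\kappa,s)$ has two parts. The first is the tube condition $\|\Im(\lambda+z\rho)\|<\kappa$. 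The second is the root condition $|\langle\lambda,\alpha\rangle+z\langle\rho,\alpha\rangle|\ge s$ for all $\alpha$, for every $z$ on the circle $|z|=Cs$. The tube condition is easy. We have $\|\Im(z\rho)\|\le |z|\|\rho\|\le \sigma s\|\rho\|$, so it suffices to take $D$ with $\sigma D\|\rho\|<\kappa-\kappa'$. This is the only place where $\kappa,\kappa'$ enter, consistent with the statement, and it requires $\sigma$ to be fixed first.

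For the root condition, fix $\alpha$ and put $a_\alpha=\langle\lambda,\alpha\rangle/\langle\rho,\alpha\rangle\in\C$. The condition becomes $|a_\alpha+z|\ge s/|\langle\rho,\alpha\rangle|$, and it suffices to have $|a_\alpha+z|\ge s/m$. As $z$ ranges over the circle of radius $Cs$, the quantity $|a_\alpha+z|$ has minimum $\big||a_\alpha|-Cs\big|$. The requirement is therefore $\big||a_\alpha|-Cs\big|\ge s/m$, that is,
\[
C\notin \big(|a_\alpha|/s - 1/m,\; |a_\alpha|/s+1/m\big).
\]
So each root forbids an open interval of length $2/m$ in the parameter $C$. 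There are at most $N=|\Phi_{\textnormal{red}}|$ such intervals, all independent of $s$ and $\lambda$ in length.

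The key step is now a pigeonhole argument. Take $\sigma=1+2N/m+1$, say. The interval $[1,\sigma]$ has length exceeding the total length $2N/m$ of the forbidden intervals, so some $C\in[1,\sigma]$ avoids all of them. This $C$ depends on $s$ and $\lambda$ only through the numbers $|a_\alpha|/s$, and $\sigma$ depends only on $G$ through $m$ and $N$. With this choice, every $z$ with $|z|=Cs$ satisfies the root condition, and the tube condition holds by the choice of $D$.

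The only subtle point is ensuring that the order of quantifiers matches the statement. We choose $\sigma$ first, from root data alone, then $D$ from $\sigma$, $\kappa$, $\kappa'$, and finally $C$ by pigeonhole for each pair $(s,\lambda)$. No step appears to pose a real obstacle. The main care needed is to use strict versus non-strict inequalities consistently: the forbidden intervals are open, so their complement in $[1,\sigma]$ is nonempty by the length comparison.
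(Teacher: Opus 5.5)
Your argument is correct and is essentially the paper's. Both pick the radius $C$ by a pigeonhole argument so that $Cs\,|\langle\rho,\alpha\rangle|$ is separated from each $|\langle\lambda,\alpha\rangle|$ by at least $s$, and then conclude by the reverse triangle inequality. The only difference is in the counting: you count forbidden intervals of fixed length $2/m$ additively in the variable $C$, whereas the paper uses geometric scales $(\tau^{2k}s,\tau^{2k+2}s)$ and sets $C=\tau^{2k+1}$. As a result your $\sigma=2+2N/m$ is linear rather than exponential in $|\Phi_{\textnormal{red}}|$, and your choice of $D$ is explicit.
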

\begin{proof}
    Let $\tau$ be a real number satisfying $\tau | \langle \rho, \alpha \rangle | > 2$ and $\tau > |\langle \rho, \alpha \rangle| + 1$ for all $\alpha \in \Phi_{\textnormal{red}}$ (so in particular $\tau > 1$). We claim that we can take $\sigma = \tau^{2|\Phi_{\textnormal{red}}^+| + 2}$. 
    
    To prove this, let $0 < \kappa' < \kappa$, and let $\lambda \in \mathfrak{a}^*(\kappa')$ be given. It is clear that if $C \in [1, \sigma]$ and $|z| = C s$ for $s$ sufficiently small depending on $\kappa$ and $\kappa'$, then $\lambda + z \rho \in \mathfrak{a}^*(\kappa)$. We must therefore show that there is a choice of $C \in [1, \sigma]$ such that $|\langle \lambda + z \rho, \alpha \rangle| \geq s$ for all $|z| = C s$ and $\alpha \in \Phi_{\textnormal{red}}$. 

    Consider the disjoint intervals $(\tau^{2k} s, \tau^{2k+2} s)$ for $0 \leq k \leq |\Phi_{\textnormal{red}}^+|$. Because there are at most $|\Phi_{\textnormal{red}}^+|$ numbers of the form $|\langle \lambda, \alpha \rangle|$, one of these intervals does not contain $|\langle \lambda, \alpha \rangle|$ for any $\alpha \in \Phi_{\textnormal{red}}$. We let $C = \tau^{2k + 1}$ where $k$ is index of one of the empty intervals. For each $\alpha$ our choice of $k$ implies that either $|\langle \lambda, \alpha \rangle| \leq \tau^{2k} s$ or $|\langle \lambda, \alpha \rangle| \geq \tau^{2k+2} s$. In the first case we have
\begin{align*}
| \langle \lambda + z \rho, \alpha \rangle| & \ge |z| | \langle \rho, \alpha \rangle| - | \langle \lambda, \alpha \rangle| \\
& \ge \tau^{2k+1} s | \langle \rho, \alpha \rangle| - \tau^{2k} s\\
& = \tau^{2k}(\tau | \langle \rho, \alpha \rangle| - 1) s \\
& > s,
\end{align*}
while in the second case a similar computation gives
\[
| \langle \lambda + z \rho, \alpha \rangle| \ge \tau^{2k+1} s (\tau - | \langle \rho, \alpha \rangle|) > s,
\]
as required.
\end{proof}

\subsection{Proof of Theorem \ref{sph-fn-bd}: Reduction step}

We will prove Theorem \ref{sph-fn-bd} by induction on the semisimple real rank of $G$.  The base case, when the rank is zero, is trivial, since the spherical functions in this case are simply complex exponentials. We may therefore assume that Theorem \ref{sph-fn-bd} holds for all proper Levi subgroups of $G$.

Let $B_R(0)$ denote the ball of radius $R$ around the origin in $\ga$.  As in the rank one case discussed above, Theorem \ref{sph-fn-bd} holds when $H \in B_R(0)$, for any fixed $R$; this follows from the bound $| \varphi_\lambda(e^H)| \le \underset{w \in W}{\max} \, e^{-w \Im \lambda(H) } \varphi_0(e^H)$, which in turn follows from e.g. Prop. 4.6.1 of \cite{Gangolli_Varadarajan_88}.  For the other $H$, we shall cover $\overline{\ga}_+ \smallsetminus B_R(0)$ with cones adapted to the centers of the maximal Levi subgroups of $G$.

Let $\Sigma = \Delta \smallsetminus \{\beta \}$ for some $\beta\in \Delta$.  Let $L_\Sigma$ be the Levi associated to $\Sigma$, which is the centralizer of the subspace $\ga_\Sigma = \{ H \in \ga : \alpha(H) = 0, \alpha \in \Sigma \} = \mathbb{R} \beta^\vee$.  Let $\Phi_\Sigma$ be the root system of $L_\Sigma$, which is given by $\Phi_\Sigma = \Phi \cap \sum_{\alpha \in \Sigma} \R \alpha$.  We let $\mathcal{C}_\Sigma \subset \ga$ be the cone consisting of those $H$ satisfying
\be
\label{CLdef}
\alpha(H) > c \| H \| \text{ for } \alpha \in \Phi^+ \smallsetminus \Phi_\Sigma^+, \quad \| H \| > R.
\ee
Roughly speaking, this is a cone around $\ga_\Sigma \cap \overline{\ga}_+$.  If $c$ is chosen small enough, the $\mathcal{C}_\Sigma$ for various $\Sigma$, together with $B_R(0)$, cover $\overline{\ga}_+$. Thus, from now on assume that we have fixed such a sufficiently small $c$.

We fix a $\Sigma$, and denote $L_\Sigma$ simply by $L$. We prefer to write $\mathcal{C}_L$ for $\mathcal{C}_\Sigma$ and $\Phi_L$ for $\Phi_\Sigma$.  We shall prove Theorem \ref{sph-fn-bd} for $H \in \overline{\ga}_+ \cap \mathcal{C}_{L}$, using our hypothesis that it holds for $L$, together with the following two ingredients.  The first is the asymptotic expansion of $\varphi_\lambda$ along $L$ given in Theorem 5.9.3 of \cite{Gangolli_Varadarajan_88}.

\begin{prop}\label{lem:complexGV}
Let $L$ be a standard Levi subgroup of $G$ and put
\[
\theta_L(H, \lambda)= \sum_{w \in W_L \backslash W} c^L(w \lambda) \varphi^L_{w \lambda}(e^H).
\]
Then there is $a,\kappa >0$, depending only on the constant $c$ appearing in \eqref{CLdef}, such that for all $\lambda \in \ga^*(\kappa)$ and $H \in \mathcal{C}_L$, we have 
\be
\label{GVasymp}
\varphi_\lambda(e^H)= e^{-\rho^L(H) } \theta_L(H, \lambda)+O((1 + \| \lambda \|)^a e^{-\rho(H)  - \kappa \| H \|} ).
\ee
\end{prop}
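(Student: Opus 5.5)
The plan is to read Proposition \ref{lem:complexGV} off Theorem 5.9.3 of \cite{Gangolli_Varadarajan_88}, which is the complex-parameter version of Proposition \ref{prop_hc_expansion}. The work consists of checking three things: the normalizations agree, the error term is uniform on the cone $\mathcal{C}_L$, and the expansion holds for all $\lambda$ in a thin tube $\ga^*(\kappa)$ rather than only for regular real $\lambda$.

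First, the normalization. Theorem 5.9.3 gives, for $\lambda$ in a tube and $H$ with $\beta_L(H)\ge\varepsilon$,
\[
\Big| e^{\rho(H)}\varphi_\lambda(e^H) - e^{\rho_L(H)}\theta_L(H,\lambda)\Big| \le C(1+\|\lambda\|)^s(1+\|H\|)^s e^{-(2-2\kappa_0)\beta_L(H)}
\]
for some $\kappa_0$ proportional to the width of the tube. Multiplying through by $e^{-\rho(H)}$, the main term becomes $e^{-\rho^L(H)}\theta_L(H,\lambda)$, since $\rho-\rho_L=\rho^L$.

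Second, the cone. For $H\in\mathcal{C}_L$ the definition \eqref{CLdef} gives $\beta_L(H) > c\|H\|$ and $\|H\|>R$. The error term is therefore bounded by $(1+\|\lambda\|)^s e^{-\rho(H)}(1+\|H\|)^s e^{-(2-2\kappa_0)c\|H\|}$. Choosing the tube width, and hence $\kappa_0$, small enough, the polynomial factor $(1+\|H\|)^s$ can be absorbed into the exponential. This leaves $e^{-\kappa\|H\|}$ for some $\kappa>0$ depending only on $c$. Taking $\kappa$ to be the smaller of this constant and the tube width gives \eqref{GVasymp} with $a=s$.

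The main obstacle is making the statement meaningful for every $\lambda\in\ga^*(\kappa)$. The individual terms $c^L(w\lambda)\varphi^L_{w\lambda}$ have poles along the hyperplanes $\langle\lambda,\alpha\rangle=0$ with $\alpha\notin\Phi_L$, so $\theta_L$ is a priori only defined away from them, and Gangolli--Varadarajan state their bound for $\lambda$ off these hyperplanes. I would handle this in three steps.
\begin{enumerate}
\item Show that $\theta_L(H,\cdot)$ extends holomorphically across these hyperplanes. The difference $e^{-\rho^L(H)}\theta_L=\varphi_\lambda - (\text{error})$ is holomorphic away from the walls. Alternatively, the singular parts cancel in pairs under the Weyl group, exactly as in the $\mathrm{SL}_2(\R)$ computation in Section \ref{sec:sph-fn-outline}.
\item Extend the error bound across the walls. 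The error $\varphi_\lambda - e^{-\rho^L(H)}\theta_L$ is then holomorphic in $\lambda$ on the whole tube. The bound holds on the complement of a neighbourhood of the walls, so the maximum modulus principle on small polydiscs transfers it to all of $\ga^*(\kappa)$, at the cost of shrinking $\kappa$.
\item Check uniformity. The constant coming out of the maximum modulus argument must not depend on $H$, which holds because the bound being transferred is uniform in $H$.
\end{enumerate}
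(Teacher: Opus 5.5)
Your proposal is essentially the paper's proof. Both read \eqref{GVasymp} off Theorem 5.9.3(b) of \cite{Gangolli_Varadarajan_88} on the set $A^+(H_0:c)\supset\exp(\mathcal{C}_L\cap\overline{\ga}_+)$, where $\beta_L(H)>c\|H\|$ lets the polynomial factors be absorbed. Both also need $\theta_L$ to extend holomorphically across the walls: the paper takes this from Prop.~5.8.2 of \cite{Gangolli_Varadarajan_88}, and notes, as you do, that it also follows from the cancellation of the simple poles of $c^L$ between the cosets $W_Lw$ and $W_Lws_\beta$.

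Your maximum-modulus steps (2)--(3) are not needed, because that theorem is already stated for all $\lambda$ in a tube, using the extended $\theta_L$. The first justification in your step (1) is circular: holomorphy away from the walls says nothing about extension across them. Only the pole-cancellation argument does any work there.
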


\begin{proof}
We note that $\theta_L(H, \lambda)$ is initially defined only for $\lambda$ regular, but it is shown in Prop. 5.8.2 of \cite{Gangolli_Varadarajan_88} that it extends holomorphically to $\ga^*(\kappa)$ if $\kappa$ is sufficiently small.  (This result is stated for a vector-valued function denoted $\Theta(\lambda,m)$ in \cite{Gangolli_Varadarajan_88}, but $\theta_L(H, \lambda)$ is equal to the first coordinate of $\Theta(\lambda,e^H)$.)  This may also be shown in a more elementary way by proving that the poles of $c^L$ along the root hyperplanes cancel in the sum.

For any $\zeta > 0$, Gangolli--Varadarajan \cite{Gangolli_Varadarajan_88} (5.9.4) define a set $A^+(H_0:\zeta) \subset A$, which in our case satisfies
\[
A^+(H_0:\zeta) = \exp( \{ H \in \overline{\ga}_+ : \alpha(H) > \zeta \| H \| \text{ for } \alpha \in \Phi^+ \smallsetminus \Phi^+_L \} ).
\]
If we choose $\zeta = c$, then we have $\exp( \mathcal{C}_L \cap \overline{\ga}_+) \subset A^+(H_0:\zeta)$.  In Theorem 5.9.3(b) of \cite{Gangolli_Varadarajan_88}, the authors prove an asymptotic for $\varphi_\lambda(e^H)$ for all $e^H \in A^+(H_0:\zeta)$ that is equivalent to \eqref{GVasymp}, which implies the proposition.  Note that we take the differential operator $b \in U(\g)$ appearing in that theorem to be trivial, in which case the operator $\gamma_0(b)$ defined in (5.9.2) of \cite{Gangolli_Varadarajan_88} is also trivial.
\end{proof}

The second ingredient is the following estimate for the main term $\theta_L$ in the asymptotic expansion \eqref{GVasymp}.

\begin{prop}\label{prop:Lmainbd}
For a standard Levi subgroup $L$ write
\begin{equation}\label{def:ThetaL}
\Theta_L(H, \lambda) = \sum_{w \in W_L} \prod_{ \alpha \in \Phi_{\textnormal{red},L}^+} f_\alpha(H,w\lambda)
\end{equation}
and write $\Theta_G=\Theta$ to accord with \eqref{Thetadef}. Assume Theorem \ref{sph-fn-bd} holds for $L$ with corresponding constants $a, \kappa$, so that
\[
\varphi^L_\lambda(e^H) \ll (1 + \| \lambda \|)^{a} \Theta_L(H, \lambda) \underset{w \in W_L}{\max} \, e^{-(\rho_L + w \Im \lambda)(H)}
\]
for $\lambda \in \ga^*(\kappa)$ and $H \in \overline{\ga}_{L,+}$, where $\overline{\ga}_{L,+}$ is defined in \eqref{aLdef}.  Then there are $a', \kappa' > 0$ such that for all $H \in \overline{\ga}_+\cap \mathcal{C}_L$ and all $\lambda \in \ga^*(\kappa')$, we have
\[
\theta_L(H, \lambda) \ll (1 + \| \lambda \|)^{a'} \Theta(H, \lambda) \underset{w \in W}{\max} \, e^{-(\rho_L + w \Im \lambda)(H)}.
\]
\end{prop}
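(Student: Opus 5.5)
The plan follows the outline in Section \ref{sec:sph-fn-outline}. We split $\ga^*(\kappa')$ into two regions according to how close $\lambda$ is to the root hyperplanes at the scale $\|H\|^{-1}$. For $H\in\overline{\ga}_+\cap\mathcal{C}_L$ we have $\|H\|>R$, and we set $s=s(H)=C_0\|H\|^{-1}$ for a small constant $C_0$, chosen so that $s<D$ in Lemma \ref{lemma:diskchoice}. The two regions are $\lambda\in\ga^*_{\rm reg}(\kappa,s)$ and its complement. The constants are arranged as $\kappa'<\kappa_1<\kappa$, where $\kappa$ is the constant for $L$ and $\kappa_1$ is an intermediate tube width used in Lemma \ref{lemma:diskchoice}.

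\emph{Step 1 (regular region).} Take $\lambda\in\ga^*_{\rm reg}(\kappa_1,s)$. We bound $\theta_L$ term by term. Each summand is $c^L(w\lambda)\varphi^L_{w\lambda}(e^H)$ with $w\in W_L\backslash W$. For $\alpha\in\Phi^+_{\rm red}\smallsetminus\Phi^+_{{\rm red},L}$, the factor $c_\alpha$ has a simple pole at $0$ and the growth bound \eqref{eqn_c_lambda_big}. This gives
\[
|c^L(w\lambda)|\ll(1+\|\lambda\|)^{a_1}\prod_{\alpha\notin\Phi_L}(1+|\langle w\lambda,\alpha\rangle|^{-1}),
\]
uniformly in a thin tube. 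On $\mathcal{C}_L$ we have $\alpha(H)\gg\|H\|$ for the roots outside $\Phi_L$, and by regularity $|\langle w\lambda,\alpha\rangle|^{-1}\ll\|H\|$. Hence $1+|\langle w\lambda,\alpha\rangle|^{-1}\asymp f_\alpha(H,w\lambda)$ for these roots.

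For the $L$-part we apply the inductive hypothesis to $\varphi^L_{w\lambda}$. Note that $H\in\overline{\ga}_+\subset\overline{\ga}_{L,+}$. This contributes $\Theta_L(H,w\lambda)\max_{u\in W_L}e^{-(\rho_L+uw\Im\lambda)(H)}$. Multiplying the two bounds, the product over $\Phi^+_{\rm red}\smallsetminus\Phi^+_{{\rm red},L}$ combines with each term $\prod_{\Phi^+_{{\rm red},L}}f_\alpha(H,uw\lambda)$ of $\Theta_L$. Since $W_L$ permutes $\Phi^+\smallsetminus\Phi_L^+$, we may write $f_\alpha(H,w\lambda)=f_{u^{-1}\alpha}(H,uw\lambda)$ up to constants, using $\alpha(H)$ versus $u^{-1}\alpha(H)$. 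Here one must check $|\alpha(H)|\asymp|u^{-1}\alpha(H)|$, which holds on $\mathcal{C}_L$ because both are $\gg\|H\|$ and $\ll\|H\|$. The result is the full product over $\Phi^+_{\rm red}$ evaluated at $uw\lambda$. Summing over $w$ and $u$ gives $\Theta(H,\lambda)$, which establishes the claim in this region.

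\emph{Step 2 (singular region, maximum modulus).} Take $\lambda\in\ga^*(\kappa')\smallsetminus\ga^*_{\rm reg}(\kappa_1,s)$. By Proposition \ref{lem:complexGV}, $\theta_L(H,\cdot)$ is holomorphic on a tube, so $z\mapsto\theta_L(H,\lambda+z\rho)$ is holomorphic in a disc. Choose $C=C(s,\lambda)$ by Lemma \ref{lemma:diskchoice}. Then the circle $|z|=Cs$ lies in $\ga^*_{\rm reg}(\kappa_1,s)$, and the maximum principle bounds $|\theta_L(H,\lambda)|$ by the maximum of Step 1's bound on that circle.

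On the circle we use three facts. First, $\|Cs\rho\|\ll\|H\|^{-1}$, so the exponential factor changes by at most $e^{O(1)}$. Second, $(1+\|\lambda+z\rho\|)\asymp1+\|\lambda\|$. Third, we need $\Theta(H,\lambda+z\rho)\ll\Theta(H,\lambda)$. For this last comparison, note that every $f_\alpha(H,\mu)\le|\alpha(H)|+1$, and that $f_\alpha(H,\lambda)\gg\min(|\alpha(H)|,|\langle\lambda,\alpha\rangle|^{-1})$. A shift of size $\ll\|H\|^{-1}$ can only increase $|\langle\lambda,\alpha\rangle|^{-1}$ when $|\langle\lambda,\alpha\rangle|\lesssim\|H\|^{-1}$. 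In that case $f_\alpha(H,\lambda)\gg\min(|\alpha(H)|+1,\|H\|)$, and on the circle $f_\alpha\le\min(|\alpha(H)|+1,s^{-1}+1)$. These are comparable, since $s^{-1}\asymp\|H\|$.

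\emph{Main obstacle.} The main difficulty is the bookkeeping in Step 1 that recombines $c^L$ and $\Theta_L$ into $\Theta$, and in particular handling the complex $\lambda$ inside the $c$-function bounds. A further subtlety is the comparison in the last part of Step 2 for roots $\alpha$ with $|\alpha(H)|$ much smaller than $\|H\|$, which occur for $\alpha\in\Phi_L$. There the circle bound $\min(|\alpha(H)|+1,s^{-1})$ is still at most $|\alpha(H)|+1$. We expect this to be $\asymp f_\alpha(H,\lambda)$ only if $|\langle\lambda,\alpha\rangle|^{-1}\gtrsim|\alpha(H)|$. If that comparison fails, the fallback is to let the constant $a'$ absorb the discrepancy using $(1+\|\lambda\|)$ factors, or to shrink the circle radius adaptively per root.
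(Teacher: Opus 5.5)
Your proposal is correct and follows the paper's proof essentially step by step. In the regular region you bound $c^L$, apply the inductive hypothesis, and recombine using that $W_L$ permutes $\Phi^+_{\rm red}\smallsetminus\Phi^+_{{\rm red},L}$ (the paper does this permutation before inserting $H$, which avoids your comparison $|\alpha(H)|\asymp|u\alpha(H)|$; in your version the index should read $u\alpha$ rather than $u^{-1}\alpha$). Elsewhere you apply the maximum modulus principle on the contour supplied by Lemma~\ref{lemma:diskchoice}. The ``main obstacle'' you flag at the end is not real, and your Step~2 argument already settles it: a shift can only matter when $|\langle\lambda,\beta\rangle|\lesssim\|H\|^{-1}$, and then $|\langle\lambda,\beta\rangle|^{-1}\gtrsim\|H\|\gtrsim|\alpha(H)|$ for every root $\alpha$, including those in $\Phi_L$. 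So both minima are $\asymp|\alpha(H)|+1$, exactly as in the paper's comparison \eqref{minima}, and neither proposed fallback is needed.
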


We shall prove Proposition \ref{prop:Lmainbd} in the next paragraph.  Let us show how to deduce Theorem \ref{sph-fn-bd} for $H \in \overline{\ga}_+ \cap \mathcal{C}_{L}$ from these results.  First, we apply the asymptotic expansion of $\varphi_\lambda$ along $L$ given in Proposition \ref{lem:complexGV}. The error term there is dominated by the majorant of Theorem \ref{sph-fn-bd}, and Proposition \ref{prop:Lmainbd} shows that the same is true of the main term, which completes the proof.

\subsection{Proof of Proposition \ref{prop:Lmainbd}}
\label{sec:Lmainbd}

Recall the definition of $\mathfrak{a}_{\rm reg}^*(\kappa)$ and $\mathfrak{a}_{\rm reg}^* (\kappa,C)$ from \eqref{temp-tube} and \eqref{defn:a-reg-kappa}. We shall show that the case of $\lambda \in \mathfrak{a}_{\rm reg}^* (\kappa,c^{-1} \|H\|^{-1})$ can be treated in a straightforward way, using the hypothesis of the proposition. We shall then deduce the bound for all $\lambda \in \mathfrak{a}^*(\kappa')$ for an appropriate choice of $\kappa'$ from the case of $\lambda \in \mathfrak{a}_{\rm reg}^*(\kappa,c^{-1}\|H\|^{-1})$ using the maximum modulus principle, aided by Lemma \ref{lemma:diskchoice}.

For any $\lambda\in\mathfrak{a}^*(\kappa)$, we have the bound
\begin{equation}\label{CM-bound}
c^L(\lambda)\ll \prod_{\alpha\in\Phi^+_{\textnormal{red}}\smallsetminus\Phi_{\textnormal{red},L}^+} (1+|\langle \lambda,\alpha\rangle|^{-1}),
\end{equation}
since for $s$ small we have $c_{\alpha}(s) \ll 1 + |s|^{-1}$ because of the simple pole at $s = 0$, and for $s$ large with imaginary part bounded by $\kappa$, we have $c_{\alpha}(s) \ll_\kappa |s|^{-m_{\alpha}/2 - m_{2 \alpha}/2} \ll 1 + |s|^{-1}$. Using \eqref{CM-bound} and the assumption of the proposition, we get
\begin{align*}
\theta_L(H, \lambda)&\ll (1 +\|\lambda\|)^a \Psi_L(H,\lambda) \underset{w \in W}{\max} \, e^{-(\rho_L + w \Im \lambda)(H)}
\end{align*}
for any $\lambda\in\mathfrak{a}(\kappa)^*$, where
\begin{align*}
\Psi_L(H,\lambda)=\sum_{w\in W_L\backslash W}\Theta_L(H, w\lambda)\prod_{\alpha\in\Phi_{\textnormal{red}}^+\smallsetminus\Phi_{\textnormal{red},L}^+} (1+|\langle w\lambda,\alpha\rangle|^{-1}).
\end{align*}
We claim that, when $\lambda\in\mathfrak{a}_{\rm reg}^* (\kappa, c^{-1} \|H\|^{-1})$, we have $\Psi_L(H,\lambda)\ll \Theta(H,\lambda)$. To see this, first note that, by the $W$-invariance of $\langle\cdot,\cdot\rangle$ and the $W_L$-invariance of $\Phi_{\textnormal{red}}^+\smallsetminus\Phi_{\textnormal{red},L}^+$ (see Proposition \ref{lemma:WL-stable}),  we have 
\[
\prod_{\alpha \in \Phi_{\textnormal{red}}^+\smallsetminus\Phi_{\textnormal{red},L}^+} (1+|\langle w\lambda,\alpha\rangle|^{-1})=\prod_{\alpha \in \Phi_{\textnormal{red}}^+\smallsetminus\Phi_{\textnormal{red},L}^+} (1+|\langle sw\lambda, s\alpha\rangle|^{-1})=\prod_{\alpha \in \Phi_{\textnormal{red}}^+\smallsetminus\Phi_{\textnormal{red},L}^+} (1+|\langle sw\lambda, \alpha\rangle|^{-1}).
\]
for any $s\in W_L$. Thus, recalling the definition of $\Theta_L(H, w\lambda)$ in \eqref{def:ThetaL}, we find
\begin{align*}
\Psi_L(H,\lambda)&=\sum_{w\in W_L\backslash W}\sum_{s\in W_L}\prod_{\alpha\in\Phi_{\textnormal{red},L}^+}f_\alpha(H,sw\lambda)\prod_{\alpha \in \Phi_{\textnormal{red}}^+\smallsetminus\Phi_{\textnormal{red},L}^+} (1+|\langle w\lambda,\alpha\rangle|^{-1})\\
&=\sum_{w\in W_L\backslash W}\sum_{s\in W_L}\prod_{\alpha \in \Phi_{\textnormal{red},L}^+}f_\alpha(H,sw\lambda)\prod_{\alpha \in \Phi_{\textnormal{red}}^+\smallsetminus\Phi_{\textnormal{red},L}^+}(1+|\langle sw\lambda,\alpha\rangle|^{-1})\
\end{align*}
Note that for $H\in\mathcal{C}_L$ and $\lambda\in\mathfrak{a}_{\rm reg}^*(\kappa, c^{-1} \|H\|^{-1})$, we have $|\langle \alpha,\lambda\rangle|^{-1} \leq c\|H\| < \alpha(H)$ for all $\alpha \in \Phi_{\textnormal{red}}^+\smallsetminus\Phi_{\textnormal{red},L}^+$, so that
$(1+|\langle sw\lambda, \alpha\rangle|^{-1})\ll f_\alpha(H,sw\lambda)$. We put together the above estimates to obtain
\[
\Psi_L(H,\lambda)\ll\sum_{w\in W_L\backslash W}\sum_{s\in W_L}\prod_{\alpha\in\Phi_{\textnormal{red}}^+}f_\alpha(H,sw\lambda)=\sum_{s\in W}\prod_{\alpha\in\Phi_{\textnormal{red}}^+}f_\alpha(H,s\lambda)
=\Theta(H,\lambda).
\]
The above bounds then establish the case of $\lambda\in\mathfrak{a}_{\rm reg}^*(\kappa; c^{-1} \|H\|^{-1})$.

We now let $\kappa'\in (0,\kappa)$ and take $\lambda \in \mathfrak{a}^* (\kappa')$.  We apply Lemma \ref{lemma:diskchoice} to $\lambda$ and $s = c^{-1} \| H \|^{-1}$, and let $C = C(H,\lambda) \in [1, \sigma]$ be the constant produced. Note that to apply Lemma \ref{lemma:diskchoice}, we may have had to make $R$ bigger which we are free to do. Applying the maximum modulus principle, together with the already established bound in the case of $\lambda \in \mathfrak{a}_{\rm reg}^* (\kappa,c^{-1}\|H\|^{-1})$, gives
\begin{align*}
\theta_L(H, \lambda) & \le \underset{ |z| = C c^{-1} \| H \|^{-1}}{\max} \theta_L(H, \lambda + z \rho) \\
& \ll \underset{ |z| = C c^{-1} \| H \|^{-1}}{\max} (1 + \| \lambda + z \rho \|)^a \Theta(H, \lambda + z \rho) \underset{w \in W}{\max} \, e^{-(\rho_L + w \Im (\lambda + z\rho))(H)} \\
& \ll (1 + \| \lambda \|)^a \underset{w \in W}{\max} \, e^{-(\rho_L + w \Im (\lambda + z\rho))(H)} \underset{ |z| = C c^{-1} \| H \|^{-1}}{\max} \Theta(H, \lambda + z \rho).
\end{align*}
Note that in the above chain of inequalities it was crucial that, even though $C(\lambda)$ varies with $\lambda$, $C(\lambda)$ always lies in a bounded range. 

It therefore remains to show that
\[
\underset{ |z| = C c^{-1} \| H \|^{-1}}{\max} \Theta(H, \lambda + z \rho) \ll \Theta(H, \lambda),
\]
which in turn follows from
\be
\label{minima}
\min( |\alpha(H)| +1, | \langle \lambda, \beta \rangle |^{-1} +1) \asymp \min( |\alpha(H)| +1, | \langle \lambda + z\rho, \beta \rangle |^{-1} +1)
\ee
for $|z| = C c^{-1} \| H \|^{-1}$ and $\alpha, \beta \in \Phi_{\textnormal{red}}$. In the case when $| \langle \lambda, \beta \rangle | < c^{-1} \| H \|^{-1}$, we have $| \langle \lambda + z\rho, \beta \rangle | \ll \| H \|^{-1}$, so that
\[
| \langle \lambda, \beta \rangle |^{-1}, | \langle \lambda + z\rho, \beta \rangle |^{-1} \gg \| H \| \gg |\alpha(H)|.
\]
This implies that both minima are $\asymp |\alpha(H)| + 1$, as required.  When $| \langle \lambda, \beta \rangle | \ge c^{-1} \| H \|^{-1}$, we have $| \langle \lambda, \beta \rangle | \asymp | \langle \lambda + z\rho, \beta \rangle |$, which again implies \eqref{minima}.

This completes the proof of Proposition \ref{prop:Lmainbd} and hence of Theorem \ref{sph-fn-bd}.

\subsection{Proof of Theorem \ref{sph-fn-cx}}
\label{sec:cx-pf}

We now deduce Theorem \ref{sph-fn-cx} from Theorem \ref{sph-fn-bd}.  We will do this by passing between the bounds \eqref{cxeuclideanbd} and \eqref{cxbound} for $\varphi_\lambda^E$ and $\varphi_\lambda$, and so for the convenience of the reader we recall that these bounds state that
\be
\label{cxeuclideanbd2}
\varphi_\lambda^E(H) \ll \sum_{w \in W} \prod_{\alpha \in \Phi^+} (1 + |\alpha(H) \langle w\lambda, \alpha \rangle|)^{-1},
\ee
and
\be
\label{cxbound2}
\varphi_\lambda(e^H) \ll e^{-\rho(H)} \prod_{\alpha \in \Phi^+} (|\alpha(H)|+1) \sum_{w \in W} \prod_{\alpha \in \Phi^+} (1 + |\alpha(H) \langle w\lambda, \alpha \rangle|)^{-1},
\ee
for $H \in \overline{\ga}^+$ and $\lambda \in \ga^*$.  To establish these, Lemma \ref{sph-bd-equiv} below shows that Theorem \ref{sph-fn-bd} implies \eqref{cxbound2} when $\| \lambda \| < 1$, which in turn implies \eqref{cxeuclideanbd2} when $\| \lambda \| < 1$.  We then use the fact that $\varphi_{r\lambda}^E(H/r) = \varphi_\lambda^E(H)$ for any $r > 0$, which follows from the bilinearity of the inner product in the phase function in \eqref{euclideandef}, to deduce \eqref{cxeuclideanbd2} for all $\lambda$. This implies \eqref{cxbound2} for all $\lambda$, and completes the proof.

\begin{lemma}
\label{sph-bd-equiv}
Let $R > 0$ be given.  In the range when $\lambda \in \ga^*$ satisfies $\| \lambda \| < R$, the bounds for $\varphi_\lambda$ given by Theorems \ref{sph-fn-bd} and \ref{sph-fn-cx} are equivalent up to a constant factor depending on $R$.
\end{lemma}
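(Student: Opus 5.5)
The plan is to compare the two majorants directly for $\lambda\in\ga^*$ with $\|\lambda\|<R$. Since $\lambda$ is real, $\Im\lambda=0$ and $(1+\|\lambda\|)^a\asymp_R 1$, so Theorem \ref{sph-fn-bd} reads $\varphi_\lambda(e^H)\ll e^{-\rho(H)}\Theta(H,\lambda)$. Since $G$ is complex, $\Phi$ is reduced, so $\Phi_{\rm red}^+=\Phi^+$. We must therefore show that
\[
\Theta(H,\lambda)=\sum_{w\in W}\prod_{\alpha\in\Phi^+}\min(|\alpha(H)|+1,|\langle w\lambda,\alpha\rangle|^{-1}+1)
\]
is comparable, up to constants depending on $R$, with
\[
\prod_{\alpha\in\Phi^+}(|\alpha(H)|+1)\sum_{w\in W}\prod_{\alpha\in\Phi^+}(1+|\alpha(H)\langle w\lambda,\alpha\rangle|)^{-1}.
\]
Both expressions are sums over $w\in W$ of products over $\alpha\in\Phi^+$. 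It therefore suffices to prove the termwise comparison: for $x=|\alpha(H)|\ge0$ and $y=|\langle w\lambda,\alpha\rangle|\in[0,CR]$,
\[
\min(x+1,y^{-1}+1)\asymp_R \frac{x+1}{1+xy}.
\]

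To prove this one-variable estimate, I split into two cases.

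\textbf{Case $xy\le1$.} The right side is $\asymp x+1$. On the left, $y^{-1}\ge x$, so $\min(x+1,y^{-1}+1)=x+1$.

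\textbf{Case $xy>1$.} The right side is $\asymp (x+1)/(xy)$. Since $x>1/y\ge 1/(CR)$, we have $x+1\asymp_R x$, so the right side is $\asymp_R y^{-1}$. On the left, $y^{-1}<x$, so the minimum equals $y^{-1}+1$. Because $y\le CR$, this gives $y^{-1}+1\asymp_R y^{-1}$.

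The degenerate value $y=0$ falls under the first case and is consistent with both conventions. Taking products over the finitely many $\alpha$ and summing over $w$ preserves $\asymp$, which proves the equivalence.

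The only point requiring care is the uniformity in $R$: the boundedness of $\|\lambda\|$ is what makes $y^{-1}+1\asymp y^{-1}$ hold in the second case. That is exactly why the lemma is restricted to $\|\lambda\|<R$. Otherwise the argument is elementary and I expect no real obstacle.
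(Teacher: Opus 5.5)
Your proof is correct and follows the paper's argument. Both reduce to the termwise comparison $\min(|\alpha(H)|+1,|\langle w\lambda,\alpha\rangle|^{-1}+1)\asymp_R (|\alpha(H)|+1)/(1+|\alpha(H)\langle w\lambda,\alpha\rangle|)$ and split according to whether $|\alpha(H)\langle w\lambda,\alpha\rangle|$ lies below or above $1$, using $\|\lambda\|<R$ only in the second case (you also make explicit the points the paper leaves implicit, namely $\Im\lambda=0$, $(1+\|\lambda\|)^a\asymp_R 1$, and $\Phi_{\rm red}^+=\Phi^+$ for complex $G$).
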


\begin{proof}
If we define
\[
g_\alpha(H,w\lambda) = \frac{|\alpha(H)|+1}{|\alpha(H) \langle w\lambda, \alpha \rangle| + 1}
\]
for $H \in \ga$, $\lambda \in \ga^*_\C$, and $w \in W$, then the bound of Theorem \ref{sph-fn-cx} reads
\[
\varphi_\lambda(e^H) \ll e^{-\rho(H)} \sum_{w \in W} \prod_{\alpha \in \Phi^+} g_\alpha(H,w\lambda).
\]
It therefore suffices to prove that $f_\alpha(H,w\lambda) \asymp_R g_\alpha(H,w\lambda)$ for $\| \lambda \| < R$, as this will imply that
\[
\Theta(H,\lambda) = \sum_{w \in W} \prod_{\alpha \in \Phi^+} f_\alpha(H,w\lambda) \asymp_R \sum_{w \in W} \prod_{\alpha \in \Phi^+} g_\alpha(H,w\lambda)
\]
as required.  To establish $f_\alpha \asymp g_\alpha$, we consider the cases $|\alpha(H) \langle w\lambda, \alpha \rangle| < 1$ and $|\alpha(H) \langle w\lambda, \alpha \rangle| \ge 1$ separately.  When $|\alpha(H) \langle w\lambda, \alpha \rangle| < 1$, we have
\[
f_\alpha(H,w\lambda) = |\alpha(H)| + 1 \asymp g_\alpha(H,w\lambda),
\]
as required.  When $|\alpha(H) \langle w\lambda, \alpha \rangle| \ge 1$, we have
\[
f_\alpha(H,w\lambda) = |\langle w\lambda, \alpha \rangle|^{-1} + 1
\]
and
\[
g_\alpha(H,w\lambda) \asymp \frac{|\alpha(H)|+1}{|\alpha(H) \langle w\lambda, \alpha \rangle|} = |\langle w\lambda, \alpha \rangle|^{-1} + |\alpha(H) \langle w\lambda, \alpha \rangle|^{-1}.
\]
The result follows in this case because $|\langle w\lambda, \alpha \rangle|^{-1} \gg_R 1$ and $|\alpha(H) \langle w\lambda, \alpha \rangle|^{-1} \le 1$.
\end{proof}

\appendix
\section{Sage code for $E_7$ computation}\label{sage}
\begin{lstlisting}[language=Python, caption={Semi-dense root subsystem analysis for $E_7$}]
from copy import deepcopy

# Generate the roots of the form e_i - e_j.
roots = []
for i in range(0, 8):
    for j in range (i+1, 8):
        dummy = [0, 0, 0, 0, 0, 0, 0, 0]
        dummy[i] = 1
        dummy[j] = -1
        dummy2 = [0, 0, 0, 0, 0, 0, 0, 0]
        dummy2[i] = -1
        dummy2[j] = 1
        roots.append (tuple (dummy))
        roots.append (tuple (dummy2))
        

# Generate the roots which are a permutation of 1/2*(1, 1, 1, 1, -1, -1, -1, -1). Because we are ultimately only concerned with linear (in)dependence, we can ignore the factor of 1/2.
for i in range (0, 8):
    for j in range (i+1, 8):
        for k in range (j+1, 8):
            for l in range (k+1, 8):
                dummy = [1, 1, 1, 1, 1, 1, 1, 1]
                dummy[i] = -1
                dummy[j] = -1
                dummy[k] = -1
                dummy[l] = -1
                roots.append (tuple (dummy))
                

# "base" is the standard base (except that the last root has been scaled by 2 which has no effect on the subsequent analysis).
base = [(0, -1, 1, 0, 0, 0, 0, 0), (0, 0, -1, 1, 0, 0, 0, 0), (0, 0, 0, -1, 1, 0, 0, 0), (0, 0, 0, 0, -1, 1, 0, 0), (0, 0, 0, 0, 0, -1, 1, 0), (0, 0, 0, 0, 0, 0, -1, 1), (1, 1, 1, 1, -1, -1, -1, -1)]


# The argument "indices" is a list of indices which is then used to pick out the standard root subsystem spanned by the corresponding elements in the base. The roots in the span are exactly those which are linearly dependent with the specified subset of the base.
def subroot (indices):
    of_interest = []
    for entry in indices:
        of_interest.append (base[entry])
    
    to_return = []
    for entry in roots:
        if linearly_dependent (of_interest, entry) == True:
            to_return.append (entry)
    
    return to_return
    

# This detects whether or not a list of vectors is linearly dependent.
def linearly_dependent (first, second):
    new_first = deepcopy(first)
    new_first.append (second)
    my_mat = matrix (new_first)
    prod = my_mat*my_mat.transpose()
    if prod.determinant() == 0:
        return True
    return False
    

# This generates the list of all elements in the W-orbit of e_1 + e_8 which are all of the form e_i + e_j, or some permutation of 1/2*(1, 1, 1, 1, 1, 1, -1, -1). Again, we can ignore the factor of 1/2.
normals = []
for i in range (0, 8):
    for j in range (i+1, 8):
        dummy = [0, 0, 0, 0, 0, 0, 0, 0]
        dummy[i] = 1
        dummy[j] = 1
        dummy2 = [1, 1, 1, 1, 1, 1, 1, 1]
        dummy2[i] = -1
        dummy2[j] = -1
        normals.append (dummy)
        normals.append (dummy2)
        

# This simply takes the dot product of two vectors.
def dot (v1, v2):
    total = 0
    for i in range (0, len(v1)):
        total += v1[i]*v2[i]
    return total


# The outer for loop iterates through all of the standard root subsystem. The inner for loop ("for normal in normals") computes the intersection of the specified standard root subsystem with each root system of the form w.Phi_0 with w in the Weyl group and Phi_0 the standard E6 root subsystem.
for i in range (0, 2**7):
	# We can enumerate all subsets of the base of simple roots by numbers between 0 and 2^7 expressed in binary. We call the associated list of indices "my_indices".
    pre_indices = Integer(i).digits(2)
    my_indices = []
    for j in range (0, len(pre_indices)):
        if pre_indices[j] == 1:
            my_indices.append (j)
    print (my_indices)
    
    # "Psi" is the root subsystem associated to indices
    Psi = subroot (my_indices)
    for normal in normals:
    	# Elements in "intersected" correspond to elements in Psi \cap w.Phi_0, where w.Phi_0 is the root system orthogonal to "normal". 
        intersected = []
        for root in Psi:
            if dot (normal, root) == 0:
                intersected.append (root)
        # We check whether or not the semi-dense root subsystem inequality is satisfied.
        ineq = 2*len(intersected) + 2*len (my_indices) - len (Psi)
        if ineq < 0:
            print (entry)
            print (intersected)
            print (my_indices)
            print (Psi)

# The program did not print anything implying that the inequality was never violated.
\end{lstlisting}

\normalem
\printbibliography

\end{document}